\setlist[enumerate,1]{label={(\alph*)}}
\DeclareMathOperator{\ess}{ess}
\DeclareMathOperator{\id}{id}
\newcommand{\R}{\mathbb{R}}
\newcommand{\C}{\mathbb{C}}
\newcommand{\Z}{\mathbb{Z}}
\newcommand{\vectornorm}[1]{\|#1\|}
\newtheorem{tm}{Theorem}[section]
\newtheorem{pr}[tm]{Proposition}
\newtheorem{lm}[tm]{Lemma}
\newtheorem{cy}[tm]{Corollary}
\newtheorem{cm}[tm]{Claim}
\newtheorem{as}[tm]{Assumption}
\theoremstyle{definition}
\newtheorem{df}[tm]{Definition}
\theoremstyle{remark}
\newtheorem{rem}[tm]{Remark}
\title{A reverse isoperimetric inequality for J-holomorphic curves}
\author{Yoel Groman and Jake P. Solomon}
\date{June 2014}
\begin{document}
\begin{abstract}
We prove that the length of the boundary of a $J$-holomorphic curve with Lagrangian boundary conditions is dominated by a constant times its area. The constant depends on the symplectic form, the almost complex structure, the Lagrangian boundary conditions and the genus. A similar result holds for the length of the real part of a real $J$-holomorphic curve. The infimum over $J$ of the constant properly normalized gives an invariant of Lagrangian submanifolds. We calculate this invariant to be $2\pi$ for the Lagrangian submanifold $\R P^n \subset \C P^n.$ We apply our result to prove compactness of moduli of $J$-holomorphic maps to non-compact target spaces that are asymptotically exact. In a different direction, our result implies the adic convergence of the superpotential.
\end{abstract}

\maketitle
\tableofcontents
\section{Introduction}\label{SecIntroduction}
\subsection{A consequence of the Cauchy-Crofton formula}\label{sec:cc}

We begin with a bound on the length of a real algebraic curve in terms of its degree, which we learned from~\cite{Gu}.
Let $\gamma$ be a one dimensional sub-manifold of $\mathbb{RP}^n$. Let $h$ be the round metric on $\mathbb{RP}^n$ normalized so the length of a line is~$1.$ Let $dV$ be the volume form on $Gr(n,n+1)$, the Grassmanian of hyperplanes in $\mathbb{R}^{n+1}$, that is invariant under the induced action of the isometry group of $h$ and satisfies
\[
\int_{Gr(n,n+1)}dV = 1.
\]
For $H\in Gr(n,n+1)$ denote by $N(H)$ the number of intersection points between $H$ and $\gamma,$
\[
N(H) = N(H,\gamma)=|\{\gamma\cap H\}|.
\]
By transverality, $N(H)$ is finite for generic $H$. Let $\ell(\gamma;h)$ denote the length of $\gamma$ with respect to $h$. The Cauchy-Crofton formula \cite[eq. 12]{Sant} asserts that
\begin{align}\label{crofton}
\int_{H\in Gr(n,n+1)}N(H)dV=\ell(\gamma;h).
\end{align}
For a quick sanity check, note that when $\gamma$ is a straight line, both sides of equation \eqref{crofton} are equal to 1.

Assume now that $\gamma$ is an algebraic curve of degree $d$. As observed in \cite[p.45]{Gu}, the Cauchy-Crofton formula~\eqref{crofton} implies
\begin{align}\label{CCEst}
d\geq \ell(\gamma;h).
\end{align}

We interpret inequality~\eqref{CCEst} as a reverse isoperimetric inequality. The Fubini-Study metric on $\C P^n$ normalized so the area of a complex line is $\frac{1}{\pi}$ induces the metric $h$ on $\R P^n,$ so we denote it also by $h.$ Let $\alpha \in H_2(\C P^n,\R P^n)\simeq \Z$ be the generator that pairs positively with the K\"ahler form of $h.$ Let $(\Sigma,\partial\Sigma)$ be a Riemann surface with boundary, and denote by $[\Sigma,\partial\Sigma] \in H_2(\Sigma,\partial\Sigma)$ the relative fundamental class. Let
\[
u:(\Sigma,\partial\Sigma)\rightarrow (\mathbb{C}P^n,\mathbb{R}P^n)
\]
be a holomorphic map with $[u]: = u_*([\Sigma,\partial\Sigma]) = d\alpha.$ By Wirtinger's theorem,
\[
Area(u;h) = \frac{d}{2\pi}.
\]
Schwarz reflection implies that $u|_{\partial\Sigma}$ is real algebraic. So equation~\eqref{CCEst} gives the estimate
\begin{align}\label{RIIProjSpace}
2\pi Area(u;h)\geq\ell(u|_{\partial\Sigma};h).
\end{align}
The present paper extends inequality~\eqref{RIIProjSpace} to general symplectic manifolds $M$, Lagrangian submanifolds $L\subset M$ , and $J$-holomorphic maps $(\Sigma,\partial\Sigma) \to (M,L)$.

\subsection{The compact case}

In the following let $(M,\omega)$ be a symplectic manifold, let $L\subset M$ be a Lagrangian submanifold and let $J$ be an almost complex structure on $M$ for which the form $\omega(\cdot,J\cdot)$ is positive definite. Denote by $g_J$ the symmetrization of the form $\omega(\cdot,J\cdot)$. For $\Sigma$ a Riemann surface with boundary we denote by $\Sigma_{\C}$ the complex double of $\Sigma$.

\begin{tm}\label{TmRIIBountdary}
Suppose $M$ is compact. There are constants $f_1=f_1(J,\omega,L)$ and $g_1=g_1(J,\omega,L)$, homogeneous of degrees $-\frac1{2}$ and $\frac1{2}$ respectively in $\omega$, with the following significance. For any Riemann surface $\Sigma$ with boundary, and for any $J$-holomorphic curve
\[
u:(\Sigma,\partial\Sigma)\rightarrow (M,L),
\]
we have
\begin{align}\label{eqRIIBoundary}
\ell(u|_{\partial \Sigma};g_J)\leq f_1{Area(u;g_J)}+ g_1genus(\Sigma_{\C}).
\end{align}
\end{tm}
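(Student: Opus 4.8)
The plan is to reduce inequality~\eqref{eqRIIBoundary} to a local reverse isoperimetric inequality in a Weinstein neighborhood of $L$, modeled on the Cauchy--Crofton argument of Section~\ref{sec:cc}, with the genus of $\Sigma_\C$ paying for the topology one is forced to create in the localization.

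\textbf{Localization near $L$.} Since $M$ is compact, fix $\epsilon>0$ so that the $\epsilon$-neighborhood $N$ of $L$ for $g_J$ is a Weinstein neighborhood, identified with a disk cotangent bundle; let $\rho\colon N\to[0,\epsilon^2)$ be a smoothing of $\tfrac12\mathrm{dist}_{g_J}(\cdot,L)^2$. Only the restriction of $u$ to $u^{-1}(N)$ matters for $\ell(u|_{\partial\Sigma};g_J)$. By the coarea inequality choose a regular value $r\in(\tfrac14\epsilon^2,\tfrac12\epsilon^2)$ of $\rho\circ u$ with $\ell(u|_{(\rho\circ u)^{-1}(r)};g_J)\le C_1\epsilon^{-1}Area(u;g_J)$, and set $\Sigma'=(\rho\circ u)^{-1}([0,r])\supset\partial\Sigma$. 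By the monotonicity lemma for $J$-holomorphic curves, every component of $u$ running from $\{\rho\le r/4\}$ out to $\{\rho=r\}$ carries area $\ge c_2\epsilon^2$, so $\#\pi_0(\Sigma')$ and $-\chi(\Sigma')$ are bounded by a constant times $Area(u;g_J)+(-\chi(\Sigma))$, hence by a constant times $Area(u;g_J)+genus(\Sigma_\C)$. It therefore suffices to bound $\ell(u|_{\partial\Sigma};g_J)$ by $Area(u;g_J)$ plus a constant times $-\chi(\Sigma')$, the boundary circles of $\Sigma'$ other than those of $\partial\Sigma$ having already been accounted for.

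\textbf{The local estimate.} Cover $L$ by finitely many Darboux charts $\psi_i\colon(W_i,W_i\cap L)\hookrightarrow(\C^n,\R^n)$, chosen fine enough that, after shrinking $N$, any subsurface $S\subset\Sigma'$ with $u(S)$ inside a single $\psi_i(W_i)$ is mapped into a Euclidean ball of radius $\eta\ll1$ on which $J$ is within $\eta$ of the standard structure $J_0$ in $C^1$. For such an $S$, bound the length of the part of $\partial S$ mapping to $L$: by the Cauchy--Crofton formula in $\R^n$ this length equals a fixed multiple of the average over real affine hyperplanes $H\subset\R^n$ of $\#(u(\partial S)\cap H)$; since the relevant boundary maps to $\R^n$ and the complexification satisfies $H_\C\cap\R^n=H$, one has the pointwise bound $\#(u(\partial S)\cap H)\le\#((u|_S)^{-1}(H_\C))$; and the average of the latter over this totally real family of complex hyperplanes is controlled, for $J_0$, by a fixed multiple of $Area(u|_S;g_{J_0})$ plus a multiple of $-\chi(S)$ --- this is the Bezout-type input, where $-\chi(S)$ accounts for interior intersections that cannot be pushed to the boundary, exactly as the degree $d$ served as an absolute bound in~\eqref{CCEst} (alternatively, Schwarz reflection across $L$ in the chart turns $S$ into a closed nearly-holomorphic curve and reduces this to a genuine Bezout estimate). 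For $J$ within $\eta$ of $J_0$ on a ball of radius $\eta$ the same chain survives with constants multiplied by $1+O(\eta)$, since positivity of complex intersection numbers and the Crofton identity are stable under small perturbations. This yields $\ell(u|_{\partial S\cap u^{-1}(L)};g_J)\le f_0\,Area(u|_S;g_J)+g_0(-\chi(S))$ with $f_0,g_0$ depending only on $(J,\omega,L)$.

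\textbf{Globalization and the main obstacle.} Using the thick--thin decomposition of $\Sigma'$ relative to the open cover $\{u^{-1}(\psi_i(W_i))\}$, cut $\Sigma'$ along at most $C_3(-\chi(\Sigma')+\#\pi_0(\Sigma'))$ arcs and circles into subsurfaces each mapped into one chart, each of $g_J$-bounded geometry, with each interface of length $O(\eta)$; apply the local estimate to every piece and sum. The parts of $\partial\Sigma$ sum to $\ell(u|_{\partial\Sigma};g_J)$; the interfaces are over-counted boundedly and contribute $O(\eta)\cdot(-\chi(\Sigma'))$, absorbed into the genus term; the circles over $\{\rho=r\}$ contribute at most $f_0C_1\epsilon^{-1}Area(u;g_J)$; and $\sum_S(-\chi(S))\le C_3'(-\chi(\Sigma'))$. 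Collecting the area contributions into $f_1$ and the topological ones into $g_1\,genus(\Sigma_\C)$, using the bounds of the first step, yields~\eqref{eqRIIBoundary}; homogeneity in $\omega$ follows because every estimate is expressed through $g_J$, so $\omega\mapsto c\omega$ scales lengths by $\sqrt c$ and areas by $c$, forcing $f_1$ to be homogeneous of degree $-\tfrac12$ and $g_1$ of degree $\tfrac12$. The hard part is the local estimate for a non-integrable $J$: one must establish the perturbed Bezout-type bound on the averaged intersection number --- that a nearly-holomorphic curve of small diameter with totally real boundary cannot meet a generic complexified real hyperplane in more than $O(Area)+O(-\chi)$ points --- and track how the constants degrade; a secondary difficulty is making the cutting genuinely topological, so that the number and the moduli of the pieces are bounded in terms of $-\chi(\Sigma)$ rather than in terms of $u$ itself.
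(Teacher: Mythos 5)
Your proposal takes a genuinely different route from the paper's proof, but it contains gaps that you partially acknowledge and that are more serious than they may appear.

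\textbf{How the paper actually argues.} The paper does not use Cauchy--Crofton or Bezout anywhere in the proof of Theorem~\ref{TmRIIBountdary}; those appear only as motivation in Section~\ref{sec:cc} and in the computation of the optimal constant in Section~\ref{sec:calc}. Instead, the proof works entirely on the domain. One studies $f(x)=\|du(x)\|$ along $\gamma=\partial\Sigma$; the boundary length is the area of the hypograph of $f$. The analytic input is packaged as the ``thick thin measure'' axioms of Definition~\ref{dfThTh}: the gradient inequality (energy quantization: a point of large derivative forces energy $\geq\delta$ in a small disk) and the cylinder inequality (exponential decay of energy across long modulus annuli). The combinatorial device is a thick--thin decomposition of the hypograph itself (Sections~\ref{SecTreePart}--\ref{SecThickThinPart}), together with a tameness estimate for geodesics in hyperbolic collars (Section~\ref{SecTameGeo}), culminating in Theorem~\ref{TmApLenBound}. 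Theorem~\ref{TmRIIBountdary} then follows by Theorem~\ref{TmThickThinM} and Lemma~\ref{lmSegWid}.

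\textbf{The central gap in your approach.} Your local estimate is the crux and is not established. You assert that a nearly-$J_0$-holomorphic surface $S$ of small diameter with totally real boundary satisfies, after averaging over complexified real hyperplanes,
\[
\text{avg}_H\,\#\big((u|_S)^{-1}(H_\C)\big)\ \lesssim\ Area(u|_S)+(-\chi(S)),
\]
and you describe this as ``exactly as the degree $d$ served'' in~\eqref{CCEst}. But the algebraic argument of Section~\ref{sec:cc} relies on Bezout for a \emph{closed} algebraic curve in projective space, where the degree is a global invariant that bounds every intersection number. For a local piece $S$ with boundary there is no degree, and Schwarz reflection across $L$ does not close $S$ up, because $\partial S$ also contains the interface circles at $\rho=r$ and the cutting arcs. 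The family of complexified real hyperplanes is moreover a totally real, measure-zero subfamily of all complex hyperplanes, so the complex Crofton formula does not directly give a bound on the average you need. You would have to prove from scratch a uniform, perturbation-stable intersection estimate of Jensen/Lelong type for nearly-holomorphic curves in a ball and show that its constants depend only on $(J,\omega,L)$; nothing of the kind appears, and it is the most difficult point of the whole strategy. (By contrast, the paper's gradient inequality is a genuinely local energy-quantization statement with a standard proof.)

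\textbf{A secondary gap in globalization.} You want to cut $\Sigma'$ into pieces, each mapping into one chart, with the number of pieces, the total length of cuts, and the total Euler characteristic all bounded by a fixed multiple of $Area(u)+(-\chi(\Sigma))$. The construction of $\Sigma'$ via a level set of $\rho\circ u$ does not by itself bound $\#\pi_0((\rho\circ u)^{-1}(r))$, hence does not bound $-\chi(\Sigma')$; monotonicity bounds the number of components of $\Sigma'$ reaching the outer level but not the number of boundary circles of a single component. And the ``thick--thin decomposition of $\Sigma'$ relative to an open cover'' depends on the conformal structure of $\Sigma'$, which depends on $u$, so a priori the number of cuts is not controlled by $-\chi(\Sigma)$ alone. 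You flag this as the ``secondary difficulty,'' but it is precisely to avoid image-based cutting that the paper does everything on the domain in terms of the fixed metric $h_{can}$.

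In short: the localization-to-Weinstein-neighborhood plus Crofton/Bezout strategy is a reasonable and genuinely different idea, and if the perturbed intersection estimate could be proved it would give an image-oriented proof complementary to the paper's domain-oriented one; but as written the two key steps you identify as ``hard'' are the entire content of the theorem and are left unproved.
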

Let  $(M,L)= (\mathbb{C}P^n,\mathbb{R}P^n)$, let $\omega=\omega_{FS}$ be the Fubini-Study form and let $J=J_{st}$ be the standard complex structure. By the discussion in Section~\ref{sec:cc}, we may take
\[
f_1(J_{st},\omega_{FS},L)=2\pi,\quad g_1(J_{st},\omega_{FS},L)=0.
\]
At this point it is not clear whether the genus dependence in \eqref{eqRIIBoundary} can be eliminated in the general case. On the other hand, the monotonicity inequality~\cite{Si} gives a lower bound on $Area(u;g_J).$ So the constant $g_1$ can be set to zero at the expense of making $f_1$ genus dependent.

Recently, real symplectic geometry has attracted considerable attention. See, for example,~\cite{We05,PS08,IKS09,BM09}. Theorem~\ref{TmRIIBountdary} has the following parallel in the real-symplectic setting.
Recall that a \textbf{real} symplectic manifold is a triple $(M,\omega,\phi)$ where $(M,\omega)$ is a symplectic manifold and $\phi:M\rightarrow M$ is an anti-symplectic involution, that is, $\phi^*\omega=-\omega.$ The natural compatibility condition for almost complex structures is that $\phi^*J=-J$. A
\textbf{real} Riemann surface is a pair $(\Sigma,\psi),$ where $\Sigma$ is a Riemann surface and $\psi:\Sigma\rightarrow \Sigma$ is an anti-holomorphic involution. We denote by $\Sigma_\R$ the fixed point set of $\psi.$ A $J$-holomorphic curve $u:\Sigma\rightarrow M$ is called \textbf{real} if $\phi\circ u=u\circ\psi.$
\begin{tm}\label{TmRIIReal}
Suppose $(M,\omega,\phi)$ is a compact real symplectic manifold. There are constants $f_2=f_2(J,\omega)$ and $g_2=g_2(J,\omega)$, homogeneous of degrees $-\frac1{2}$ and $\frac1{2}$ respectively in $\omega$, with the following significance. For any real Riemann surface $(\Sigma,\psi)$ and any real $J$-holomorphic curve $u : \Sigma \to M,$ we have
\begin{align}\label{eqRIIReal}
\ell( u|_{\Sigma_\R};g_J)\leq f_2{Area(u;g_J)}+ g_2genus(\Sigma).
\end{align}

\end{tm}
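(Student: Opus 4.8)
The plan is to deduce Theorem~\ref{TmRIIReal} from Theorem~\ref{TmRIIBountdary} by a doubling argument: a real $J$-holomorphic curve is, up to a finite cover, a pair of mirror-image copies of a $J$-holomorphic curve with Lagrangian boundary in $\mathrm{Fix}(\phi)$. If $\Sigma_\R=\emptyset$ the left-hand side of \eqref{eqRIIReal} vanishes, so assume $\Sigma_\R\neq\emptyset$ and put $L:=\mathrm{Fix}(\phi)$. Since $\phi^*\omega=-\omega$ and $\phi^*J=-J$ one has $\phi^*\big(\omega(\cdot,J\cdot)\big)=\omega(\cdot,J\cdot)$, so $\phi$ is a $g_J$-isometry and $L$ is a Lagrangian submanifold, nonempty because $u(\Sigma_\R)\subset L$. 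Set $Q:=\Sigma/\psi$; as $\psi$ is anti-holomorphic with codimension-one fixed locus, $Q$ is a compact (Klein) surface with $\partial Q=\Sigma_\R$, complex double $\Sigma$, and $\chi(\Sigma)=2\chi(Q)$.

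If $Q$ is orientable, $\Sigma\setminus\Sigma_\R$ has two components interchanged by $\psi$; let $\Sigma_0$ be the closure of one of them. Then $\Sigma_0$ is a Riemann surface with boundary, $\partial\Sigma_0=\Sigma_\R$, $(\Sigma_0)_\C\cong\Sigma$, and $u_0:=u|_{\Sigma_0}\colon(\Sigma_0,\partial\Sigma_0)\to(M,L)$ is $J$-holomorphic. Theorem~\ref{TmRIIBountdary} gives
\[
\ell\big(u_0|_{\partial\Sigma_0};g_J\big)\le f_1(J,\omega,L)\,Area(u_0;g_J)+g_1(J,\omega,L)\,genus\big((\Sigma_0)_\C\big).
\]
Now $u_0|_{\partial\Sigma_0}=u|_{\Sigma_\R}$, $genus((\Sigma_0)_\C)=genus(\Sigma)$, and $Area(u_0;g_J)=\tfrac12\,Area(u;g_J)$ because $u\circ\psi=\phi\circ u$, $\psi$ is anti-conformal, $\phi$ is a $g_J$-isometry, and $\Sigma_\R$ has measure zero. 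So \eqref{eqRIIReal} holds with $f_2=\tfrac12 f_1$ and $g_2=g_1$.

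If $Q$ is non-orientable, let $\rho\colon\hat Q\to Q$ be its orientation double cover, an orientable compact surface with boundary, and form $\hat\Sigma:=\Sigma\times_Q\hat Q$. Then $\hat\Sigma\to\Sigma$ is a double cover of Riemann surfaces, $\hat\Sigma$ is the complex double of $\hat Q$, the anti-holomorphic involution $\hat\psi(s,\hat q):=(\psi(s),\hat q)$ has fixed locus $\hat\Sigma_\R=\partial\hat Q\to\Sigma_\R$ a $2$-to-$1$ cover, and $\hat u(s,\hat q):=u(s)$ is a real $J$-holomorphic curve on $(\hat\Sigma,\hat\psi)$. Applying the orientable case to $(\hat\Sigma,\hat\psi,\hat u)$ and using $\ell(\hat u|_{\hat\Sigma_\R};g_J)=2\ell(u|_{\Sigma_\R};g_J)$, $Area(\hat u;g_J)=2\,Area(u;g_J)$, and $genus(\hat\Sigma)=2\,genus(\Sigma)-1$ (from $\chi(\hat\Sigma)=2\chi(\hat Q)=4\chi(Q)=2\chi(\Sigma)$) recovers \eqref{eqRIIReal} for $u$, with $f_2,g_2$ again explicit multiples of $f_1,g_1$.

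The one non-formal point is the non-orientable case: one needs that $\Sigma/\psi$ is a Klein surface with complex double $\Sigma$ and that its orientation cover is a genuine \emph{Riemann surface with boundary}, since Theorem~\ref{TmRIIBountdary} is stated only for orientable domains --- the finite cover is precisely what makes the reduction legitimate. The remaining ingredients (the Lagrangian and isometry properties of $\phi$, the identification of complex doubles, the area halving) are routine. Alternatively one could rerun the proof of Theorem~\ref{TmRIIBountdary} directly in the real-symplectic setting, but the deduction above is shorter and self-contained.
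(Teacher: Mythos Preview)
Your argument is correct, but it takes a genuinely different route from the paper's.  The paper does not deduce Theorem~\ref{TmRIIReal} from Theorem~\ref{TmRIIBountdary}; rather, both are obtained in parallel as immediate consequences of the main technical result Theorem~\ref{TmApLenBound}, together with Lemma~\ref{lmSegWid} (the fixed-point set of an isometric involution satisfies the SegWidth hypothesis with $k=1$), Theorem~\ref{TmThickThinM} (the energy measure of a $J$-holomorphic curve is thick--thin), and Remark~\ref{rmScaling} (for the homogeneity in $\omega$).  In the real case one works directly on the closed surface $\Sigma=\Sigma_\C$ with its anti-holomorphic involution $\psi$: the set $\Sigma_\R$ is itself a conjugation-invariant geodesic for $h_{can}$, and Lemma~\ref{lmSegWid} applies verbatim, so Theorem~\ref{TmApLenBound} yields \eqref{eqRIIReal} with no quotient, cutting, or covering required.

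Your reduction---cutting along $\Sigma_\R$ when $\Sigma/\psi$ is orientable, and passing to the complex double of the orientation cover of $\Sigma/\psi$ when it is not---is a legitimate alternative.  Its virtue is that it uses Theorem~\ref{TmRIIBountdary} purely as a black box, so a reader who already accepts the boundary case gets the real case essentially for free; the constants you obtain, $f_2=\tfrac12 f_1$ and $g_2=g_1$, even work uniformly in both cases.  The cost is the type~I/type~II case split and the Klein-surface bookkeeping (your identification of $\hat Q$ with $\Sigma$ cut along $\Sigma_\R$, and of $\hat\Sigma=(\hat Q)_\C$ with a connected unramified double cover of $\Sigma$, is correct but deserves a line of justification).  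The paper's route is more uniform and in fact shows that the real case sits inside the framework of Theorem~\ref{TmApLenBound} at least as naturally as the boundary case.
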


We remark  that a forward isoperimetric inequality does not hold for holomorphic curves. Indeed, consider degree $2$ curves in $(\mathbb{CP}^2,\mathbb{RP}^2)$. For $t > 0,$ let $C_t$ be the closure of one of the two connected components of the non-real solutions of the equation $X^2+Y^2-t=0$. Then $C_t$ has constant area but arbitrarily small boundary length as $t$ goes to 0.

\subsection{The optimal isoperimetric constant}\label{sec:oic}

The preceding theorems, though they involve Riemannian length measurements, lead to a purely symplectic invariant of Lagrangian submanifolds. For a given $J$, denote by $F_1(\omega,J,L)$ the optimal value of the constant $f_1(\omega,J,L)$ of Theorem~\ref{TmRIIBountdary} when $u$ ranges over $J$-holomorphic maps from a surface of genus $0.$ Define the constant $h_1(M,L,\omega)$ by
\[
h_{1}(M,L,\omega)=\inf_{J}\frac{F_1(\omega,J,L)}{2 Diam(L;g_J)},
\]
where the infimum is over all $J$ tamed by $\omega.$ Similarly, for $(M,\omega,\phi)$ a real symplectic manifold, for given $\phi$ anti-invariant $J,$ denote by $F_2(\omega,J)$ the optimal value of the constant $f_2(\omega,J)$ of Theorem~\ref{TmRIIReal} when $u$ ranges over maps from the surface of genus $0.$ Define the constant $h_2(M,\phi,\omega)$ by
\[
h_2(M,\phi,\omega) = \inf_J \frac{F_2(\omega,J)}{2 Diam(Fix(\phi);g_J)},
\]
where the infimum is over all tame $J$ such that $\phi^*J = -J.$

Clearly, $h_1$ and $h_2$ are symplectic invariants. Theorems~\ref{TmRIIBountdary} and~\ref{TmRIIReal} imply that $h_i(M,L,\omega) < \infty.$ As seen in the proof of the following proposition, lower bounds follow from open Gromov-Witten theory.
\begin{pr}\label{pr:ex}
Let $M=\mathbb{CP}^n$, let $L=\mathbb{RP}^n$ and let $\omega=\omega_{FS}$ be the Fubini-Study form normalized so the area of a line is $\frac{1}{\pi}.$ Then $h_1(M,L,\omega)=2\pi.$
\end{pr}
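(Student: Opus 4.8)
The plan is to prove the two inequalities $h_1(\omega)\le 2\pi$ and $h_1(\omega)\ge 2\pi$ separately. Let $\beta\in H_2(\mathbb{CP}^n,\mathbb{RP}^n)$ be the class of a hemisphere of a conjugation-invariant complex line; it has Maslov index $n+1$, and $\omega_{FS}$ takes the value $\tfrac1{2\pi}$ on $\beta$ and, more generally, values in $\tfrac1{2\pi}\mathbb{Z}$ on all of $H_2(\mathbb{CP}^n,\mathbb{RP}^n)$, so $\beta$ has the least positive symplectic area of any class represented by a non-constant holomorphic disk. For the upper bound I would take $J=J_{st}$, so that $g_J$ is the Fubini--Study metric, whose restriction to $\mathbb{RP}^n$ is the metric $h$ of Section~\ref{sec:cc}. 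By the Cauchy--Crofton discussion there, $f_1=2\pi$ with $g_1=0$ is admissible in Theorem~\ref{TmRIIBountdary}, so $F_1(\omega_{FS},J_{st},\mathbb{RP}^n)\le 2\pi$. Writing $\mathbb{RP}^n=S^n_r/\{\pm1\}$, a projective line is the image of a great circle and hence has length $\pi r$, so the normalization that a line has length $1$ forces $r=1/\pi$, and then $Diam(\mathbb{RP}^n;h)=\tfrac{\pi}{2}r=\tfrac12$. Hence $h_1(\omega)\le F_1(\omega_{FS},J_{st},\mathbb{RP}^n)/(2\cdot\tfrac12)\le 2\pi$.

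For the lower bound it is enough to show $F_1(\omega_{FS},J,\mathbb{RP}^n)\ge 4\pi\, Diam(\mathbb{RP}^n;g_J)$ for every $\omega_{FS}$-tame $J$, and this will follow from the existence statement: \emph{for every such $J$ and every pair of distinct points $p,q\in\mathbb{RP}^n$ there is a $J$-holomorphic disk $u:(D,\partial D)\to(\mathbb{CP}^n,\mathbb{RP}^n)$ in class $\beta$ with $p$ and $q$ on its boundary.} Granting this, observe that $Area(u;g_J)=\int_D u^*\omega_{FS}=\omega_{FS}(\beta)=\tfrac1{2\pi}$ (the identity $Area=\int u^*\omega$ holds for a $J$-holomorphic curve and any tame $J$, since the image of $du$ is a $J$-complex line in $TM$, on which $\omega_{FS}$ agrees with the area form of $g_J$), that the complex double of $D$ is a sphere so $genus(\Sigma_\C)=0$, and that $u|_{\partial D}$ is a loop through $p$ and $q$ so $\ell(u|_{\partial D};g_J)\ge 2\,d_{g_J}(p,q)$. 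Substituting into \eqref{eqRIIBoundary} gives $f_1\ge 4\pi\,d_{g_J}(p,q)$ for every admissible $f_1$, and choosing $p,q$ to realize the (attained) diameter gives $F_1(\omega_{FS},J,\mathbb{RP}^n)\ge 4\pi\, Diam(\mathbb{RP}^n;g_J)$; taking the infimum over $J$ yields $h_1(\omega)\ge 2\pi$, so $h_1(\omega)=2\pi$. (Applied to $J_{st}$, this also shows the constant in \eqref{RIIProjSpace} is optimal, witnessed by a single hemisphere.)

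To prove the existence statement I would use open Gromov--Witten theory. Fix a generic complex hyperplane $H\cong\mathbb{CP}^{n-1}$ and, for generic $\omega_{FS}$-tame $J$ and generic $p,q\in\mathbb{RP}^n$, consider the count of $J$-holomorphic disks in class $\beta$ with two boundary marked points going to $p$ and $q$ and one interior marked point going into $H$. An index count shows this constrained moduli problem has expected dimension $0$; since $\beta$ has minimal positive area there is no bubbling, so the compactified moduli spaces are compact with no lower strata, the mod-$2$ count is defined, and, the space of tame $J$ being contractible, it is independent of $J$ by a cobordism along a path of tame $J$'s. At $J=J_{st}$ one computes it directly: by Schwarz reflection every $J_{st}$-holomorphic disk in class $\beta$ is a hemisphere of a conjugation-invariant complex line, and passing through $p$ and $q$ forces that line to be the unique real line $L_{pq}$ through $p$ and $q$, leaving its two hemispheres; for generic choices the single transverse point $L_{pq}\cap H$ is non-real and hence lies in the interior of exactly one hemisphere, so the count is $1$. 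Thus the count is $1\pmod 2$ for every tame $J$, so for generic $J$ and generic $(p,q)$ there is a $J$-holomorphic disk in class $\beta$ through $p$ and $q$; for arbitrary tame $J$ and arbitrary distinct $p,q$, perturb $J$ and $(p,q)$ to nearby generic data, take such disks, and pass to a Gromov limit --- no bubbling for energy reasons, and no collision of the two marked points onto a ghost component since $p\ne q$ --- to obtain the disk we want.

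The main difficulty lies in this last part: setting up the open Gromov--Witten count and its $J$-independence rigorously (transversality, Gromov compactness in families, the mod-$2$ cobordism) so that the existence conclusion holds for \emph{every} tame $J$, not merely a generic one. The interior hyperplane constraint is essential here --- without it the count would be $2$, hence even, and the argument would break down --- and with it the reasoning applies uniformly in $n$, including the non-orientable cases $n$ even.
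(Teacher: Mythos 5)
Your proposal is correct and reaches the same conclusion by the same upper-bound argument (Cauchy--Crofton at $J_{st}$, $Diam(\mathbb{RP}^n;g_{J_{st}})=\tfrac12$) and the same lower-bound framework (produce, for every tame $J$ and every pair $p,q\in\mathbb{RP}^n$, a degree-$1$ $J$-holomorphic disk with $p$ and $q$ on its boundary, then feed it into \eqref{eqRIIBoundary}), but the mechanism you use for the existence statement is genuinely different from the paper's. The paper studies the evaluation map $ev_J:\mathcal{M}_{0,2}(M,L,J)\to L^2$ and shows it has integer degree $2$, relying on the relative orientability of $ev_J$ established in~\cite{So06} and a sign check that the two conjugate hemispheres contribute with the same sign; a nonzero degree then forces surjectivity, and Gromov compactness extends this to all tame $J$. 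You sidestep orientations entirely: adding one interior marked point constrained to a generic complex hyperplane $H$ cuts the expected dimension to $0$ and breaks the symmetry between the two hemispheres of the real line $L_{pq}$ (exactly one contains the non-real point $L_{pq}\cap H$), so the mod-$2$ count is $1$. This is a legitimate trade: you replace the orientation-theoretic input of~\cite{So06} with an extra geometric constraint and an unoriented count, which is more elementary and manifestly uniform in $n$ (including $n$ even, where $\mathbb{RP}^n$ is non-orientable; the paper handles this via relative orientability of $ev$ rather than of the moduli space). Both arguments leave the same standard technical layer --- somewhere-injectivity of minimal-energy disks (the structure theorem of~\cite{KO00,La11}), transversality, $J$-independence via a cobordism in a path of tame $J$, and Gromov compactness with no bubbling below the energy threshold $\omega(\beta)=\tfrac1{2\pi}$ --- at the level of a sketch, a difficulty you correctly flag. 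You also spell out two small computations the paper states without proof: that $Area(u;g_J)=\int_D u^*\omega_{FS}$ for tame (not just compatible) $J$, and that $Diam(\mathbb{RP}^n;g_{J_{st}})=\tfrac12$ under the given normalization.
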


For lower bounds on $h_2,$ we can use the rapidly developing theory of Welschinger invariants~\cite{We05,IKS09,BM09,HS12}. In the projective real algebraic case, the discussion of Section~\ref{sec:cc} gives upper bounds on $h_2.$ However, to minimize the discrepancy between upper and lower bounds, it is necessary understand how to maximize diameter within a deformation class of projective real algebraic varieties.

There are various ways to generalize $h_i$ to higher genus, and it seems interesting to study the resulting invariants. Also, restricting to $u$ with non-trivial boundary degree, it could be interesting to consider the ratio of the isoperimetric constant $F_1$ and a version of the $1$-systole of $L$. As we will see in Section~\ref{sec:adic}, such a ratio arises naturally in the context of the Fukaya category. See~\cite{Ka07} for background on systolic geometry. We leave these problems for future research.

\subsection{The general case}
We show how Theorems~\ref{TmRIIBountdary} and~\ref{TmRIIReal} generalize when $M$ and $L$ are not compact. In the process, we characterize more precisely the dependence of the isoperimetric constants $f_i$ on the geometry of $(M,\omega,J,L).$

Denote by $R$ the curvature of $M,$ by $B$ the second fundamental form of $L$ and by $i$ the radius of injectivity of $M$, all with respect to the metric $g_J$. For a tensor $A$ on $M$ or on $L,$ we denote by $\vectornorm{A}_m$ the $C^m$ norm of $A$ with respect to $g_J$. We say that $(M,\omega,J,L)$ has \textbf{$K$-bounded} geometry if
\[
\max\left\{\vectornorm{R}_2,\vectornorm{J}_2,\vectornorm{B}_2,\frac1{i}\right\}<K.
\]
For any Riemannian manifold $X$ with submanifold $Y$ and $\epsilon>0$, we say that $Y$ is $\epsilon$-Lipschitz if
\[
\frac{d_X(x,y)}{\min\{1,d_Y(x,y)\}} \geq\epsilon\qquad \forall x\neq y\in Y.
\]

\begin{tm}\label{Estimates}
There are functions $f_1=f_1(K)$ and $g_1=g_1(K)$, with the following significance. Theorem \ref{TmRIIBountdary} holds upon replacing the assumption that $M$ is compact by the assumption that
$(M,\omega,J,L)$ has $K$-bounded geometry as well as one of the following:
\begin{enumerate}
\item \label{CaseA} $L$ is $\frac1{K}$-Lipschitz.
\item \label{CaseB} Consider the conformal metric on $\Sigma$ of constant curvature $0,\pm 1,$ of unit area in case of zero curvature, such that $\partial\Sigma$ is totally geodesic. Then $\partial\Sigma$ and each connected component of $L$ are $\frac1{K}$-Lipschitz.
 \end{enumerate}
\end{tm}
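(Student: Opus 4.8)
The plan is to reduce the non-compact statement of Theorem~\ref{Estimates} to the compact case of Theorem~\ref{TmRIIBountdary} by a localization argument: the reverse isoperimetric inequality is really a local estimate near the boundary $\partial\Sigma$, so only the geometry of $(M,\omega,J,L)$ in a uniformly bounded neighborhood of $u(\partial\Sigma)$ in $M$ enters. More precisely, I would first establish a monotonicity lemma for $J$-holomorphic half-disks with boundary on $L$: if $(M,\omega,J,L)$ has $K$-bounded geometry, there is a radius $r_0=r_0(K)>0$ and a constant $c=c(K)$ such that for any $p\in L$ and any $J$-holomorphic curve $u$ whose boundary passes through $p$, the area of $u$ inside the $g_J$-ball $B_r(p)$ for $r\le r_0$ is at least $c\,r$ times the length of $u|_{\partial\Sigma}$ inside $B_{r/2}(p)$. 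This is the heart of the argument and where I expect the main obstacle to lie: one has to control the geometry of $L$ (via its second fundamental form and the tubular-neighborhood hypothesis) and of $J$ (via its covariant derivative) uniformly, and then run a differential-inequality argument on $t\mapsto \mathrm{Area}(u\cap B_t(p))$ in the spirit of the standard monotonicity formula, but with the doubling/Schwarz-reflection trick used to handle the boundary. The tubular neighborhood hypotheses \eqref{CaseA} and \eqref{CaseB} enter precisely to make the doubling construction go through with uniform constants.

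Granting the half-disk monotonicity lemma, the rest is a covering argument. Cover $u(\partial\Sigma)$ by finitely many balls $B_{r_0/2}(p_i)$ with $p_i\in u(\partial\Sigma)$, chosen so that the concentric balls $B_{r_0/4}(p_i)$ have bounded overlap (Vitali/Besicovitch, using the lower injectivity radius bound from $K$-bounded geometry). Summing the monotonicity estimate over the $p_i$ bounds $\ell(u|_{\partial\Sigma};g_J)$ by a $K$-dependent constant times $\mathrm{Area}(u;g_J)$ — this already gives the inequality \eqref{eqRIIBoundary} with $g_1=0$ in case \eqref{CaseA}, since there the tubular neighborhood width is uniform and no genus correction is needed. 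In case \eqref{CaseB}, the subtlety is that the tubular neighborhood of $\partial\Sigma$ inside $\Sigma$ (with respect to the normalized constant-curvature metric) may be narrow for topological reasons, and one must use the genus of $\Sigma_{\C}$ to bound the number of ``bad'' boundary components or the defect; this is where the $g_1\,\mathrm{genus}(\Sigma_{\C})$ term gets produced, by an Euler-characteristic/collar-lemma count for hyperbolic (or spherical/flat) surfaces with totally geodesic boundary.

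Finally, I would record that the constants produced depend only on $K$ — this is automatic from the construction, since every estimate invoked (monotonicity radius $r_0$, overlap multiplicity, collar widths) was made to depend only on the curvature, injectivity radius, second fundamental form bounds, and tubular neighborhood width, all controlled by $K$. The homogeneity in $\omega$ asserted in Theorem~\ref{TmRIIBountdary} is preserved because rescaling $\omega\mapsto\lambda\omega$ rescales $g_J$ by $\lambda$, hence lengths by $\sqrt\lambda$ and areas by $\lambda$, so $f_1$ scales like $\lambda^{-1/2}$ and $g_1$ like $\lambda^{1/2}$; one just has to check the $K$-bounded geometry hypothesis transforms compatibly, which it does after the corresponding rescaling of $K$. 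The one genuinely delicate point I want to flag again is the uniform half-disk monotonicity near $L$: without the tubular neighborhood hypothesis, a sequence of Lagrangians could ``pinch'' and the local area lower bound could degenerate, so hypotheses \eqref{CaseA}--\eqref{CaseB} are not cosmetic — the whole argument hinges on using them to keep the reflected almost complex structure across $L$ (resp.\ across $\partial\Sigma$) uniformly tame.
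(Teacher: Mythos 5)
Your proposal takes an image-oriented route — a half-disk monotonicity lemma near $L$ followed by a covering argument — while the paper's proof is domain-oriented: Theorem~\ref{Estimates} is deduced from Theorem~\ref{TmApLenBound} (via Lemma~\ref{lmSegWid}, Remark~\ref{rmScaling} and Theorem~\ref{TmThickThinM}), and Theorem~\ref{TmApLenBound} itself is proved by a careful analysis of the hypograph of $\ln\frac{d\ell_\mu}{d\ell_n}$ over the boundary geodesic, decomposed into thick and thin parts, with the energy-quantization (``gradient'') inequality handling the thick part and exponential decay on long annuli (the ``cylinder'' inequality) handling the thin part and the low-derivative regions of a high-genus domain. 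These are genuinely different strategies, and the authors explicitly flag yours as one they could not make work: in the discussion after Theorem~\ref{tmDiamEst} they note that Sikorav's monotonicity technique is image-oriented, that they ``could not see how it would allow one to utilize the bounds on the domain necessary in case~\ref{CaseB},'' and, ``more importantly, we could not see how to generalize Sikorav's technique to obtain results on boundary length.''

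The gap is precisely the half-disk monotonicity lemma you yourself flag as the heart of the argument. It is not a routine strengthening of known monotonicity: the standard (half-)ball monotonicity estimate is quadratic in $r$ and says nothing about boundary length, and there is no obvious way to run the usual differential-inequality argument against $\ell(u|_{\partial\Sigma}\cap B_{r/2}(p))$ rather than against $r$, because bubbling means the boundary arc can concentrate length at arbitrarily small scales. More tellingly, your lemma plus the covering argument would, in the compact case (which automatically satisfies case~\ref{CaseA}), yield $\ell \leq C\cdot Area$ with $g_1 = 0$ and $f_1$ genus-independent. But the paper explicitly leaves open whether the genus dependence in~\eqref{eqRIIBoundary} can be eliminated: they only note that $g_1$ can be set to $0$ at the cost of making $f_1$ genus-dependent (via the monotonicity area lower bound). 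So your proposal, if it worked, would settle an open question the authors pose, which should make you suspicious of the unproved lemma. In the paper's framework the genus term is not an artifact of case~\ref{CaseB}: it appears in Theorem~\ref{TmApLenBound} via the $I_0$ estimate (Lemma~\ref{lmI0}), where Gauss--Bonnet forces the hyperbolic area of $\Sigma_\C$, hence the contribution of the thick part of the domain with small derivative, to scale with the genus. Your sketch for case~\ref{CaseB} (``an Euler-characteristic/collar-lemma count'') is pointing at the same phenomenon, but it cannot be grafted onto the image-covering argument without an actual mechanism for how the collar geometry of the domain caps the length not controlled by the putative monotonicity estimate; as written, that step is not an argument.

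Two smaller points. First, even granting the lemma, you must handle arcs of $u|_{\partial\Sigma}$ that enter and exit $B_{r/2}(p)$ without crossing, whose length need not be comparable to $r$; the covering argument silently assumes the length per ball is controlled by the number of transversal crossings. Second, and separately from correctness: the paper's proof actually goes through the stronger and more general Theorem~\ref{TmApLenBound} about thick-thin measures and conjugation-invariant geodesics in $\Sigma_\C$, and then specializes; your proposal would not recover that generality. If you want to pursue the image-based route, the first thing to do is try to prove or refute the half-disk monotonicity lemma in the model case $(\mathbb{C}^n,\mathbb{R}^n)$ with the standard structures — I expect you will run into exactly the thin-annulus and bubbling phenomena that the paper's thick-thin hypograph machinery is designed to control.
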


Theorem~\ref{TmRIIReal} generalizes to the non-compact case under considerably weaker assumptions.
\begin{tm}\label{Estimates2}
There are functions $f_2(K)$ and $g_2(K)$ such that Theorem \ref{TmRIIReal} holds upon replacing the requirement that $M$ be compact with the requirement that
\[
\max\left\{\vectornorm{R},\vectornorm{J}_2,\frac1{i}\right\}<K.
\]
\end{tm}

There is also an a priori estimate on the diameter of $J$-holomorphic curves.
\begin{tm}\label{tmDiamEst}
There are functions  $f_3=f_3(K)$ and $g_3=g_3(K)$ such that under the same assumptions as in Theorem \ref{Estimates} and denoting $b:=|\pi_0(\partial\Sigma)|$,
\begin{align}\label{DiamEst}
Diam(u(\Sigma);g_J)\leq (b+1)[f_3{Area(u;g_J)}+ g_3genus(\Sigma_{\C})].
\end{align}
\end{tm}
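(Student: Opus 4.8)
The plan is to bound the diameter of $u(\Sigma)$ by controlling two separate contributions: the extrinsic geometry of the image near the boundary, and the geometry of the image in the interior. First I would observe that since $u(\partial\Sigma)\subset L$, every point of $u(\partial\Sigma)$ lies within distance $Diam(L;g_J)$ of a fixed base point, and by Theorem \ref{TmRIIBountdary} (in the $K$-bounded form, Theorem \ref{Estimates}) the total boundary length is bounded by $f_1 Area(u;g_J) + g_1 genus(\Sigma_\C)$. The key point is that an interior point $p=u(z)$ far from $u(\partial\Sigma)$ is the center of an intrinsic $J$-holomorphic disk in $M$ of radius comparable to $d(p,u(\partial\Sigma))$, to which the monotonicity inequality~\cite{Si} applies: the area of such a disk is at least $c\, r^2$ for a constant $c=c(K)$. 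Hence any point at intrinsic distance $r$ from the image of the boundary forces area at least $c r^2$, which already bounds $d(p, u(\partial\Sigma)) \le C\sqrt{Area(u;g_J)}$.

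The subtlety is that $\sqrt{Area}$ is the wrong homogeneity — the theorem claims a \emph{linear} bound in area — and this is where the genus and the factor $(b+1)$ enter, so the real work is a covering/decomposition argument. I would decompose $\Sigma$ along short geodesics (with respect to the fixed constant-curvature unit-volume metric of Theorem~\ref{Estimates}\eqref{CaseB}) into a bounded number, controlled by $genus(\Sigma_\C)$ and $b$, of pieces each of which is either a "thick" piece with bounded geometry or a "thin" collar. On each thick piece one runs a chain argument: cover the piece by finitely many balls; on each ball the restriction of $u$ is a $J$-holomorphic curve whose image has diameter bounded by a constant times its own area via monotonicity, and consecutive balls overlap, so summing the local area bounds telescopes to give a bound linear in the total area on that piece. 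The collars, being thin, contribute $J$-holomorphic annuli of small modulus; here I would use the isoperimetric/length control from Theorem~\ref{Estimates} applied to the two boundary circles of the collar together with a monotonicity argument to show the image of a collar has diameter bounded linearly by its area plus the length of its bounding circles, the latter already controlled by the boundary-length estimate. Chaining the thick pieces and collars together, and noting each connected boundary component and each handle contributes an additive chunk, yields the $(b+1)$ and $genus(\Sigma_\C)$ factors.

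A cleaner route I would try first: reduce directly to the boundary-length estimate. Pick $p=u(z_0)$ and $q=u(z_1)$ realizing (approximately) the diameter of $u(\Sigma)$, and let $\sigma$ be a path in $\Sigma$ from $z_0$ to $z_1$; then $d(p,q)\le \ell(u\circ\sigma;g_J)$, so it suffices to find $\sigma$ of controlled image-length. If both $z_0,z_1$ can be joined to $\partial\Sigma$ by short arcs, this reduces to the boundary case. The obstacle is exactly when $z_0$ or $z_1$ sits deep in a thin part of $\Sigma$ far from $\partial\Sigma$; there I would again invoke monotonicity to say that being far from the boundary (intrinsically in $\Sigma$, hence a fortiori one controls the image) costs quadratic area, but since the number of such thin parts is bounded by the genus, the total such cost is absorbed into the $g_3 genus(\Sigma_\C)$ term after using the a priori area lower bound from monotonicity to convert $\sqrt{Area}$ into $Area$ at the expense of a constant depending only on $K$.

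The main obstacle I anticipate is getting the correct \emph{linear} dependence on area rather than the naive $\sqrt{Area}$, and organizing the thick/thin decomposition of $\Sigma$ so that the combinatorial count of pieces is genuinely controlled by $genus(\Sigma_\C)$ and $b=|\pi_0(\partial\Sigma)|$ — this is a collar-lemma bookkeeping argument that must interface cleanly with the geometric estimates (monotonicity and the boundary-length bound of Theorem~\ref{Estimates}) already established.
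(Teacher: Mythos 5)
Your ``cleaner route'' is on the right track --- the paper does exactly this: it joins the two points realizing the diameter to $\partial\Sigma$ by paths and bounds the total image length of those paths plus the boundary plus connecting arcs. But your proposal misses the key tool that makes the linear area bound work, which is that Theorem~\ref{TmApLenBound} applies not only to $\partial\Sigma$ but to \emph{any} compact conjugation-invariant embedded geodesic $\gamma\subset\Sigma_\C$ satisfying the segment-width condition~\eqref{eq:swa}, giving $\ell_\mu(\gamma)$ linear in $\mu(\Sigma_\C)$ plus genus. The paper's actual argument takes $p_1,p_2\in\Sigma$, joins each $p_i$ to its conjugate $\overline{p_i}$ by a minimal geodesic $\delta_i\subset\Sigma_\C$ (automatically conjugation-invariant, passing through a boundary component $\gamma_i$, with segment width controlled by Lemma~\ref{lmSegWid}), then connects $\gamma_1$ to $\gamma_2$ along $\partial\Sigma$ and an admissible chain of at most $b-1$ bridges $\beta_i$ (Lemma~\ref{lmBridegChain}), whose doubled versions $\beta_i\cup\overline{\beta_i}$ again have controlled segment width by Lemma~\ref{lmSegWidth2}. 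Applying Theorem~\ref{TmApLenBound} to each of these $\leq b+1$ conjugation-invariant geodesics and summing gives the stated linear estimate directly; no monotonicity inequality enters at all.

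Your monotonicity route runs into the $\sqrt{Area}$ homogeneity problem you already noticed, and the fix you suggest (convert $\sqrt{Area}$ to $Area$ using the a priori positive lower bound on area) amounts to Sikorav's image-oriented argument \cite{Si}. The paper explicitly discusses why this does not cover all the cases in Theorem~\ref{Estimates}: Sikorav's technique works in case~\ref{CaseA} and for closed curves but does not interface with the domain-side tubular-neighborhood hypothesis of case~\ref{CaseB}, which is precisely the case where a different method is needed. Your thick/thin decomposition of $\Sigma$ with ball-chain coverings is, in effect, an attempt to rebuild the machinery that the paper has already encapsulated in Theorem~\ref{TmApLenBound}; the missing realization is that you do not need a new decomposition, only to feed new geodesics (the $\delta_i$ and the admissible bridges) into the existing length-bound theorem. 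The bookkeeping that produces $(b+1)$ is then just the count of geodesic pieces in the chain, not a count of handles or thin parts as you suggest; the genus contribution is already present inside Theorem~\ref{TmApLenBound} itself.
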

In the case of closed curves, which includes conjugation invariant curves, as well as in case~\ref{CaseA} of Theorem \ref{Estimates}, the diameter estimate \eqref{DiamEst}, without any genus dependence, was proved by Sikorav \cite{Si} using the monotonicity inequality. However, Sikorav's technique is image oriented, so we could not see how it would allow one to utilize the bounds on the domain necessary in case~\ref{CaseB} of Theorem \ref{Estimates}. More importantly, we could not see how to generalize Sikorav's technique to obtain results on boundary length.
\subsection{An example}

The following example, due to~\cite{Liu}, illustrates the role of conditions~\ref{CaseA} and~\ref{CaseB} of Theorem~\ref{Estimates}.
Consider the special Lagrangian fibration of $M = \C^3$ discussed in \cite{HaLa} and \cite{KV}. Namely, let $H:\C^3\rightarrow \R^3$, be given by
\[
(z_1,z_2,z_3)\mapsto (|z_1|^2-|z_3|^2,|z_2|^2-|z_3|^2, Im(z_1z_2z_3)).
\]
It can be shown that for each $c\in\C^3$ the fiber $H^{-1}(c)$ is Lagrangian. Moreover, letting $J_{0}$ and $\omega_0$ denote the standard complex and symplectic structures on $\C^3,$ it can be shown that $(M,\omega_0,J_{0},H^{-1}(c))$ has bounded geometry and that $H^{-1}(c)$ is Lipschitz. Thus the reverse isoperimetric inequality applies to curves with boundary in $H^{-1}(c)$.

On the other hand, consider a Lagrangian $L\subset\mathbb{C}^3$ which is the union of two or more fibers of $H$. Then the components of $L$ are arbitrarily close to each other at infinity. Thus $L$ as a whole is not Lipschitz, but each component of $L$ is. We construct a counterexample to the reverse isoperimetric inequality as follows. Let $L_0=H^{-1}(0,0,0)$ and $L_1=H^{-1}(-1,-1,0)$.  For any $a\in \R$ we construct a holomorphic annulus with one boundary component in $L_0$ and the other one in $L_1$. For any $a\in \R$ let $r_a$ be a positive solution of the equation
\[
a^2r^6+r^4-a^2=0.
\]
Let $\Sigma_a$ be the annulus in the plane with radii $1$ and $r_a$.  Consider the map
\[
(\Sigma_a,\partial\Sigma_a)\rightarrow (\C^3,L_0\cup L_1)
\]
given by
\[
z\mapsto  \left(az,az,\frac{a}{z^2}\right).
\]
Allowing $a$ to approach infinity, the boundary length is unbounded while the area, being a homological invariant, remains constant. Thus there is no reverse isoperimetric inequality in this case. Theorem~\ref{Estimates}\ref{CaseB} implies that the Lipschitz constant of $\partial\Sigma_a$ goes to $0$ as $a$ approaches infinity. This can indeed be verified directly by noting that
\[
\lim_{a\rightarrow\infty}r_a=1,
\]
so $Mod(\Sigma_a)=\ln r_a\rightarrow 0$.

\subsection{Application to compactness}
We apply Theorem \ref{Estimates} to deduce compactness of moduli spaces of $J$-holomorphic maps in the following scenario. Our argument can be seen as a quantitative version of the idea of~\cite{Kosh}. We say that the symplectic form $\omega$ is \textbf{asymptotically exact} if  for a point $p\in M$ there exists a 1-form $\lambda$ such that
 \[
 \lim _{R\rightarrow \infty}\vectornorm{\restrict{(\omega-d\lambda)}{M\backslash B_R(p)}}_{C^0}=0.
 \]
Here and below we use the $C^0$ norm induced by $g_J$. Similarly, we say that $L$ is an asymptotically exact Lagrangian submanifold if $\omega$ is asymptotically exact and there is a function $f:L\rightarrow \mathbb{R}$ so that
 \[
 \lim _{R\rightarrow \infty}\vectornorm{\restrict{(\lambda-df)}{L\backslash B_R(p)}}_{C^0}=0.
 \]
Applying Stokes theorem, the following is immediate.
\begin{cy}\label{CptApp}
Let $A\in H_2(M,L)$, $g\in\mathbb{N}\cup\{0\},$ and $p\in M$. Assume $\omega$ and $L$ are asymptotically exact and $(M,\omega,J,L)$ has $K$-bounded geometry. Then there is an $R=R(M,L,p,K,A)$ such that any $J$-holomorphic
\[
u:(\Sigma,\partial\Sigma)\rightarrow (M,L)
\]
with $[u]=A$ and $genus(\Sigma_\C)\leq g$ that satisfies the conditions of Theorem \ref{Estimates} also satisfies $u(\Sigma)\subset B_R(p)$.
\end{cy}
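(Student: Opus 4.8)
The plan is to combine the reverse isoperimetric inequality of Theorem~\ref{Estimates} with the exactness hypotheses through a single application of Stokes' theorem, exploiting the fact that symplectic area is a homological quantity. Since $L$ is Lagrangian, $\omega$ represents a class in $H^{2}(M,L;\R)$, and since $u$ is $J$-holomorphic with $u(\partial\Sigma)\subset L$,
\[
Area(u;g_J)=\int_\Sigma u^{*}\omega=\langle[\omega],A\rangle=:E\geq 0,
\]
so $E$ is determined by $A$ alone. We may assume $A\neq0$, hence $E>0$; the case $A=0$, in which $u$ is constant, is trivial. Theorem~\ref{Estimates} then bounds the boundary length: $\ell(u|_{\partial\Sigma};g_J)\leq\Lambda$, where $\Lambda:=f_{1}(K)E+g_{1}(K)\,genus(\Sigma_{\C})$.

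Next I would fix the scale. Using the definition of asymptotic exactness, choose a $1$-form $\lambda$ on $M$ and a function $f:L\to\R$, and then pick $R_{0}$ so large that
\[
\vectornorm{(\omega-d\lambda)|_{M\setminus B_{R_{0}}(p)}}\leq\tfrac12\qquad\text{and}\qquad\vectornorm{(\lambda-df)|_{L\setminus B_{R_{0}}(p)}}\leq\frac{E}{4(\Lambda+1)};
\]
both can be arranged simultaneously for $R_{0}$ large, straight from the definition. Let $D$ be an a priori bound for $Diam(u(\Sigma);g_J)$, furnished by Theorem~\ref{tmDiamEst} (in case~\ref{CaseA} of Theorem~\ref{Estimates}, or for closed curves, one may instead invoke Sikorav's bound~\cite{Si}, which contains no genus term). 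Put $R:=R_{0}+D+1$. Suppose $u(\Sigma)\not\subset B_{R}(p)$. Then, since $\Sigma$ is connected and $u(\Sigma)$ has diameter at most $D$, the whole image lies outside $B_{R_{0}}(p)$; in particular $u(\partial\Sigma)\subset L\setminus B_{R_{0}}(p)$. Stokes' theorem, together with the identity $u^{*}\lambda=d(f\circ u)+u^{*}(\lambda-df)$ along $\partial\Sigma$ and $\int_{\partial\Sigma}d(f\circ u)=0$, now yields
\[
E=\int_\Sigma u^{*}\omega=\int_{\partial\Sigma}u^{*}(\lambda-df)+\int_\Sigma u^{*}(\omega-d\lambda)\leq\frac{\Lambda}{4(\Lambda+1)}\,E+\tfrac12 E<E,
\]
a contradiction. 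Hence $u(\Sigma)\subset B_{R}(p)$.

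As the placement of the statement suggests, the computation itself is routine; the one point I would flag is the role of the genus. The bound $\Lambda$, and in case~\ref{CaseB} also the diameter bound $D$, each carry a term proportional to $genus(\Sigma_{\C})$, so in full generality the constant produced is $R=R(M,L,p,K,genus(\Sigma_{\C}))$. This is immaterial for the compactness applications, where one works inside a moduli space of fixed topological type, and in case~\ref{CaseA} the genus dependence disappears entirely once $g_{1}(K)=0$. Apart from this, the only things to check are that a single $R_{0}$ makes both smallness conditions hold — immediate from the definition of asymptotic exactness — and that one is in a setting (connected, non-constant $u$) where the diameter estimate applies.
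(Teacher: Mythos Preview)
Your argument is correct and is precisely the one the paper has in mind: the paper's entire proof is the sentence ``Applying Stokes theorem, the following is immediate,'' and you have faithfully unpacked that sentence, including the necessary appeal to the diameter estimate (Theorem~\ref{tmDiamEst}) to force the whole image outside $B_{R_0}(p)$ before applying Stokes. Your caveat about the hidden dependence of $R$ on $genus(\Sigma_{\C})$ (and, via Theorem~\ref{tmDiamEst}, on $|\pi_0(\partial\Sigma)|$) is well taken and is indeed glossed over in the paper's statement; as you say, this is harmless for the intended compactness application at fixed topological type.
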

Given this corollary, standard Gromov compactness implies compactness of the moduli space of stable maps of degree $A$ and
\[
genus(\Sigma_\C)\leq g.
\]
In a paper to appear subsequently, we prove that toric Calabi Yau manifolds along with the Lagrangian submanifolds of Aganagic Vafa~\cite{AV00} are asymptotically exact and have bounded geometry. Thus, we apply the construction of~\cite{Li03} to define open Gromov-Witten invariants for general toric Calabi Yau 3-folds.

\subsection{Application to adic convergence}\label{sec:adic}
The linearity in the estimate of Theorem \ref{TmRIIBountdary} is important for proving adic convergence results in the $A_\infty$-algebra associated with $L$. As an example of the utility of Theorem \ref{TmRIIBountdary} in this connection, we give an alternative proof of a result of \cite{FukAdic,FukCount}. For simplicity, we restrict attention to cases where the moduli space of $J$-holomorphic disks up to reparametrization is of dimension zero, such as when $M$ is a compact Calabi-Yau manifold of dimension three and $L$ a Lagrangian sub-manifold of Maslov index zero. We limit our discussion to a simplified version of the superpotential. Nevertheless, the ideas we present are not limited to this simplified setting.

Let $T$ be a formal variable and write
\[
\Lambda_{0,nov}:= \left\{\left.\sum_{i}a_iT^{\lambda_i}\right|a_i\in\C,\lambda_i\in\mathbb{R}_{\geq 0},\lim_{i\to\infty}\{\lambda_i\}=\infty\right\},
\]
and
\[
\Lambda_{nov}:= \left\{\left.\sum_{i}a_iT^{\lambda_i}\right|a_i\in\C,\lambda_i\in\mathbb{R},\lim_{i\to\infty}\{\lambda_i\}=\infty\right\}.
\]
Let
\[
val\left(\sum_{i}a_iT^{\lambda_i}\right):=\inf\{\lambda_i|a_i\neq 0\}.
\]
The field $\Lambda_{nov}$ is equipped with a non-Archimedean norm $|\cdot|$ defined by
\[
|x|:=e^{-val(x)}.
\]

To count $J$-holomorphic disks in $(M,L),$ we use a domain dependent $J$ as follows. Let $J = \{J_z\}_{z\in D^2}$ be a family of almost complex structures compatible with $\omega.$ Denote by $j$ the standard complex structure on $D^2.$ Let $u : (D^2, \partial D^2) \to (M,L).$ We say $u$ is $J$-holomorphic if
\[
du_z + J_{u(z),z} \circ du_z \circ j_z = 0, \qquad z \in D^2.
\]
Our theorem applies to such maps via the following graph construction. Think of $D^2$ as the northern hemisphere of $S^2$ and denote by $j$ the standard complex structure on $S^2$ as well. Extend the family of almost complex structures $J_{z}$ arbitrarily to $z \in S^2.$ Let $\widetilde M = M \times S^2,$ let $\widetilde J$ be the almost complex structure on $\widetilde M$ given by $\widetilde J_{x,z} = J_{x,z} \oplus j_z,$ and let $\widetilde L = L \times \partial D^2.$  Define $\tilde u : (D^2,\partial D^2) \to (\widetilde M,\widetilde L)$ by $\tilde u = u \times \id.$ Then $u$ is $J$-holomorphic if and only if $\tilde u$ is $\widetilde J$ holomorphic, and our theorem applies directly to $\tilde u.$ For a generic choice of $J,$ it is shown in~\cite{MS2} that all $J$-holomorphic maps $u$ are regular. Imposing appropriate divisor constraints on $u$ as in~\cite{So06}, we obtain a discrete space. So, for $\beta \in H_2(M,L;\Z),$ we define $N_\beta$ to be the number of $J$-holomorphic disks with boundary in $L$ representing~$\beta$. In general, the numbers $N_\beta$ depend on the choice of $J.$

The superpotential is a map $\psi:H^1(L;\C)\to \Lambda_{0,nov}$ defined as follows. Let $\{e_i\}_{i=1}^m$ be a basis for the integral lattice in $H^1(L;\C).$ For a relative homology class $\beta\in H_2(M,L;\Z),$ denote by $E(\beta)$ the symplectic area of $\beta,$ and denote by $(\partial\beta,e_i)$ the integral of $e_i$ over the boundary of $\beta.$ For
\[
b=\sum_{i=1}^m b_i e_i\in H^1(L;\C),
\]
the superpotential is given by
\[
\psi(b):=\sum_{\beta\in H_2(M,L;\Z)}N_\beta e^{\sum_{i=1}^m b_i{(\partial\beta,e_i)}}T^{E(\beta)}.
\]

Let $\tilde{\psi}:\left(\C^\times\right)^m\to \Lambda_{0,nov}$ be the map
\begin{equation}\label{GWpot}
\tilde{\psi}(t_1,...,t_m)=\sum_{\beta\in H_2(M,L;\Z)}N_\beta \prod_{i=1}^m t_i^{(\partial\beta,e_i)}T^{E(\beta)}.
\end{equation}
Thus, $\psi$ factors through the exponential map $\exp:H^1(L;\C)\to(\C^\times)^m$ via $\tilde{\psi}$.
For the intrinsic construction of the SYZ mirror to $M$ as outlined in~\cite{FOOO,WuTu}, it is desirable to extend the domain of $\tilde{\psi}$ and other similar power series to an appropriate annulus in $(\Lambda_{nov})^m.$ For $\alpha$ a $1$-form on $L,$ let $\| \alpha \|_{L^\infty}$ be the norm given by taking the supremum over the $L^\infty$ norms associated with the Riemannian metrics $g_{J_z}$ for $z \in D^2.$ For $a \in H^1(L),$ let $\|a\|_{L^\infty}$ denote the infimum of $\|\alpha\|_{L^\infty}$ over forms $\alpha$ representing $a.$ The following theorem is proved in~\cite{FukAdic,FukCount} via a different route.

\begin{tm}\label{tm:supp}
Let $\delta = \frac1{f_1m\max{\vectornorm{e_i}_{L^\infty}}}.$ Then the power series \eqref{GWpot} converges for $(t_1,...,t_m)\in\Lambda_{nov}^m$ with $val(t_i)\in(-\delta,\delta)$.
\end{tm}
\begin{proof}
A series $\sum x_i$ of elements $x_i$ in a non-Archimedean field converges if and only if $\lim |x_i|=0$. See~\cite{Bosch} for relevant background. In the case of $\Lambda_{nov}$, this just means that for any $E \in \R$ there exists a finite set $A_E$ such that for any index $i\not\in A_E$ and any $E'\leq  E,$ the  coefficient of $T^{E'}$ in $x_i$ is 0. For the question at hand, let $A_E$ be the set of classes $\beta\in H_2(M,L;\Z)$ such that $N_{\beta}\neq 0$ and
\[
  \left(1-mf_1\max_i\left\{|val(t_i)|\vectornorm{e_i}_{L^\infty}\right\}\right) E(\beta)\leq E.
\]
Suppose  $val(t_i)\in(-\delta,\delta)$. Then by Gromov compactness, $A_E$ is a finite set. On the other hand, by Theorem~\ref{TmRIIBountdary} we have
\[
  (\partial\beta,e_i)\leq f_1E(\beta)\vectornorm{e_i}_{L^\infty}
\]
for any $\beta$ with $N_{\beta}\neq 0$. In particular, for any $\beta\not\in A_E$ and $E'\leq E$ the coefficient of $T^{E'}$ in $N_\beta \prod_{i=1}^m t_i^{(\partial\beta,e_i)}T^{E(\beta)}$ vanishes.
 \end{proof}

The proof given in \cite{FukAdic,FukCount} for the same result avoids the heavy analysis going into the proof of Theorem~\ref{TmRIIBountdary}. However, it is not clear how to extend that proof to the $A_\infty$-algebra of immersed Lagrangians as defined in~\cite{AkahoJoyce}. On the other hand, the above proof applies to immersed Lagrangians if we extend Theorem~\ref{TmRIIBountdary} to $J$-holomorphic maps with strip-like ends asymptotic to a transverse intersection point. Such an extension should not be hard. We thank K. Fukaya for this idea.

In the spirit of Section~\ref{sec:oic}, Theorem~\ref{tm:supp} suggests we consider the purely symplectic invariant
\[
h_3(M,L,\omega) = \inf_J F_1(\omega,J,L) \max \|e_i\|_{L^\infty}.
\]
The norm $\|\cdot\|_{L^\infty}$ on cohomology has been studied in the context of systolic geometry~\cite{Ka07}. Norms of integral homology bases have been studied in~\cite{BuS02} for the case of surfaces. An upper bound on the convergence radius of the superpotential would imply a non-trivial lower bound for $h_3.$

\subsection{Idea of the proof}
We restrict attention to the case of real $J$-holomorphic maps from a real Riemann surface $(\Sigma,\psi),$ which for the time being, we assume to be a sphere. We assume the fixed point set $\Sigma_\R$ of $\psi$ is non-empty and abbreviate $\gamma = \Sigma_\R$. We equip $\Sigma$ with a round metric invariant under $\psi$ of radius $1.$ In particular, $\gamma$ is a great circle.

Let $u: \Sigma\rightarrow M$ be real $J$-holomorphic. Let $f:\gamma\rightarrow [0,\infty)$ be given by $f(x):=\vectornorm{du(x)}$. Then $\ell(u(\gamma))$ is the area of the hypograph of $f$,
\[
H=\{(x,t)\in\gamma\times[0,\infty)|t \leq f(x) \}.
\]
The bubbling phenomenon implies that there is no a priori bound on the $L_\infty$ norm of $f$ in terms of the energy of $u$. Thus our argument relies on a close analysis of the set $H$.

In the following sketch of our proof, we make the simplifying assumption that $f$ has a unique local maximum on $\gamma$. For $t\in[1,\infty)$ write $w_t=\{x\in\gamma|f(x)\geq t\}$ and let $W_t\subset\Sigma$ be the image under the exponential map of the ball of radius $\frac1{t}$ in the normal bundle of $w_t$. Our simplifying assumption implies that $w_t$ is connected. Note also that if $t'>t$, then $w_{t'}\subset w_t$. For any $t$ write
\[
a_t:=\left\lfloor\frac{t\ell(w_t)}{2}\right\rfloor.
\]

One of the main ways the fact that $u$ is $J$-holomorphic enters our proof is the following energy quantization property. Let $p\in\Sigma$ with $d=\vectornorm{du(p)} \geq 1,$ and let $B\subset\Sigma$ be the disk of radius $\frac1{d}$  centered at $p$. Then it is known~\cite[Lemma 4.3.1]{MS2} \footnote{See Remark \ref{rmGradApp} below.} that
\begin{align}\label{DefDenseDisc}
\int_{B}\vectornorm{du}^2\geq\delta,
\end{align}
where $\delta$ is a constant that depends only on the geometry of $(M,\omega,J,L).$ From now on, we call a disk $B\subset\Sigma$ satisfying \eqref{DefDenseDisc} a \textbf{dense disk}.

Let $R_i=w_{3^i}\times[0,3^{i+1}]$ for $0\leq i\leq \log_3\sup_{\partial\Sigma} f$. Clearly, the rectangles $R_i$ cover the area under the graph of $f$ wherever $f\geq1$. Thus to deduce Theorem \ref{TmRIIReal}, it suffices to bound the sum
\[
S:=\sum_i3^{i+1}\ell(w_{3^i}).
\]
To get such a bound, note that for any $i\in\mathbb{N}$, by the energy quantization property, the set $W_{3^i}\setminus W_{3^{i+1}}$ contains at least $n_i$ disjoint dense disks, where

\[
n_i:=a_{3^i}-\left\lceil\frac1{3}(a_{3^{i+1}}+1)\right\rceil-1.
\]
See Figure~\ref{wt}. The term in the middle of the formula is an upper bound on the number of disks of radius $3^{-i}$ required to cover $W_{3^{i+1}}$. The $-1$ in the formula expresses the fact that an arbitrarily small neighborhood of each end point of $W_{3^{i+1}}$ can knock out a whole dense disk of $W_{3^{i}}.$
\begin{figure}[h]
\centering
\includegraphics[scale=0.6]{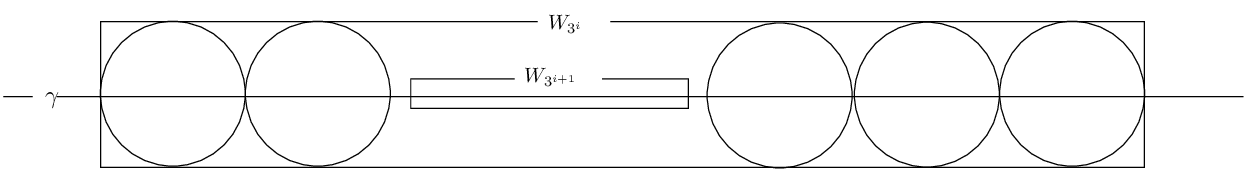}
\caption{}\label{wt}
\end{figure}
Since area coincides with energy for $J$-holomorphic maps, estimate~\eqref{DefDenseDisc} implies the bound
\begin{equation*}
\sum n_i\leq\frac{Area(\Sigma;g_J)}{\delta}.
\end{equation*}
Assume momentarily that for each $i$ we have
\begin{align}\label{EqAssumption}
a_{3^i}\geq 4.
\end{align}
Then
\[
a_{3^{i+1}}-\left\lceil\frac1{3}(a_{3^{i+1}}+1)\right\rceil-1\geq \frac{1}{4}a_{3^{i+1}}.
\]
So,
\begin{equation*}
\sum n_i\geq \frac{1}{4}\sum a_{3^i} =  \frac1{4}\sum\left\lfloor\frac1{2}3^{i}{\ell(w_{3^i})}\right\rfloor.
\end{equation*}
But, again using assumption~\eqref{EqAssumption}, we have
\begin{equation*}
\left\lfloor\frac1{2}3^{i}{\ell(w_{3^i})}\right\rfloor \geq \frac{4}{5}\cdot\frac{1}{2}3^i\ell(w_{3^i}).
\end{equation*}
So,
\begin{equation*}
\frac{Area(\Sigma;g_J)}{\delta} \geq \sum n_i \geq \frac{1}{4}\cdot\frac{4}{5}\cdot \frac{1}{2} \sum 3^i\ell(w_{3^i})
\geq \frac1{5\cdot6}S,
\end{equation*}
giving the required bound. The argument breaks down, however, if we remove assumption~\eqref{EqAssumption} as in Figure \ref{wt2}.
\begin{figure}[h]
\includegraphics[scale=0.5]{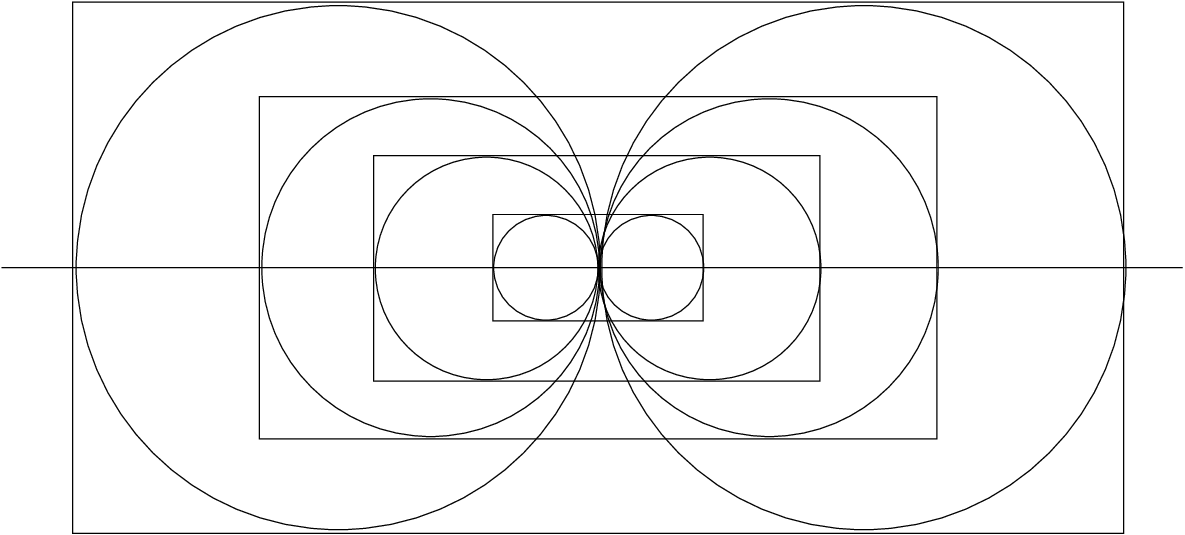}
\caption{}\label{wt2}
\end{figure}

To deal with this problem, we partition $H$ into the sets
\[
K=\left\{(x,t)\in H\left| \ell(w_t)\geq \frac8{t},t\geq1\right.\right\},
\]
and $N=H\setminus K$. These are the thick and thin parts of the hypograph respectively. Figure \ref{graphs} shows a possible alternation between thick and thin. The solid line is the graph of $f$, and the dashed line is the graph of the function
\[
x\mapsto \frac8{d(x,x_0)},
\]
where $x_0$ is the point where $f$ obtains its maximum. Conceptually, the components of the thick part should be thought of as parts of $\gamma$ which lie on bubbles of $u$, and those of the thin part as the necks separating the bubbles.  A slight modification of the above argument shows the same linear lower bound on the area of the thick part. For the thin part we utilize the bound on the width of the graph and well known properties of $J$-holomorphic annuli.
\begin{figure}[h]
\includegraphics[scale=0.5]{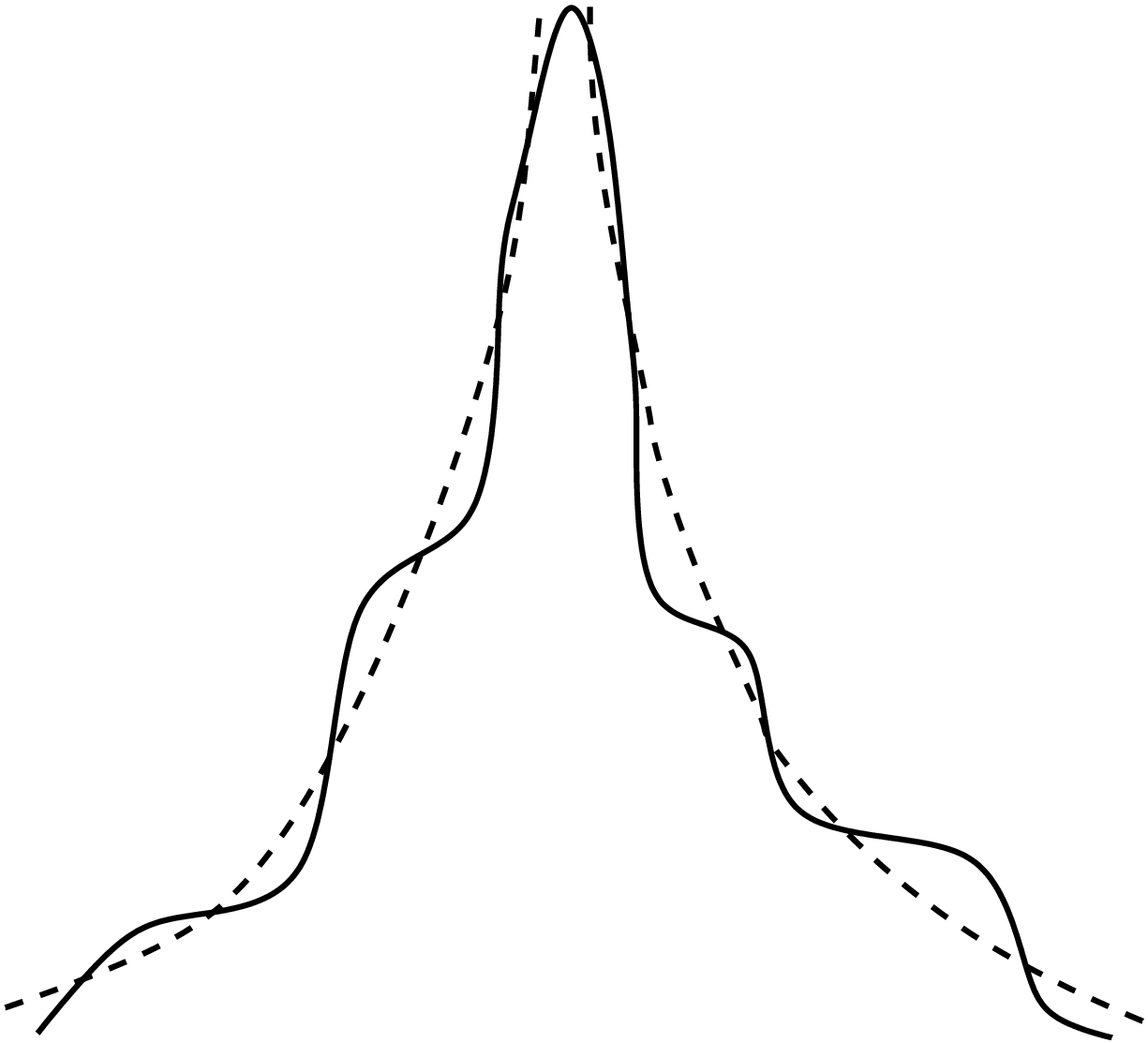}
\caption{}\label{graphs}
\end{figure}

When we consider maps from surfaces with genus greater than 0, we also have to deal with the fact that the domain has unbounded length. For values of the derivative which are small relative to the radius of injectivity, the above argument again breaks down. Integrating a small number over an unbounded domain gives an unbounded value. This is again dealt with by utilizing the properties of holomorphic annuli. Our proof thus exhibits a pleasing symmetry between the thin part of the graph and the thin part of the surface.

The main technical difficulties in the proof arise from the a priori arbitrary arrangement of the critical points of $f$. A large part of the proof is devoted to constructing a partition of the hypograph of $f$ into a thick part and a thin part in such away that the above arguments apply.

The paper is organized as follows. Section \ref{SecPrelimimaries} reviews basic notions of the conformal geometry of surfaces as well as the thick thin decomposition for Riemann surfaces with negative Euler characteristic equipped with a hyperbolic metric. Section \ref{SecThickThin} presents the concept of thick thin measures. The measure one should have in mind is the energy of a $J$-holomorphic map from the surface. Section \ref{SecState} formulates Theorem \ref{TmApLenBound}, which is a generalization of the theorems in this introduction. It addresses arbitrary conjugation invariant geodesics in the complex double $\Sigma_{\C}$. Section \ref{SecTreePart} discusses a partition for hypographs of continuous functions whose elements form a tree. This partition is the key to the discussion of the thick thin partition of the next section. Section \ref{SecThickThinPart} presents the thick thin partition of the hypograph relevant to the proof. It then shows that the number of components of the thin part is linearly bounded by the energy. Moreover, each component lies in a suitable annulus. In Section \ref{SecTameGeo} we define a notion of tame geodesics in holomorphic annuli and discuss their properties. Roughly speaking, tame geodesics are those that do not wrap around the annulus too quickly. In Section \ref{SecProof} we prove Theorem \ref{TmApLenBound}. In Section \ref{SecApp} we deduce all the theorems of the introduction from Theorem~\ref{TmApLenBound}. Finally, in Section~\ref{sec:calc} we prove Proposition~\ref{pr:ex}.

\subsection{Acknowledgements}
The authors would like to thank Kenji Fukaya, Asaf Horev, Mikhail Katz, David Kazhdan, Melissa Liu, and Ran Tessler, for helpful conversations. The authors were partially supported by Israel Science Foundation grant 1321/2009 and Marie Curie International Reintegration Grant No. 239381.

\section{Preliminaries on conformal geometry.}\label{SecPrelimimaries}

Let $(I,j)$ be a compact doubly connected surface with complex structure $j$. The \textbf{modulus} of $(I,j)$, denoted by $Mod(I,j)$ or $Mod(I)$ when the complex structure is clear from the context, is the unique real number $r>0$ such that $(I,j)$ is conformally equivalent to $[0,r]\times S^1$ (see \cite{FaKr}). Here $S^1$ is taken to be a standard circle of length $2\pi$. In the sequel, we denote by $h_{st}$ the unique flat metric on $I$ with respect to which it has circumference $2 \pi.$

Let $(I,h)$ be doubly connected with Riemannian metric $h$. We call global cylindrical coordinates $(\rho,\theta)$ on $I,$
\[
a \leq \rho \leq b, \qquad 0 \leq \theta < 2\pi,
\]
\textbf{axially symmetric} if
\begin{equation}\label{eq:axsy}
h =d\rho^2+h_{\theta}(\rho)^2d\theta^2.
\end{equation}
We say $h$ is \textbf{axially symmetric} if $I$ has axially symmetric coordinates.
In this case, the conformal length of $(I,h)$ is given by
\begin{equation*}
Mod(I,h)=\int_a^b\frac1{h_{\theta}(\rho)}d\rho.
\end{equation*}

\begin{df}
Let $I$ be a doubly connected surface of conformal length $L$. Then there is a holomorphic map $f:[0,L]\times S^1\rightarrow I$ unique up to a rotation and a holomorphic reflection. For real numbers $a\leq b\in [0,L]$ with $a\leq L-b$, we write
\[
S(a,b;I):=f([a,b]\times S^1)\subset I,
\]
and
\[
C(a,b;I):=S(a,L-b;I).
\]
Note that composing $f$ with a holomorphic reflection of $[0,L]\times S^1$ replaces $S(a,b)$ with $S(L-b,L-a)$. The expression $C(a,a)$ is independent of the choice of $f$. A \textbf{subcylinder} of $I$ is a set of the form $I'=S(a,b;I)$.
\end{df}
\begin{df}\label{DfCofRad}
Let $U$ be a Riemann surface biholomorphic to the unit disk $D_1$. Let $h$ be a conformal metric on $U$ and let $z\in U$. Then there is a biholomorphism $\phi:U\rightarrow D_1$ with $\phi(z)=0,$ unique up to rotation. The \textbf{conformal radius of $U$ viewed from $z$} is defined to be
\[
r_{conf}(U,z;h):=1/\|d\phi(z)\|_h.
\]

\end{df}

Note that $r_{conf}(U,z;h)$ is not conformally invariant, since it depends on the metric at $z$. However, let $\nu_h$ denote the volume form of $h,$ let $\mu$ be an absolutely continuous measure on $U$ and denote by $\frac{d\mu(z)}{d\nu_{h}}$ the Radon-Nikodym derivative. Then the expression $\frac{d\mu(z)}{d\nu_{h}}r_{conf}^2(z)$ is conformally invariant.

The cases of interest for us will be conformal radii of geodesic disks with metrics of constant curvature $K,$ viewed from their center.
In these cases, the metric can be written in polar coordinates as
\begin{equation}\label{eq:pch}
h = d\rho^2 + h_\theta^2(\rho) d\theta^2,
\end{equation}
where
\begin{equation}\label{eq:forhth}
h_\theta(\rho) = \begin{cases}
\sinh(\rho), & K = -1, \\
\rho, & K = 0, \\
\sin(\rho), & K = 1.
\end{cases}
\end{equation}
So, the conformal radius of $B_r(p)$ viewed from $p$ is given by
\[
r_{conf} = \exp(f(r))
\]
where $f$ is the function defined by
\[
f'(r) = \frac{1}{h_\theta(r)}, \qquad f(r) = \log(r) + O(r) \text{  as $r \to 0$}.
\]
More explicitly,
\begin{equation}\label{eq:f}
f(r) = \log(r) + \int_0^r \left(\frac{1}{h_{\theta}(\rho)} - \frac{1}{\rho} \right)d\rho.
\end{equation}
It follows from equation~\eqref{eq:f} that
\begin{equation}\label{eq:bdrc1}
r_{conf} \geq r, \qquad K = 0,1,
\end{equation}
and for any $\kappa$ there exists a constant $c > 0$ such that
\begin{equation}\label{eq:bdrc2}
r_{conf} \geq c r, \qquad K = -1, \; r < \kappa.
\end{equation}

\begin{df}\label{dfCplxDbl}
 For any Riemann surface $\Sigma=(\Sigma,j)$, write $\overline{\Sigma}:=(\Sigma,-j)$. The \textbf{complex double} is the Riemann surface
 \[
 \Sigma_{\C} :=\Sigma\cup\overline{\Sigma},
  \]
 where the surfaces are glued together along the boundary by the identity. The complex structure on $\Sigma_{\C}$ is the unique one which coincides with $j$ and with $-j$ when restricted suitably. $\Sigma_{\C}$ is endowed with a natural antiholomorphic involution and for any $z\in\Sigma_{\C}$ we denote by $\overline{z}$ the image of $z$ under this involution. For more details about these constructions see \cite{AlGr}.
\end{df}

\begin{rem}
Note that for any connected Riemann surface $\Sigma,$ $\Sigma_{\C}$ is connected if and only if $\partial\Sigma\neq\emptyset$. Also, for any $\Sigma$, $\partial\Sigma_{\C}=\emptyset$.
\end{rem}

\begin{df}\label{df:cln}
Let $I\subset\Sigma_{\C}$ be doubly connected and conjugation invariant. If
\[
S\left(0,\frac1{2}Mod I;I\right)=\overline{S\left(\frac1{2}Mod I,Mod I;I\right)},
\]
we say the conjugation on $I$ is \textbf{latitudinal}. If for each $a,b\in [0,Mod I],$
\[
S(a,b;I)=\overline {S(a,b;I)},
 \]
we say the conjugation on $I$ is \textbf{longitudinal}.
\end{df}

\begin{lm}\label{lmMerLong}
Let $I\subset\Sigma_{\C}$ be doubly connected and conjugation invariant. Then the conjugation on $I$ is either latitudinal or longitudinal.
\end{lm}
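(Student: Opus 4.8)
The plan is to use harmonic-function rigidity on an annulus to pin down the restriction of the conjugation to $I$ up to exactly two cases, which turn out to be the two alternatives in the statement. Let $\sigma$ denote the conjugation on $\Sigma_{\C}$. Since $I$ is conjugation invariant and $\partial\Sigma_{\C}=\emptyset$, the map $\sigma$ restricts to an anti-holomorphic homeomorphism of the compact annulus $I$ onto itself. Set $L=\mathrm{Mod}\,I$ and fix a biholomorphism $f\colon[0,L]\times S^1\to I$ as in the definition of $S(a,b;I)$, with $(t,\theta)$ the standard cylindrical coordinates on the source. Let $\rho:=t\circ f^{-1}\colon I\to[0,L]$. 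Since $t$ is the real part of a locally defined holomorphic coordinate on the cylinder, $\rho$ is harmonic; it is identically $0$ on one component of $\partial I$ and identically $L$ on the other; and $S(a,b;I)=\rho^{-1}([a,b])$ for $0\le a\le b\le L$.

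First I would note that $\rho\circ\sigma$ is again harmonic on $I$, because the composition of a harmonic function with an anti-holomorphic map is harmonic. As $\sigma$ is a self-homeomorphism of $I$ it permutes the two components of $\partial I$, on each of which $\rho$ is constant; hence $\rho\circ\sigma$ takes boundary values $0,L$ or $L,0$, i.e.\ the same boundary values as $\rho$ or as $L-\rho$. By uniqueness for the Dirichlet problem on the annulus $I$, this forces $\rho\circ\sigma=\rho$ or $\rho\circ\sigma=L-\rho$ throughout $I$.

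It then remains to translate this dichotomy. Using that $\sigma$ is an involution, $\overline{S(a,b;I)}=\sigma(\rho^{-1}([a,b]))=(\rho\circ\sigma)^{-1}([a,b])$. If $\rho\circ\sigma=\rho$ this equals $\rho^{-1}([a,b])=S(a,b;I)$ for all $a,b$, so the conjugation is longitudinal. If $\rho\circ\sigma=L-\rho$ it equals $(L-\rho)^{-1}([a,b])=\rho^{-1}([L-b,L-a])=S(L-b,L-a;I)$; specializing to $a=\tfrac12 L$, $b=L$ gives $\overline{S(\tfrac12\mathrm{Mod}\,I,\mathrm{Mod}\,I;I)}=S(0,\tfrac12\mathrm{Mod}\,I;I)$, so the conjugation is latitudinal. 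Finally, replacing $f$ by its holomorphic reflection replaces $\rho$ by $L-\rho$ and merely interchanges the two cases, so the alternative is well posed and independent of the choice of $f$.

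There is no deep obstacle here; the only things requiring a little care are verifying that $\sigma$ genuinely descends to a homeomorphism of the closed annulus respecting its two boundary circles, and bookkeeping the reflection ambiguity in the definition of $S(a,b;I)$. Once these are in hand the harmonic rigidity does all the work --- in fact the hypothesis that $\sigma$ is an involution is used only cosmetically, since the same argument shows any anti-holomorphic automorphism of $I$ is longitudinal or latitudinal.
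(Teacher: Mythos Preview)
Your proof is correct. The paper's own proof is a one-line pointer: it invokes the classification of holomorphic automorphisms of the annulus together with the observation that composing two anti-holomorphic maps gives a holomorphic one, so any anti-holomorphic automorphism of $[0,L]\times S^1$ differs from the standard conjugation $(t,\theta)\mapsto(t,-\theta)$ by a rotation or by the flip $(t,\theta)\mapsto(L-t,-\theta)$ composed with a rotation. These two families are exactly the longitudinal and latitudinal cases.

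Your route is genuinely different, though closely related in spirit: instead of quoting the automorphism classification as a black box, you extract just the part of it that matters here by applying Dirichlet uniqueness to the harmonic coordinate $\rho$. This is more self-contained and arguably more elementary, since the automorphism classification itself is typically proved by exactly this harmonic-function trick (or the equivalent argument with the extremal-length foliation). What the paper's approach buys is brevity if the reader already has the classification in hand; what yours buys is that no external result needs to be looked up, and as you note, it makes transparent that the involution hypothesis is inessential. Both arguments ultimately rest on the same rigidity: the only harmonic functions on the annulus with constant boundary values are affine in the modular coordinate.
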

\begin{proof}
The lemma is a consequence of the classification of the holomorphic automorphisms of the annulus and the fact that the composition of two anti-holomorphic automorphisms is holomorphic.
\end{proof}

For later reference we conclude with a statement of the thick thin decomposition for surfaces of genus $g>1$. In the following we assume the surfaces are endowed with their unique metric $h$ of constant curvature $-1.$

\begin{tm}\label{TmThTh}\cite[4.1.1]{Bu}
Let $S$ be a compact Riemann surface of genus $g\geq2$, and let $\gamma_1,...,\gamma_m$ be pairwise disjoint simple closed geodesics on $S$. Then the following hold:
\begin{enumerate}
\item
$m\leq 3g-3$.
\item
There exist simple closed geodesics $\gamma_{m+1},...,\gamma_{3g-3},$ which, together 	 with $\gamma_1,...,\gamma_m$, decompose S into pairs of pants.
\item\label{it:coll}
The collars
\begin{equation}
\mathcal{C}(\gamma_i)=\{p\in S|dist(p,\gamma_i)\leq w(\gamma_i)\}\notag
\end{equation}
of widths
\begin{equation}		 w(\gamma_i)=\sinh^{-1}\left(1/\sinh\left(\frac1{2}\ell(\gamma_i)\right)\right)\notag
\end{equation}
are pairwise disjoint for $i=1,...,3g-3$.
\item\label{it:spco}
Each $\mathcal{C}(\gamma_i)$ is isometric to the cylinder $[-w(\gamma_i),w(\gamma_i)]\times S^1$ with the Riemannian metric
\[
d\rho^2+\frac{\ell^2(\gamma_i)\cosh^2(\rho)}{4\pi^2} d\theta^2.
\]
\end{enumerate}
\end{tm}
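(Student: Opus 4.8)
This is the classical collar theorem; the plan is to follow the argument of \cite{Bu}. For part \textbf{(a)}, I would first note that in the hyperbolic metric $h$ each free homotopy class of an essential simple closed curve has a unique geodesic representative, so the $\gamma_i$ are essential and pairwise non-homotopic. It then suffices to bound a collection of disjoint, pairwise non-homotopic, essential simple closed curves on $S$, and I would do this by cutting $S$ along a maximal such collection: the complementary pieces are three-holed spheres (pairs of pants), and since the $3\cdot(\#\text{pants})$ boundary circles of the pants are glued in pairs to recover $S$, while $\chi(S)=2-2g=-(\#\text{pants})$, one gets $\#\text{pants}=2g-2$ and hence $\#\text{curves}=3g-3$.

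For part \textbf{(b)}, I would cut $S$ along $\gamma_1,\dots,\gamma_m$ and, as long as some complementary component fails to be a pair of pants, choose an essential non-peripheral simple closed curve $\delta$ in it and let $\gamma_{m+1}$ be the geodesic representative of $\delta$ in $S$. Since $\delta$ is disjoint from every $\gamma_i$, its class is realized disjointly from each $[\gamma_i]$, and geodesic representatives of disjointly realizable classes are disjoint; hence $\gamma_{m+1}$ is disjoint from $\gamma_1,\dots,\gamma_m$, and being non-peripheral it is distinct from them. Iterating and using (a), after $3g-3-m$ steps every complementary piece is a pair of pants.

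For parts \textbf{(c)} and \textbf{(d)}, fix $\gamma=\gamma_i$ of length $\ell$, pass to the universal cover $\mathbb H^2$, and normalize a lift $\tilde\gamma$ to be the imaginary axis, so that the associated deck transformation is $z\mapsto e^\ell z$. In Fermi coordinates $(\rho,s)$ about $\tilde\gamma$ the hyperbolic metric is $d\rho^2+\cosh^2(\rho)\,ds^2$; quotienting by $z\mapsto e^\ell z$ and setting $s=\tfrac{\ell}{2\pi}\theta$ identifies the embedded part of the $\rho$-neighborhood of $\gamma$ with $[-w,w]\times S^1$ carrying the metric $d\rho^2+\tfrac{\ell^2\cosh^2(\rho)}{4\pi^2}\,d\theta^2$, which gives (d). To finish I would show that the neighborhood of width $w(\gamma_i)$ embeds and that the collars $\mathcal C(\gamma_i)$ are pairwise disjoint; both reduce to showing that for any lift $c$ of any $\gamma_j$ with $c\ne\tilde\gamma$, the open $w(\gamma_i)$-neighborhood of $\tilde\gamma$ and the open $w(\gamma_j)$-neighborhood of $c$ are disjoint. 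As $\gamma_j$ is simple and disjoint from $\gamma_i$, the geodesics $\tilde\gamma,c\subset\mathbb H^2$ are disjoint and hence have a common perpendicular, and the key step is a hyperbolic-trigonometric estimate, via the right-angled-hexagon identity, bounding the length of that perpendicular below by $w(\gamma_i)+w(\gamma_j)$; the value $w(\gamma_k)=\sinh^{-1}\!\big(1/\sinh(\tfrac12\ell(\gamma_k))\big)$ is exactly what makes this sharp.

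The main obstacle will be this last trigonometric estimate together with the bookkeeping of which lifts $c$ can come near $\tilde\gamma$: one must rule out that a lift of $\gamma_j$ re-enters the prospective collar after wrapping around, and it is precisely here that simplicity of the $\gamma_j$ and the exact width enter. By contrast, the coordinate computation behind (d), the topological count in (a), and the pants-decomposition argument in (b) are routine.
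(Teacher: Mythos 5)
The paper does not prove this statement: it is quoted verbatim as Theorem 4.1.1 of Buser's book and is used later only as a black box (e.g.\ in Lemma~\ref{lmConstInjRad}, Lemma~\ref{lm:eppi}, Lemma~\ref{lmI0}), so there is no proof of the authors' to compare against. Your sketch is nonetheless a correct outline of the classical argument. Parts~(a) and~(b) -- the Euler-characteristic count $3g-3$, and the inductive completion to a pants decomposition by adding geodesic representatives of non-peripheral essential simple closed curves disjointly realized from the $\gamma_i$ -- are precisely the standard steps. For parts~(c) and~(d) you work in the universal cover: Fermi coordinates about a lift $\tilde\gamma$ give the explicit cylinder metric of~(d), and embeddedness plus disjointness is reduced to a lower bound, via common perpendiculars and right-angled-hexagon trigonometry, on $\operatorname{dist}(\tilde\gamma,c)$ over all lifts $c\neq\tilde\gamma$ of all $\gamma_j$. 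Buser's own proof instead establishes a half-collar lemma internal to a single pair of pants (splitting the pair of pants into two right-angled hexagons and reading off the formula for $w$) and then glues half-collars along the pants decomposition; this localizes the ``bookkeeping of lifts'' that you rightly identify as the main obstacle, since in the universal-cover picture one must still organize the nearby lifts $c$ by the adjacent pairs of pants, so the two routes converge on the same hexagon estimate. Your outline is sound as a plan; the unwritten step -- the sharp trigonometric inequality producing $w(\gamma_k)=\sinh^{-1}\bigl(1/\sinh(\tfrac12\ell(\gamma_k))\bigr)$ together with the reduction to nearest lifts -- is exactly where the theorem's content lies, and you have correctly flagged it rather than glossed over it.
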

Denote by $InjRad(S;h,p)$ the radius of injectivity of $S$ at $p\in S$, i.e. the supremum of all $r$ such that $B_r(p)$ is an embedded disk. If $h$ or $S$ is clear from the context, we may omit it from the notation.
\begin{tm}\label{TmThTh2}\cite[4.1.6]{Bu}
	Let $\beta_1,...,\beta_k$ be the set of all simple closed geodesics of length $\leq \sinh^{-1}1$ on $S$. Then $k\leq3g-3$ and the following hold.
	\begin{enumerate}
		\item The geodesics $\beta_1,...,\beta_k$ are pairwise disjoint.
		\item \label{it:lbd} $InjRad(S;p) > \sinh^{-1}1$ for all $p\in S-(\mathcal{C}(\beta_1)\cup...\cup\mathcal{C}(\beta_k))$.
		\item \label{it:fml} If $p\in\mathcal{C}(\beta_i)$,and $d=dist(p,\partial \mathcal{C}(\beta_i))$, then
\begin{align}\label{InjRadEq}
			\sinh (InjRad(S;p)) =\cosh\frac1{2}\ell(\beta_i)\cosh d-\sinh d.
		\end{align}
	\end{enumerate}
\end{tm}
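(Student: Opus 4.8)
The plan is to derive Theorem~\ref{TmThTh2} from the collar theorem, Theorem~\ref{TmThTh}, treating the three items in turn.

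\emph{Disjointness of the $\beta_i$ and the bound $k\le 3g-3$.} Suppose $\beta_i\ne\beta_j$ intersect. Applying Theorem~\ref{TmThTh} to the single geodesic $\beta_i$, items~\ref{it:coll} and~\ref{it:spco} give that $\mathcal{C}(\beta_i)$ is an embedded cylinder isometric to $[-w,w]\times S^1$ with metric $d\rho^2+\tfrac{\ell^2\cosh^2\rho}{4\pi^2}\,d\theta^2$, where $\ell=\ell(\beta_i)$ and $w=w(\beta_i)=\sinh^{-1}\!\bigl(1/\sinh(\tfrac\ell2)\bigr)$; since $\ell\le\sinh^{-1}1<1$ we get $w>1$. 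The warping factor $\tfrac{\ell}{2\pi}\cosh\rho$ attains its minimum exactly on the core $\{\rho=0\}=\beta_i$, so by Clairaut's relation a geodesic meeting the core transversally has $\rho$ monotone along it, and therefore the component of $\beta_j\cap\mathcal{C}(\beta_i)$ through an intersection point runs from one boundary circle to the other; hence $\ell(\beta_j)\ge 2w>2$, contradicting $\ell(\beta_j)\le\sinh^{-1}1$. So the $\beta_i$ are pairwise disjoint simple closed geodesics, whence $k\le 3g-3$ by the first item of Theorem~\ref{TmThTh}; completing them to a pants decomposition, items~\ref{it:coll} and~\ref{it:spco} apply to show the $\mathcal{C}(\beta_i)$ are pairwise disjoint, each isometric to the above cylinder model.

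\emph{The injectivity radius formula, item~\ref{it:fml}.} Fix $\beta=\beta_i$, $\ell=\ell(\beta)$, $w=w(\beta)$, and use the cylinder model; a point $p$ with $\mathrm{dist}(p,\partial\mathcal{C}(\beta))=d$ sits at $|\rho|=w-d$. Realizing $\mathcal{C}(\beta)$ as the $w$-neighbourhood of the axis $A$ of the hyperbolic deck transformation $\sigma$ of translation length $\ell$ associated to $\beta$, the geodesic loop at $p$ wrapping once around the core lifts to the segment $[\tilde p,\sigma\tilde p]$, and the hyperbolic formula for the displacement of a point at distance $w-d$ from the axis of $\sigma$ gives $\cosh d(\tilde p,\sigma\tilde p)=1+2\sinh^2(\tfrac\ell2)\cosh^2(w-d)$, hence $\sinh\bigl(\tfrac12 d(\tilde p,\sigma\tilde p)\bigr)=\sinh(\tfrac\ell2)\cosh(w-d)$ by the half-angle identity. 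Substituting $\sinh w=1/\sinh(\tfrac\ell2)$, $\cosh w=\cosh(\tfrac\ell2)/\sinh(\tfrac\ell2)$ and expanding $\cosh(w-d)$ collapses the right-hand side to $\cosh(\tfrac\ell2)\cosh d-\sinh d$, which is the asserted value of $\sinh(InjRad(S;p))$ provided this once-around loop realizes the injectivity radius. That proviso is the delicate point: any competing geodesic loop lifts either via a power $\sigma^m$ with $|m|\ge 2$ (manifestly longer) or via some $h\in\Gamma$ that is not a power of $\sigma$, and disjointness of the distinct $\Gamma$-translates of $N_w(A)$ --- the embeddedness content of Theorem~\ref{TmThTh}\ref{it:spco} --- forces $d(h\tilde p,A)\ge w$; feeding this into the hyperbolic trigonometry shows $d(\tilde p,h\tilde p)\ge 2\sinh^{-1}1$ (one checks $2\sinh^{-1}1=\cosh^{-1}3$), so in the range where the asserted value does not exceed $\sinh^{-1}1$ no such competitor is shorter.

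\emph{The lower bound off the collars, item~\ref{it:lbd}, and the main obstacle.} For item~\ref{it:lbd} I would combine item~\ref{it:fml} with the standard analysis of short geodesic loops on hyperbolic surfaces. On $\partial\mathcal{C}(\beta_i)$ the formula of item~\ref{it:fml} at $d=0$ gives $\sinh(InjRad(S;p))=\cosh(\tfrac12\ell(\beta_i))>1$, so $InjRad(S;p)>\sinh^{-1}1$ there. If $p$ lies outside every $\mathcal{C}(\beta_j)$ and $InjRad(S;p)\le\sinh^{-1}1$, the shortest geodesic loop at $p$ closes up to a primitive closed geodesic $\gamma$ with $\ell(\gamma)\le 2\,InjRad(S;p)$, which by the usual short-loop argument is simple and short enough to appear in the list $\{\beta_1,\dots,\beta_k\}$, say $\gamma=\beta_i$; running the loop-length computation of the previous step in reverse then places $p$ within distance $\cosh^{-1}\!\bigl(1/\sinh(\tfrac12\ell(\beta_i))\bigr)<w(\beta_i)$ of $\beta_i$, i.e.\ $p\in\mathcal{C}(\beta_i)$, a contradiction. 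The main obstacle, as this makes clear, is the ``realizes the injectivity radius'' proviso in item~\ref{it:fml}: the trigonometric identity itself is routine, but verifying --- uniformly in the location of $p$ within the collar --- that no shortcut loop beats the once-around loop is precisely what pins down the constant $\sinh^{-1}1$, and it requires particular care near $\partial\mathcal{C}(\beta_i)$, where item~\ref{it:fml} is effectively a sharp lower bound on $InjRad$ rather than an exact value of a small quantity.
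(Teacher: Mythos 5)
The paper does not prove this statement; it simply cites Buser (Theorem~4.1.6 of \emph{Geometry and Spectra of Compact Riemann Surfaces}), so there is no in-paper argument to compare against. Evaluating your proposal on its own terms: your treatment of part~(a) via Clairaut's relation is fine, and the trigonometric identity $\sinh(\tfrac12\ell(\beta_i))\cosh(w-d)=\cosh(\tfrac12\ell(\beta_i))\cosh d-\sinh d$ for the once-around loop is correct. But you have correctly identified, and not closed, the crucial gap: the ``realizes the injectivity radius'' proviso. The step ``disjointness of the $\Gamma$-translates of $N_w(A)$ forces $d(h\tilde p,A)\ge w$; feeding this into hyperbolic trigonometry shows $d(\tilde p,h\tilde p)\ge 2\sinh^{-1}1$'' does not follow. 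From $d(\tilde p,A)=w-d$, $d(h\tilde p,A)\ge w$, and even the stronger $d(A,hA)>2w$, the triangle inequality only yields $d(\tilde p,h\tilde p)\ge 2d$, which is arbitrarily small near $\partial\mathcal C(\beta_i)$. Moreover you explicitly restrict to the subregion where $\sinh(InjRad)\le 1$, but the theorem asserts the equality for \emph{all} $p\in\mathcal C(\beta_i)$ --- including points near the boundary where the asserted value is $\sinh^{-1}(\cosh\tfrac12\ell(\beta_i))>\sinh^{-1}1$. So item~(c) is established only on a proper subset of the collar.

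The same gap resurfaces in item~(b) as a factor-of-two discrepancy. From $InjRad(S;p)\le\sinh^{-1}1$ you obtain a geodesic loop $\alpha$ of length $\le 2\sinh^{-1}1$, whence the closed geodesic $\gamma$ in its free homotopy class satisfies $\ell(\gamma)\le 2\sinh^{-1}1$. But the $\beta_i$ were defined to be those of length $\le\sinh^{-1}1$, so ``short enough to appear in the list'' is not justified. This is not a defect of your argument alone: with the threshold $\sinh^{-1}1$ as printed, item~(b) is in fact \emph{false} --- on a surface whose unique short simple closed geodesic $\gamma$ has length in $(\sinh^{-1}1,\,2\sinh^{-1}1)$, the list $\{\beta_i\}$ is empty yet $InjRad=\tfrac12\ell(\gamma)<\sinh^{-1}1$ on $\gamma$. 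Buser's Theorem~4.1.6 takes $\beta_1,\dots,\beta_k$ to be the simple closed geodesics of length $\le 2\sinh^{-1}1=\cosh^{-1}3$; the paper's quotation appears to have dropped the factor $2$. Once the threshold is corrected, the right structure for item~(c) is Buser's Lemma preceding 4.1.6: a geodesic loop at $p$ of length $\le 2\sinh^{-1}1$ determines a unique primitive $\eta\in\Gamma$, the formula $\sinh(\tfrac12\ell(\text{loop}))=\sinh(\tfrac12\ell(\eta))\cosh d(\tilde p,\mathrm{axis}(\eta))$ applies, and disjointness of the full-width collars of all geodesics of length $\le 2\sinh^{-1}1$ (not just of translates of $N_w(A)$) is what rules out a competing $\eta$. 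Your proposal gestures at this but does not carry it out, and as written the decisive inequality $d(\tilde p,h\tilde p)\ge 2\sinh^{-1}1$ remains unproved.
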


\section{Thick thin measure}\label{SecThickThin}
For the rest of the discussion, fix constants $c_1,c_2,c_3,\delta_1,\delta_2>0$. Without loss of generality we will assume that $c_3\leq 1$ and that $\delta_2<\delta_1$. Given a metric $h$ on a measured Riemann surface $(\Sigma,j,\mu)$ we denote by $\frac{d\mu}{d\nu_h}$ the Radon Nikodym derivative of $\mu$ with respect to $\nu_h$, the volume form induced by $h$.
\begin{df}
Let $\Sigma$ be a Riemann surface. A subset $S\subset\Sigma_{\C}$ is said to be \textbf{clean} if either $S=\overline{S}$ or $S\cap \overline{S}=\emptyset$.
\end{df}
\begin{df}\label{dfThTh}
Let $(\Sigma,j)$ be a Riemann surface, possibly bordered. Let $\mu$ be a finite measure on $\Sigma$ and extend $\mu$ to a measure on $\Sigma_{\C}$ by reflection i.e.
\[
\mu(U):=\mu(\overline{U}),
\]
for $U\subset\overline{\Sigma}$  a measurable set. $\mu$  will be called \textbf{thick thin} if it satisfies the following conditions:
\begin{enumerate}
\item $\mu$ is absolutely continuous and has a continuous density $\frac{d\mu}{d\nu_h}$,  where $h$ is any Riemannian metric on $\Sigma_{\C}$.
\item \textbf{Gradient inequality.} Let $U\subset\Sigma_{\C}$ be a simply connected domain and let $z\in U$. Then for any conformal metric $h$ on $(\Sigma_{\C},j)$,
\begin{align}
\notag \mu(U)<\delta_1\quad \Rightarrow \quad &\frac{d\mu}{d\nu_h}(z)\leq c_1\frac{\mu(U)}{r_{conf}^2},
\end{align}
where $r_{conf}=r_{conf}(U,z;h)$.
\item \textbf{Cylinder inequality.} Let $I\subset\Sigma_{\C}$ be a doubly connected domain so that $Mod(I)> 2c_2$ and satisfying either $I\subset\Sigma$ or $I=\overline{I}$.
    Then
\begin{align}
\notag \mu\{I\}<\delta_2&\Rightarrow &\mu\{C(t,t;I)\}\leq e^{-c_3t}\mu\{I\},\notag\\&&\forall t\in(c_2,\frac1{2}Mod(I))\notag.
\end{align}
\end{enumerate}
\end{df}

\begin{rem}\label{rmGrad}
Let $\mu$ be a thick thin measure on $\Sigma,$ and let $h$ be a conformal metric of constant curvature $K = 0,\pm 1$ on $\Sigma_\C$. By inequalities~\eqref{eq:bdrc1} and~\eqref{eq:bdrc2}, there is a constant $c'_1$ depending linearly on $c_1$ such that for any $z\in\Sigma$ and $r\in(0,\min(\sinh^{-1}(1),InjRad(\Sigma;h,z)))$,
\begin{align}
\mu(B_r(z;h))<\delta_1\Rightarrow &\frac{d\mu}{d\nu_h}(z)\leq c'_1\frac{\mu(B_r(z;h))}{r^2}.
\end{align}
\end{rem}
\begin{rem}\label{rmGradApp}
Let $\Sigma,\mu$ and $h$ be as in Remark~\ref{rmGrad}.
We will apply the gradient inequality in the following way. For any point $z \in \Sigma_\C$, let $d= \frac{d\mu}{d\nu_h}(z)$ and let
\begin{align}\label{EqArDee}
r_d:=\sqrt{\frac{c'_1\delta_1}{d}}.
\end{align}
Suppose
\begin{equation*}
r_d \in (0,\min(\sinh^{-1}(1),InjRad(\Sigma;h,z))).
\end{equation*}

Then
\[
\mu(B_{r_d}(z;h))\geq\delta_1.
\]
Moreover, we have
\[
\frac{d\mu}{d\nu_h}(z) \leq c_1' \frac{\mu(B_r(z;h))}{r^2}, \qquad r \leq r_d.
\]

To simplify our formulas, we always scale $\mu$ so that $c'_1\delta_1=1$.
\end{rem}

We denote by $\mathcal{M}=\mathcal{M}(c_1,c_2,c_3,\delta_1,\delta_2)$ the family of measured Riemann surfaces $(\Sigma,j,\mu)$ such that $\mu$ is thick-thin.

\begin{lm}\label{ExpCylDEst}
There is a constant $a$ with the following significance. Let $(\Sigma,j,\mu)\in\mathcal{M}$. Let $I\subset\Sigma_{\C}$ be clean and doubly connected, and let $h = h_{st}.$ Suppose $\mu(I)<\delta_2$. Let $z\in C(c_2+\pi,c_2+\pi;I)$ be a point with cylindrical coordinates
\[
(s,t)\in\left[-\frac1{2}Mod(I)+c_2+\pi,\frac1{2}Mod(I)-c_2-\pi\right]\times S^1.
\]
Then,
\begin{align}
 \frac{d\mu}{d\nu_{h}}(z)<ae^{-c_3(\frac1{2}Mod(I)-|s|)}\mu(I).
\end{align}
\end{lm}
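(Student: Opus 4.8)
The plan is to bound the density of $\mu$ at $z$ by trapping a full $h_{st}$-ball of radius $\pi$ about $z$ inside a central subcylinder of $I$ on which the cylinder inequality forces exponential decay of $\mu$, and then converting that measure bound into a density bound by the gradient inequality of Definition~\ref{dfThTh}. As a preliminary, I would note that the cylinder inequality is in fact available for $I$: since $I$ is clean and connected, either $I=\overline I$, or $I\cap\overline I=\varnothing$, in which case $I$ misses $\partial\Sigma$ (pointwise fixed by conjugation) and hence lies in $\mathrm{int}(\Sigma)$ or in $\mathrm{int}(\overline\Sigma)$. Using $\mu(\overline X)=\mu(X)$, $Mod(\overline I)=Mod(I)$ and $C(t,t;\overline I)=\overline{C(t,t;I)}$, I may replace $I$ by $\overline I$ and $z$ by $\overline z$ (which has the same cylindrical coordinates and density) if necessary, and so assume $I\subset\Sigma$ or $I=\overline I$. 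Then Definition~\ref{dfThTh}(3), together with $\mu(I)<\delta_2$ and continuity of $t\mapsto\mu(C(t,t;I))$, gives
\[
\mu\bigl(C(t,t;I)\bigr)\le e^{-c_3 t}\mu(I),\qquad t\in\Bigl[c_2,\tfrac12 Mod(I)\Bigr).
\]

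Next, write $L=Mod(I)$ and $\tau:=\tfrac L2-|s|$, so the coordinate hypothesis on $z$ is exactly $\tau\ge c_2+\pi$, whence $\tau-\pi\in[c_2,\tfrac L2)$. Pass to axially symmetric coordinates $(s,\theta)\in[-\tfrac L2,\tfrac L2]\times S^1$ with $h_{st}=ds^2+d\theta^2$, in which $z=(s,t)$, and consider the metric ball $B:=B_\pi(z;h_{st})$. The cut locus of $z$ in this flat cylinder is the generator $\{\theta=t+\pi\}$, at distance $\ge\pi$ from $z$, so $B$ is isometric to the standard Euclidean disk of radius $\pi$; in particular $B$ is a simply connected domain contained in $I$ with $r_{conf}(B,z;h_{st})\ge\pi$ by~\eqref{eq:bdrc1}. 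Moreover $B$ projects into the interval $(s-\pi,s+\pi)$ under the linear coordinate, and since $-|s|\le s\le|s|$ this interval lies in $[-|s|-\pi,|s|+\pi]$, i.e.\ $B\subset C(\tau-\pi,\tau-\pi;I)$. Applying the displayed cylinder bound at $t=\tau-\pi$,
\[
\mu(B)\le\mu\bigl(C(\tau-\pi,\tau-\pi;I)\bigr)\le e^{-c_3(\tau-\pi)}\mu(I)=e^{c_3\pi}\,e^{-c_3\tau}\,\mu(I).
\]

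Finally I would apply the gradient inequality of Definition~\ref{dfThTh}(2) to the simply connected domain $U=B$ with the metric $h_{st}$, extended arbitrarily to a conformal metric on $\Sigma_\C$ (only its germ at $z$ enters the conclusion). Since $\mu(B)\le\mu(I)<\delta_2<\delta_1$, this yields
\[
\frac{d\mu}{d\nu_{h_{st}}}(z)\le c_1\,\frac{\mu(B)}{r_{conf}(B,z;h_{st})^2}\le\frac{c_1}{\pi^2}\,\mu(B)\le\frac{c_1 e^{c_3\pi}}{\pi^2}\,e^{-c_3\bigl(\frac12 Mod(I)-|s|\bigr)}\mu(I),
\]
so the lemma holds with $a=c_1e^{c_3\pi}/\pi^2$ (with $a=2c_1e^{c_3\pi}/\pi^2$ the inequality becomes strict whenever $\mu(I)>0$). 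I do not expect a substantive obstacle: the only delicate point is the double use of the borderline radius $r=\pi$ — that $B_\pi(z;h_{st})$ is genuinely a Euclidean disk, so that both~\eqref{eq:bdrc1} and the hypothesis ``$U$ simply connected'' apply, and that its linear width is exactly $2\pi$, matched precisely by the condition $\tau\ge c_2+\pi$ — after which the inclusion $B\subset C(\tau-\pi,\tau-\pi;I)$ is routine bookkeeping.
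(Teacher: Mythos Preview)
Your proof is correct and follows essentially the same approach as the paper's: apply the gradient inequality to an $h_{st}$-ball of radius $\pi$ about $z$, enlarge to the subcylinder $[-|s|-\pi,|s|+\pi]\times S^1=C(\tau-\pi,\tau-\pi;I)$, and bound its measure by the cylinder inequality, arriving at the same constant $a=c_1e^{c_3\pi}/\pi^2$. You are simply more explicit about several points the paper elides (reducing ``clean'' to the hypothesis $I\subset\Sigma$ or $I=\overline I$, the simple connectivity of $B_\pi(z;h_{st})$, the borderline $t=c_2$ via continuity, and the irrelevance of how $h_{st}$ is extended beyond $z$).
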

\begin{proof}
Combining the gradient inequality and the cylinder inequality,
\begin{align}
 \frac{d\mu}{d\nu_{h}}(z)&\leq \frac{c_1}{\pi^2}\mu\left([s-\pi,s+\pi]\times S^1\right)\\&\leq \frac{c_1}{\pi^2}\mu\left([-|s|-\pi,|s|+\pi]\times S^1\right)\notag\\&\leq \frac{c_1}{\pi^2}e^{-c_3(\frac1{2}Mod(I)-\pi-|s|)}\mu(I)\notag.
\end{align}
\end{proof}

\section{A priori bound}\label{SecState}

\begin{df}
Let $(M,h)$ be a Riemannian manifold, let $N$ be a totally geodesic submanifold possibly with boundary, and let $h_{N}$ be the induced metric on $N$. Let $p\in N.$ Define the \textbf{segment width} by
\begin{align*}
&SegWidth(N,p;h) = \\
& = \sup \{s > 0| B_r(p;(M,h))\cap N = B_r(p;(N,h_{N}))\text{ for all $r < s$}\}.
\end{align*}
\end{df}

In the following, an \textbf{embedded geodesic} is a one-dimensional totally geodesic submanifold possibly with boundary. Let $(M,h)$ be a Riemannian manifold. For $\gamma$ an embedded geodesic, we denote by $d\ell_h$ the line element, or volume form, of the induced metric on $\gamma$. Now, consider the special case when $M$ is a Riemann surface $\Sigma$. Let $\mu$ be a measure and $h$ a conformal metric on $\Sigma$. Define
\[
d\ell_\mu = \left .\sqrt{\frac{d\mu}{d\nu_h}}\right|_{\gamma} d\ell_h.
\]
It easy to see that $d\ell_\mu$ is independent of $h.$ If $\gamma$ is compact, define
\[
\ell_\mu(\gamma) = \int_\gamma d\ell_\mu.
\]

Let $\Sigma$ be a Riemann surface possibly with boundary. For the rest of the paper, denote by $h_{can}$ the unique conformal metric on $\Sigma_\C$ satisfying the following conditions. If $\chi(\Sigma_\C) \neq 0,$ then $h_{can}$ has constant curvature $\pm 1.$ If $\chi(\Sigma_\C) = 0,$ then $h_{can}$ has constant curvature $0$ and $\nu_{h_{can}}(\Sigma) = 1.$
\begin{tm}\label{TmApLenBound}
There are constants $b_1$ and $b_2$ with the following significance. Let $(\Sigma,\mu)\in\mathcal{M}$, let $\gamma\subset\Sigma_{\C}$ be a compact embedded conjugation invariant geodesic, and let $k\geq1$ be a constant such that for any $x\in\gamma,$
\begin{equation}\label{eq:swa}
SegWidth(\gamma,x;h_{can})>\frac1{k}InjRad(\Sigma_\C;h_{can},x).
\end{equation}
Then
\begin{equation}\label{eq:meq}
\ell_{\mu}(\gamma)\leq {k^2}\{b_1\mu(\Sigma_{\C})+b_2genus(\Sigma_{\C})\}.
\end{equation}
\end{tm}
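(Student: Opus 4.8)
The plan is to reduce the length bound to a counting argument, in the spirit of the sketch in the introduction, but carried out on the complex double $\Sigma_\C$ equipped with $h_{can}$ and with the thick–thin dichotomy guided by the abstract thick–thin measure axioms. First I would set $f(x) = \sqrt{\frac{d\mu}{d\nu_{h_{can}}}(x)}$, so that $\ell_\mu(\gamma) = \int_\gamma f\, d\ell_{h_{can}}$ is the area of the hypograph $H = \{(x,t) : 0 \le t \le f(x)\}$ inside $\gamma \times [0,\infty)$. Using the rescaling from Remark~\ref{rmGradApp} (so $c_1'\delta_1 = 1$), the gradient inequality says that whenever $r_d = 1/d \le \min(\sinh^{-1}1, InjRad(\Sigma_\C;h_{can},x))$ with $d = f(x)^2$, the geodesic ball $B_{1/d}(x)$ carries $\mu$-mass at least $\delta_1$; call such a ball a \emph{dense ball}. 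The hypothesis~\eqref{eq:swa} on the segment width is exactly what lets us transfer this: the portion of $\gamma$ inside $B_{1/d}(x)$ is a genuine subarc of $\gamma$ of length comparable to $\frac1{k}\cdot\frac1{d}$, so a unit of $\gamma$-length at height $d$ "uses up" a dense ball whose $\mu$-mass is bounded below, while disjointness of the arcs (controlled by $k$) keeps the dense balls from overlapping too much. The factor $k^2$ in~\eqref{eq:meq} should emerge precisely from this two-step loss: one factor of $k$ from segment-width-to-radius, one more from packing dense balls along $\gamma$.

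Next I would organize the hypograph dyadically, following the introduction: for $t \ge 1$ let $w_t = \{x \in \gamma : f(x) \ge t\}$, and cover $H \cap \{f \ge 1\}$ by rectangles over the level sets $w_{3^i}$. The sub-$\sigma$-unit part $H \cap \{f \le 1\}$ contributes at most $\ell_{h_{can}}(\gamma)$, which is controlled by genus and $k$ via the collar geometry of Theorems~\ref{TmThTh} and~\ref{TmThTh2} together with~\eqref{eq:swa}; this is where the $b_2\,genus(\Sigma_\C)$ term and part of the $k^2$ come from. The bulk of the work is the thick part $K = \{(x,t) \in H : \ell_{h_{can}}(w_t) \ge 8/t\}$ versus thin part $N = H \setminus K$. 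For $K$ I would run the energy-quantization count: on each dyadic annular region $W_{3^i} \setminus W_{3^{i+1}}$ (images under $\exp$ of the $\tfrac1{3^i}$-tube over $w_{3^i}$) the dense balls associated to the thick part are disjoint, number $\gtrsim 3^i \ell(w_{3^i})$ up to the "$-2$ per endpoint" loss, and each has $\mu$-mass $\ge \delta_1$; summing over $i$ and over components of $w_t$ gives $\sum 3^i \ell(w_{3^i}) \lesssim \mu(\Sigma_\C)/\delta_1$, hence $Area(K) \lesssim k^2 \mu(\Sigma_\C)$. The thickness threshold $\ell(w_t) \ge 8/t$ is exactly what makes the floors $a_t = \lfloor t\ell(w_t)/2 \rfloor$ large enough that assumption~\eqref{EqAssumption}-type estimates go through.

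For the thin part $N$, where level sets are short, the dense-ball count fails and one instead uses that a short level set $w_t$, thickened, sits inside a long, thin subcylinder of $\Sigma_\C$ (or inside a collar), so Lemma~\ref{ExpCylDEst} applies: on such a cylinder the density $\frac{d\mu}{d\nu_{h_{st}}}$ decays exponentially as one moves toward the core, which forces $f$, hence the hypograph area over that region, to be summably small — the exponential decay beats the at most $O(\log)$-sized domain. The main technical obstacle, and where I expect to spend the most effort, is organizing the critical points of $f$: the level sets $w_t$ need not be connected and can merge and split arbitrarily, so before any of the above can be run one must build the tree-structured partition of $H$ (the content of Sections~\ref{SecTreePart}–\ref{SecThickThinPart}) that simultaneously makes the thick components satisfy the packing estimate and the thin components embed into genuine subcylinders with large modulus; getting the bookkeeping of endpoints, nesting, and the passage between $h_{can}$ and $h_{st}$ coordinates to close with clean constants $b_1, b_2$ is the crux.
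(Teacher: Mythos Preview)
Your outline captures the large-scale architecture correctly---hypograph, dyadic packing of dense balls for the thick part, cylinder inequality for the thin part---and acknowledges that Sections~\ref{SecTreePart}--\ref{SecThickThinPart} do the combinatorial bookkeeping. But there is a genuine gap in how you handle the region $\{f\le 1\}$, and it is exactly the point where the paper does something you have not.

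You write that the contribution of $H\cap\{f\le1\}$ is at most $\ell_{h_{can}}(\gamma)$, which ``is controlled by genus and $k$ via the collar geometry of Theorems~\ref{TmThTh} and~\ref{TmThTh2} together with~\eqref{eq:swa}.'' This is false. A conjugation-invariant geodesic satisfying~\eqref{eq:swa} can cross a thin collar of $\Sigma_\C$, and the $h_{can}$-width $w(\beta_i)=\sinh^{-1}\!\bigl(1/\sinh(\tfrac12\ell(\beta_i))\bigr)$ of such a collar is unbounded as $\ell(\beta_i)\to 0$. So $\ell_{h_{can}}(\gamma)$ is \emph{not} bounded by genus, and your low-$f$ term blows up. The paper deals with this by abandoning $h_{can}$ on $\gamma$ in favor of the renormalized metric $n=\tfrac{1}{r(x)}h_{can}|_\gamma$ with $r(x)=\min(\sinh^{-1}1,InjRad(x))$, and by working with $g=\ln\frac{d\ell_\mu}{d\ell_n}$ rather than $f$. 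In the normalized metric the low-derivative contribution becomes the integral $I_0=\int_\gamma\min\{e^{g},2k\}\,d\ell_n$, and its bound (Lemma~\ref{lmI0}) requires the cylinder inequality inside the collars of the \emph{surface} (via Lemmas~\ref{lmLNCutoffBounf},~\ref{lmConstInjRad},~\ref{lmKTame}), not merely their existence. You invoke the cylinder inequality only for the thin part of the \emph{hypograph}, but there are two independent ``thin'' phenomena here---thin surface and thin hypograph---and the paper treats them in parallel (this is the ``pleasing symmetry'' mentioned in the introduction). Your sketch collapses them into one, and the surface-thin piece is where your argument would fail.

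A smaller point: with the normalization $c_1'\delta_1=1$, Remark~\ref{rmGradApp} gives $r_d=1/\sqrt{d}$ where $d=\frac{d\mu}{d\nu_h}=f^2$, so the dense ball has radius $1/f$, not $1/d=1/f^2$ as you wrote.
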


For the rest of this discussion up to and including the proof of Theorem \ref{TmApLenBound}, we fix $\gamma$ and $k$.
\begin{rem}\label{rmScaling}
Recall the definition of $c_1'$ from Remark~\ref{rmGrad}. In the proof of Theorem~\ref{TmApLenBound}, without loss of generality, we may assume the constants $c_1',\delta_1,$ pertaining to the definition of thick-thin satisfy $c_1'\delta_1 = 1.$ This is true for two reasons. First, for $\tilde c_1 \geq c_1$ and $\tilde \delta_1 \leq \delta_1,$ we have
\[
\mathcal M(c_1,c_2,c_3,\delta_1,\delta_2) \subset \mathcal M(\tilde c_1, c_2,c_3,\tilde\delta_1,\delta_2).
\]
Such $\tilde c_1,\tilde\delta_1,$ can always be chosen so that $\tilde c_1' \tilde \delta_1 = 1.$ However, this will not yield the optimal constant $c$ for a given $c_1,\delta_1.$ To obtain the optimal value of $c,$ it is useful to note that for $\lambda > 0,$ the map
\[
\mathcal M(c_1,c_2,c_3,\delta_1,\delta_2) \rightarrow \mathcal M(c_1,c_2,c_3,\lambda \delta_1,\lambda\delta_2)
\]
given by $(\Sigma,j,\mu) \mapsto (\Sigma,j,\lambda \mu)$ scales the constants $b_i$ for $i=1,2$, by $b_1 \mapsto b_1/\sqrt{\lambda}$ and $b_2\mapsto \sqrt{\lambda}b_2$.
\end{rem}

For any metric $n$ on $\gamma,$ denote by $d\ell_{n}$ the line element. By definition,
\[
\ell_{\mu}(\gamma)=\int_{x\in\gamma}\frac{d\ell_{\mu}}{d\ell_{n}}d\ell_{n}(x).
\]
We derive Theorem~\ref{TmApLenBound} by studying the graph of the function
\[
g:=\ln\frac{d\ell_{\mu}}{d\ell_{n}}:\gamma\rightarrow(-\infty,\infty)
\]
for a convenient choice of the metric $n$. We define $n$ as follows. For any $x\in\gamma$, let
\[
r(x):=\min(\sinh^{-1}(1),InjRad(\Sigma;h_{can},x)).
\]
It turns out that for dealing with higher genus, where there is no a priori bound on the radius of injectivity of $\Sigma$, it is convenient to use the metric $n=\frac1{r(x)}h_{can}|_{\gamma}$.

We use the normalized metric $n$ only on $\gamma$. On $\Sigma_{\C}$ we continue to use the standard metric $h_{can}$. To translate from estimates in terms of the one to estimates in terms of the other metric, we will use the following lemma.
\begin{lm}\label{lm:cpmet}
Let $x_1,x_2\in\gamma$ such that $d_{{\gamma}}(x_1,x_2;h_{can})\leq r(x_1)/2.$ Then
\begin{align}\label{NormEstimate}
\frac{2d_{\gamma}(x_1,x_2;h_{can})}{3r(x_1)}\leq d_{\gamma}(x_1,x_2;h_{n})\leq \frac{2d_{\gamma}(x_1,x_2;h_{can})}{r(x_1)}.
\end{align}
\end{lm}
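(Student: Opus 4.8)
The plan is to exploit the slow variation of the function $r(\cdot)$ along $\gamma$. Since $r$ is the pointwise minimum of the constant $\sinh^{-1}(1)$ and the injectivity radius, and the injectivity radius is a $1$-Lipschitz function with respect to $d_{h_{can}}$, the function $r$ is itself $1$-Lipschitz. Because any rectifiable curve contained in $\gamma$ is in particular a curve in $\Sigma_{\C}$, it follows that $|r(x_1)-r(x)|\le d_\gamma(x_1,x;h_{can})$ for all $x_1,x\in\gamma$. Hence, whenever $d_\gamma(x_1,x;h_{can})\le r(x_1)/2$ we obtain
\begin{equation*}
\tfrac12 r(x_1)\le r(x)\le \tfrac32 r(x_1).
\end{equation*}
This is the only point at which the hypothesis $d_\gamma(x_1,x_2;h_{can})\le r(x_1)/2$ is used, and it is responsible for the constants $\tfrac23$ and $2$ in \eqref{NormEstimate}. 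Throughout I use that $h_n$ rescales $h_{can}$-length along $\gamma$ by the factor $r(\cdot)^{-1}$, i.e. $\ell_{h_n}(\tau)=\int_\tau r(x)^{-1}\,d\ell_{h_{can}}(x)$ for a rectifiable curve $\tau\subset\gamma$.

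For the upper bound I would take a minimizing $h_{can}|_\gamma$-geodesic $\sigma$ from $x_1$ to $x_2$, which exists because $\gamma$ is compact, of length $L:=d_\gamma(x_1,x_2;h_{can})\le r(x_1)/2$. Every point of $\sigma$ lies within $h_{can}|_\gamma$-distance $r(x_1)/2$ of $x_1$, so $r\ge\tfrac12 r(x_1)$ along $\sigma$ by the displayed inequality, whence
\begin{equation*}
d_\gamma(x_1,x_2;h_n)\le \ell_{h_n}(\sigma)=\int_\sigma r^{-1}\,d\ell_{h_{can}}\le \frac{2}{r(x_1)}\,L,
\end{equation*}
which is the right-hand inequality of \eqref{NormEstimate}.

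For the lower bound I would estimate $\ell_{h_n}(\tau)$ from below for an arbitrary rectifiable curve $\tau\subset\gamma$ from $x_1$ to $x_2$, splitting into two cases. If $\tau$ is contained in the closed $h_{can}|_\gamma$-ball $\overline{B}_{r(x_1)/2}(x_1)$, then $r\le\tfrac32 r(x_1)$ along $\tau$, so $\ell_{h_n}(\tau)\ge\tfrac{2}{3r(x_1)}\ell_{h_{can}}(\tau)\ge\tfrac{2}{3r(x_1)}L$. Otherwise, let $\tau'$ be the initial subarc of $\tau$ up to the first time it meets $h_{can}|_\gamma$-distance $r(x_1)/2$ from $x_1$; then $\tau'\subset\overline{B}_{r(x_1)/2}(x_1)$ and $\ell_{h_{can}}(\tau')\ge r(x_1)/2$, so $\ell_{h_n}(\tau)\ge\ell_{h_n}(\tau')\ge\tfrac{2}{3r(x_1)}\cdot\tfrac{r(x_1)}{2}=\tfrac13\ge\tfrac{2L}{3r(x_1)}$, the last step using $L\le r(x_1)/2$. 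Taking the infimum over $\tau$ gives the left-hand inequality of \eqref{NormEstimate}, completing the proof. The only genuinely nontrivial ingredient is the $1$-Lipschitz continuity of the injectivity radius, which is standard; everything else is bookkeeping with curve lengths, and one could replace the Lipschitz bound by the elementary observation that a geodesic loop or conjugate point realizing the injectivity radius at a nearby point can be translated.
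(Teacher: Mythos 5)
Your proof is correct and takes essentially the same approach as the paper's: both exploit the $1$-Lipschitz property of $r$ (the paper's Lemma~\ref{eq:drl1} says $|dr(\gamma(t))/dt|\le 1$) together with the conformal rescaling $d\ell_n = r^{-1}\,d\ell_{h_{can}}$, and the constants $2/3$ and $2$ arise exactly as in your displayed inequality $\tfrac12 r(x_1)\le r(x)\le\tfrac32 r(x_1)$. The one place you are actually more careful than the paper is the lower bound: the paper parameterizes the $h_{can}$-minimizing arc and writes $d_\gamma(x_1,x_2;h_n)=|\int_0^{\Delta x} r(\gamma(t))^{-1}\,dt|$ as an equality, which tacitly assumes the $h_n$-minimizing arc is the same one, whereas your case split (curve stays inside $\overline B_{r(x_1)/2}(x_1)$ versus first exits it, the latter forcing $\ell_{h_n}\ge 1/3\ge 2L/(3r(x_1))$) closes that gap cleanly.
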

\begin{lm}\label{eq:drl1}
For all $t$ such that $r(\gamma(t))$ is differentiable,
\begin{equation}
\frac{dr(\gamma(t))}{dt}\leq 1.
\end{equation}
\end{lm}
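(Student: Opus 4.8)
The claim is that the function $r(x) = \min(\sinh^{-1}(1), InjRad(\Sigma_\C; h_{can}, x))$, restricted to the geodesic $\gamma$ and parametrized by arc length $t$ with respect to $h_{can}$, is $1$-Lipschitz. The plan is to reduce this to the standard fact that the injectivity radius of a Riemannian manifold is $1$-Lipschitz with respect to its own distance function, and then observe that truncation by a constant and restriction to a geodesic can only decrease the Lipschitz constant.

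First I would recall the general fact: for a complete Riemannian manifold $(N,h)$, the function $p \mapsto InjRad(N;h,p)$ satisfies $|InjRad(N;h,p) - InjRad(N;h,q)| \leq d_h(p,q)$ for all $p,q$; this is classical (it follows, e.g., from the characterization of the injectivity radius as the minimum of the conjugate radius and half the length of the shortest geodesic loop based at the point, both of which are $1$-Lipschitz). Applying this with $N = \Sigma_\C$, $h = h_{can}$, $p = \gamma(t_1)$, $q = \gamma(t_2)$, and using that $\gamma$ is parametrized by $h_{can}$-arc length so that $d_{h_{can}}(\gamma(t_1),\gamma(t_2)) \leq |t_1 - t_2|$, we get that $t \mapsto InjRad(\Sigma_\C;h_{can},\gamma(t))$ is $1$-Lipschitz. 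Since $a \mapsto \min(\sinh^{-1}(1), a)$ is itself $1$-Lipschitz (indeed non-expansive), the composition $t \mapsto r(\gamma(t))$ is $1$-Lipschitz, and hence at any point of differentiability its derivative is bounded above by $1$ in absolute value, which in particular gives the stated one-sided bound.

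The only point requiring a little care is whether one wants $InjRad$ of $\Sigma$ or of $\Sigma_\C$ in the definition of $r$; the excerpt writes $InjRad(\Sigma;h_{can},x)$ but $h_{can}$ lives on $\Sigma_\C$ and $\gamma \subset \Sigma_\C$, so the relevant object is the injectivity radius of the closed surface $\Sigma_\C$ at points of $\gamma$, and the argument above applies verbatim. I do not anticipate a genuine obstacle here: the main (and essentially only) content is invoking the $1$-Lipschitz property of the injectivity radius, with the truncation and the arc-length parametrization contributing nothing beyond non-expansiveness. If one prefers to avoid citing the Lipschitz property of $InjRad$ as a black box, one can instead argue directly: for $t_2 > t_1$, any embedded metric ball $B_{r(\gamma(t_1)) - |t_1-t_2|}(\gamma(t_2))$ is contained in $B_{r(\gamma(t_1))}(\gamma(t_1))$ up to the truncation bound, hence embedded, giving $r(\gamma(t_2)) \geq r(\gamma(t_1)) - |t_1 - t_2|$ and by symmetry the full Lipschitz estimate.
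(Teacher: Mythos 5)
Your proof is correct but takes a different route from the paper's. Rather than invoking the general $1$-Lipschitz property of the injectivity radius, the paper splits into the curvature cases of $h_{can}$ and uses machinery already set up: when $h_{can}$ is hyperbolic, Theorem~\ref{TmThTh2}\ref{it:lbd} gives $r\equiv\sinh^{-1}(1)$ on the thick part (so $r$ is locally constant there), while Theorem~\ref{TmThTh2}\ref{it:fml} gives the collar formula $\sinh(InjRad(p))=\cosh\tfrac12\ell(\beta_i)\cosh d-\sinh d$; since $d$, the distance to the collar boundary, is $1$-Lipschitz along an arc-length geodesic, differentiating and using $(A\cosh d-\sinh d)^2-(A\sinh d-\cosh d)^2=A^2-1\ge 0$ with $A=\cosh\tfrac12\ell(\beta_i)\ge1$ gives $|\tfrac{d}{dt}InjRad|<1$; in the flat and spherical cases the injectivity radius is constant and the derivative vanishes. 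Your black-box invocation of the Lipschitz property of $InjRad$ is a legitimate, shorter alternative that treats the three curvature cases uniformly; the paper's version has the modest advantage of not importing an external result, since the collar lemma is already in play. One caveat about your closing ``direct argument'': containment $B_s(p)\subset B_r(q)$ with $B_r(q)$ an embedded disk does not by itself imply that $B_s(p)$ is an embedded disk---one must additionally rule out geodesic loops at $p$ of length $<2s$ and conjugate points within distance $s$. In the present constant-curvature setting this is easily done (Gauss--Bonnet forbids geodesic digons inside an embedded disk when $K\le0$, and for $K=1$ the conjugate radius is $\pi>\sinh^{-1}(1)\ge s$), but as written the step skips exactly the case analysis that makes the classical Lipschitz estimate nontrivial. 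You are also correct that $InjRad(\Sigma;\cdot)$ in the definition of $r$ should read $InjRad(\Sigma_\C;\cdot)$.
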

\begin{proof}
If $h_{can}$ has curvature $-1,$ inequality~\eqref{eq:drl1} follows from Theorem~\ref{TmThTh2}\ref{it:lbd} and~\ref{it:fml}. If $h_{can}$ has non-negative curvature, then $\frac{dr(\gamma(t))}{dt} = 0.$
\end{proof}

\begin{proof}[Proof of Lemma \ref{lm:cpmet}]
Write $\Delta x = d_\gamma(x_1,x_2;h_{can}).$ Parameterize $\gamma$ by $h_{can}$-length so that $\gamma(0) = x_1$ and $\gamma(\Delta x) = x_2.$ It is easy to see that $r(\gamma(t))$ is piecewise smooth and thus differentiable almost everywhere with respect to $t.$  Applying Lemma \ref{eq:drl1} we calculate
\begin{align}
d_\gamma(x_{1},x_2;h_n)&= \left|\int_{0}^{\Delta x}\frac1{r(\gamma(t))}dt \right|\\
&\leq \frac{\Delta x}{\inf_{t\in[0,\Delta x]}r(\gamma(t))}\notag\\
&\leq \frac{\Delta x}{r(x_1)-\Delta x\ess\sup_{t\in[0,\Delta x]}\left|\frac{dr(\gamma(t))}{dt}\right|}\notag\\
&\leq \frac{2\Delta x}{r(x_1)}.\notag
\end{align}
For the last inequality we have used~\eqref{eq:drl1}. The upper bound of estimate~\eqref{NormEstimate} follows. A similar argument gives the lower bound.
\end{proof}

\begin{df}\label{dfDB}
Denote
\[
D:=\{(x,t)\in\gamma\times[\ln 2k,\infty)|\ln 2k\leq t\leq g(x)\}.
 \]
For any  $(x,t)\in D,$ denote $B(x,t):=B_{e^{-t}r(x)}(x;\Sigma,h_{can}).$
\end{df}
\begin{lm}\label{lmDDisc}
For $(x,t) \in D,$ we have $\mu(B(x,t))\geq\delta_1$.
\end{lm}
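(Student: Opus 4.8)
The plan is to apply the gradient inequality in the form stated in Remark~\ref{rmGradApp}. Fix $(x,t)\in D$, so that $\ln 2k \le t \le g(x)$. By definition of $g$, we have $g(x) = \ln\frac{d\ell_\mu}{d\ell_n}(x)$, and by definition of $d\ell_\mu$ and of the normalized metric $n = \frac{1}{r(x)}h_{can}|_\gamma$, the density $\frac{d\ell_\mu}{d\ell_n}(x)$ equals $r(x)\sqrt{d(x)}$ where $d(x) := \frac{d\mu}{d\nu_{h_{can}}}(x)$. Hence $t \le g(x)$ gives $e^{t} \le r(x)\sqrt{d(x)}$, i.e. $e^{-t} r(x) \ge \frac{1}{\sqrt{d(x)}}$. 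After the scaling convention $c_1'\delta_1 = 1$ of Remark~\ref{rmGradApp}, the radius $r_d$ of that remark is exactly $\frac{1}{\sqrt{d(x)}}$, so we have shown $e^{-t}r(x) \ge r_d$. On the other hand, the radius of $B(x,t)$ is $e^{-t}r(x)$, and since $t \ge \ln 2k \ge \ln 2 > 0$ we get $e^{-t}r(x) < r(x) \le \min(\sinh^{-1}(1), InjRad(\Sigma;h_{can},x))$ by the very definition of $r(x)$.

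So the radius $e^{-t}r(x)$ lies in the interval $\big[r_d,\ \min(\sinh^{-1}(1), InjRad(\Sigma;h_{can},x))\big)$. The conclusion of Remark~\ref{rmGradApp} is that $\mu(B_{r_d}(x;h_{can})) \ge \delta_1$; since $B_{r_d}(x;h_{can}) \subseteq B_{e^{-t}r(x)}(x;h_{can}) = B(x,t)$ and $\mu$ is a (nonnegative) measure, monotonicity gives $\mu(B(x,t)) \ge \mu(B_{r_d}(x;h_{can})) \ge \delta_1$, as required. One should double-check the degenerate possibility that $r(x)$ is not an interior radius — but $r(x) \le InjRad(\Sigma;h_{can},x)$ by construction, and the ball $B(x,t)$ is taken inside $\Sigma$ (not $\Sigma_\C$), which is consistent with how the gradient inequality is applied in Remark~\ref{rmGradApp} since $x\in\gamma$ is a point of $\Sigma$ (being conjugation invariant, $\gamma$ meets $\Sigma$, or more precisely $\gamma \subset \Sigma_\C$ is handled by the clean/conjugation-invariant setup); the ball of radius below the injectivity radius is simply connected, so the hypothesis of the gradient inequality is met.

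The only genuinely substantive point — hence the main "obstacle," though it is light — is getting the identification $\frac{d\ell_\mu}{d\ell_n}(x) = r(x)\sqrt{d(x)}$ straight, i.e. tracking the factor $r(x)$ coming from $n = \frac{1}{r(x)}h_{can}|_\gamma$ through the definition $d\ell_\mu = \sqrt{d\mu/d\nu_h}\big|_\gamma\, d\ell_h$ (which is independent of the conformal metric $h$ on $\Sigma$, but $d\ell_n$ versus $d\ell_{h_{can}}$ on $\gamma$ is not). Once that bookkeeping is done, the chain $t \le g(x) \Rightarrow e^{-t}r(x) \ge r_d$ together with monotonicity of $\mu$ finishes the proof immediately. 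No hard analysis is involved; everything reduces to unwinding Definitions~\ref{dfDB} and the normalization $n$, and quoting Remark~\ref{rmGradApp}.
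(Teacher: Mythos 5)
Your proof is correct and follows essentially the same route as the paper's (which is a one-liner: $t>0$ implies $B(x,t)$ is an embedded disk, then invoke Remark~\ref{rmGradApp}); you have simply unpacked the bookkeeping that the paper leaves implicit. In particular, your identification $\frac{d\ell_\mu}{d\ell_n}(x) = r(x)\sqrt{d(x)}$ (using $d\ell_n = \tfrac{1}{r(x)}d\ell_{h_{can}}$ and $d\ell_\mu = \sqrt{d\mu/d\nu_{h_{can}}}\,d\ell_{h_{can}}$), the deduction $e^{-t}r(x) \ge r_d$ from $t \le g(x)$ under the normalization $c_1'\delta_1 = 1$, the observation that $t \ge \ln 2k > 0$ keeps the radius below $r(x) \le \min(\sinh^{-1}(1), InjRad)$, and the monotonicity step $B_{r_d} \subseteq B(x,t)$ all correctly spell out what "by Remark~\ref{rmGradApp}" compresses.
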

\begin{proof}
Since $k\geq 1$, we have $t>0.$ Therefore, $B(x,t)$ is an embedded disk.  By Remark \ref{rmGradApp}, $\mu(B(x,t))\geq \delta_1$.
\end{proof}
\begin{lm}\label{lmDisjDisc}
Let $(x_i,t_i)\in D$ for $i=1,2$. Suppose
\[
d_{\gamma}(x_1,x_2;h_n)>2(e^{-t_1}+e^{-t_2}).
\]
Then $B(x_1,t_1)\cap B(x_t,t_2)=\emptyset$.
\end{lm}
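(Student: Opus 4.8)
The plan is to establish the contrapositive: supposing $B(x_1,t_1)\cap B(x_2,t_2)\neq\emptyset$, I will show $d_\gamma(x_1,x_2;h_n)\le 2(e^{-t_1}+e^{-t_2})$. First fix a point $z$ in the intersection. Since $z\in B(x_i,t_i)$, the $h_{can}$-distance in $\Sigma$ from $x_i$ to $z$ is smaller than $e^{-t_i}r(x_i)$, hence so is the $h_{can}$-distance in $\Sigma_{\C}$, which is no larger. The triangle inequality in $(\Sigma_{\C},h_{can})$ then yields
\[
d_{\Sigma_{\C}}(x_1,x_2;h_{can})<e^{-t_1}r(x_1)+e^{-t_2}r(x_2).
\]
Since both hypothesis and conclusion are symmetric in the two indices, I may relabel so that $r(x_1)\ge r(x_2)$, whence the right side is at most $(e^{-t_1}+e^{-t_2})\,r(x_1)$; and as $(x_i,t_i)\in D$ forces $t_i\ge\ln 2k$, so that $e^{-t_i}\le\tfrac1{2k}$, this is in turn at most $r(x_1)/k$.

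The second and central step is to convert this ambient estimate into an estimate along $\gamma$. From the definition of $r$ one has $r(x_1)\le InjRad(\Sigma_{\C};h_{can},x_1)$, so the standing hypothesis~\eqref{eq:swa} gives
\[
d_{\Sigma_{\C}}(x_1,x_2;h_{can})<\frac{r(x_1)}{k}\le\frac1k\,InjRad(\Sigma_{\C};h_{can},x_1)<SegWidth(\gamma,x_1;h_{can}).
\]
By the very definition of the segment width, for every radius strictly between these two quantities the $h_{can}$-ball about $x_1$ meets $\gamma$ exactly in the intrinsic $h_{can}$-ball of $\gamma$ of the same radius; shrinking the radius down to $d_{\Sigma_{\C}}(x_1,x_2;h_{can})$ shows $d_\gamma(x_1,x_2;h_{can})=d_{\Sigma_{\C}}(x_1,x_2;h_{can})$. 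Thus $d_\gamma(x_1,x_2;h_{can})$ is a small multiple of $r(x_1)$ — in particular below $r(x_1)/2$, by the bound $t_i\ge\ln 2k$ — so Lemma~\ref{lm:cpmet} applies at $x_1$ and produces
\[
d_\gamma(x_1,x_2;h_n)\le\frac{2\,d_\gamma(x_1,x_2;h_{can})}{r(x_1)}<2\bigl(e^{-t_1}+e^{-t_2}\bigr),
\]
which is exactly what the contrapositive requires.

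The step I expect to be the main obstacle is the middle one: one must certify that the ambient distance $d_{\Sigma_{\C}}(x_1,x_2;h_{can})$ stays strictly below $SegWidth(\gamma,x_1;h_{can})$, so that the segment-width identity can be invoked, and this is precisely where the normalization $r(x)=\min(\sinh^{-1}1,InjRad(\Sigma_{\C};h_{can},x))$, the inequality $e^{-t_i}\le\tfrac1{2k}$ built into the definition of $D$, and hypothesis~\eqref{eq:swa} all enter, while Lemma~\ref{eq:drl1} (the $1$-Lipschitz control of $r$ along $\gamma$) is what makes the hypothesis of Lemma~\ref{lm:cpmet} applicable. The remaining ingredients — the triangle inequality, the relabeling to arrange $r(x_1)\ge r(x_2)$, and the substitution into Lemma~\ref{lm:cpmet} — are routine.
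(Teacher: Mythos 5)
Your argument mirrors the paper's proof, restated in contrapositive form: both combine the triangle inequality in $(\Sigma_{\C},h_{can})$, the segment-width hypothesis~\eqref{eq:swa} to pass between ambient and intrinsic distance along $\gamma$, and Lemma~\ref{lm:cpmet} to convert from $h_{can}$ to $h_n$. One caveat worth noting: the intermediate claim $d_\gamma(x_1,x_2;h_{can})<r(x_1)/2$ follows from the derived bound $<r(x_1)/k$ only when $k\ge 2$; for $1\le k<2$ this is not immediate, and one should either observe that $k$ may be replaced by $\max(k,2)$ at the cost of a constant factor absorbed into $b_1,b_2$ in Theorem~\ref{TmApLenBound}, or split the $\gamma$-geodesic and apply the intermediate estimate $d_\gamma\le \Delta x/(r(\cdot)-\Delta x)$ from the proof of Lemma~\ref{lm:cpmet} with both base points $x_1$ and $x_2$, using only $e^{-t_i}\le\tfrac12$ — the paper's equally terse proof has the same implicit requirement.
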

\begin{proof}
By Lemma~\ref{lm:cpmet} we have
\[
d_{\gamma}(x_1,x_2;h_{{can}})\geq e^{-t_1}r(x_1)+e^{-t_2}r(x_2).
\]
Since $t_i\geq\ln2k,$ the assumption of Theorem \ref{TmApLenBound} implies that
\[
SegWidth(\gamma,x_i;h_{can}) \geq 2 e^{-t_i}r(x_i).
\]
The claim now follows.
\end{proof}

\section{Partitions of hypographs}\label{SecTreePart}
Let $\gamma$ be a 1-dimensional manifold, let $f:\gamma\rightarrow[0,\infty)$ be a continuous function, and let $E$ be the hypograph of $f$. That is, $E$ is the set of points under the graph of $f$ in $\gamma\times [0,\infty)$. In this section we introduce a binary relation on subsets of $E$, which should be thought of intuitively as the relation of lying above. We prove two basic theorems about this order relation. Theorem \ref {TmTreeEsat} states that for any partition $P$ of $E$ into connected subsets by intersecting $E$ with horizontal segments, the binary relation on the elements of $P$ is a tree-like partial order. See Figure \ref{Fig1}. Theorem \ref{tmMinMaxTree} states that there is a particular such partition, denoted $\mathcal{T}_E$, such that the branchings in the tree associated with $\mathcal{T}_E$ correspond to local minima in the graph of $f$. See Figure \ref{Fig2}. After proving these theorems, we show that continuity of $f$ allows us two control the number of elements of $\mathcal{T}_E$ by the number of its maximal elements. Note that in general $\mathcal{T}_E$ might be infinite, and if $f$ is not continuous, there might not be any maximal elements.

\begin{figure}[h]
\centering
\includegraphics[scale=0.5]{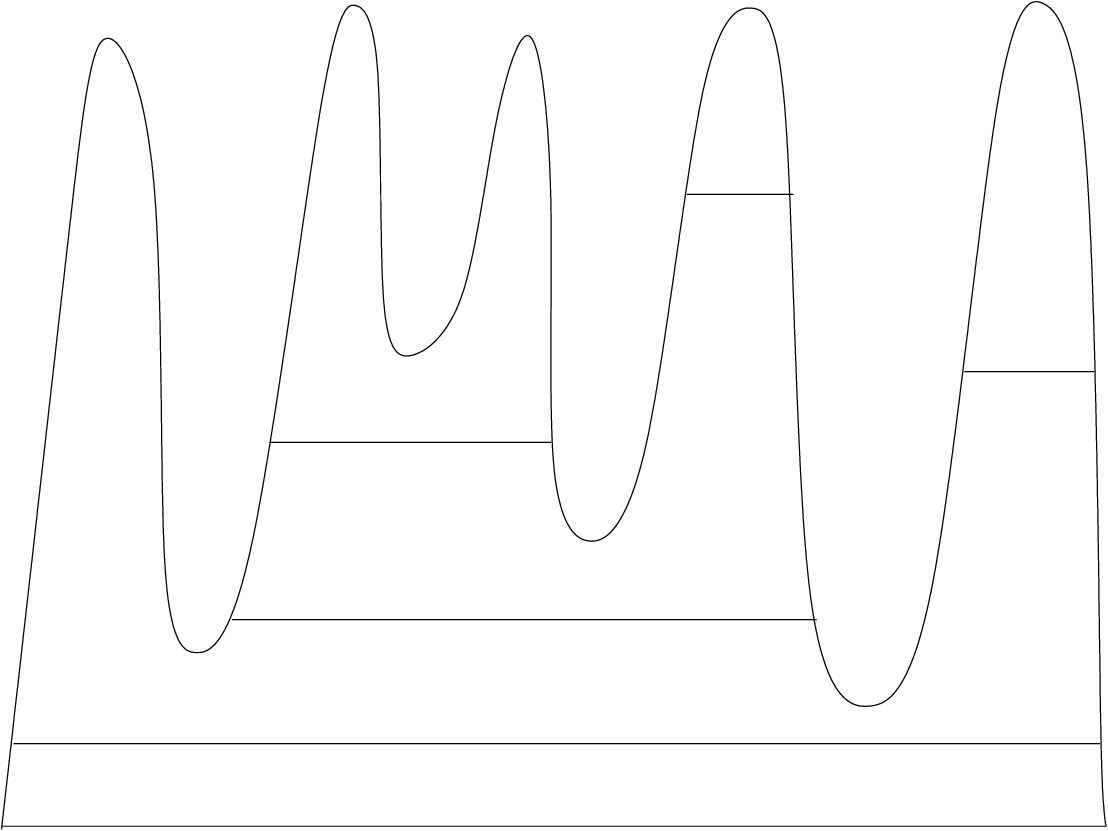}
\caption{}\label{Fig1}
\end{figure}

\begin{figure}[h]
\includegraphics[scale=0.5]{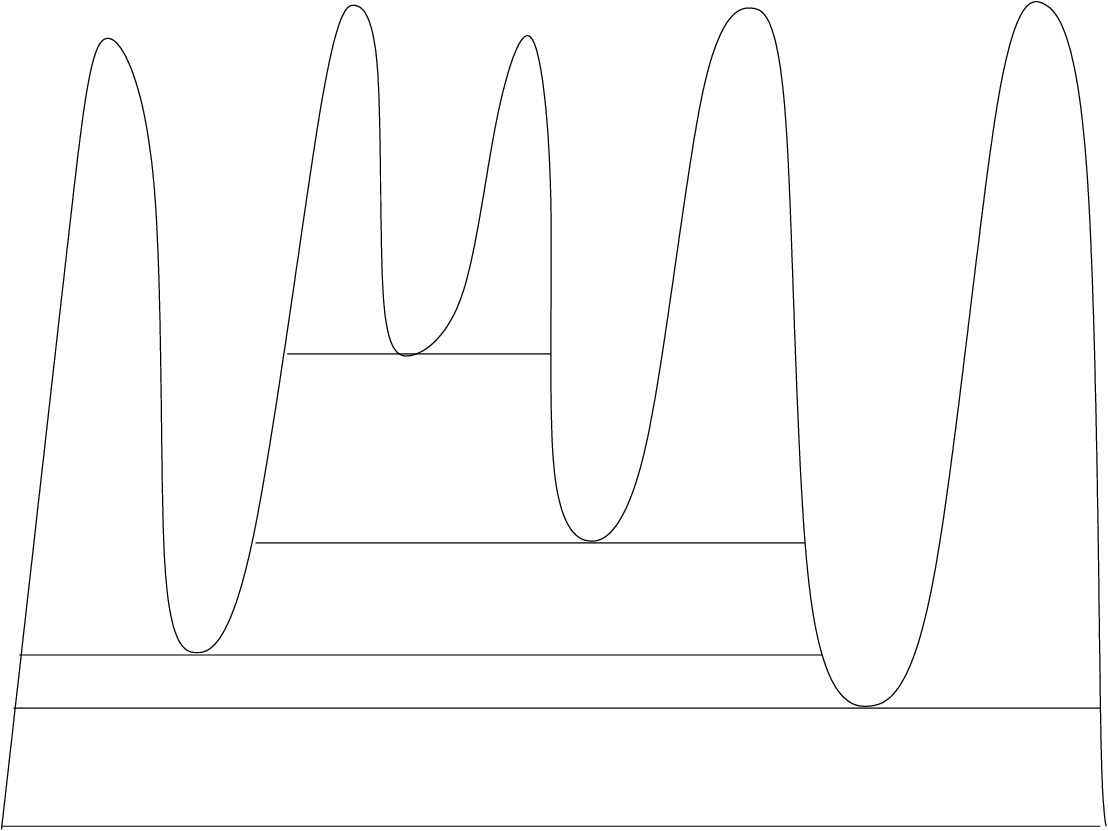}
\caption{}\label{Fig2}
\end{figure}

\subsection{A binary relation}
Let $\gamma$ be a compact 1-dimensional manifold with or without boundary. Let $\xi\in\mathbb{R}$, write $X=\gamma\times[\xi,\infty),$ and denote by $p_1:X\rightarrow\gamma$  and $p_2:X\rightarrow [\xi,\infty)$ the canonical projections. Denote $X_t:=\gamma\times\{t\}$. For any subset $S\subset X$ denote $S_t:=X_t\cap S$. If $p_2(S)\subset[\xi,\infty)$ is bounded, denote
\[
T_f(S):=\sup\{ p_2(S)\},
\]
\[
T_i(S):=\inf\{ p_2(S)\},
\]
and $T(S):=T_f(S)-T_i(S)$.

Let $f:\gamma\rightarrow [\xi,\infty)$ be a continuous function. Denote the region under the graph of $f$ by
\[
E:=\{y\in X|p_2(y)\leq f(p_1(y))\}.
 \]
For a topological space $Y,$ denote by $\pi_0(Y)$ the set of path-connected components.
An \textbf{$E$-segment} is an element of $\cup_{t\in[\xi,\infty)}\pi_0(E_t)$. For any $t\geq \xi$, and for any $x\in p_1(E_t)$ we denote by $e(x,t)$ the $E$-segment containing $(x,t)$.

\begin{rem}\label{rem:cont}
It follows from the continuity of $f$ that $E$ is a closed set. So, all $E$-segments are closed.
It also follows from the continuity of $f$ that if $e$ is an $E$-segment, $x$ is not a boundary point of $\gamma,$ and $(x,t)$ is a boundary point of $e,$ then $f(x)=t$.
\end{rem}

We define a relation on the power set $P(E)$ as follows. Let $S_1,S_2\subset E.$ We say that $S_1\leq_1 S_2$ if $p_1(S_2)\subseteq p_1(S_1)$. We say that $S_1\leq_2 S_2$ if  for any $t\in  p_2(S_2)$ there is a $t'\in p_2(S_1)$ such that $t'\leq t$. Finally we say that $S_1\leq S_2$ is $S_1\leq_1 S_2$ and $S_1\leq_2 S_2$.

The following properties of $\leq$ are obvious and are stated without proof.
\begin{lm}\label{lm:ob}
\begin{enumerate}
\item
The relation $\leq$ is reflexive and transitive.

\item\label{it:ob2}
Let $\Pi\subset P(E)$ be the collection of subsets of the form
\[
s_1\times \{t\},
\]
where $s_1\subset\gamma$, $t\in [\xi,\infty)$. The restriction of $\leq$ to $\Pi$ is antisymmetric. $\Pi$ contains the singletons of $E$ and the $E$-segments.
\item
For any two sets $S_1,S_2\in P(E)$, $S_1\leq S_2$ if and only if for any $(x,t)\in S_2$, $S_1\leq \{(x,t)\}$
\end{enumerate}
\end{lm}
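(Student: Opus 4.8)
The plan is to verify each of the three assertions directly from the definitions of $\leq_1$, $\leq_2$, and $\leq$, since no genuine content is involved beyond unwinding the quantifiers. For part (a), reflexivity of $\leq_1$ is the inclusion $p_1(S)\subseteq p_1(S)$, and reflexivity of $\leq_2$ is witnessed by taking $t'=t$; hence $S\leq S$. For transitivity, given $S_1\leq S_2\leq S_3$, the chain of inclusions $p_1(S_3)\subseteq p_1(S_2)\subseteq p_1(S_1)$ handles $\leq_1$, while for $\leq_2$ one chases a given $t\in p_2(S_3)$ down to some $t''\in p_2(S_2)$ with $t''\leq t$ and then to some $t'\in p_2(S_1)$ with $t'\leq t''\leq t$.

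For part (b), $\Pi$ contains the singletons because $\{(x,t)\}=\{x\}\times\{t\}$, and it contains the $E$-segments because every $E$-segment, being a connected component of $E_t\subseteq\gamma\times\{t\}$, has the form $s_1\times\{t\}$. To see antisymmetry, take $S_i=s_i\times\{t_i\}\in\Pi$ with $S_1\leq S_2$ and $S_2\leq S_1$: the two $\leq_1$ relations give $s_2\subseteq s_1$ and $s_1\subseteq s_2$, hence $s_1=s_2$; and since $p_2(S_i)=\{t_i\}$, the two $\leq_2$ relations give $t_1\leq t_2$ and $t_2\leq t_1$, hence $t_1=t_2$, so $S_1=S_2$. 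The degenerate case $s_i=\emptyset$ is absorbed by noting that then $S_1=S_2=\emptyset$.

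For part (c), the forward direction applies the hypothesis $S_1\leq S_2$ to the singleton $\{(x,t)\}\subseteq S_2$: the inclusion $\{x\}\subseteq p_1(S_2)\subseteq p_1(S_1)$ gives $S_1\leq_1\{(x,t)\}$, and since $t\in p_2(S_2)$ the defining property of $S_1\leq_2 S_2$ furnishes a $t'\in p_2(S_1)$ with $t'\leq t$, giving $S_1\leq_2\{(x,t)\}$. For the converse, if $S_1\leq\{(x,t)\}$ for every $(x,t)\in S_2$, then $p_1(S_2)\subseteq p_1(S_1)$ by ranging over all such $x$, and for any $t\in p_2(S_2)$ one picks any point $(x,t)\in S_2$ and extracts from $S_1\leq_2\{(x,t)\}$ a $t'\in p_2(S_1)$ with $t'\leq t$; hence $S_1\leq S_2$.

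I do not anticipate any real obstacle; the one point worth a moment's care is the treatment of empty sets in the antisymmetry argument, which is precisely why part (b) restricts to the family $\Pi$ of horizontal slices rather than to arbitrary subsets of $E$, on which $\leq$ is genuinely only a preorder.
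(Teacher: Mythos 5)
Your verification is correct throughout: parts (a) and (c) are exactly the quantifier chase one would expect, and in part (b) you rightly observe that antisymmetry on $\Pi$ reduces to set equality of the $s_i$ together with $t_1=t_2$ extracted from the two $\leq_2$ conditions, with the empty-slice degeneracy handled separately. The paper in fact gives no proof of this lemma at all, declaring the properties ``obvious and stated without proof,'' so there is no argument to compare against; your write-up is precisely the routine unwinding the authors chose to omit, and your closing remark about why $\Pi$ rather than all of $P(E)$ is the right domain for antisymmetry is a fair observation (the relation is genuinely only a preorder on arbitrary subsets).
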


\begin{lm}\label{lmEsegChar}
Let $\xi\leq t\leq t_1,t_2$, let $x\in p_1(E_t)$, $x_i\in p_1(E_{t_i})$ for $i=1,2$ and denote $e_i=e(x_i,t_i)$.
\begin{enumerate}
\item\label{i1}
For any $t'\in[\xi,t]$, $e(x,t')$ is well defined.
\item\label{i2}
If $e(x_1,t)=e(x_2,t)$ then for any $t'\in[\xi,t]$, $e(x_1,t')=e(x_2,t')$.
\item\label{i3}
For any $t'\in[\xi,t]$, $e(x,t')\leq e(x,t)$.
\item \label{i5}
$e_1\leq e_2$ if and only if $t_1\leq t_2$ and $e(x_2,t_1)=e_1$.
\item \label{i4}
If $e_1$ and $e_2$ are incomparable with respect to $\leq$, then
\[
d(p_1(e_1),p_1(e_2))>0.
\]
\item\label{i6}
If $e(x_1,t_1)\leq e(x_2,t_2),$ then for any $t\in [t_1,t_2]$, $e(x_1,t_1)\leq e(x_2,t)$.
\end{enumerate}
\end{lm}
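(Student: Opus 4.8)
The statement to prove is Lemma~\ref{lmEsegChar}, which collects six elementary facts about $E$-segments and the relation $\leq$.

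\medskip

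The plan is to prove the six items essentially in the order given, since the later ones lean on the earlier ones, and to reduce everything to the continuity of $f$ (equivalently, to the fact recorded in Remark~\ref{rem:cont} that $E$ is closed and that an interior boundary point $(x,t)$ of an $E$-segment satisfies $f(x)=t$). For~\ref{i1}, the point is that $(x,t)\in E$ means $t\le f(x)$, and for $t'\le t$ we then have $t'\le f(x)$, so $(x,t')\in E$ and $e(x,t')$ is defined. For~\ref{i2}, suppose $e(x_1,t)=e(x_2,t)=:e$; since $e$ is a connected subset of $\gamma\times\{t\}$, it has the form $J\times\{t\}$ for a connected (hence interval) $J\subset\gamma$ containing $x_1,x_2$, and every point of $J$ lies in $E$; for $t'\le t$ the whole set $J\times\{t'\}$ lies in $E$ by the argument of~\ref{i1}, is connected, and contains both $(x_1,t')$ and $(x_2,t')$, so they lie in a common $E$-segment, giving $e(x_1,t')=e(x_2,t')$. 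Item~\ref{i3} combines these: $e(x,t')\le_1 e(x,t)$ because $p_1(e(x,t'))\supseteq p_1(e(x,t))$ (the interval only grows as $t$ decreases — here is where continuity of $f$ and connectedness are used to see the component containing $x$ at level $t'$ contains the projection of the component at level $t$), and $e(x,t')\le_2 e(x,t)$ trivially since $p_2$ of the lower segment is $\{t'\}$ with $t'\le t$.

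\medskip

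For~\ref{i5}: if $e_1\le e_2$ then $\le_2$ forces $t_1\le t_2$ (the only value in $p_2(e_1)$ is $t_1$, the only one in $p_2(e_2)$ is $t_2$, and $\le_2$ demands $t_1\le t_2$), and $\le_1$ gives $p_1(e_2)\subseteq p_1(e_1)$; then $x_2\in p_1(e_1)$ means $(x_2,t_1)\in e_1$ (as $e_1$ is a segment at level $t_1$ over the interval $p_1(e_1)$), so $e(x_2,t_1)=e_1$. Conversely, if $t_1\le t_2$ and $e(x_2,t_1)=e_1$, then by~\ref{i3} applied at the point $x_2$ and levels $t_1\le t_2$ we get $e_1=e(x_2,t_1)\le e(x_2,t_2)=e_2$. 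Item~\ref{i6} is the monotonicity/transitivity consequence: from $e(x_1,t_1)\le e(x_2,t_2)$ and~\ref{i5} we get $t_1\le t_2$ and $e(x_2,t_1)=e(x_1,t_1)$; for $t\in[t_1,t_2]$, item~\ref{i3} at $x_2$ gives $e(x_2,t_1)\le e(x_2,t)$, hence $e(x_1,t_1)\le e(x_2,t)$.

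\medskip

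Item~\ref{i4} is the one genuinely topological assertion and I expect it to be the main point requiring care. Suppose $e_1,e_2$ are incomparable but $d(p_1(e_1),p_1(e_2))=0$; since $\gamma$ is compact and the projections of $E$-segments are closed intervals, this means $\overline{p_1(e_1)}$ and $\overline{p_1(e_2)}$ share a point $x_0$. One then argues that the two segments must in fact be comparable, contradicting the hypothesis: descend both to the common low level $t_0=\min(t_1,t_2)$ using~\ref{i3}, so it suffices to treat the case $t_1=t_2=t_0$, where $e_1$ and $e_2$ are distinct components of $E_{t_0}$ whose projection-intervals abut at $x_0$. By Remark~\ref{rem:cont}, if $x_0$ is an interior point of $\gamma$ then $(x_0,t_0)$ being a common boundary point of (the closures of) both segments forces $f(x_0)=t_0$, and then the point $(x_0,t_0)$ lies in $E$ and connects the two intervals — so in fact $e_1=e_2$, a contradiction; the boundary-of-$\gamma$ case is handled directly since there $p_1(e_i)$ can only abut $x_0$ from one side. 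Alternatively, and perhaps more cleanly, one shows the contrapositive: if $d(p_1(e_1),p_1(e_2))=0$ then after passing to the lower level the two intervals touch, the touching point is in $E$ by closedness of $E$, so the two segments coincide at that level, and then~\ref{i5} makes the higher one comparable to the lower, hence $e_1,e_2$ comparable. Either way the crux is that $E$ being closed (continuity of $f$) prevents two distinct $E$-segments at the same level from having touching projections.
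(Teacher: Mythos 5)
Your proof is correct and follows essentially the same route as the paper's for every item: reducing everything to the observation (Remark~\ref{rem:cont}) that $E$ is closed and $E$-segments are closed intervals, deducing \ref{i1}--\ref{i3} by elementary set considerations, proving \ref{i5} and \ref{i6} from \ref{i3}, and handling \ref{i4} by descending to the lower level and observing that two distinct $E$-segments there are disjoint closed intervals. Your ``alternative, cleaner'' version of \ref{i4} is precisely the paper's argument; your first pass at \ref{i4} is slightly more roundabout (the case split on $x_0\in\partial\gamma$ is unnecessary, since any shared projection point already forces the two segments at the common level to coincide) but reaches the same conclusion.
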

\begin{proof}
\begin{enumerate}
\item
We have $t'\leq t\leq f(x)$, so $(x,t')\in E$.
\item
Let $S=p_1(e(x_1,t))$ then $S$ is a segment which by assumption contains $x_1$ and $x_2$. Since $S\times\{t\}\subset E_t$, for all $t'\in[\xi,t]$, and all $x\in S$, $t'\leq f(x).$ Therefore, $S\times\{t'\}\subset E_{t'}$. $S\times\{t'\}$ is connected and contains $(x_i,t')$ for $i=1,2$. In particular $e(x_1,t')=e(x_2,t')$.
\item
It is clear that $e(x,t')\leq_2 e(x,t)$. If $x'\in p_1(e(x,t)),$ then $e(x',t)=e(x,t)$. By \ref{i2}, $e(x',t')=e(x,t').$ In particular, $x'\in p_1(e(x,t'))$. Thus $e(x,t')\leq_1e(x,t)$.
\item
Assume first that $e_1\leq e_2.$ Then $t_1\leq t_2$ by definition. Further, $x_2\in p_1(e_1),$ so $e(x_2,t_1)\subset e_1$. But $e_1$ is an $E$-segment, so $e(x_1,t_2)=e_1$ as required. Assume now that $e(x_2,t_1)=e_1$, and $t_1\leq t_2$. Then by \ref{i3}, $e_1\leq e(x_2,t_2)$.
\item
Assume without loss of generality that $t_1\leq t_2$ and denote $e'_2=e(x_2,t_1)$. By Remark~\ref{rem:cont}, $E_{t_1}$ is closed. So, since $e'_2$ and $e_1$ are both connected components of $E_{t_1},$ we have that either
$e'_2 = e_1$ or $d( p_1(e'_2), p_1(e_1))>0$. Thus, by~\ref{i5}, $d( p_1(e'_2), p_1(e_1))>0$. By~\ref{i3}, $e'_2\leq e_2.$ In particular, $p_1(e_2)\subset p_1(e'_2)$, so
\[
d( p_1(e_1), p_1(e_2))>d( p_1(e'_2), p_1(e_1))>0.
\]
\item
Using \ref{i5} twice, $e(x_1,t_1)\leq e(x_2,t_2)$ implies $e(x_2,t_1)=e(x_1,t_1),$ which implies $e(x_1,t_1)\leq e(x_2,t)$.
\end{enumerate}
\end{proof}

\subsection{Tree-like partial order}

\begin{df}
Let $S\subset E.$ $S$ is said to be \textbf{$E$-saturated} if $S$ is a union of $E$-segments.
\end{df}
\begin{rem}\label{rmEsat}
Clearly, any union or intersection of $E$-saturated sets is $E$-saturated. Moreover, a connected component of an $E$-saturated set and the complement in $E$ of an $E$-saturated set are $E$-saturated.
\end{rem}
\begin{df}
Let $S$ be a set. A tree-like order relation on $S$ is a partial order relation $\leq$ which satisfies for any $v,v_1,v_2\in S,$
\begin{align}
v_1\leq v \text{ and } v_2\leq v \quad \Rightarrow \quad  v_1\leq v_2 \text{ or } v_2\leq v_1.\notag
\end{align}
\end{df}
\begin{tm}\label{TmTreeEsat}
Let $P$ be a collection of pairwise disjoint $E$-saturated connected sets. Then the restriction of $\leq$ to $P$ is a tree-like order relation.
\end{tm}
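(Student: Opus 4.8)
We must show that the relation $\leq$, restricted to a collection $P$ of pairwise disjoint, $E$-saturated, connected subsets of $E$, is (i) reflexive, (ii) transitive, (iii) antisymmetric, and (iv) tree-like. Reflexivity and transitivity are free from Lemma~\ref{lm:ob}(a) and require no use of the hypotheses on $P$. So the real content is antisymmetry and the tree-like property, and both will be extracted from the behavior of $\leq$ on $E$-segments as catalogued in Lemma~\ref{lmEsegChar}. The guiding principle is: an $E$-saturated connected set $S$, being a union of $E$-segments, is itself ``built out of'' $E$-segments, and its position in the order relative to another such set should be detectable one $E$-segment at a time, using Lemma~\ref{lm:ob}(c).

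\emph{Antisymmetry.} Suppose $S_1,S_2\in P$ with $S_1\leq S_2$ and $S_2\leq S_1$. From $\leq_1$ in both directions we get $p_1(S_1)=p_1(S_2)$; from $\leq_2$ in both directions, combined with the fact (Remark~\ref{rem:cont}) that $E$ is closed so the relevant infima over $p_2$ are attained, we get $T_i(S_1)=T_i(S_2)$. Pick a point $(x,t)\in S_1$ realizing $t = T_i(S_1)$. Since $p_1(S_2)=p_1(S_1)\ni x$ and $S_2$ is $E$-saturated, $S_2$ contains the whole $E$-segment $e(x,t')$ for the value $t'=T_i(S_2)=t$ at which some point of $S_2$ over $x$ sits at minimal height; in fact I will argue that $S_1$ and $S_2$ contain a common $E$-segment, and then, being disjoint, they must coincide — contradicting ``pairwise disjoint'' unless $S_1=S_2$. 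The cleanest route: show that if $S_1\leq S_2$ and $S_2 \leq S_1$ then they contain the same bottom $E$-segment over any given $x$, using Lemma~\ref{lmEsegChar}(i3) and (i5) to propagate downward; disjointness then forces equality. (One must be slightly careful that $P$ allows $S_1=S_2$, which is exactly antisymmetry.)

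\emph{Tree-like property.} Suppose $S_1\leq S$ and $S_2\leq S$ with $S_1,S_2,S\in P$; we must show $S_1\leq S_2$ or $S_2\leq S_1$. Fix any $(x,t)\in S$. By Lemma~\ref{lm:ob}(c), $S_1\leq\{(x,t)\}$ and $S_2\leq\{(x,t)\}$, and in particular $x\in p_1(S_1)\cap p_1(S_2)$, so both $S_1$ and $S_2$ contain the $E$-segment $e_x := e(x,t)$ if $t$ lies in their $p_2$-ranges — more precisely, both contain $e(x,t')$ for the appropriate $t'\le t$ by $E$-saturation plus Lemma~\ref{lmEsegChar}(i1),(i3). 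Now $S_1$ and $S_2$ each contain an $E$-segment over $x$, and the $E$-segments over a fixed $x$ are linearly ordered by $\leq$ by Lemma~\ref{lmEsegChar}(i3). The key remaining point is \emph{connectedness}: I claim that for a connected $E$-saturated set $S_1$, the set of pairs $(x, e)$ with $e$ an $E$-segment of $S_1$ over $x$ varies ``consistently,'' so that whether $S_1\leq S_2$ holds can be checked at a single point $x$. Concretely, I will show: if $S_1,S_2$ are connected, $E$-saturated, and there is one point $x_0$ with $e_1 := $ (the $E$-segment of $S_1$ over $x_0$) $\leq$ $e_2 :=$ (the $E$-segment of $S_2$ over $x_0$), then $S_1\leq S_2$. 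This uses Lemma~\ref{lmEsegChar}(i4): incomparable $E$-segments have disjoint $p_1$-projections, so within the connected set $S_2$ all $E$-segments comparable to $e_1$ form a relatively clopen piece, hence all of $S_2$; then $\leq_1$ and $\leq_2$ for $S_1\leq S_2$ follow from assembling the segmentwise inequalities via Lemma~\ref{lm:ob}(c). Applying this with the roles of $S_1,S_2$ according to which of $e_1\leq e_2$ or $e_2\leq e_1$ holds at $x_0$ gives the dichotomy.

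\emph{Main obstacle.} The subtle step is turning the \emph{local} (one-point) comparison of $E$-segments into a \emph{global} comparison of the saturated connected sets — i.e., proving the ``key remaining point'' above. This is where connectedness and $E$-saturation must be used in tandem with the disjointness-of-projections statement Lemma~\ref{lmEsegChar}(i4); without connectedness the claim is simply false (two disjoint stacked pieces give incomparable sets). I expect the argument to run by defining, for fixed $S_1$, the subset of $S_2$ consisting of points lying on $E$-segments $\geq e_1'$ for \emph{some} $E$-segment $e_1'$ of $S_1$ over the corresponding base point, showing this subset is nonempty, open, and closed in $S_2$ — openness/closedness coming from Lemma~\ref{lmEsegChar}(i4) (comparable vs. incomparable segments are separated in the base $\gamma$) and the continuity baked into Remark~\ref{rem:cont} — and then concluding it is all of $S_2$. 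The rest is bookkeeping with Lemma~\ref{lm:ob} and Lemma~\ref{lmEsegChar}.
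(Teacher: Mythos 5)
Your overall plan is sound, and the decomposition into antisymmetry plus the branching dichotomy is exactly right. Your observations on connectedness (it fails without it) and on the fact that both $S_1$ and $S_2$ must project over a common base point are also correct. However, the mechanism you propose for the crucial ``local-to-global'' step is different from the paper's, and as sketched it has a genuine gap. The paper's route to both antisymmetry (Corollary~\ref{cyAntiSym}) and the dichotomy (Lemma~\ref{lmNoCycles}, via Lemma~\ref{lmEitherOrSat}) runs through Lemma~\ref{lm3rdpoint}: for $S$ connected and $K\subset S$ compact, there is a point $(x,t)\in S$ with $e(x,t)\leq K$. This is proved by exploiting the compactness of $\gamma$ and the continuity of $f$ to produce an explicit low segment; from it one deduces Lemma~\ref{lmEsat}\ref{lmEsat1} (the slices of an $E$-saturated connected set are linearly ordered) and then chains segment-level inequalities. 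You, on the other hand, propose a connectedness/clopen argument inside $S_2$ using Lemma~\ref{lmEsegChar}\ref{i4}, but you do not supply the two nontrivial verifications (openness and closedness of the relevant subset), and these are precisely where the work lies.

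Concretely, the gap shows up twice. In your antisymmetry argument you ``pick a point $(x,t)\in S_1$ realizing $t=T_i(S_1)$,'' but $S_1$ is not assumed to be a closed subset of $E$, so this infimum need not be attained; the paper avoids this by using Lemma~\ref{lmEsat}\ref{lmEsat1} and the fact that $p_2(S_2)$ is an interval. In your dichotomy argument, closedness of the set $\{(y,s)\in S_2 : e(y,s)\text{ comparable to }e_1\}$ is not a consequence of Lemma~\ref{lmEsegChar}\ref{i4} alone: the separation guaranteed by that lemma is for a \emph{pair} of incomparable segments, not uniform, and $E$-segments can merge or split as $s$ passes a local minimum of $f$, so incomparability with a fixed $e_1$ is not manifestly an open condition on $(y,s)$. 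Neither is openness of comparability immediate at boundary points of $p_1(e_1)$; one has to invoke Remark~\ref{rem:cont} and the disjointness of $S_1$ and $S_2$ to rule out problematic limit configurations. In short, the ``main obstacle'' you correctly identify is not resolved by the sketch: you would need either to carry out a careful case analysis to justify the clopen decomposition, or (as the paper does) to replace it with the compactness/continuity argument of Lemma~\ref{lm3rdpoint}, which produces a dominating low segment directly and sidesteps the topological delicacies.
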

For the proof of Theorem \ref{TmTreeEsat}, we first prove a few lemmas.
\begin{lm}\label{lm3rdpoint}
Let $S\subset E$ be connected. For any compact set $K \subset S$ there exists a point $(x,t)\in S$ such that $e(x,t)\leq K.$
\end{lm}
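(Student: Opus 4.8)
\textbf{Proof proposal for Lemma \ref{lm3rdpoint}.}

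The plan is to find the desired $E$-segment by minimizing the ``height'' over the compact set $K$. Concretely, I would set $t_0 := \inf\{p_2(y) : y \in K\}$. Since $K$ is compact and $p_2$ is continuous, this infimum is attained, say at a point $(x_0,t_0) \in K$. I claim that the $E$-segment $e := e(x_0,t_0)$ does the job, i.e. $e \leq K$. By Lemma \ref{lm:ob}\ref{it:ob2} combined with the last item of Lemma \ref{lm:ob}, it suffices to show $e \leq \{(x,t)\}$ for every $(x,t) \in K$; equivalently, $e \leq_1 \{(x,t)\}$ and $e \leq_2 \{(x,t)\}$ for each such point.

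The relation $e \leq_2 \{(x,t)\}$ is immediate: we need some $t' \in p_2(e)$ with $t' \leq t$, and $t_0 \in p_2(e)$ with $t_0 \leq t$ by the choice of $t_0$ as the infimum of $p_2$ over $K \supseteq \{(x,t)\}$. The substantive point is $e \leq_1 \{(x,t)\}$, namely $x \in p_1(e)$, i.e.\ that the horizontal projection of $e$ contains every first coordinate appearing in $K$. Here I would use connectedness of $S$ together with the fact that $e$ is a full $E$-segment at the lowest level touched by $K$. Since $(x,t) \in K \subset S$ and $S$ is connected, I want to ``travel'' inside $S$ from $(x,t)$ down to level $t_0$; more precisely, consider the point $(x,t) \in S$: by Lemma \ref{lmEsegChar}\ref{i1} the segment $e(x,t_0)$ is well-defined (since $t_0 \leq t \leq f(x)$). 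It then suffices to show $e(x,t_0) = e(x_0,t_0) = e$, for then $x \in p_1(e(x,t_0)) = p_1(e)$.

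To see $e(x,t_0) = e(x_0,t_0)$, the idea is that if these were distinct $E$-segments, then by Remark \ref{rem:cont} the set $E_{t_0}$ is closed, so its connected components $e(x,t_0)$ and $e(x_0,t_0)$ would have $p_1$-projections at positive distance, and more relevantly the component $e(x,t_0)$ would be ``separated'' from $e(x_0,t_0)$ within $E_{t_0}$. Now $S$ is connected and contains both $(x,t)$ and $(x_0,t_0)$, with $(x_0,t_0)$ at the minimal level $t_0$. Using Lemma \ref{lmEsegChar}\ref{i2}--\ref{i3}, the connected components of $E$ ``below level $t_0$ don't exist'' (nothing in $K$, hence a fortiori I should restrict to how $S$ sits), so a path in $S$ from $(x,t)$ to $(x_0,t_0)$ must, at the moment it first reaches level $t_0$, lie in the component $e(x_0,t_0)$; tracking the first coordinate along such a path and using continuity of $f$ forces $x$ into $p_1(e(x_0,t_0))$. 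I expect the main obstacle to be making this ``path reaches the bottom level'' argument rigorous without assuming $S$ is path-connected — one should argue purely with connectedness, perhaps by showing that $\{y \in S : p_1(y) \in p_1(e)\}$ is both open and closed in $S$ (closedness from $e$ being closed and $E_{t_0}$ closed; openness from continuity of $f$ and the local structure of $E$-segments near their non-boundary points, cf.\ Remark \ref{rem:cont}), and hence all of $S$. That clopen-set argument is the technical heart; once it is in place, $x \in p_1(e)$ for all $(x,t) \in K$ follows and the proof is complete.
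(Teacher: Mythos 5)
Your proposal starts from the right instinct (minimize in the $p_2$ direction), but it has a genuine gap: the $E$-segment $e := e(x_0,t_0)$ at the level $t_0 = \inf p_2(K)$ need \emph{not} project onto all of $p_1(K)$, and the clopen argument you sketch to force this cannot be made to work. Here is a concrete counterexample. Take $\gamma = [-2,2]$, $\xi = 0$, $f(x) = x^2$, $S = E$ (the full hypograph), and $K = \{(-1,1),(1,1)\}$. Then $t_0 = 1$ and (say) $(x_0,t_0) = (-1,1)$, so $E_{t_0} = ([-2,-1]\cup[1,2])\times\{1\}$ and $e = [-2,-1]\times\{1\}$. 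Its projection $p_1(e) = [-2,-1]$ does not contain $1$, so $e \not\leq K$. The set $A = \{y \in S : p_1(y) \in p_1(e)\}$ is the region $\{(x,t): -2\le x\le -1,\ 0\le t \le x^2\}$, which is \emph{not} open in $S$: at $(-1,0)$ every $S$-neighborhood contains points $(x,0)$ with $x > -1$. Your appeal to Remark~\ref{rem:cont} controls the boundary of $e$ at its own level $t_0$, but says nothing about points $(x',t)$ with $x'$ a boundary point of $p_1(e)$ and $t < t_0$; that is exactly where openness breaks. Connectedness of $S$ alone cannot rescue the claim, because $S$ may dip below level $t_0$ and resurface at an entirely different $E$-segment of $E_{t_0}$, as this example shows.

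The missing idea, which is what the paper's proof supplies, is that one must also descend in the $f$-direction, not just in the $t$-direction over $K$. Let $L \subset p_1(S)$ be a connected compact set containing $p_1(K)$ (this exists since $p_1(S)$ is a connected subset of a $1$-manifold). By continuity of $f$ choose $x_2 \in L$ with $f(x_2) = \inf_L f$, and since $x_2 \in p_1(S)$ pick $t_2 \le f(x_2)$ with $(x_2,t_2) \in S$. Set $t = \min(t_0,t_2)$ and $(x,t)$ the corresponding point of $S$. Now $t \le f(y)$ for every $y \in L$, so $L\times\{t\} \subset E$; being connected and containing $(x,t)$, it lies inside $e(x,t)$, which therefore satisfies $p_1(K) \subset L \subset p_1(e(x,t))$ and $t \le t_0 = T_i(K)$, i.e.\ $e(x,t) \le K$. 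In the example above this produces a point like $(0,0)$, whose $E$-segment is all of $\gamma\times\{0\}$ and does dominate $K$. So the structure of your proof is salvageable, but you must replace the single minimization over $p_2(K)$ by the two-sided minimization and the $L\times\{t\}\subset E$ observation; as written, the step ``$x \in p_1(e)$ for all $(x,t)\in K$'' is false.
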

\begin{proof}

Using the compactness of $K,$ choose $(x_1,t_1) \in K$ such that $t_1 = T_{i}(K).$ Let $L \subset p_1(S)$ be a connected compact subset containing $p_1(K).$ Using the continuity of $f,$ choose $x_2 \in L$ such that
\[
f(x_2) = \inf_{y \in L} f(y).
\]
Since $x_2 \in p_1(S),$ and $S \subset E,$ there exists $t_2 \leq f(x_2)$ such that $(x_2,t_2) \in S.$ Choose $i$ such that $t_i = \min(t_1,t_2)$ and set $(x,t) = (x_i,t_i).$ Clearly, $(x,t) \in S.$ Since $t \leq t_2,$ we have
\[
L\times\{t\} \subset E.
\]
So, since $L \times\{t\}$ is connected and contains $(x,t),$ we have
\[
L \times\{t\} \subset e(x,t).
\]
Therefore, since $p_1(K) \subset L$ and $t \leq t_1,$ we have $e(x,t) \leq K.$

\end{proof}

\begin{lm}\label{lm:sco}
Let $S_1 \subset E$ be $E$-saturated and let $S_2 \subset E$ be connected such that $S_1 \cap S_2 = \emptyset.$ Suppose there exist $x \in \gamma$ and $t_1 < t_2 \in [\xi,\infty)$ such that $(x,t_i) \in S_i$ for $i =1,2.$ Then $S_1 \leq S_2.$
\end{lm}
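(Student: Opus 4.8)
The plan is to show that the connected set $S_2$ is trapped in the region lying over the $E$-segment $e := e(x,t_1)$ and strictly above height $t_1$; once this is established, both $S_1 \le_1 S_2$ and $S_1 \le_2 S_2$ follow at once.

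First I would reduce to a statement about $e$. Since $S_1$ is $E$-saturated and $(x,t_1) \in S_1$, and since $e(x,t_1)$ is the only $E$-segment containing $(x,t_1)$, we get $e \subseteq S_1$, and hence $S_2 \subseteq E' := E \setminus e$ by the disjointness hypothesis. Put $I_1 := p_1(e)$. Because $E_{t_1} = \{x' \in \gamma \mid f(x') \ge t_1\} \times \{t_1\}$, the set $I_1$ is precisely the connected component of $\{x' \in \gamma \mid f(x') \ge t_1\}$ containing $x$; it is closed, and since $f(x) \ge t_2 > t_1$ and $f$ is continuous it has nonempty interior in $\gamma$.

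The heart of the argument is to show that
\[
W := \{(z,u) \in E' \mid z \in I_1,\ u > t_1\}
\]
is simultaneously closed and open in $E'$. Closedness is routine: a point of $E'$ that is a limit of points of $W$ has first coordinate in the closed set $I_1$ and second coordinate $\ge t_1$, and the value $t_1$ is impossible since $I_1 \times \{t_1\} = e$ is disjoint from $E'$. For openness one checks that some neighborhood in $E$ of any $(z_0,u_0) \in W$ stays over $I_1$ at heights $> t_1$. This is where I expect the main obstacle: when $z_0$ is a boundary point of $I_1$, a generic neighborhood of $z_0$ in $\gamma$ leaves $I_1$. The resolution is Remark~\ref{rem:cont}: if such a $z_0$ were an interior point of $\gamma$, then $f(z_0) = t_1$, contradicting $(z_0,u_0) \in E$ together with $u_0 > t_1$; thus $z_0$ is a boundary point of $\gamma$, and then a sufficiently small neighborhood of $z_0$ in $\gamma$ still lies in $I_1$ because $I_1$ is a connected closed set with nonempty interior having $z_0$ as an endpoint.

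Given the claim, $(x,t_2) \in W \cap S_2$ shows that $W \cap S_2$ is a nonempty clopen subset of the connected set $S_2$, so $S_2 \subseteq W$. Hence $p_1(S_2) \subseteq I_1 = p_1(e) \subseteq p_1(S_1)$, which is $S_1 \le_1 S_2$, and every value in $p_2(S_2)$ exceeds $t_1$, which belongs to $p_2(S_1)$ since $(x,t_1) \in S_1$, giving $S_1 \le_2 S_2$. Therefore $S_1 \le S_2$.
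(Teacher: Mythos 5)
Your proof is correct and takes essentially the same approach as the paper's: both rely on Remark~\ref{rem:cont} to show that the vertical rays above the interior endpoints of $e(x,t_1)$ miss $E$, and then use the connectedness of $S_2$ to trap it over $p_1(e(x,t_1))$ at heights above $t_1$. The paper phrases this as a decomposition of $X$ into two disjoint open sets $U_1$, $U_2$; your clopen set $W$ is exactly $U_2 \cap (E\setminus e(x,t_1))$ in the paper's notation.
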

\begin{proof}
Let $e = e(x,t_1).$ Since $S_1$ is $E$-saturated, we have $e \subset S_1.$ So,
\begin{equation}\label{eq:eS2e}
e \cap S_2 = \emptyset.
\end{equation}
Let $x_1,x_2 \in \gamma$ be the boundary points of $p_1(e).$ By Remark~\ref{rem:cont},
\begin{equation}\label{eq:fxit}
f(x_i) = t_1, \qquad \text{for $i \in \{1,2\}$ such that $x_i \notin \partial\gamma.$}
\end{equation}
Define
\[
r_i =
\begin{cases}
\{x_i\} \times (t_1,\infty), & x_i \notin \partial \gamma \\
\emptyset & x_i \in \partial \gamma.
\end{cases}
\]
By equation~\eqref{eq:fxit}, the rays $r_i$ are disjoint from $E.$ In particular,
\begin{equation}\label{eq:Sri}
S_2 \cap r_i = \emptyset, \qquad i = 1,2.
\end{equation}
Define disjoint open sets $U_1$ and $U_2$ by
\begin{align*}
&\qquad\quad U_2 = \{ (x,s) \in X | x \in (x_1,x_2) \cup (\partial \gamma \cap \{x_1,x_2\})\text{ and } s > t_1 \}, \\
&\qquad \quad U_1 = X \setminus \overline U_2.
\end{align*}
Clearly,
\[
U_1 \cup U_2 = X \setminus(e \cup r_1 \cup r_2).
\]
So, by equations~\eqref{eq:eS2e} and~\eqref{eq:Sri}, $S_2 \subset U_1 \cup U_2.$ Since $(x,t_2) \in S_2 \subset E,$ we have $f(x)\geq t_2 > t_1.$ Thus by equation~\eqref{eq:fxit}, we have $x \notin \{x_1,x_2\}\setminus \partial \gamma.$ So, by definition of $U_2,$ we have $(x,t_2) \in U_2.$ Therefore, $S_2 \cap U_2 \neq \emptyset.$ Since $S_2$ is connected, it follows that $S_2 \subset U_2.$ So, $U_2 \leq S_2.$ By definition of $U_2,$ we have $e \leq U_2.$ Since $e \subset S_1,$ we have $S_1 \leq e.$ Combining the foregoing inequalities, we have
\[
S_1 \leq e \leq U_2 \leq S_2,
\]
which proves the lemma.
\end{proof}

\begin{figure}[h]
\centering
\includegraphics[scale=0.7]{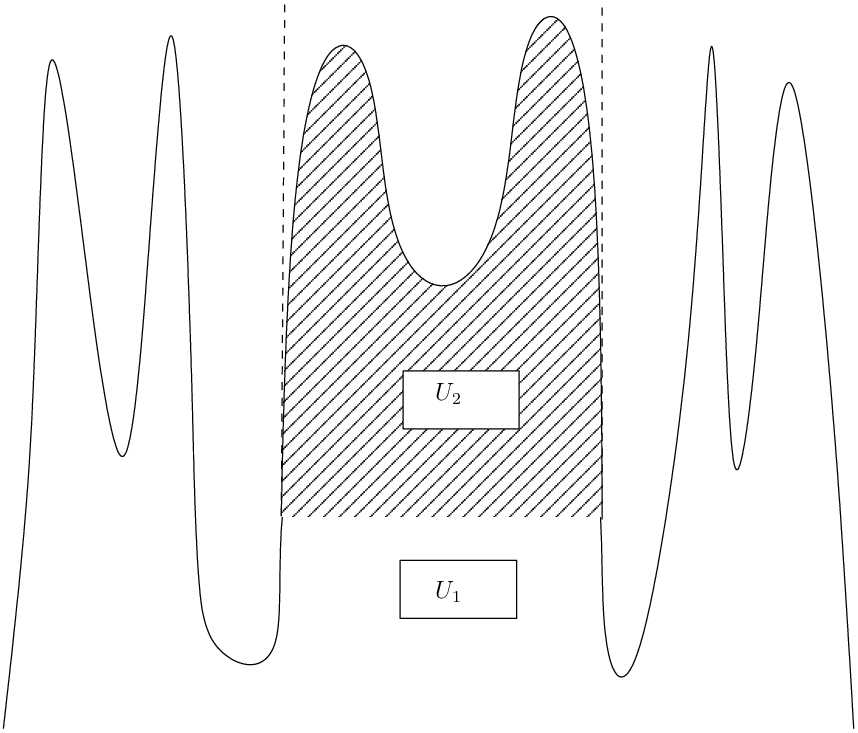}
\caption{}\label{Fig3}
\end{figure}

\begin{lm}\label{lmEsat}
Let $S\subset E$ be $E$-saturated and connected.
\begin{enumerate}
\item\label{lmEsat2}
Let $e_1, e_2\subset S$ be $E$-segments. If $e_3$ is an $E$-segment such that $e_1\leq e_3\leq e_2$, then $e_3\subset S$.
\item\label{lmEsat1}
For any $t_1\leq t_2\in p_2(S)$, $S_{t_1}\leq S_{t_2}$.
\end{enumerate}
\end{lm}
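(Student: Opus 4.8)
The plan is to prove part~\ref{lmEsat2} first by a separation argument built on Lemma~\ref{lm:sco}, and then to deduce part~\ref{lmEsat1} from it together with Lemma~\ref{lm3rdpoint}.

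For part~\ref{lmEsat2}, I would start from the observation that, since $S$ is a union of $E$-segments and any two $E$-segments at a fixed level are disjoint (they are connected components of the corresponding $E_t$), there is a dichotomy: $e_3\subset S$ or $e_3\cap S=\emptyset$. An $E$-segment lies in a single level set, so I write $t_i$ for the level of $e_i$; by the definition of $\leq_2$ we have $t_1\leq t_3\leq t_2$. If $t_1=t_3$, then $\leq_1$ forces $p_1(e_3)\subset p_1(e_1)$, and two components of one level set with nested nonempty projections under $p_1$ must coincide, so $e_3=e_1\subset S$; the case $t_3=t_2$ is symmetric. Hence I may assume $t_1<t_3<t_2$. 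Now suppose toward a contradiction that $e_3\cap S=\emptyset$. I would apply Lemma~\ref{lm:sco} with $S_1=e_3$ (an $E$-segment, hence $E$-saturated and connected) and $S_2=S$: choosing $x\in p_1(e_2)\subset p_1(e_3)$, the point $(x,t_3)$ lies in $e_3$ while $(x,t_2)$ lies in $e_2\subset S$ with $t_2>t_3$, so Lemma~\ref{lm:sco} gives $e_3\leq S$. In particular $e_3\leq_2 S$, i.e. every element of $p_2(S)$ is $\geq t_3$; but $e_1\subset S$ sits at level $t_1<t_3$, a contradiction. Therefore $e_3\subset S$.

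For part~\ref{lmEsat1}, I may assume $t_1<t_2$, the case $t_1=t_2$ being reflexivity of $\leq$. As $t_1,t_2$ are the unique levels of $S_{t_1},S_{t_2}$ and $t_1\leq t_2$, the relation $S_{t_1}\leq_2 S_{t_2}$ is automatic, so it suffices to show $p_1(S_{t_2})\subset p_1(S_{t_1})$. Fix $x$ with $(x,t_2)\in S$; by $E$-saturation $e(x,t_2)\subset S$, and since $t_1\in p_2(S)$ there is $y$ with $(y,t_1)\in S$, so $e(y,t_1)\subset S$ as well. Both these $E$-segments are compact by Remark~\ref{rem:cont}, so $K:=e(x,t_2)\cup e(y,t_1)$ is a compact subset of the connected set $S$, and Lemma~\ref{lm3rdpoint} yields $(x_0,t_0)\in S$ with $e(x_0,t_0)\leq K$. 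Unwinding $\leq_1$ and $\leq_2$ gives $p_1(K)\subset p_1(e(x_0,t_0))$ and $t_0\leq\min(t_1,t_2)=t_1$. Since $x\in p_1(e(x,t_2))\subset p_1(e(x_0,t_0))$, the component $e(x_0,t_0)$ of $E_{t_0}$ contains $(x,t_0)$, hence equals $e(x,t_0)$, which therefore lies in $S$. Finally, $e(x,t_0)\leq e(x,t_1)\leq e(x,t_2)$ by Lemma~\ref{lmEsegChar}\ref{i3}, and both ends lie in $S$, so part~\ref{lmEsat2} gives $e(x,t_1)\subset S$; in particular $x\in p_1(S_{t_1})$, as needed.

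The step I expect to be the crux is the one in part~\ref{lmEsat1} of manufacturing an $E$-segment of $S$ lying \emph{below} $e(x,t_1)$, since that is exactly what part~\ref{lmEsat2} needs as input. Lemma~\ref{lm3rdpoint} produces a point of $S$ below any prescribed compact subset, and the device that makes it deliver the right object here is to feed it the union of $e(x,t_2)$ — forcing the output's $p_1$-projection to contain $x$ — and an arbitrary level-$t_1$ segment of $S$ — forcing the output's level down to $\leq t_1$. For part~\ref{lmEsat2} itself, the genuinely geometric content, namely that a connected set cannot meet both sides of the barrier formed by $e_3$ and the vertical rays over the endpoints of $p_1(e_3)$, is already encapsulated in Lemma~\ref{lm:sco}, so what remains there is only level bookkeeping for $E$-segments.
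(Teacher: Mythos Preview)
Your proof is correct and follows essentially the same route as the paper: part~\ref{lmEsat2} via the dichotomy from $E$-saturation and Lemma~\ref{lm:sco}, and part~\ref{lmEsat1} by using Lemma~\ref{lm3rdpoint} to produce a low $E$-segment in $S$, then sandwiching $e(x,t_1)$ between it and $e(x,t_2)$ and invoking part~\ref{lmEsat2}. The only notable difference is that in part~\ref{lmEsat1} you apply Lemma~\ref{lm3rdpoint} to the compact set $K=e(x,t_2)\cup e(y,t_1)$, whereas the paper takes $K=S_{t_1}\cup S_{t_2}$; your choice is slightly more careful since compactness of the two individual $E$-segments is explicit from Remark~\ref{rem:cont}, while compactness of the full level slices $S_{t_i}$ is not justified in the paper.
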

\begin{proof}
\begin{enumerate}
\item
Suppose that $e_3 \not \subset S.$ Since $S$ is $E$-saturated it follows that
\begin{equation}\label{eq:e3s}
e_3 \cap S = \emptyset.
\end{equation}
Take $S_1 = e_3$ and $S_2 = S.$ Set $t_0 = p_2(e_1), t_1 = p_2(e_3)$ and $t_2 = p_2(e_2).$ Since $e_1,e_2 \subset S,$ by equation~\eqref{eq:e3s} we have $t_0 < t_1 < t_2.$ Choose $x \in p_2(e_2).$ Then $S_1,S_2,x,t_1,t_2$ satisfy the hypotheses of Lemma~\ref{lm:sco}. We conclude that $e_3 \leq S.$ So,
\[
p_2(e_1) = t_0 < t_1 = p_2(e_3) \leq T_i(S)
\]
contradicting the assumption that $e_1 \subset S.$
\item
By Lemma~\ref{lm3rdpoint} with $K = S_{t_1} \cup S_{t_2},$ there exists $(x,t) \in S$ such that $e(x,t) \leq S_{t_i}$ for $i = 1,2.$ Since $S$ is $E$-saturated, $e(x,t) \subset S.$ Let $y \in p_1(S_{t_2}).$ Since $S$ is $E$-saturated, $e(y,t_2) \subset S.$ In particular, $e(x,t) \leq e(y,t_2).$ So, by Lemma~\ref{lmEsegChar}\ref{i6}, $e(x,t) \leq e(y,t_1).$ By Lemma~\ref{lmEsegChar}\ref{i3}, $e(y,t_1) \leq e(y,t_2).$ Therefore, by~\eqref{lmEsat2} we have $e(y,t_1) \subset S.$ Since $y \in p_1(S_{t_2})$ was arbitrary, it follows that $S_{t_1} \leq S_{t_2}.$
\end{enumerate}
\end{proof}

\begin{cy}\label{cyAntiSym}
Let $F$ be a collection of pairwise disjoint connected $E$-saturated sets. Then the restriction of the relation $\leq$ to $F$ is antisymmetric.
\end{cy}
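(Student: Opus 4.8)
The plan is to deduce antisymmetry directly from Theorem \ref{TmTreeEsat} together with Lemma \ref{lmEsat}\ref{lmEsat1}. Suppose $S_1, S_2 \in F$ with $S_1 \leq S_2$ and $S_2 \leq S_1$; I want to conclude $S_1 = S_2$. Since the members of $F$ are pairwise disjoint, it suffices to derive a contradiction from $S_1 \neq S_2$, which, by disjointness, means $S_1 \cap S_2 = \emptyset$.

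First I would extract a common point at the level of the base: from $S_1 \leq_1 S_2$ we get $p_1(S_2) \subseteq p_1(S_1)$, and from $S_2 \leq_1 S_1$ we get $p_1(S_1) \subseteq p_1(S_2)$, so $p_1(S_1) = p_1(S_2) =: L$. Pick $x \in L$. Then there exist heights $t_1, t_2$ with $(x, t_i) \in S_i$. If $t_1 \neq t_2$, say $t_1 < t_2$, then by Lemma \ref{lm:sco} applied with the $E$-saturated set $S_1$, the connected set $S_2$, and the point $x$ at heights $t_1 < t_2$, we get $S_1 \leq S_2$ in the strong sense that $T_i(S_2) \geq t_2 > t_1 \geq T_i(S_1)$; but symmetrically (swapping the roles and using $S_2 \leq S_1$ is not what we want here) — more cleanly, from $S_1 \leq_2 S_2$ and $S_2 \leq_2 S_1$ we directly get $T_i(S_1) = T_i(S_2)$, so $t_1 = t_2$ is forced once we choose the witnesses to lie in the bottom $E$-segments. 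Concretely: choose $x$ so that $(x, T_i(S_1)) \in S_1$; since $S_1 \leq_2 S_2$ forces $T_i(S_1) \leq T_i(S_2)$ and the reverse inequality holds too, $T_i(S_1) = T_i(S_2)$, and since $p_1(S_1)=p_1(S_2)$ one arranges a common point $(x,t)$ with $t = T_i(S_1) = T_i(S_2)$ lying in both $S_1$ and $S_2$, contradicting $S_1 \cap S_2 = \emptyset$.

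The cleanest route, and the one I would actually write, is: by Theorem \ref{TmTreeEsat} the relation $\leq$ restricted to $F$ is a tree-like order, hence in particular reflexive and transitive (Lemma \ref{lm:ob}), so it only remains to check antisymmetry, and the argument above does that by producing, from $S_1 \leq S_2 \leq S_1$ with $S_1 \neq S_2$, a point in $S_1 \cap S_2$ via the equality $p_1(S_1) = p_1(S_2)$ and $T_i(S_1) = T_i(S_2)$ together with $E$-saturation (the bottom-level $E$-segment over any $x \in p_1(S_i)$ at height $T_i(S_i)$ lies in $S_i$, by Lemma \ref{lm3rdpoint} and $E$-saturation), contradicting disjointness.

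The main obstacle is the bookkeeping needed to guarantee that the witnessing point can be taken at the common bottom height in \emph{both} sets simultaneously: $p_1(S_1) = p_1(S_2)$ gives a common $x$ in the base, and $S_i \leq_2$-comparability pins down $T_i(S_1) = T_i(S_2)$, but one must invoke $E$-saturation (via Lemma \ref{lm3rdpoint}) to know that over such an $x$ the $E$-segment at that minimal height is genuinely contained in $S_i$ rather than merely $\leq$-below it. Once that is in place the contradiction with pairwise disjointness is immediate.
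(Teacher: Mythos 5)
Your overall strategy is close to the paper's: both start from $p_1(S_1)=p_1(S_2)$, pick a common base point $x$, and try to produce a point of $S_1\cap S_2$. However, there are two real problems with your execution.

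First, the route you ultimately lean on — forcing the witness to the bottom height — has a genuine gap. You write ``choose $x$ so that $(x,T_i(S_1))\in S_1$'' and say that ``one arranges a common point $(x,t)$ with $t=T_i(S_1)=T_i(S_2)$ lying in both $S_1$ and $S_2$.'' But $T_i(S_i)=\inf p_2(S_i)$ need not be attained: an $E$-saturated connected set such as $\{(x,t)\mid 1<t\le 2,\ x\in p_1(e(x_0,t))\}$ has $T_i=1$ but contains no point at height $1$. So there may be no ``bottom $E$-segment'' at all, and the appeal to Lemma \ref{lm3rdpoint} does not repair this — that lemma produces some $(x,t)\in S$ with $e(x,t)\le K$ for compact $K\subset S$, and says nothing about the infimum being achieved or about the segments at the minimal height belonging to $S_i$. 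The paper avoids the issue entirely by \emph{not} descending to the infimum: it picks $t_1,t_2$ with $(x,t_i)\in S_i$, assumes $t_1\le t_2$, uses $S_2\le_2 S_1$ to get some $t\le t_1$ in $p_2(S_2)$, then uses the connectedness of $S_2$ (so that $p_2(S_2)$ is an interval, hence $t_1\in p_2(S_2)$) together with Lemma \ref{lmEsat}\ref{lmEsat1} (so $S_{2,t_1}\le S_{2,t_2}$, giving $(x,t_1)\in S_2$). That interior-level argument is the step you need, and you should formulate it explicitly rather than pushing toward $T_i$.

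Second, your ``cleanest route'' of invoking Theorem \ref{TmTreeEsat} is circular: the paper's proof of Theorem \ref{TmTreeEsat} cites Corollary \ref{cyAntiSym} precisely to obtain antisymmetry before using Lemma \ref{lmNoCycles} to get tree-likeness. You partly notice this (you still try to check antisymmetry by hand), but the statement ``by Theorem \ref{TmTreeEsat} the relation $\le$ restricted to $F$ is a tree-like order, hence in particular reflexive and transitive'' should be replaced simply by an appeal to Lemma \ref{lm:ob} for reflexivity and transitivity, with no mention of Theorem \ref{TmTreeEsat}. A smaller issue: the inference you attribute to Lemma \ref{lm:sco} (``we get $S_1\le S_2$ in the strong sense that $T_i(S_2)\ge t_2$'') is not what that lemma gives; its proof shows $S_2$ lies above height $t_1$, not $t_2$. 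You abandon that line anyway, but the claim as written is unsupported.
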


\begin{proof}
Let $S_1,S_2\in F$ and let $x\in p_1(S_1)$. Assume $S_1\leq S_2$ and $S_2\leq S_1$. Then in particular, $ p_1(S_1)= p_1(S_2)$ so that $x\in p_1(S_2)$. Therefore, there are $t_1,t_2\in  p_2(S_1)$ such that $(x,t_1)\in S_1$ and $(x,t_2)\in S_2$. Assume without loss of generality that $t_1\leq t_2$. Since $S_2\leq_2 S_1$ there is a $t\in p_2(S_2)$ with $t\leq t_1$. Since $S_2$ is connected, $p_2(S_2)$ is connected and so $[t,t_2]\subset p_2(S_2)$. In particular $t_1\in p_2(S_2)$. By Lemma \ref{lmEsat}\ref{lmEsat1}, $S_{2,t_1}\leq S_{2,t_2},$ so  $(x,t_1)\in S_2$. Thus $S_1\cap S_2$ is nonempty, and by the assumption on $F$, $S_1=S_2$.
\end{proof}

\begin{lm}\label{lmEitherOrSat}
Let $S_1,S_2\subset E$. Suppose that $S_1$ is $E$-saturated and connected, that $S_2$ is connected and that $S_1\leq_2 S_2$. Then either $S_1\leq S_2$ or
\[
 p_1(S_1)\cap p_1(S_2)=\emptyset.
\]
\end{lm}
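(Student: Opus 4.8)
The plan is to prove the equivalent implication: if $p_1(S_1)\cap p_1(S_2)\neq\emptyset$, then $S_1\leq S_2$. Since $S_1\leq_2 S_2$ is assumed, it will suffice to establish $S_1\leq_1 S_2$, i.e. $p_1(S_2)\subseteq p_1(S_1)$ — though in one of the two cases below the full relation $S_1\leq S_2$ drops out directly. I would use throughout only that $p_2(S_1)$ is a connected subset of $\R$, hence an interval, by connectedness of $S_1$; this is the sole connectedness of a projection that enters, so no hypothesis on $\gamma$ is needed. The argument splits according to whether $S_1\cap S_2=\emptyset$.

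First I would treat the case $S_1\cap S_2=\emptyset$. Choose $x_0\in p_1(S_1)\cap p_1(S_2)$ together with $(x_0,a)\in S_1$ and $(x_0,b)\in S_2$; disjointness forces $a\neq b$. If $a<b$, then Lemma~\ref{lm:sco}, applied to $S_1$, $S_2$ and the vertical pair over $x_0$, gives $S_1\leq S_2$ outright. If $a>b$, I would derive a contradiction: by $S_1\leq_2 S_2$ there is $\tilde a\in p_2(S_1)$ with $\tilde a\leq b$, so the interval $p_2(S_1)$ contains $\tilde a\leq b<a$ and hence contains $b$; thus $S_{1,b}\neq\emptyset$, and Lemma~\ref{lmEsat}\ref{lmEsat1} gives $S_{1,b}\leq S_{1,a}$, so $x_0\in p_1(S_{1,a})\subseteq p_1(S_{1,b})$, which puts $(x_0,b)$ in $S_1\cap S_2$, contradicting disjointness. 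Hence $a<b$ and this case is settled.

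Next I would treat the case $S_1\cap S_2\neq\emptyset$; fix $(p,q)\in S_1\cap S_2$ and let $y\in p_1(S_2)$ be arbitrary, say $(y,s)\in S_2$. Applying Lemma~\ref{lm3rdpoint} to the connected set $S_2$ and the compact set $K=\{(p,q),(y,s)\}\subseteq S_2$ produces $(x',t')\in S_2$ with $e(x',t')\leq K$; unwinding the definitions of $\leq_1$ and $\leq_2$, this means $t'\leq q$, $t'\leq s$, and $p,y\in p_1(e(x',t'))$, so $e(x',t')=e(p,t')$ and in particular $y\in p_1(e(p,t'))$. Since $t'\in p_2(S_2)$, the hypothesis $S_1\leq_2 S_2$ yields $t''\in p_2(S_1)$ with $t''\leq t'$; as $q\in p_2(S_1)$ and $p_2(S_1)$ is an interval containing $t''\leq t'\leq q$, we get $t'\in p_2(S_1)$. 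Then Lemma~\ref{lmEsat}\ref{lmEsat1} gives $S_{1,t'}\leq S_{1,q}$, so $p\in p_1(S_{1,q})\subseteq p_1(S_{1,t'})$, i.e. $(p,t')\in S_1$; since $S_1$ is $E$-saturated, the $E$-segment $e(p,t')$ lies in $S_1$, whence $y\in p_1(e(p,t'))\subseteq p_1(S_1)$. As $y$ was arbitrary, $p_1(S_2)\subseteq p_1(S_1)$, so $S_1\leq_1 S_2$ and therefore $S_1\leq S_2$.

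I expect the only delicate point to be the sub-case $a>b$ above: a priori, over a common point of $p_1(S_1)$ and $p_1(S_2)$ the set $S_1$ could sit above $S_2$, and excluding this is precisely where the hypothesis $S_1\leq_2 S_2$ is indispensable, in combination with the downward monotonicity of slices of a connected $E$-saturated set supplied by Lemma~\ref{lmEsat}\ref{lmEsat1}. The remaining steps are routine manipulations of $\leq_1$, $\leq_2$, and the defining property of $E$-saturated sets.
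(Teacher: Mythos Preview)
Your proof is correct. The paper argues in a single pass: assuming $x_1\in p_1(S_1)\cap p_1(S_2)$ with $(x_1,t_1)\in S_2$, it takes an arbitrary $(x_2,t_2)\in S_2$, applies Lemma~\ref{lm3rdpoint} to produce $(x_3,t_3)\in S_2$ with $e(x_3,t_3)\leq(x_i,t_i)$, then picks $t\in p_2(S_1)$ with $t\leq\min(t_1,t_3)$, uses Lemma~\ref{lmEsegChar}\ref{i4} to compare $e(x_1,t)$ with $e(x_3,t_3)$, and concludes $e(x_1,t)\subset S_1$ via Lemma~\ref{lmEsat}\ref{lmEsat1}. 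Your route differs by splitting on whether $S_1\cap S_2$ is empty: in the disjoint case you invoke Lemma~\ref{lm:sco} directly (handling the sub-case $a>b$ by the interval argument), and in the non-disjoint case your anchor point $(p,q)\in S_1\cap S_2$ lets you run the paper's Lemma~\ref{lm3rdpoint} argument more transparently, since showing $(p,t')\in S_1$ is immediate from $S_{1,t'}\leq S_{1,q}$. The paper's unified treatment is more compact, while your case split avoids the need for Lemma~\ref{lmEsegChar}\ref{i4} and makes the role of the $\leq_2$ hypothesis (in ruling out $a>b$) more visible.
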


\begin{proof}
Suppose that there is a point $x_1\in p_1(S_1)\cap p_1(S_2)$. Then there is a $t_1\in p_2(S_2)$ such that $(x_1,t_1)\in S_2$. Let $(x_2,t_2)\in S_2$. By Lemma \ref{lm3rdpoint} with $K = (x_1,t_1) \cup (x_2,t_2),$ there is an $(x_3,t_3)\in S_2$ such that $e(x_3,t_3)\leq (x_i,t_i)$ for $i=1,2$. In particular,
\[
 p_1(e(x_3,t_3))\cap p_1(S_1)\neq\emptyset.
\]
Let $t\in p_2(S_1)$ such that $t\leq \min(t_1,t_3)$. Such a $t$ exists by the assumption $S_1\leq_2 S_2$. By Lemma \ref{lmEsegChar}\ref{i4} and the fact that $t\leq t_3$, $e(x_1,t)\leq e(x_3,t_3)$. By Lemma~\ref{lmEsat}\ref{lmEsat1} and the fact that $t \leq t_1,$ we have $S_t \leq S_{t_1}.$ In particular, $(x_1,t) \in S_1.$ Since $S_1$ is $E$-saturated, $e(x_1,t)\subset S_1.$ Therefore,
\[
S_1\leq e(x_1,t)\leq e(x_3,t_3)\leq \{(x_2,t_2)\}.
\]
But $(x_2,t_2)$ was an arbitrary point of $S_2,$ so the claim follows.
\end{proof}

\begin{lm}\label{lmNoCycles}
Let $S_1,S_2 \subset E$ be connected and let $S_1$ be $E$-saturated.
Suppose $S_1\leq_2 S_2$. If there is a nonempty set $S\subset E$ such that $S_i\leq S$ for $i=1,2,$ then $S_1\leq S_2$.

\end{lm}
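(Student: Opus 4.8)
The plan is to reduce the statement to a single application of Lemma~\ref{lmEitherOrSat}, using the auxiliary set $S$ only to eliminate the degenerate alternative in that lemma. Since $S_1 \leq_2 S_2$ is assumed, it suffices to establish $S_1 \leq_1 S_2$; combined with $S_1 \leq_2 S_2$ this gives $S_1 \leq S_2$ by definition of $\leq$.

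First I would invoke Lemma~\ref{lmEitherOrSat} with the pair $S_1, S_2$. Indeed, $S_1$ is $E$-saturated and connected, $S_2$ is connected, and $S_1 \leq_2 S_2$ by hypothesis, so all the hypotheses of that lemma are met. It yields that either $S_1 \leq S_2$ — in which case there is nothing left to prove — or $p_1(S_1) \cap p_1(S_2) = \emptyset$.

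It remains to rule out the second alternative, and this is exactly where the set $S$ enters. The hypothesis $S_1 \leq S$ unpacks via $\leq_1$ to $p_1(S) \subseteq p_1(S_1)$, and likewise $S_2 \leq S$ gives $p_1(S) \subseteq p_1(S_2)$. Hence $p_1(S) \subseteq p_1(S_1) \cap p_1(S_2)$. If this intersection were empty, then $p_1(S) = \emptyset$, forcing $S = \emptyset$ and contradicting the assumption that $S$ is nonempty. Therefore $p_1(S_1) \cap p_1(S_2) \neq \emptyset$, the second alternative of Lemma~\ref{lmEitherOrSat} is excluded, and we conclude $S_1 \leq S_2$.

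I do not expect a genuine obstacle here: the argument is essentially a two-line deduction once Lemma~\ref{lmEitherOrSat} is in hand, the role of $S$ being purely to force the first projections of $S_1$ and $S_2$ to overlap. The only point worth stating explicitly is the trivial fact that $p_1(S) \neq \emptyset$ whenever $S \neq \emptyset$, which holds because $p_1$ is the restriction to $S$ of a projection of $\gamma \times [\xi,\infty)$.
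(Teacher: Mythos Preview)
Your proof is correct and follows essentially the same approach as the paper: use the hypothesis $S_i \leq S$ to deduce $p_1(S) \subset p_1(S_1) \cap p_1(S_2) \neq \emptyset$, and then apply Lemma~\ref{lmEitherOrSat} to conclude $S_1 \leq S_2$. The paper's version is terser, compressing this into two sentences, but the logic is identical.
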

\begin{proof}
By assumption, $ p_1(S_1)\cap p_1(S_2)\neq\emptyset$. Therefore, by Lemma \ref{lmEitherOrSat} $S_1\leq S_2$.

\end{proof}

\begin{proof}[\textbf{Proof of Theorem \ref{TmTreeEsat}}]
By Cor. \ref{cyAntiSym}, $\leq$ is an order relation when restricted to $P$. By Lemma \ref{lmNoCycles}, this order is tree-like.
\end{proof}

\subsection{Equivalence relation}
\begin{df}
A \textbf{branching point} is a point $(x,t)\in\partial E$ that is contained in an open segment $s\subset e(x,t)$ such that $\partial s\subset E^o$. Here, $E^o$ denotes the interior of $E$.
\end{df}
\begin{df}\label{df:gal}
Let $(x_i,t_i) \in E$ for $i = 1,2.$
If $t_1\leq t_2,$ we say that $(x_1,t_1)\sim(x_2,t_2)$ if the following two conditions hold:
\begin{enumerate}
\item
$e(x_1,t_1)\leq e(x_2,t_2)$.
\item
The rectangle $R=p_1(e(x_1,t_1))\times[t_1,t_2)$ contains no branching points.
\end{enumerate}
If $t_2<t_1,$ we reverse the roles of $t_1$ and $t_2$.
\end{df}

\begin{lm}\label{lm:er}
$\sim$ is an equivalence relation.
\end{lm}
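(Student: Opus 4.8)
The plan is to verify the three axioms of an equivalence relation; reflexivity and symmetry are immediate and transitivity carries all the content. Reflexivity holds because $(x,t)\sim(x,t)$: one has $e(x,t)\leq e(x,t)$ by Lemma~\ref{lm:ob}, and the rectangle $p_1(e(x,t))\times[t,t)$ is empty, hence free of branching points. Symmetry is built into Definition~\ref{df:gal}, which reverses the roles of $t_1$ and $t_2$ when $t_2<t_1$. For transitivity, suppose $(x_1,t_1)\sim(x_2,t_2)$ and $(x_2,t_2)\sim(x_3,t_3)$, and, using the symmetry of $\sim$, assume $t_1\leq t_3$. The key auxiliary fact, which I would establish first, is a \emph{splitting lemma}: if $z_1,z_2\in\gamma$ lie in a common $E$-segment at some level $r_1$, i.e. $e(z_1,r_1)=e(z_2,r_1)$, but in different $E$-segments at a strictly higher level $r_2\leq\min(f(z_1),f(z_2))$, then the rectangle $p_1(e(z_1,r_1))\times[r_1,r_2)$ contains a branching point.

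To prove the splitting lemma, set $\tau=\sup\{s\in[r_1,r_2]\colon e(z_1,s)=e(z_2,s)\}$. By Lemma~\ref{lmEsegChar}\ref{i2} this set is downward closed, and the nested intersection $\bigcap_{r_1\leq s<\tau}p_1(e(z_1,s))$ is compact, connected, contains $z_1$ and $z_2$, and carries $f\geq\tau$, so slicing it at height $\tau$ forces $e(z_1,\tau)=e(z_2,\tau)$; thus $\tau$ lies in the set and $\tau<r_2$. Writing $F=p_1(e(z_1,\tau))$, for each $s\in(\tau,r_2]$ the sets $p_1(e(z_1,s))$ and $p_1(e(z_2,s))$ are disjoint sub-arcs of $F$ separating $z_1$ from $z_2$; letting $s\downarrow\tau$ along an arc of $F$ joining $z_1$ to $z_2$ and using compactness produces a point $y_0$ in the interior of that arc with $f(y_0)=\tau$. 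Since $f>\tau$ near $z_1$ and near $z_2$, one may choose points on the two sides of $y_0$ in $F$ with $f$-value $>\tau$; the open segment they bound at height $\tau$ lies in $e(y_0,\tau)$ and has both endpoints in $E^o$, so $(y_0,\tau)$ is a branching point, and by Lemma~\ref{lmEsegChar}\ref{i3} it belongs to $p_1(e(z_1,r_1))\times[r_1,r_2)$.

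Granting the splitting lemma, transitivity reduces to bookkeeping with Lemma~\ref{lmEsegChar}\ref{i5} and~\ref{lmEsegChar}\ref{i3}, in three cases according to the position of $t_2$. If $t_1\leq t_3\leq t_2$: by Lemma~\ref{lmEsegChar}\ref{i5}, $e(x_2,t_1)=e(x_1,t_1)$ and $e(x_2,t_3)=e(x_3,t_3)$, so $e(x_1,t_1)\leq e(x_2,t_1)\leq e(x_2,t_3)=e(x_3,t_3)$ by Lemma~\ref{lmEsegChar}\ref{i3}; and $p_1(e(x_1,t_1))\times[t_1,t_3)\subseteq p_1(e(x_1,t_1))\times[t_1,t_2)$, which is branching-free by hypothesis. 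If $t_2\leq t_1\leq t_3$: by Lemma~\ref{lmEsegChar}\ref{i5}, $e(x_1,t_2)=e(x_2,t_2)=e(x_3,t_2)$; the rectangle $p_1(e(x_1,t_1))\times[t_1,t_3)$ sits inside the branching-free rectangle $p_1(e(x_2,t_2))\times[t_2,t_3)$ (using Lemma~\ref{lmEsegChar}\ref{i3} for the first factor), and for condition (1) it suffices by Lemma~\ref{lmEsegChar}\ref{i5} to check $e(x_3,t_1)=e(x_1,t_1)$; were this false, the splitting lemma applied to $x_1,x_3$ at levels $t_2<t_1$ would put a branching point in $p_1(e(x_2,t_2))\times[t_2,t_1)$, contradicting $(x_2,t_2)\sim(x_1,t_1)$.

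The remaining case $t_1\leq t_2\leq t_3$ is where the real work lies. Condition (1) is immediate since $e(x_1,t_1)\leq e(x_2,t_2)\leq e(x_3,t_3)$ by transitivity of $\leq$. For condition (2), write $p_1(e(x_1,t_1))\times[t_1,t_3)$ as the union of $p_1(e(x_1,t_1))\times[t_1,t_2)$, which is branching-free by hypothesis, and $p_1(e(x_1,t_1))\times[t_2,t_3)$. Given a branching point $(x,t)$ in the second piece, Remark~\ref{rem:cont} forces $x\notin\partial\gamma$ and $f(x)=t\geq t_2$; if $x\in p_1(e(x_2,t_2))$ then $(x,t)$ lies in the branching-free rectangle $p_1(e(x_2,t_2))\times[t_2,t_3)$, a contradiction; otherwise $x$ and $x_2$ lie in a common $E$-segment at level $t_1$ (since $e(x_2,t_1)=e(x_1,t_1)$ by Lemma~\ref{lmEsegChar}\ref{i5}) but in different $E$-segments at level $t_2$, so the splitting lemma applied at levels $t_1<t_2$ produces a branching point in $p_1(e(x_1,t_1))\times[t_1,t_2)$, again a contradiction. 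I expect the main obstacle to be the splitting lemma, and inside it the boundary bookkeeping: ensuring the minimizer $y_0$ avoids $\partial\gamma$ and that a genuine open $E$-segment with endpoints in $E^o$ can be drawn around $(y_0,\tau)$, handled uniformly whether $\gamma$ is an arc or a circle. The rest is formal manipulation of $\leq$ via the already-proved parts of Lemma~\ref{lmEsegChar}.
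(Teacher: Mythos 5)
Your proposal is correct and follows essentially the same route as the paper's. Your ``splitting lemma'' is precisely the contrapositive of the paper's Lemma~\ref{lmNoBp}\ref{Bp1}, and your three-way case split on the position of $t_2$ matches the paper's. The small differences: the paper factors out from Lemma~\ref{lmNoBp} two further consequences of Bp1 — linearity of $\leq$ on $E$-segments in the rectangle (\ref{Bp2}) and the projection identity (\ref{Bp3}) — and invokes these in the three cases, whereas you invoke the splitting lemma directly together with Lemma~\ref{lmEsegChar}\ref{i5}, which amounts to the same thing. In the proof of the splitting lemma itself, the paper locates the branching point in one step by minimizing $f$ over the arc separating the two $E$-segments; your argument first takes the supremum $\tau$ of the splitting level and then finds $y_0$ by a limiting argument. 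That passage (``letting $s\downarrow\tau$ along an arc \dots\ produces $y_0$ with $f(y_0)=\tau$'') is the only spot that needs a bit more care: for each $s>\tau$ the arc $[z_1,z_2]\subset F$ cannot be entirely contained in $p_1(E_s)$ (else $e(z_1,s)=e(z_2,s)$), so $\{y\in[z_1,z_2]:f(y)\leq s\}$ is a nested family of nonempty compacta whose intersection lies in $\{f=\tau\}$; the endpoint check $f(z_i)\geq r_2>\tau$ then keeps $y_0$ interior. With that filled in, the argument is sound.
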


Before proving Lemma~\ref{lm:er}, we prove the following preparatory lemma.

\begin{lm}\label{lmNoBp}
Let $t_1>\xi$ and $t_2\geq t_1$. Let $x\in p_1(E_{t_2})$. Let $R= p_1(e(x,t_1))\times[t_1,t_2)$ and $\overline{R}=p_1(e(x,t_1))\times[t_1,t_2]$. Assume that $R$ contains no branching points.
\begin{enumerate}
\item\label{Bp1}
$(\overline{R}\cap E)_{t_2}$ is connected.
\item\label{Bp2}
The order $\leq$ on the set of $E$-segments contained in $\overline{R}$ is linear.
\item\label{Bp3}
\[
( p_1(e(x,t_2))\times[t_2,\infty))\cap E=( p_1(e(x,t_1))\times[t_2,\infty))\cap E
\]
\end{enumerate}
\end{lm}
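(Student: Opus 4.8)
The plan is to exploit the "no branching point" hypothesis on $R$ to show that the horizontal slices of $E$ inside $\overline R$ do not split as we move up from $t_1$ to $t_2$, and then to read off all three assertions from that. First I would fix notation: write $s_1 = p_1(e(x,t_1))$, so that $R = s_1\times[t_1,t_2)$ and $\overline R = s_1\times[t_1,t_2]$. Note that $s_1$ is a closed interval (or a circle, but since $t_1 > \xi$ and $(x,t_1)$ has a point above it we are in the interval case) with endpoints $x_1,x_2$; by Remark~\ref{rem:cont}, for each endpoint $x_i\notin\partial\gamma$ we have $f(x_i) = t_1$, so the vertical ray $\{x_i\}\times(t_1,\infty)$ is disjoint from $E$. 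Hence $(\overline R\cap E)$ sits strictly over the open interval $(x_1,x_2)$ above level $t_1$, exactly as in the proof of Lemma~\ref{lm:sco}.

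For part~\ref{Bp1}, I would argue by contradiction. Suppose $(\overline R\cap E)_{t_2}$ is disconnected. Consider the smallest level $t^*\in(t_1,t_2]$ at which $E\cap(s_1\times\{t\})$ fails to be connected; such a $t^*$ exists because $E_{t_1}\cap(s_1\times\{t_1\}) = e(x,t_1)$ is connected and $E$ is closed (Remark~\ref{rem:cont}), so the set of levels at which the slice is connected is closed from below and its complement is open upward near a splitting. Just below $t^*$ the slice over $s_1$ is a single segment; at $t^*$ it breaks into at least two pieces. Continuity of $f$ then forces a point $(x^*,t^*)$ in the interior of $e(x,t_1)$-column that is a branching point: it lies on $\partial E$, it is contained in an open horizontal segment of $e(x^*,t^*)$ — namely a segment straddling the splitting — whose boundary points lie in $E^o$ because just below $t^*$ those points were interior to $E$. (Here one uses that the two adjacent pieces at level $t^*$ have nonempty interior beneath them, which again follows from continuity of $f$ and the fact that the slice was a nondegenerate segment for $t<t^*$.) This contradicts the hypothesis that $R$, hence $\overline R$ minus its top slice, contains no branching point. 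The delicate point, and the one I expect to be the main obstacle, is making this "first splitting level" argument rigorous: one must handle the possibility that slices degenerate (shrink to points, or to finitely many points) without a genuine branching, and be careful that the relevant open segment witnessing a branching point really has its endpoints in $E^o$ and not merely in $E$. I would isolate this in a sub-claim: \emph{if $t_1 < t \le t_2$ and $E\cap(s_1\times\{t\})$ is disconnected, then $R$ contains a branching point}, and prove it by taking $t$ minimal with this property and examining the topology of $E$ near the resulting singular slice using the continuity of $f$ alone.

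Part~\ref{Bp2} is then essentially a restatement of part~\ref{Bp1}: let $e,e'$ be $E$-segments contained in $\overline R$, at levels $s\le s'$. Both $p_1(e)$ and $p_1(e')$ are subintervals of $s_1$. By part~\ref{Bp1} applied with $t_2$ replaced by $s'$ (and using that $R$ contains no branching points implies the same for $s_1\times[t_1,s')$), the slice $(s_1\times\{s'\})\cap E$ is connected, hence equals a single $E$-segment containing $e'$; similarly the slice at level $s$ is a single segment. Then $e$ and $e'$ are comparable: one applies Lemma~\ref{lmEsegChar}\ref{i5}, after checking $e(x',s) = $ the whole level-$s$ slice for any $x'\in p_1(e')$, which gives $e\le e'$. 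I would write this out invoking Lemma~\ref{lmEsegChar} parts~\ref{i3} and~\ref{i5}.

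For part~\ref{Bp3}, the inclusion $\supseteq$ would follow from $e(x,t_1)\le e(x,t_2)$ (Lemma~\ref{lmEsegChar}\ref{i3}), which gives $p_1(e(x,t_2))\subseteq p_1(e(x,t_1)) = s_1$, together with the observation that over the interval $p_1(e(x,t_2))$ the function $f$ is $\ge t_2$; so above level $t_2$, $E$ restricted to the $p_1(e(x,t_1))$-column agrees with $E$ restricted to the $p_1(e(x,t_2))$-column. For $\subseteq$: take $(y,t)\in E$ with $y\in s_1$ and $t\ge t_2$; I claim $y\in p_1(e(x,t_2))$. Indeed the level-$t_2$ slice $(s_1\times\{t_2\})\cap E$ is connected by part~\ref{Bp1} and contains both $(x,t_2)$ and — since $f(y)\ge t\ge t_2$ — the point $(y,t_2)$; a connected slice over $s_1$ containing $(x,t_2)$ is exactly $e(x,t_2)$, so $y\in p_1(e(x,t_2))$, as needed. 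This last part is short once part~\ref{Bp1} is in hand; the whole weight of the lemma is in the branching-point argument for part~\ref{Bp1}.
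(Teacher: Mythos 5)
Your arguments for parts~\ref{Bp2} and~\ref{Bp3} match the paper's in spirit and would go through once~\ref{Bp1} is established. The real divergence, and the real issue, is in part~\ref{Bp1}.

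You propose to locate a ``first splitting level'' $t^*\in(t_1,t_2]$ and then extract a branching point at or near $t^*$. This is genuinely different from the paper's route, and it is weaker in two ways. First, the existence step is not secured: the set $T=\{t\in(t_1,t_2]: (s_1\times\{t\})\cap E \text{ disconnected}\}$ has an infimum, but nothing you say guarantees that the infimum lies in $T$; the slice at $\inf T$ may still be connected with disconnection occurring only strictly above it, in which case your ``at $t^*$ it breaks into at least two pieces'' is unjustified. Second, even granting a singular level, the branching-point verification is not pinned down: you need an open horizontal segment whose \emph{endpoints} lie in $E^o$, and your justification (``just below $t^*$ those points were interior'') does not show the endpoints at level $t^*$ itself are interior --- indeed, if $f$ dips to the value $t^*$ on a nondegenerate interval, the natural endpoints are boundary points of $E$, not interior ones. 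You flag this as the delicate spot but leave it unresolved, so as written there is a gap.

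The paper avoids all of this with a direct construction that works at the fixed level $t_2$ and needs no minimality. Given two adjacent components $e_1,e_2$ of $(\overline R\cap E)_{t_2}$, take the inner endpoints $x_1,x_2$ (so $f(x_1)=f(x_2)=t_2$ by Remark~\ref{rem:cont}), and let $x_3\in[x_1,x_2]$ be a point where $f|_{[x_1,x_2]}$ attains its minimum $t_3=f(x_3)$. Then $t_1\le t_3<t_2$ because $x_3\in s_1$ and the gap contains a non-$E$ point at level $t_2$; the horizontal segment $[x_1,x_2]\times\{t_3\}$ lies entirely in a single $E$-segment; and crucially $f(x_1)=f(x_2)=t_2>t_3>\xi$, which makes $(x_1,t_3),(x_2,t_3)$ \emph{strictly} interior points of $E$. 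So $(x_3,t_3)$ is a branching point in $R$, contradiction. Choosing the minimum of $f$ over the gap, rather than a minimal level of disconnection, is exactly what makes the endpoints land in $E^o$ for free. I would recommend replacing your first-splitting-level scheme with this direct argument.
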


\begin{proof}
\begin{enumerate}
\item
We prove this by contradiction. Choose an orientation on $\gamma$ so that intervals between points on $\gamma$ are well defined. Let $e_1$ and $e_2$ be distinct connected components of $(\overline{R}\cap E)_{t_2}.$ Let $(x_1,t_2)$ and $(x_2,t_2)$ be boundary points of $e_1$ and $e_2$ respectively such that the segment
$(x_1,x_2)\times\{t_2\}$ is contained in $\overline{R}$ and is disjoint from both $e_1$ and $e_2$.  By continuity of $f,$ choose $x_3\in[x_1,x_2]$ where $f|_{[x_1,x_2]}$ obtains its minimum and let $t_3=f(x_3)$. Since
\[
x_3 \in [x_1,x_2] \subset p_1(\overline R) = p_1(e(x,t_1)),
\]
it follows that $t_3 = f(x_3) \geq t_1.$
Since
\[
[x_1,x_2]\times{t_2}\cap (X\backslash E)_{t_2}\neq\emptyset,
\]
there is an $x'\in(x_1,x_2)$ such that $f(x')<t_2$, which implies that $t_3<t_2$. For each $x'\in[x_1,x_2]$, $f(x')\geq t_3.$ Therefore,
\[
[x_1,x_2]\times\{t_3\}\subset e(x,t_3)\subset E.
\]
On the other hand, since $f(x_i)=t_2>t_3$ for $i=1,2,$ we have that $x_3\in(x_1,x_2).$ Furthermore, by continuity of $f$ and the fact that $t_3 \geq t_1 > \xi,$ we have $(x_1,t_3),(x_2,t_3)\in E^o$. Thus $(x_3,t_3)$ is a branching point. Since $t_2 > t_3 \geq t_1,$ $(x_3,t_3)$ is contained in $R$ contradicting the assumption.
\item
Let $e_1=e(x_1,t'_1)$ and $e_2=e(x_2,t'_2)$ for $(x_1,t'_1),(x_2,t'_2)\in \overline{R}$. Suppose without loss of generality that $t'_1\leq t'_2$, then by Lemma \ref{lmEsegChar}\ref{i5} it suffices to show that $e(x_1,t'_1)=e(x_2,t'_1)$. Since $e_i \in \overline{R},$ it follows that $e(x,t_1)\leq e(x_i,t'_1)$ for $i=1,2.$ So, $e(x_i,t'_1)$ are connected components of $(R\cap E)_{t'_1}$. The claim now follows immediately from \ref{Bp1}.
\item
We show the less obvious inclusion. Let
\[
(x',t')\in( p_1(e(x,t_1))\times[t_2,\infty))\cap E.
\]
 By \ref{Bp1},
 \[
 e(x',t_2)=e(x,t_2).
 \]
In particular,
 \[
 x'\in p_1(e(x,t_2)).
 \]
\end{enumerate}
\end{proof}
\begin{rem}\label{rmBp1Conv}
It is immediate from the definition of a branching point that the converse to \ref{lmNoBp}\ref{Bp1} is also true. Namely, if there is a $t\in(t_1,t_2)$ for which $R_t$ contains a branching point, then there is a $t'>t$ with $t'\in p_2(R\cap E)$ such that $R_{t'}$ has at least two components. It is clear that $t'$ can be taken arbitrarily close to t.
\end{rem}

\begin{proof}[\textbf{Proof of Lemma~\ref{lm:er}}]
Suppose $(x_1,t_1)\sim(x_2,t_2)$ and $(x_2,t_2)\sim(x_3,t_3)$. We wish to prove that $(x_1,t_1)\sim (x_3,t_3)$. Without loss of generality we assume $t_1\leq t_3$. For $i=1,2,3,$ denote $e_i=e(x_i,t_i)$, $c_i= p_1(e_i)$  and let $R=c_1\times[t_1,t_3)$. We need to prove that $e_1\leq e_3$ and that $R$ contains no branching points.

We distinguish between the three possibilities for the order of $t_1,t_2$ and $t_3$. We start with the case $t_2\leq t_1\leq t_3$. Then $R\subset c_2\times[t_2,t_3)$ and thus $R$ contains no branching points. By Lemma \ref{lmNoBp}\ref{Bp2}, $e_1$ and $e_3$ are comparable, and since $t_1\leq t_3,$ we have $e_1\leq e_3$.

If $t_1\leq t_2\leq t_3,$ then by Lemma \ref{lmNoBp}\ref{Bp3},
\[
R\cap E\subset p_1(e_1)\times[t_1,t_2)\cup p_1(e_2)\times [t_2,t_3),
\]
and thus $R$ contains no branching points. Furthermore,
\[
e_1\leq e_2\leq e_3
\]
by assumption.

If $t_1\leq t_3\leq t_2,$ then $R\subset c_1\times[t_1,t_2).$ So, there are no branching points in $R$. By Lemma~\ref{lmNoBp}\ref{Bp2}, $e_1$ and $e_3$ are comparable. Since $t_1 \leq t_3,$ it follows that $e_1 \leq e_3.$
\end{proof}

\subsection{Tree-like partition}
\begin{df}
A subset $S\subset X$ is \textbf{closed from above} if for any $x\in\gamma$ the intersection $\{x\}\times [\xi,\infty)\cap S$ is closed from the right.
\end{df}
\begin{rem}\label{remClosedAb}
Any finite union, intersection and relatively closed subset of sets that are closed from above is closed from above.
\end{rem}
Let $\mathcal{T}_E$ denote the partition of $E$ into $\sim$ equivalence classes.

\begin{tm}\label{tmMinMaxTree}
Each $c\in \mathcal{T}_E$ satisfies the following properties.
\begin{enumerate}
\item\label{tmMinMaxTree1}
$c$ is $E$-saturated and connected.
\item\label{tmMinMaxTree2}
For any $c_1\neq c\in\mathcal{T}_E$ such that $c\leq c_1,$ there exists a $c_2\in\mathcal{T}_E$ that is incomparable to $c_1$ and such that $c\leq c_2$.
\item\label{tmMinMaxTree3}
Let $c'\subset c$ be $E$-saturated, connected and closed from above. Let $S\subset E$ be disjoint from $c'$. Then
\[
c'\leq S\Rightarrow c'_{T_f(c')}\leq S.
\]
\end{enumerate}
\end{tm}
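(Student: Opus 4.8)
The plan is to prove the three clauses in the order (a), then a structural lemma, then (c), and finally (b), since (c) needs the structural description and (b) is by far the hardest; throughout, $\le$ restricted to $\mathcal T_E$ is tree-like by Theorem~\ref{TmTreeEsat} once (a) is known, and I will use this freely.

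\emph{Clause (a).} That each $c\in\mathcal T_E$ is $E$-saturated is immediate: if $(x,t)\sim(x',t')$ and $x''\in p_1(e(x',t'))$ then $e(x'',t')=e(x',t')$, so $(x,t)\sim(x'',t')$; hence $c$ is a union of $E$-segments. For connectedness, take $(x_1,t_1),(x_2,t_2)\in c$ with $t_1\le t_2$. By definition of $\sim$ we have $e(x_1,t_1)\le e(x_2,t_2)$ with $p_1(e(x_1,t_1))\times[t_1,t_2)$ free of branching points, so Lemma~\ref{lmEsegChar}\ref{i5} gives $e(x_2,t_1)=e(x_1,t_1)$ and Lemma~\ref{lmEsegChar}\ref{i6} gives $e(x_1,t_1)\le e(x_2,s)$ for every $s\in[t_1,t_2]$; since $p_1(e(x_1,t_1))\times[t_1,s)$ sits inside a branching-point-free rectangle, $(x_2,s)\sim(x_1,t_1)$, so $\{x_2\}\times[t_1,t_2]\subseteq c$. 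Concatenating this vertical segment with the $E$-segment $e(x_1,t_1)=e(x_2,t_1)\subseteq c$, which joins $(x_1,t_1)$ to $(x_2,t_1)$, exhibits a connected (in fact path-connected) subset of $c$ containing both points.

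\emph{Structural lemma and clause (c).} Two distinct $E$-segments at the same level are $\le$-incomparable, so for any $c\in\mathcal T_E$ each slice $c_t$ is empty or a single $E$-segment; by Lemma~\ref{lmEsat}\ref{lmEsat1} (legitimate by (a)) the nonempty slices form a $\le$-chain. If $c'\subseteq c$ is $E$-saturated, connected and closed from above, then $E$-saturation forces $c'_t\in\{c_t,\emptyset\}$ for every $t$, connectedness forces $T:=\{t:c'_t\ne\emptyset\}$ to be an interval, and closedness from above forces $\tau':=T_f(c')=\sup T\in T$, so $e^\ast:=c'_{\tau'}=c_{\tau'}$ is a nonempty $E$-segment. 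Now for (c): by Lemma~\ref{lm:ob}\ref{it:ob2} it suffices to show $e^\ast\le\{(x,t)\}$ for each $(x,t)\in S$, i.e. $x\in p_1(e^\ast)$ and $t\ge\tau'$. If $x\in p_1(e^\ast)$ and $t<\tau'$, then $x\in p_1(c_t)$ by nestedness of footprints, and $c'\le_2 S$ applied to $(x,t)$ forces $t\in T$, whence $(x,t)\in c_t=c'_t\subseteq c'$, contradicting $S\cap c'=\emptyset$; so $t\ge\tau'$. It remains to prove $p_1(S)\subseteq p_1(e^\ast)$. Suppose $(x_0,s_0)\in S$ with $x_0\notin p_1(e^\ast)$; since $x_0\in p_1(c')$, the set of $\rho\in T$ with $x_0\in p_1(c_\rho)$ is an initial segment $T\cap[\,\cdot\,,\rho_0]$ with $\rho_0<\tau'$, and disjointness together with $c'\le_2 S$ forces $s_0>\rho_0$; as $s_0\le f(x_0)$ this gives $f(x_0)>\rho_0$, so $(x_0,\rho)\in E$ for $\rho$ slightly above $\rho_0$ while $x_0\notin p_1(c_\rho)$ — i.e. the component of $\{f\ge\rho\}$ meeting $p_1(c_\rho)$ separates from $x_0$ at level $\rho_0$, producing a branching point $(z,\rho_0)$ with $z\in p_1(c_{\rho_0})$. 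But then $(z,\rho_0)\in p_1(c_{\rho_0})\times[\rho_0,\tau')$ contradicts $c_{\rho_0}\sim c_{\tau'}$ (both slices of $c$). Hence $p_1(S)\subseteq p_1(e^\ast)$ and $c'_{\tau'}\le S$.

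\emph{Clause (b).} Pick $y\in p_1(c_1)\subseteq p_1(c)$ and $s_1,s_2$ with $(y,s_1)\in c$, $(y,s_2)\in c_1$; Lemma~\ref{lm:sco} with Corollary~\ref{cyAntiSym} rules out $s_1\ge s_2$, so $s_1<s_2$. As $(y,s_1)\not\sim(y,s_2)$, the rectangle $R_0=p_1(e(y,s_1))\times[s_1,s_2)$ contains a branching point, and by Remark~\ref{rmBp1Conv} there is a level $r\in(s_1,s_2)$ for which $(R_0\cap E)_r$ has at least two components; let $e_2$ be one not containing $(y,r)$ and $c_2$ its $\sim$-class. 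Then $e(y,s_1)\le e_2$ by Lemma~\ref{lmEsegChar}\ref{i5}, a branching point lies in $p_1(e(y,s_1))\times[s_1,r)$ so $c_2\ne c$, and Lemma~\ref{lm:sco} gives $c\le c_2$. To see $c_2$ is incomparable to $c_1$, write $c_y$ for the class of $(y,r)$ and $e_y=e(y,r)$: then $e_y$ and $e_2$ are distinct same-level $E$-segments, hence $\le$-incomparable, and the key point is that $c_y$ and $c_2$ are incomparable — a $\sim$-class cannot contain, inside its own footprint, a slice separated from the slice $c_{y,r}=e_y$ by a lower point, since such a point is dominated by a branching point which, placed in a rectangle over a lower slice of $c_y$, would violate that slice's $\sim$-equivalence with $e_y$. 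Since $c_y\le c_1$ or $c_y=c_1$ by Lemma~\ref{lm:sco} in the column over $y$, and $\le$ on $\mathcal T_E$ is tree-like, comparability of $c_2$ with $c_1$ would force comparability of $c_2$ with $c_y$ — impossible. The main obstacle is exactly this last step of (b): showing $c_2\perp c_1$, which is where Lemma~\ref{lm:sco}, the rigidity of $\sim$-classes, tree-likeness, and careful bookkeeping of which component lies above the branching point and of the (non-)attainment of the relevant suprema all have to be combined.
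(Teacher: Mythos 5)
Your proof follows essentially the same route as the paper's, which distributes the work over several preparatory lemmas (the paper's Lemmas~\ref{lmcClosed}, \ref{lm:tconn}, \ref{lm:cfatf}, \ref{lmStroncont}, \ref{lm:seg}, \ref{lm:path}). Your clause~(a) reproduces the paper's argument via vertical segments (the content of Lemma~\ref{lm:seg} and Lemma~\ref{lm:path}), and your structural lemma packages Corollary~\ref{cy:pro}, Corollary~\ref{lmTECon} and Lemma~\ref{lm:cfatf}. Your clause~(c) is the paper's Lemma~\ref{lmStroncont} re-derived, with one small imprecision: you assert that $\rho_0:=\sup\{\rho\in T: x_0\in p_1(c_\rho)\}$ lies in the set (so the ``initial segment'' is closed on the right) and that the ensuing branching point sits at level exactly $\rho_0$; the first needs the closedness-from-above of $c$ (the set $\{\rho: (x_0,\rho)\in c\}$ is closed on the right), and the second is unnecessary --- the branching point produced via Lemma~\ref{lmNoBp}\ref{Bp1} lies somewhere in $[\rho_0,\rho)\subset[\rho_0,\tau')$, which already contradicts $(x_1,\rho_0)\sim(x_2,\tau')$. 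Both are easily repaired. Your clause~(b) is more elaborate than the paper's one-sentence justification, and the extra structure (the auxiliary class $c_y$ plus tree-likeness from Corollary~\ref{cy:tl}) is a legitimate way to fill it out. However, the pivotal step --- that $c_y$ and $c_2$ are incomparable --- is stated only as a heuristic (``a $\sim$-class cannot contain, inside its own footprint, a slice separated from $e_y$ by a lower point\ldots''). The precise mechanism you want is Lemma~\ref{lmNoBp}\ref{Bp2}: if, say, $c_y\le c_2$, take $z\in p_1(e_2)\subset p_1(c_y)$, let $\tilde t=\max\{t:(z,t)\in c_y\}$ (attained since $c_y$ is closed from above, and necessarily $\tilde t<r$), note $e(z,\tilde t)=(c_y)_{\tilde t}$ satisfies $e(z,\tilde t)\le e_y$ and $e(z,\tilde t)\le e_2$, that $(z,\tilde t)\sim(y,r)$ forbids branching points in $p_1(e(z,\tilde t))\times[\tilde t,r)$, and then Lemma~\ref{lmNoBp}\ref{Bp2} makes the $E$-segments in $\overline R$ linearly ordered --- contradicting the incomparability of the distinct level-$r$ segments $e_y$ and $e_2$. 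With that substitution your argument for~(b) is complete and, in fact, supplies the details the paper elides when it merely cites ``the definition of $\sim$ and Remark~\ref{rmBp1Conv}.''
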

In Figure \ref{Fig5} the shaded and white parts correspond to different elements in a partition of $E$. The right side of the figure shows what part~\ref{tmMinMaxTree2} of Theorem \ref{tmMinMaxTree} rules out.  The left side shows what is ruled out by part~\ref{tmMinMaxTree3}.
\begin{figure}[h]
\centering
\includegraphics[scale=0.5]{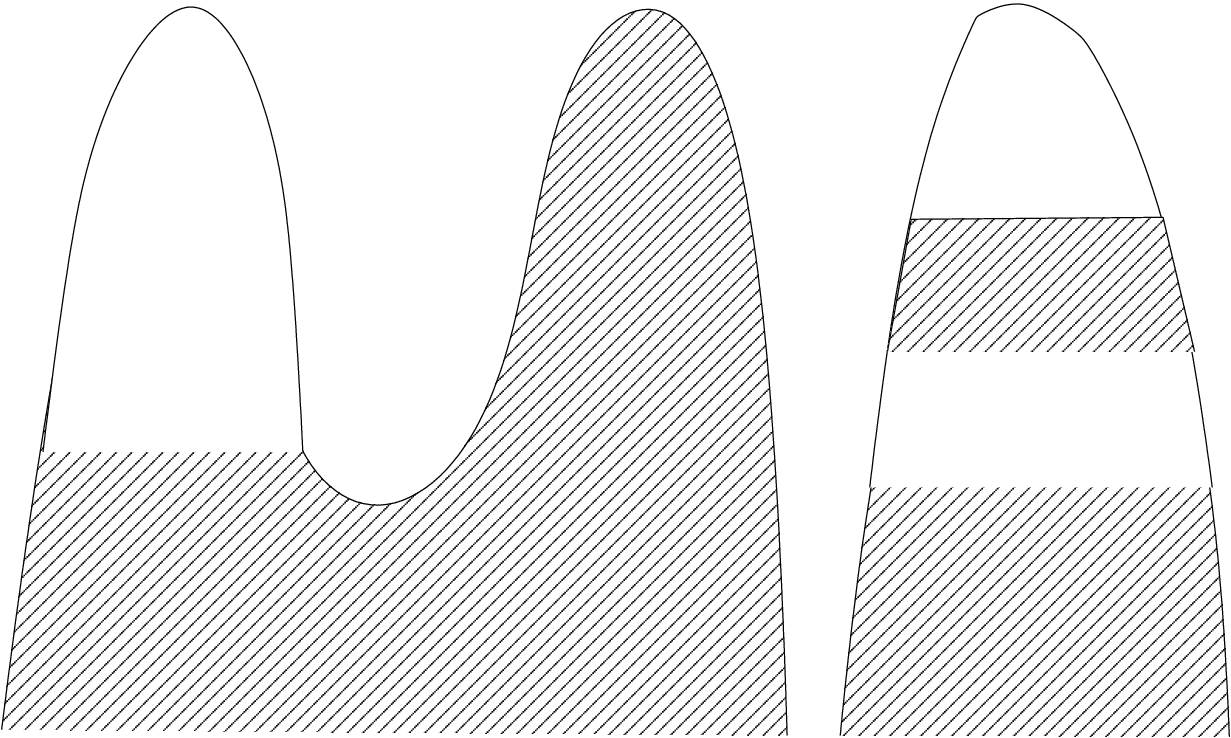}
\caption{}\label{Fig5}
\end{figure}

\begin{cy}\label{cy:tl}
The restriction of $\leq$ to $\mathcal{T}_E$ is a tree-like order relation.
\end{cy}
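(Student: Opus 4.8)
The plan is to obtain Corollary~\ref{cy:tl} as an immediate consequence of Theorem~\ref{TmTreeEsat} combined with part~\ref{tmMinMaxTree1} of Theorem~\ref{tmMinMaxTree}. Recall that $\mathcal{T}_E$ is, by definition, the partition of $E$ into $\sim$-equivalence classes; in particular its elements are pairwise disjoint, so $P=\mathcal{T}_E$ is a legitimate choice of collection to which Theorem~\ref{TmTreeEsat} may be applied. By Theorem~\ref{tmMinMaxTree}\ref{tmMinMaxTree1}, every $c\in\mathcal{T}_E$ is $E$-saturated and connected. Hence all hypotheses of Theorem~\ref{TmTreeEsat} are met with $P=\mathcal{T}_E$, and that theorem directly yields that the restriction of $\leq$ to $\mathcal{T}_E$ is a tree-like order relation.

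I expect no genuine obstacle: the substantive content has already been assembled. Antisymmetry of $\leq$ on a collection of pairwise disjoint connected $E$-saturated sets is Corollary~\ref{cyAntiSym}; reflexivity and transitivity are Lemma~\ref{lm:ob}; the tree-like branching condition follows from Lemma~\ref{lmNoCycles}; and all of these are already packaged inside Theorem~\ref{TmTreeEsat}. The only new input is the identification of $\sim$-classes as $E$-saturated connected sets, which is precisely Theorem~\ref{tmMinMaxTree}\ref{tmMinMaxTree1}. Note that parts~\ref{tmMinMaxTree2} and~\ref{tmMinMaxTree3} of Theorem~\ref{tmMinMaxTree} are not needed for this corollary; they are recorded for later use in constructing the thick-thin partition of the hypograph in Section~\ref{SecThickThinPart}.

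If a slightly more self-contained rendering is desired, one could instead spell out the verification directly: reflexivity and transitivity of $\leq$ on $\mathcal{T}_E$ from Lemma~\ref{lm:ob}, antisymmetry from Corollary~\ref{cyAntiSym} applied to the pairwise disjoint connected $E$-saturated family $\mathcal{T}_E$, and the tree-like property from Lemma~\ref{lmNoCycles} — but invoking Theorem~\ref{TmTreeEsat} wholesale is cleaner. In either case the proof is a single paragraph.
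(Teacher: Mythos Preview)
Your proposal is correct and matches the paper's own proof exactly: the paper simply invokes Theorem~\ref{tmMinMaxTree}\ref{tmMinMaxTree1} and Theorem~\ref{TmTreeEsat}. Your additional remarks about which ingredients feed into Theorem~\ref{TmTreeEsat} and which parts of Theorem~\ref{tmMinMaxTree} are unneeded here are accurate but optional.
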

\begin{proof}
The corollary follows from Theorem~\ref{tmMinMaxTree}\ref{tmMinMaxTree1} and Theorem~\ref{TmTreeEsat}.
\end{proof}

\begin{lm}\label{lmcClosed}
Let $c\subset E$ be an equivalence class under $\sim$. Then $c$ is closed from above.
\end{lm}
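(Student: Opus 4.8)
The plan is to show that for each $x\in\gamma$, the slice $I_x := (\{x\}\times[\xi,\infty))\cap c$ is closed from the right, i.e.\ if $(x,t_n)\in c$ with $t_n\downarrow t$, then $(x,t)\in c$. First note that $E$ is closed (Remark~\ref{rem:cont}), so $(x,t)\in E$, and it remains to show $(x,t)\sim (x,t_n)$ for some (hence all) $n$. Since $c$ is a $\sim$-equivalence class, $(x,t_1)\sim(x,t_n)$ for all $n$, so it suffices to establish $(x,t)\sim(x,t_1)$. We have $t\le t_1$ and $t\le t_n\le t_1$, so $e(x,t)\le e(x,t_n)\le e(x,t_1)$ by Lemma~\ref{lmEsegChar}\ref{i3}; in particular the first condition of Definition~\ref{df:gal} holds for the pair $(x,t),(x,t_1)$.

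The content is the second condition: the rectangle $R := p_1(e(x,t))\times[t,t_1)$ contains no branching points. Suppose for contradiction that $(y,s)\in R$ is a branching point, so $t\le s<t_1$ and $y\in p_1(e(x,t))$. By the converse direction recorded in Remark~\ref{rmBp1Conv} (applied with the rectangle $p_1(e(x,t))\times[t,t_1]$ in place of $\overline R$), there is an $s'>s$, arbitrarily close to $s$, with $s'\in p_2(R\cap E)$ such that the slice $(R\cap E)_{s'}$ has at least two components. Here is where I would use $t_n\downarrow t$: choose $n$ large enough that $t_n<s'$; then $s'\in(t_n,t_1)$, and since $p_1(e(x,t))\subseteq p_1(e(x,t_n))$, the point $(y,s)$ — or rather the branching point exhibited — lies in the rectangle $p_1(e(x,t_n))\times[t_n,t_1)$ associated to the pair $(x,t_n)\sim(x,t_1)$. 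Actually the cleaner route: $(x,t_n)\sim(x,t_1)$ means $p_1(e(x,t_n))\times[t_n,t_1)$ has no branching points, and Lemma~\ref{lmNoBp}\ref{Bp3} with $t_1:=t_n$, $t_2:=t_1$ gives $p_1(e(x,t_n))\times[s',\infty)\cap E = p_1(e(x,s'))\times[s',\infty)\cap E$ — wait, more directly, Lemma~\ref{lmNoBp}\ref{Bp1} applied to that branching-point-free rectangle forces each slice $(\cdot)_{s'}$ to be connected for $s'\in[t_n,t_1]$, contradicting the two-component conclusion above once $t_n<s'$.

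So the key step — and the main obstacle — is to locate the would-be branching point inside one of the rectangles known to be branching-point-free, which requires replacing the lower edge $t$ by some $t_n$ strictly above it; this is exactly where right-closedness (as opposed to full closedness) of $I_x$ is the natural statement, since a branching point sitting exactly at height $t$ creates no contradiction but one at height $>t$ does. I would organize the argument as: (1) reduce to showing $(x,t)\sim(x,t_1)$; (2) verify the $e(x,t)\le e(x,t_1)$ condition via Lemma~\ref{lmEsegChar}\ref{i3}; (3) assume a branching point in $R$, push it up slightly via Remark~\ref{rmBp1Conv} to a multi-component slice at height $s'$; (4) pick $t_n<s'$ and derive a contradiction with connectedness of slices in the branching-point-free rectangle $p_1(e(x,t_n))\times[t_n,t_1)$ via Lemma~\ref{lmNoBp}\ref{Bp1}. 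Steps (1)–(2) are routine; step (3)–(4) is the heart, and the only subtlety is keeping the horizontal coordinate inside $p_1(e(x,t))\subseteq p_1(e(x,t_n))$ throughout, which follows from Lemma~\ref{lmEsegChar}\ref{i3}.
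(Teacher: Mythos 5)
You have misread the definition of \emph{closed from above}. That term (the paper also says ``closed from the right'') asks that the slice $\{x\}\times[\xi,\infty)\cap c$ contain the supremum of its bounded pieces --- equivalently, that $(x,t_n)\in c$ with $t_n\uparrow T$ implies $(x,T)\in c$. This is what the paper actually uses downstream: Lemma~\ref{lm:cfatf} invokes closedness from above precisely to conclude $T_f(S)\in p_2(S)$. You instead set out to prove that $(x,t_n)\in c$ with $t_n\downarrow t$ forces $(x,t)\in c$; that is closedness from \emph{below}, a different property, and in fact a false one for $\sim$-equivalence classes. If $(y,t)$ is a branching point with $y\in p_1(e(x,t))$ and there are no other branching points in $p_1(e(x,t'))\times[t',t_1)$ for $t'>t$, then $(x,t')\in c$ for every $t'\in(t,t_1]$ while $(x,t)\notin c$, precisely because that branching point enters the rectangle associated to the pair $(x,t),(x,t_1)$. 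So the statement you aim at cannot be proved; the slight worry you flag at the end (``a branching point sitting exactly at height $t$'') is not a subtlety to be managed but a genuine counterexample to your version.

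There is a second, independent gap even granting the direction. You assert $p_1(e(x,t))\subseteq p_1(e(x,t_n))$, citing Lemma~\ref{lmEsegChar}\ref{i3}, but the inclusion runs the other way: since $t<t_n$, that lemma gives $e(x,t)\leq e(x,t_n)$, and by the definition of $\leq$ this means $p_1(e(x,t_n))\subseteq p_1(e(x,t))$. Hence the rectangle $p_1(e(x,t_n))\times[t_n,t_1)$ is \emph{narrower} than $R$, and neither the branching point $(y,s)$ nor the multi-component slice produced by Remark~\ref{rmBp1Conv} has any reason to lie inside, or to remain multi-component inside, the narrower rectangle. The contradiction via Lemma~\ref{lmNoBp}\ref{Bp1} therefore does not close.

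For comparison, the paper's argument is short and avoids both issues. Fix $(x,t)\in c$ and set $t_1=\sup\{s:(x,s)\in c\}$; closedness of $E$ gives $(x,t_1)\in E$, so it suffices to show $(x,t_1)\in c$. If not, then since $e(x,t)\leq e(x,t_1)$ by Lemma~\ref{lmEsegChar}\ref{i3}, the failure of $(x,t)\sim(x,t_1)$ forces a branching point $(x_1,t_2)\in p_1(e(x,t))\times[t,t_1)$. That same point lies in $p_1(e(x,t))\times[t,t_3)$ for every $t_3\in(t_2,t_1)$, so $(x,t_3)\notin c$ for all such $t_3$, contradicting $t_1=\sup\{s:(x,s)\in c\}$. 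No limiting sequence and no passage between rectangles is required, because every rectangle invoked has the \emph{same} base $p_1(e(x,t))$ coming from the fixed representative $(x,t)\in c$, with only the height of the top edge varying.
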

\begin{proof}
Let $x\in  p_1(c)$. Choose $t\geq\xi$ such that $(x,t)\in c.$ Let
\[
t_1 = \sup \{s \in [\xi,\infty)|(x,s) \in c \}.
\]
Since $E$ is closed and $c \subset E,$ we have $(x,t_1) \in E.$ It suffices to show that $(x,t_1) \in c.$ Suppose it is not. By Lemma~\ref{lmEsegChar}\ref{i3}, $e(x,t) \leq e(x,t_1).$ So, the rectangle $R= p_1(e(x,t))\times [t,t_1)$ must contain a branching point $(x_1,t_2)$. But then for any $t_3\in(t_2,t_1)$, $(x,t_3)$ is not contained in $c$. This contradicts the definition of $t_1.$
\end{proof}

\begin{lm}\label{lm:tconn}
Let $S\subset E$ be $E$-saturated and contained in a single $\sim$ equivalence class. Then $S_t$ is a single $E$-segment for all $t \in p_2(S).$
\end{lm}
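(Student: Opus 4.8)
The plan is to argue by contradiction. Suppose there is some $t \in p_2(S)$ for which $S_t$ decomposes into at least two distinct $E$-segments. Since $S$ is $E$-saturated, $S_t$ is a union of $E$-segments, so pick two distinct ones, say $e_1 = e(x_1,t)$ and $e_2 = e(x_2,t)$, both contained in $S$. I would then exploit the hypothesis that $S$ lies inside a single $\sim$-equivalence class $c$: in particular $(x_1,t) \sim (x_2,t)$. Both have the same second coordinate $t$, so the defining conditions of $\sim$ (Definition~\ref{df:gal}) force $e(x_1,t) \leq e(x_2,t)$ (or vice versa), and since these are two $E$-segments at the same level $t$, comparability with respect to $\leq$ at equal height means $e_1 = e(x_2,t) = e_2$ by Lemma~\ref{lmEsegChar}\ref{i5}, a contradiction.

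More carefully: from $(x_1,t) \sim (x_2,t)$ with equal second coordinates, Definition~\ref{df:gal} gives $e(x_1,t) \leq e(x_2,t)$. By definition of $\leq_1$ this means $p_1(e(x_2,t)) \subseteq p_1(e(x_1,t))$. But $e(x_1,t)$ and $e(x_2,t)$ are both connected components of $E_t$ (this uses Remark~\ref{rem:cont} so that $E_t$ is closed and its components are well-defined $E$-segments), and a connected component cannot be properly contained in the projection of another component; since $p_1(e(x_1,t))$ and $p_1(e(x_2,t))$ are intervals of $\gamma$, the inclusion $p_1(e(x_2,t)) \subseteq p_1(e(x_1,t))$ together with $e(x_2,t)$ being a maximal connected subset of $E_t$ forces $e(x_1,t) = e(x_2,t)$. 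This contradicts the assumption that $e_1 \neq e_2$. Alternatively, one can invoke Lemma~\ref{lmEsegChar}\ref{i4}: if $e_1$ and $e_2$ were incomparable they would have $d(p_1(e_1),p_1(e_2)) > 0$, contradicting $e_1 \leq e_2$; hence they are comparable, and by Lemma~\ref{lmEsegChar}\ref{i5} comparability of two $E$-segments at the same level $t$ forces equality.

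The step I expect to require the most care is the verification that the hypothesis "$S$ is $E$-saturated and contained in a single equivalence class" genuinely supplies the relation $(x_1,t) \sim (x_2,t)$ for points on two putatively distinct segments at the same level, and then correctly translating condition (a) of Definition~\ref{df:gal} into the statement $e_1 = e_2$. The bookkeeping with $\leq_1$, $\leq_2$ and the fact that $E$-segments at a fixed level are the connected components of $E_t$ — so that one cannot be strictly nested in another — is where the argument actually lives; everything else is formal. Since $S$ is $E$-saturated, for each $t \in p_2(S)$ the slice $S_t$ is nonempty (as $t \in p_2(S)$) and is automatically a union of $E$-segments, so once we show any two such segments coincide we conclude $S_t$ is a single $E$-segment, completing the proof.
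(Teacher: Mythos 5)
Your argument is correct and follows essentially the same route as the paper's: both deduce from $(x_1,t)\sim(x_2,t)$ that $e(x_1,t)$ and $e(x_2,t)$ are comparable under $\leq$, and then conclude equality. The only cosmetic difference is in the last step — the paper extracts both $e(x_1,t)\leq e(x_2,t)$ and $e(x_2,t)\leq e(x_1,t)$ from the (symmetric) relation $\sim$ and invokes the antisymmetry of $\leq$ on $\Pi$ (Lemma~\ref{lm:ob}\ref{it:ob2}), whereas you get one inequality and close via Lemma~\ref{lmEsegChar}\ref{i5} or the maximality of connected components of $E_t$, both of which are valid.
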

\begin{proof}
Since $S$ is $E$-saturated, $S_t$ is union of $E$-segments. Suppose $e(x_1,t),e(x_2,t) \subset S_t.$ By assumption, $(x_1,t) \sim (x_2,t),$ so by definition of $\sim,$ we have $e(x_1,t) \leq e(x_2,t)$ and $e(x_2,t) \leq e(x_1,t).$ Thus $e(x_1,t) = e(x_2,t)$ by Lemma~\ref{lm:ob}\ref{it:ob2}. Therefore, $S_t$ is a single $E$-segment.
\end{proof}

\begin{lm}\label{lm:cfatf}
Let $S\subset E$ be $E$-saturated, connected, closed from above and contained in a single $\sim$ equivalence class. Then $T_f(S) \in p_2(S).$
\end{lm}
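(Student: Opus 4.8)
The plan is to argue by contradiction, supposing that $t^* := T_f(S) \notin p_2(S)$, and to manufacture a point of $S$ at height $t^*$. First I would record some elementary facts. Since $S$ is connected, $p_2(S)$ is a subinterval of $[\xi,\infty)$; since $S \subset E$ and $f$ is continuous on the compact manifold $\gamma$, this interval is bounded, so $t^* < \infty$; and since $t^* = \sup p_2(S)$ is not attained, $p_2(S)$ is nondegenerate, so fixing $t_0 \in p_2(S)$ with $t_0 < t^*$ we have $[t_0, t^*) \subseteq p_2(S)$.

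The heart of the argument is a nested-compactness step. For each $t \in p_2(S)$ the slice $S_t$ is a single $E$-segment by Lemma~\ref{lm:tconn}, which applies because $S$ is $E$-saturated and contained in one $\sim$-class; moreover $p_1(S_t)$ is a compact subset of $\gamma$, since $S_t$ is a closed $E$-segment by Remark~\ref{rem:cont} and $\gamma$ is compact. Next, $S$ being $E$-saturated and connected, Lemma~\ref{lmEsat}\ref{lmEsat1} gives $S_t \le S_{t'}$, hence $p_1(S_{t'}) \subseteq p_1(S_t)$, whenever $t_0 \le t \le t' < t^*$. Thus $\{p_1(S_t)\}_{t \in [t_0,t^*)}$ is a nested family of nonempty compact subsets of $\gamma$, so by the finite intersection property their intersection is nonempty; I would fix a point $x^*$ in it, so that $(x^*, t) \in S$ for every $t \in [t_0, t^*)$.

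To conclude, I would feed this into the hypothesis that $S$ is closed from above: the column $\{x^*\}\times[\xi,\infty) \cap S$ is closed from the right and contains $\{x^*\}\times[t_0,t^*)$, so it contains $(x^*,t^*)$; hence $t^* \in p_2(S)$, a contradiction. The step I expect to be the real obstacle is producing a \emph{single} $x^*$ valid for all heights below $t^*$, rather than merely a limit point in $\overline S$ — this is exactly why the single-$\sim$-class hypothesis is needed (through Lemma~\ref{lm:tconn}, to keep the slices from splitting) and why the monotonicity in Lemma~\ref{lmEsat}\ref{lmEsat1} is needed (to make the projections nested); the closed-from-above hypothesis is then precisely what bridges the remaining gap between $\overline S$ and $S$.
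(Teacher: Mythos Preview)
Your proof is correct and follows essentially the same route as the paper: use Lemma~\ref{lm:tconn} to get that each slice $S_t$ is a single $E$-segment, use Lemma~\ref{lmEsat}\ref{lmEsat1} to get that the projections $p_1(S_t)$ are nested compact sets, extract a common point $x^*$, and then invoke closed-from-above on the column through $x^*$. The only difference is cosmetic---you frame it as a contradiction whereas the paper argues directly---but the content is identical.
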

\begin{proof}
By Lemma~\ref{lm:tconn}, $S_t$ is a single $E$-segment for each $t \in p_2(S).$ So, by Remark~\ref{rem:cont} $p_1(S_t)$ is closed. Since $\gamma$ is compact, so is $p_1(S_t).$ By Lemma~\ref{lmEsat}\ref{lmEsat1}, $p_1(S_{t'}) \subset p_1(S_t)$ for all $t' \geq t.$ So $p_1(S_t)$ for $t \in p_2(S)$ is a nested family of compact non-empty sets. Therefore, we may choose
\[
x \in \bigcap_{t \in p_2(S)}  p_1(S_t).
\]
So,
\[
\{x\} \times p_2(S) = (\{x\} \times [\xi,\infty)) \cap S.
\]
Therefore, since $S$ is closed from above, $(x,T_f(S)) \in S,$ and $T_f(S) \in p_2(S).$
\end{proof}

\begin{lm}\label{lmStroncont}
Let $S\subset E$ be $E$-saturated, connected, closed from above and contained in a single $\sim$ equivalence class. Then for any $T\subset E\backslash S$,
\[
S\leq T\Rightarrow S_{T_f(S)}\leq T.
\]
\end{lm}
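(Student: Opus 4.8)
The plan is to show that $e_0:=S_{T_f(S)}$ is a well-defined single $E$-segment and then to establish the two inclusions $p_1(T)\subseteq p_1(e_0)$ and $p_2(T)\subseteq[T_f(S),\infty)$; together these say precisely that $e_0\leq\{(x,t)\}$ for every $(x,t)\in T$, whence $e_0\leq T$ by the singleton characterization of $\leq$ in Lemma~\ref{lm:ob}. Write $t_0:=T_f(S)$. Since $S$ is $E$-saturated, connected, closed from above and contained in a single $\sim$-class, Lemma~\ref{lm:cfatf} gives $t_0\in p_2(S)$, and Lemma~\ref{lm:tconn} shows $S_t$ is a single $E$-segment for each $t\in p_2(S)$; in particular $e_0=S_{t_0}=e(x',t_0)$ for any $x'\in p_1(e_0)$. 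For $x\in p_1(S)$ set $s^*(x):=\sup\{s:(x,s)\in S\}$; since $S$ is closed from above this supremum is attained, so $(x,s^*(x))\in S$ while $(x,s)\notin S$ for all $s>s^*(x)$. Observe that $s^*(x)=t_0$ if and only if $x\in p_1(e_0)$: if $x\in p_1(e_0)$ then $(x,t_0)\in e_0\subseteq S$ forces $s^*(x)\geq t_0$, hence $s^*(x)=t_0$; conversely $s^*(x)=t_0$ gives $(x,t_0)\in S_{t_0}=e_0$.

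First I would bound heights: for every $(x,t)\in T$ one has $x\in p_1(S)$ (from $S\leq_1 T$) and $t>s^*(x)$. Indeed, if $t\leq s^*(x)$, then from $S\leq_2 T$ there is $t'\in p_2(S)$ with $t'\leq t$, and since $p_2(S)$ is an interval containing $t'$ and $s^*(x)$ we get $t\in p_2(S)$; so $S_t$ is a single $E$-segment and, by Lemma~\ref{lmEsat}\ref{lmEsat1}, $p_1(S_{s^*(x)})\subseteq p_1(S_t)$. As $x\in p_1(S_{s^*(x)})$, this gives $(x,t)\in S_t\subseteq S$, contradicting $T\cap S=\emptyset$.

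The crux, and the step I expect to be the main obstacle, is to prove $p_1(T)\subseteq p_1(e_0)$, equivalently $s^*(x)=t_0$ for every $x\in p_1(T)$; intuitively, $T$ cannot "hang off the side" of $S$ above the level where $S$'s column over $x$ ends, because that would create a branching point inside $S$'s stem, contradicting that $S$ lies in one $\sim$-class. Suppose to the contrary that $s^*(x)<t_0$ for some $x\in p_1(T)$, and fix $(x,t)\in T$, so $f(x)\geq t>s^*(x)$. Put $e^-:=e(x,s^*(x))=S_{s^*(x)}$, $[a,b]:=p_1(e^-)$, choose $x_*\in p_1(e_0)$, and note $e^-=S_{s^*(x)}\leq S_{t_0}=e_0$ by Lemma~\ref{lmEsat}\ref{lmEsat1}, so $x_*\in[a,b]$. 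Since $(x,s^*(x))$ and $(x_*,t_0)$ lie in the single $\sim$-class containing $S$, the definition of $\sim$ shows the rectangle $[a,b]\times[s^*(x),t_0)$ contains no branching points. Pick $\epsilon$ with $0<\epsilon<\min(t-s^*(x),\,t_0-s^*(x))$ and put $e^+:=e(x,s^*(x)+\epsilon)$, defined because $f(x)>s^*(x)+\epsilon$. Applying Lemma~\ref{lmNoBp}\ref{Bp3} with base point $x$, lower level $s^*(x)$ and upper level $s^*(x)+\epsilon$ — the relevant rectangle sitting inside the branching-point-free one — gives
\[
\bigl(p_1(e^+)\times[s^*(x)+\epsilon,\infty)\bigr)\cap E=\bigl([a,b]\times[s^*(x)+\epsilon,\infty)\bigr)\cap E .
\]
Since $(x_*,t_0)$ lies in the right-hand side, it lies in the left, so $x_*\in p_1(e^+)$ and hence $e^+=e(x_*,s^*(x)+\epsilon)\leq e(x_*,t_0)=e_0$ by Lemma~\ref{lmEsegChar}\ref{i3}; likewise $e^-=e(x,s^*(x))\leq e(x,s^*(x)+\epsilon)=e^+$. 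Thus $e^-\leq e^+\leq e_0$ with $e^-,e_0\subseteq S$, so Lemma~\ref{lmEsat}\ref{lmEsat2} forces $e^+\subseteq S$; but $(x,s^*(x)+\epsilon)\in e^+\setminus S$ and $S$ is $E$-saturated, so $e^+\cap S=\emptyset$ — a contradiction. (The boundary case $s^*(x)=\xi$, which forces the component of $\gamma\times\{\xi\}$ through $(x,\xi)$ to lie in $S$, is handled by the same computation after harmlessly enlarging the height range below $\xi$, which introduces no new branching points.)

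To finish, for any $(x,t)\in T$ the previous two steps give $x\in p_1(e_0)$, so $e_0\leq_1\{(x,t)\}$, and $t>s^*(x)=t_0$, so $e_0\leq_2\{(x,t)\}$ since $p_2(e_0)=\{t_0\}$; hence $e_0\leq\{(x,t)\}$. As $(x,t)\in T$ was arbitrary, the singleton characterization in Lemma~\ref{lm:ob} gives $S_{T_f(S)}=e_0\leq T$, as required.
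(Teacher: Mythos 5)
Your argument is correct and follows essentially the same route as the paper's proof: both reduce the claim to showing, for each $(x,t)\in T$, that $t>T_f(S)$ and $x\in p_1(S_{T_f(S)})$, by combining Lemmas~\ref{lm:cfatf},~\ref{lm:tconn},~\ref{lmEsat}, the hypothesis that $S$ lies in a single $\sim$-class (which forbids branching points in the relevant rectangle), and Lemma~\ref{lmNoBp}. Your explicit introduction of $s^*(x)$ and your direct appeal to Lemma~\ref{lmNoBp}\ref{Bp3} make visible a step the paper's proof leaves implicit, and your parenthetical handling of the corner case $s^*(x)=\xi$, while informal, is the right idea.
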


\begin{proof}
Let $(x,t)\in T$. We show first that $t> T_f(S)$. By assumption there is a $t_1\in p_2(S)$ such that $(x,t_1)\in S$ and a $t_2\in p_2(S)$ such that $t_2\leq \min\{t,t_1\}$. By \ref{lmEsat}\ref{lmEsat1}, $(x,t_2)\in S$. Assume now that $t\leq T_{f}(S)$. Then we have that
\[
R= p_1(e(x,t_2))\times[t_2,t)\subset p_1(S)\times(T_i(S),T_f(S)),
\]
and thus $R$ contains no branching point. By Lemma~\ref{lm:cfatf}, there is an $x'\in\gamma$ such that $(x',{T_f(S)})\in S$. By Lemma~\ref{lmEsat}\ref{lmEsat1}, $e(x',t)\subset S$. Thus by Lemma~\ref{lm:tconn}, $e(x',t)=e(x,t).$ In particular, $(x,t)\in S$ contradicting the assumption that $T\subset E\backslash S$. Thus $t> T_f(S)$, so $e(x,{T_f(S)})$ is well defined. By Lemma~\ref{lm:tconn}, $e(x,{T_f(S)})=S_{T_f(S)},$ so by Lemma \ref{lmEsegChar}\ref{i5}, $S_{T_f(S)}\leq e(x,t) \leq (x,t)$. Since $(x,t) \in T$ was arbitrary, the lemma follows.
\end{proof}

\begin{lm}\label{lm:seg}
Let $c \in \mathcal{T}_E,$ and let $t_1, t_2 \in p_2(c)$ satisfy $t_1 \leq t_2.$ Let $x_2 \in p_1(c_{t_2}).$ Then $\{x_2\} \times [t_1,t_2] \subset c.$
\end{lm}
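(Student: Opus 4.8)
The plan is to show that $(x_2,t)\sim(x_2,t_2)$ for every $t\in[t_1,t_2]$; since $c$ is precisely the $\sim$-equivalence class of $(x_2,t_2)$, this gives $(x_2,t)\in c$ and hence the lemma. Verifying $(x_2,t)\sim(x_2,t_2)$ amounts to checking the order condition $e(x_2,t)\leq e(x_2,t_2)$ --- which is immediate from Lemma~\ref{lmEsegChar}\ref{i3} --- together with the fact that the rectangle $p_1(e(x_2,t))\times[t,t_2)$ contains no branching point, which is the real content.

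First I would record the structural input. By Theorem~\ref{tmMinMaxTree}\ref{tmMinMaxTree1}, $c$ is $E$-saturated and connected, so Lemma~\ref{lm:tconn} applied with $S=c$ shows that $c_s$ is a single $E$-segment for each $s\in p_2(c)$; since $x_2\in p_1(c_{t_2})$, this gives $c_{t_2}=e(x_2,t_2)$. Next, I would prove the key intermediate fact that $(x_2,t_1)\in c$. As $c$ is $E$-saturated and connected and $t_1\leq t_2$ with $t_1,t_2\in p_2(c)$, Lemma~\ref{lmEsat}\ref{lmEsat1} yields $c_{t_1}\leq c_{t_2}$. Writing $c_{t_1}=e(x_1,t_1)$ for some $x_1$ and using $c_{t_2}=e(x_2,t_2)$, the inequality $e(x_1,t_1)\leq e(x_2,t_2)$ together with $t_1\leq t_2$ forces $e(x_2,t_1)=e(x_1,t_1)=c_{t_1}$ by Lemma~\ref{lmEsegChar}\ref{i5}. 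In particular $x_2\in p_1(c_{t_1})$, so $(x_2,t_1)\in c$.

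To conclude, since $(x_2,t_1)$ and $(x_2,t_2)$ both lie in the single equivalence class $c$, we have $(x_2,t_1)\sim(x_2,t_2)$, whence $R_1:=p_1(e(x_2,t_1))\times[t_1,t_2)$ contains no branching point. For an arbitrary $t\in[t_1,t_2]$, Lemma~\ref{lmEsegChar}\ref{i3} gives $e(x_2,t_1)\leq e(x_2,t)\leq e(x_2,t_2)$; the first inequality implies $p_1(e(x_2,t))\subseteq p_1(e(x_2,t_1))$, so $p_1(e(x_2,t))\times[t,t_2)\subseteq R_1$ also contains no branching point, and combined with $e(x_2,t)\leq e(x_2,t_2)$ this is exactly the statement $(x_2,t)\sim(x_2,t_2)$. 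Hence $\{x_2\}\times[t_1,t_2]\subset c$.

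I expect the step establishing $(x_2,t_1)\in c$ to be the main obstacle, since it is the place where $E$-saturation of $c$, the single-segment structure of its slices (Lemma~\ref{lm:tconn}), slice monotonicity (Lemma~\ref{lmEsat}\ref{lmEsat1}), and the sharp ``if and only if'' characterization of the order on $E$-segments (Lemma~\ref{lmEsegChar}\ref{i5}) all have to be invoked simultaneously. Once $(x_2,t_1)\in c$ is in hand, the rest is a mechanical unwinding of the definition of $\sim$ using only the monotonicity of $e(x_2,\cdot)$.
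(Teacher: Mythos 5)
Your proof reaches the right conclusion by essentially the same mechanism as the paper's, but as written it contains a circularity. You invoke Theorem~\ref{tmMinMaxTree}\ref{tmMinMaxTree1} to assert that $c$ is connected, and then apply Lemma~\ref{lmEsat}\ref{lmEsat1} (which requires connectedness) to deduce $c_{t_1}\leq c_{t_2}$. However, the connectedness half of Theorem~\ref{tmMinMaxTree}\ref{tmMinMaxTree1} is established in the paper via Lemma~\ref{lm:path}, and Lemma~\ref{lm:path} explicitly cites the lemma you are trying to prove. So connectedness of the equivalence class $c$ is a downstream consequence of Lemma~\ref{lm:seg} and cannot be used in its proof; this is exactly why the paper's argument never mentions connectedness.

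The repair is short and local. Pick any $x_1\in p_1(c_{t_1})$; such $x_1$ exists because $t_1\in p_2(c)$. Since $(x_1,t_1)$ and $(x_2,t_2)$ both lie in $c$, they are $\sim$-equivalent, and the definition of $\sim$ with $t_1\leq t_2$ yields $e(x_1,t_1)\leq e(x_2,t_2)$ directly, with no appeal to Lemma~\ref{lmEsat}\ref{lmEsat1}. Your use of Lemma~\ref{lm:tconn} is fine (it needs only $E$-saturation, which holds for $\sim$-classes by definition), so $c_{t_1}=e(x_1,t_1)$ and $c_{t_2}=e(x_2,t_2)$, giving $c_{t_1}\leq c_{t_2}$. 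From there your Lemma~\ref{lmEsegChar}\ref{i5} step establishing $(x_2,t_1)\in c$ and the final rectangle-containment argument go through unchanged. After this fix your route differs from the paper's only cosmetically: the paper works with the rectangle over $p_1(e(x_1,t_1))$ and invokes Lemma~\ref{lmEsegChar}\ref{i6}, while you first show $(x_2,t_1)\in c$ and work over $p_1(e(x_2,t_1))$; since $e(x_2,t_1)=e(x_1,t_1)$, these are the same rectangle.
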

\begin{proof}
Let $x_1 \in p_1(c_{t_1}).$ Then $(x_1,t_1) \sim (x_2,t_2).$ By definition of $\sim$, $e(x_1,t_1)\leq e(x_2,t_2)$. Let $t\in [t_1,t_2].$ Then by Lemmas \ref{lmEsegChar}\ref{i3} and \ref{lmEsegChar}\ref{i6},
\[
e(x_1,t_1)\leq e(x_2,t)\leq e(x_2,t_2).
\]
Let $R= p_1(e(x_2,t))\times[t,t_2).$ Then $R\subset p_1(e(x_1,t_1))\times[t_1,t_2)$. Therefore $R$ contains no branching points. Thus $(x_2,t)\sim (x_2,t_2)$. The lemma follows.
\end{proof}

\begin{lm}\label{lm:path}
Let $c \in \mathcal{T}_E,$ and $(x_i,t_i) \in c$ for $i = 1,2.$ Suppose $t_1 \leq t_2.$ There exists a path $\rho \subset c$ connecting $(x_1,t_1)$ and $(x_2,t_2)$ such that $p_2(\rho) \subset [t_1,t_2].$
\end{lm}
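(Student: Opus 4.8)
The plan is to exhibit $\rho$ concretely as the concatenation of a horizontal segment at height $t_1$ followed by a vertical segment over the point $x_2$, relying on Lemmas~\ref{lm:tconn} and~\ref{lm:seg}.

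First I would get the vertical part. Since $(x_2,t_2)\in c$ we have $x_2\in p_1(c_{t_2})$, and $t_1\leq t_2$ with $t_1,t_2\in p_2(c)$; hence Lemma~\ref{lm:seg} gives $\{x_2\}\times[t_1,t_2]\subset c$. In particular $(x_2,t_1)\in c$, so $(x_2,t_1)\in c_{t_1}$; and of course $(x_1,t_1)\in c_{t_1}$ as well. Next I would get the horizontal part: by Theorem~\ref{tmMinMaxTree}\ref{tmMinMaxTree1}, $c$ is $E$-saturated, and it is (trivially) contained in a single $\sim$-equivalence class, so Lemma~\ref{lm:tconn} applies with $S=c$ and shows that $c_{t_1}$ is a single $E$-segment. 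An $E$-segment is by definition a path-connected component of $E_{t_1}$, hence path-connected; so there is a path $\rho_1\subset c_{t_1}$ from $(x_1,t_1)$ to $(x_2,t_1)$, and $p_2(\rho_1)=\{t_1\}$.

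Finally, letting $\rho_2=\{x_2\}\times[t_1,t_2]$ traversed from $(x_2,t_1)$ to $(x_2,t_2)$, the first step gives $\rho_2\subset c$ and $p_2(\rho_2)=[t_1,t_2]$. The concatenation $\rho:=\rho_1\ast\rho_2$ is then a path in $c$ from $(x_1,t_1)$ to $(x_2,t_2)$ with $p_2(\rho)=\{t_1\}\cup[t_1,t_2]=[t_1,t_2]$, as required. There is no real obstacle here: the two substantive facts, namely that horizontal slices $c_{t_1}$ of an element of $\mathcal{T}_E$ are connected $E$-segments and that the vertical segment $\{x_2\}\times[t_1,t_2]$ stays in $c$, are precisely Lemmas~\ref{lm:tconn} and~\ref{lm:seg}; the proof of Lemma~\ref{lm:path} is just the assembly of these into a path, so the only thing to be careful about is checking the hypotheses of those two lemmas, which hold because $c$ is $E$-saturated and is a single $\sim$-class.
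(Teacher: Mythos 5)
Your proof is correct and follows essentially the same route as the paper's: a horizontal move at height $t_1$ inside an $E$-segment followed by the vertical segment $\{x_2\}\times[t_1,t_2]$ supplied by Lemma~\ref{lm:seg}. One small remark: you cite Theorem~\ref{tmMinMaxTree}\ref{tmMinMaxTree1} for the $E$-saturatedness of $c$, but that theorem is proved after Lemma~\ref{lm:path} and its connectedness clause in fact relies on it; you should instead read $E$-saturatedness of a $\sim$-class directly off Definition~\ref{df:gal} (as the paper does), which is all you need and avoids the circular reference.
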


\begin{proof}
Let $\rho' = \{x_2\}\times[t_1,t_2].$
By Lemma~\ref{lm:seg}, $\rho'\subset c$. By definition of $\sim,$ $e(x_1,t_1) \leq e(x_2,t_2).$ Thus $\rho'$ connects the path connected sets $e(x_1,t_1)$ and $e(x_2,t_2).$ So, choose
\[
\rho \subset e(x_1,t_1) \cup \rho' \cup e(x_2,t_2)
\]
connecting $(x_1,t_1)$ to $(x_2,t_2).$
By definition of $\sim,$ $c$ is $E$-saturated, so $e(x_i,t_i) \subset c.$ Thus $\rho \subset c.$ Since
\[
p_2(e(x_1,t_1) \cup \rho' \cup e(x_2,t_2)) \subset [t_1,t_2],
\]
we have $p_2(\rho) \subset [t_1,t_2].$
\end{proof}

\begin{proof}[\textbf{Proof of Theorem \ref{tmMinMaxTree}}]
Any $c\in\mathcal{T}_E$ is $E$-saturated by definition.
Such $c$ is connected (in fact, path connected) by Lemma~\ref{lm:path}. Thus $\mathcal{T}_E$ satisfies \ref{tmMinMaxTree1}. That it satisfies \ref{tmMinMaxTree2} follows from the definition of $\sim$ and Remark \ref{rmBp1Conv}. Condition \ref{tmMinMaxTree3} follows from Lemma \ref{lmStroncont}.
\end{proof}

\begin{cy}\label{lmTE}
Let $s\in\mathcal{T}_E$.
\begin{enumerate}
\item\label{lmTE1}
$s$ is closed from above.
\item\label{lmTE2}
For any $t\in p_2(s)$, $s_t$ is connected.
\end{enumerate}
\end{cy}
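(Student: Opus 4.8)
The plan is to derive Corollary~\ref{lmTE} as a direct consequence of the structural results already established for elements of $\mathcal{T}_E$. The statement has two parts: first, that each $s \in \mathcal{T}_E$ is closed from above, and second, that each horizontal slice $s_t$ for $t \in p_2(s)$ is connected (i.e., a single $E$-segment).

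For part~\ref{lmTE1}, the work is already done: Lemma~\ref{lmcClosed} establishes precisely that an equivalence class $c$ under $\sim$ is closed from above, and $\mathcal{T}_E$ is by definition the partition of $E$ into such equivalence classes. So I would simply cite Lemma~\ref{lmcClosed}.

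For part~\ref{lmTE2}, I would invoke Lemma~\ref{lm:tconn}, which states that an $E$-saturated subset of $E$ contained in a single $\sim$ equivalence class has each horizontal slice equal to a single $E$-segment, hence connected. To apply it with $S = s$, I need $s$ to be $E$-saturated (true by definition of $\sim$, or by Theorem~\ref{tmMinMaxTree}\ref{tmMinMaxTree1}) and contained in a single equivalence class (trivially true, since $s$ \emph{is} an equivalence class). Then $s_t$ is a single $E$-segment for each $t \in p_2(s)$, and in particular connected.

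There is no real obstacle here; the corollary is a packaging of Lemmas~\ref{lmcClosed} and~\ref{lm:tconn} in the language of the partition $\mathcal{T}_E$. The only thing to be careful about is to make sure the hypotheses of Lemma~\ref{lm:tconn} are literally met — but since an element of $\mathcal{T}_E$ is by construction an $E$-saturated set lying in (being) a single $\sim$-class, this is immediate. Thus the proof is two sentences, one citing each lemma.

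\begin{proof}
Part~\ref{lmTE1} is Lemma~\ref{lmcClosed}, since $\mathcal{T}_E$ consists of the $\sim$ equivalence classes. For part~\ref{lmTE2}, note that $s$ is $E$-saturated by definition of $\sim$ and is contained in a single $\sim$ equivalence class, namely itself. So Lemma~\ref{lm:tconn} implies that $s_t$ is a single $E$-segment, hence connected, for every $t \in p_2(s)$.
\end{proof}
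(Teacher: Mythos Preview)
Your proof is correct and essentially identical to the paper's: the paper also cites Lemma~\ref{lmcClosed} for part~\ref{lmTE1} and Lemma~\ref{lm:tconn} for part~\ref{lmTE2}, after noting that $s$ is by definition a $\sim$-equivalence class.
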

\begin{proof}
By definition, $s$ is a $\sim$-equivalence class. We rely on this in the following.
\begin{enumerate}
\item
The claim is Lemma \ref{lmcClosed}.
\item
The claim is Lemma~\ref{lm:tconn}.
\end{enumerate}
\end{proof}

\begin{cy}\label{cy:pro}
Let $s \in \mathcal T_E$ and let $s' \subset s$ be $E$-saturated. Then for any $a \subset p_2(s'),$ we have
\[
p_2^{-1}(a)\cap s = p_2^{-1}(a) \cap s'.
\]
\end{cy}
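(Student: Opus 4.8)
The statement claims that for $s \in \mathcal{T}_E$ and $s' \subset s$ an $E$-saturated subset, the slices of $s$ and $s'$ over any subset $a \subset p_2(s')$ coincide. The inclusion $p_2^{-1}(a) \cap s' \subseteq p_2^{-1}(a) \cap s$ is trivial since $s' \subset s$, so the content is the reverse inclusion: if $(x,t) \in s$ with $t \in a \subset p_2(s')$, then $(x,t) \in s'$.

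First I would reduce to a single height: it suffices to fix $t \in p_2(s')$ and show $s_t = s'_t$, i.e. that the full $E$-segment-slice of $s$ at level $t$ already lies in $s'$. Pick a point $(y,t) \in s'_t$ (possible since $t \in p_2(s')$). Since $s$ lies in a single $\sim$-equivalence class and is $E$-saturated, Corollary~\ref{lmTE}\ref{lmTE2} (or Lemma~\ref{lm:tconn}) tells us $s_t$ is a single $E$-segment, namely $s_t = e(y,t)$. Now $s'$ is $E$-saturated and contains the point $(y,t)$, so by definition of $E$-saturated it contains the entire $E$-segment through $(y,t)$, which is exactly $e(y,t) = s_t$. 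Hence $s_t \subset s'$, giving $s_t \subset s'_t$; combined with the trivial inclusion $s'_t \subset s_t$ we get $s_t = s'_t$.

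Finally I would reassemble: for arbitrary $a \subset p_2(s')$, since both $p_2^{-1}(a) \cap s$ and $p_2^{-1}(a)\cap s'$ are the unions over $t \in a$ of their respective level-$t$ slices, and these agree for each $t \in a$ by the previous paragraph, the two sets coincide. I do not anticipate a genuine obstacle here; the only thing to be careful about is making sure $s_t$ is genuinely a \emph{single} $E$-segment (not merely a union), which is where membership of $s$ in one $\sim$-class is used via Lemma~\ref{lm:tconn} — this is the step doing the real work, and everything else is bookkeeping with the definition of $E$-saturation and the slicing notation.
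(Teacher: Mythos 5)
Your proof is correct and takes essentially the same route as the paper's: both reduce to a single slice $t \in a$, invoke Corollary~\ref{lmTE}\ref{lmTE2} (equivalently Lemma~\ref{lm:tconn}) to see that $s_t$ is a single $E$-segment, and then use $E$-saturation of $s'$ to conclude $s'_t = s_t$. Your version spells the step out slightly more explicitly by naming the point $(y,t)$, but the argument is the same.
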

\begin{proof}
Let $t \in a.$ Since $s'$ is $E$-saturated, $s'_t$ is an $E$-segment. So, Corollary \ref{lmTE}\ref{lmTE2} implies $s'_t=s_t$.
\end{proof}

\begin{cy}\label{lmTECon}
Let $s\in\mathcal{T}_E$ and let $s'\subset s$ be $E$-saturated. Suppose $p_2(s')$ is connected. Then $s'$ is connected.
\end{cy}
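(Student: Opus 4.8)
The plan is to prove the stronger statement that $s'$ is \emph{path}-connected, by reducing to Lemma~\ref{lm:path}. If $s'$ is empty there is nothing to prove, so fix two points $(x_1,t_1),(x_2,t_2)\in s'$ and, after possibly interchanging them, assume $t_1\le t_2$. Since $p_2(s')$ is by hypothesis a connected subset of $[\xi,\infty)$ containing $t_1$ and $t_2$, it contains the whole segment $[t_1,t_2]$.

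Because $s'\subset s$ and $s\in\mathcal{T}_E$ is a single $\sim$-equivalence class, both chosen points lie in $s$, so Lemma~\ref{lm:path} yields a path $\rho\subset s$ from $(x_1,t_1)$ to $(x_2,t_2)$ with $p_2(\rho)\subset[t_1,t_2]$. This path a priori lives in $s$ rather than $s'$; to pull it back into $s'$ I would apply Corollary~\ref{cy:pro} with $a=[t_1,t_2]$, which is legitimate since $[t_1,t_2]\subset p_2(s')$ and $s'$ is $E$-saturated. That corollary gives
\[
p_2^{-1}([t_1,t_2])\cap s = p_2^{-1}([t_1,t_2])\cap s',
\]
and since $\rho\subset s$ with $p_2(\rho)\subset[t_1,t_2]$, we conclude $\rho\subset s'$. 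As the two points were arbitrary, $s'$ is path-connected, hence connected.

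I do not anticipate any genuine difficulty here: the argument is essentially a two-line combination of Lemma~\ref{lm:path} and Corollary~\ref{cy:pro}. The only place the hypothesis that $p_2(s')$ is connected enters --- and the one point deserving a moment's attention --- is the verification that $[t_1,t_2]\subset p_2(s')$, which is precisely what makes Corollary~\ref{cy:pro} applicable to the interval joining the heights of the two chosen points.
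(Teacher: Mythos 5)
Your proof is correct and follows essentially the same route as the paper's: both invoke Corollary~\ref{cy:pro} with $a=[t_1,t_2]$ together with Lemma~\ref{lm:path}, differing only in the (immaterial) order in which the two are applied.
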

\begin{proof}
Let $(x_i,t_i)\in s'$ for $i=1,2$. Suppose $t_1\leq t_2$. Since $p_2(s')$ is connected, for any $t\in[t_1,t_2],$ $s'_t\neq\emptyset$.  By Corollary~\ref{cy:pro} with $a = [t_1,t_2],$ we have
\[
p_2^{-1}([t_1,t_2])\cap s = p_2^{-1}([t_1,t_2]) \cap s'.
\]
So, by Lemma~\ref{lm:path} we can connect $(x_1,t_1)$ to $(x_2,t_2)$ by a path in $s'.$ Since $(x_i,t_i)\in s'$ were arbitrary, $s'$ is (path) connected.
\end{proof}

\begin{lm}\label{lm:lino}
Let $c \in \mathcal{T}_E.$ Let $\mathcal P$ be a collection of disjoint connected $E$-saturated subsets of $c.$ The relation $\leq$ induces a linear order on $\mathcal P.$
\end{lm}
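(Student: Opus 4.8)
The plan is to show the order $\leq$ on $\mathcal{P}$ is (i) antisymmetric, (ii) transitive (these make it a partial order), and (iii) total, i.e.\ any two elements are comparable. Antisymmetry is immediate from Corollary~\ref{cyAntiSym}, since the elements of $\mathcal P$ are pairwise disjoint connected $E$-saturated sets. Reflexivity and transitivity come from Lemma~\ref{lm:ob}. So the only real content is totality: given $S_1, S_2 \in \mathcal P$ with $S_1 \neq S_2$, I must produce $S_1 \leq S_2$ or $S_2 \leq S_1$.

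To prove totality I would exploit that everything lives inside a single equivalence class $c$, so the $E$-segments involved are all $\leq$-comparable in a strong sense. First, pick $(x_i,t_i) \in S_i$. Since $(x_1,t_1)$ and $(x_2,t_2)$ lie in $c$, they are $\sim$-equivalent, so (taking $t_1 \leq t_2$ without loss of generality, after relabelling) $e(x_1,t_1) \leq e(x_2,t_2)$ and the rectangle $R = p_1(e(x_1,t_1)) \times [t_1,t_2)$ contains no branching points. I would then apply Lemma~\ref{lmNoBp}\ref{Bp2}: the order $\leq$ on $E$-segments contained in $\overline R$ is linear. The key geometric point is that $p_1(S_1) = p_1(e(x_1,t_1))$ need not hold, but $S_1, S_2$ being $E$-saturated and inside $c$ forces, via Lemma~\ref{lm:tconn}, that $S_{i,t}$ is a single $E$-segment for each $t \in p_2(S_i)$, and via Lemma~\ref{lmEsat}\ref{lmEsat1} that these nest as $t$ increases. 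Intersecting $S_1$ and $S_2$ with a suitable horizontal level $t = t_1$ (the lower of the two, which lies in $p_2(S_1)$; I also need $t_1 \in p_2(S_2)$, which follows since $p_2(S_2)$ is connected and $S_1 \leq_2 S_2$ — wait, I don't yet know $S_1 \leq_2 S_2$, so instead I use that $t_2 \in p_2(S_2)$ and $(x_2,t_2) \sim (x_1,t_1)$ with Lemma~\ref{lm:seg} to get $\{x_2\}\times[t_1,t_2]\subset c$, placing a point of $c$ at level $t_1$ in the right fibre). Then $S_{1,t_1}$ and $S_{2,t_1}$ are two $E$-segments, both contained in $\overline R$ (after enlarging $R$ appropriately), hence comparable by Lemma~\ref{lmNoBp}\ref{Bp2}; say $S_{1,t_1} \leq S_{2,t_1}$. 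Using Lemma~\ref{lmEitherOrSat} (with $S_1$ $E$-saturated, $S_2$ connected, and $S_1 \leq_2 S_2$ deduced from the level comparison together with Lemma~\ref{lmEsat}\ref{lmEsat1}), I conclude $S_1 \leq S_2$ or $p_1(S_1) \cap p_1(S_2) = \emptyset$; the latter is ruled out because the chosen point of $c$ at level $t_1$ shows the projections of the relevant segments overlap.

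The main obstacle I anticipate is bookkeeping the fibres carefully: making sure that the level $t_1$ at which I compare $S_1$ and $S_2$ actually lies in both $p_2(S_1)$ and $p_2(S_2)$, and that both horizontal slices genuinely sit inside one no-branching-point rectangle so Lemma~\ref{lmNoBp}\ref{Bp2} applies. The cleanest route is probably: reduce to comparing two $E$-segments $e_1 \subset S_1$, $e_2 \subset S_2$ at a common level using Lemma~\ref{lm:seg} to slide points of $c$ vertically, invoke Lemma~\ref{lmNoBp}\ref{Bp2} for comparability of $e_1,e_2$, then promote the segment comparison to a comparison of $S_1, S_2$ via Lemma~\ref{lmEitherOrSat} and the disjointness hypothesis on $\mathcal P$. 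Transitivity and antisymmetry then finish the proof that $\leq$ is a linear order on $\mathcal P$.
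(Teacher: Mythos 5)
Your treatment of antisymmetry and transitivity is fine, and you correctly identify Lemma~\ref{lmEitherOrSat} as the tool that will give totality. The gap is in how you try to feed it. You propose to compare $S_1$ and $S_2$ at a common horizontal level~$t_1$, but for distinct $S_1, S_2 \in \mathcal P$ no such level can exist: by Corollary~\ref{cy:pro}, for any $t \in p_2(S_i)$ one has $S_{i,t} = c_t$, so a common $t \in p_2(S_1) \cap p_2(S_2)$ would force $S_1 \cap S_2 \supset c_t \neq \emptyset$, contradicting disjointness. Hence $S_{2,t_1}$ is empty, the appeal to Lemma~\ref{lmNoBp}\ref{Bp2} applied to $S_{1,t_1}$ and $S_{2,t_1}$ never gets off the ground, and the ``cleanest route'' you sketch at the end --- comparing two $E$-segments $e_1 \subset S_1$, $e_2 \subset S_2$ at a common level --- is likewise impossible.

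What does survive from your sketch is the very last part: picking $(x_i,t_i) \in S_i,$ the relation $(x_1,t_1) \sim (x_2,t_2)$ makes $e(x_1,t_1)$ and $e(x_2,t_2)$ comparable, so their $p_1$-projections nest, and since $e(x_i,t_i) \subset S_i$ by $E$-saturation one gets $p_1(S_1) \cap p_1(S_2) \neq \emptyset.$ Together with the free WLOG $S_1 \leq_2 S_2$ this does feed Lemma~\ref{lmEitherOrSat} and gives $S_1 \leq S_2.$ The paper packages this much more tightly and avoids the dead end: by Corollary~\ref{lmTE}\ref{lmTE2} one has $P_t = c_t$ for $t \in p_2(P),$ hence by Lemma~\ref{lmEsat}\ref{lmEsat1} every $P \in \mathcal P$ satisfies $P \leq c_{T_f(c)};$ the common upper bound then lets Lemma~\ref{lmNoCycles} (which encapsulates exactly the step ``common bound $\Rightarrow$ projections intersect $\Rightarrow$ Lemma~\ref{lmEitherOrSat}'') conclude $P_1 \leq P_2$ once $P_1 \leq_2 P_2$ is assumed without loss of generality.
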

\begin{proof}
First, we prove that for $P \subset \mathcal P$ we have
\begin{equation}\label{eq:PcTf}
P \leq c_{T_f(c)}.
\end{equation}
Indeed, for $t\in p_2(P)$ since $P$ is $E$-saturated, Lemma~\ref{lmTE}\ref{lmTE2} implies that $P_t = c_t.$ By Lemma~\ref{lmEsat}\ref{lmEsat1}, we have $c_t \leq c_{T_f(c)}.$ So,
\[
P \leq P_t = c_t \leq c_{T_f(c)}
\]
as desired.

Suppose $P_1,P_2 \in \mathcal P.$ Without loss of generality, we may assume $P_1 \leq_2 P_2.$ So, by relation~\eqref{eq:PcTf} and Lemma~\ref{lmNoCycles}, we have $P_1\leq P_2,$ which implies the lemma.
\end{proof}

\begin{lm}\label{lm:max}
For each $s\in\mathcal{T}_E$ there exists a maximal element $m \in \mathcal{T}_E$ such that $s \leq m.$
\end{lm}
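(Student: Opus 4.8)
The plan is to apply Zorn's lemma to the up-set $\mathcal U:=\{c\in\mathcal T_E\mid s\leq c\}$. By Corollaries~\ref{cyAntiSym} and~\ref{cy:tl}, $(\mathcal T_E,\leq)$ is a partially ordered set, hence so is $(\mathcal U,\leq)$, and it is nonempty since $s\in\mathcal U$. First I would observe that a maximal element $m$ of $\mathcal U$ is automatically maximal in $\mathcal T_E$: if $m\leq c$ for some $c\in\mathcal T_E$, then $s\leq m\leq c$ by transitivity, so $c\in\mathcal U$ and therefore $c=m$. Thus it suffices to show every chain $\mathcal C\subseteq\mathcal U$ has an upper bound in $\mathcal U$. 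The empty chain is bounded by $s$, so assume $\mathcal C\neq\emptyset$.

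To bound a nonempty chain $\mathcal C$, I would first note that $f$, being continuous on the compact manifold $\gamma$, is bounded, so all quantities $T_f(c)$ for $c\in\mathcal T_E$ are finite. For each $c\in\mathcal C$, Lemma~\ref{lm:cfatf} applies, since $c$ is an $E$-saturated, connected, closed-from-above subset of a single $\sim$-class, namely itself, using Theorem~\ref{tmMinMaxTree}\ref{tmMinMaxTree1} and Corollary~\ref{lmTE}\ref{lmTE1}; hence $T_f(c)\in p_2(c)$, so by Corollary~\ref{lmTE}\ref{lmTE2} the slice $c_{T_f(c)}$ is a single $E$-segment, which is nonempty and, by Remark~\ref{rem:cont}, closed. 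The heart of the argument is that the family $\{p_1(c_{T_f(c)})\}_{c\in\mathcal C}$ is nested: if $c_1\leq c_2$ in $\mathcal C$ with $c_1\neq c_2$, then $c_2\subseteq E\setminus c_1$, so Lemma~\ref{lmStroncont} with $S=c_1$ and $T=c_2$ gives $c_{1,T_f(c_1)}\leq c_2$, and therefore $p_1(c_{2,T_f(c_2)})\subseteq p_1(c_2)\subseteq p_1(c_{1,T_f(c_1)})$. Thus $\{p_1(c_{T_f(c)})\}_{c\in\mathcal C}$ is a family of nonempty closed subsets of the compact $\gamma$ that is totally ordered by reverse inclusion, hence has the finite intersection property, and I would choose a point $x_0\in\bigcap_{c\in\mathcal C}p_1(c_{T_f(c)})$.

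Now $(x_0,T_f(c))\in c$ for every $c\in\mathcal C$. Set $\beta:=\sup_{c\in\mathcal C}T_f(c)=\sup p_2\!\left(\bigcup_{c\in\mathcal C}c\right)$, which is finite; since $f(x_0)\geq T_f(c)$ for all $c\in\mathcal C$, we get $f(x_0)\geq\beta$, so $(x_0,\beta)\in E$. Let $m\in\mathcal T_E$ be the $\sim$-class containing $(x_0,\beta)$. Given $c\in\mathcal C$: if $(x_0,\beta)\in c$ then $c=m$; otherwise $c\cap m=\emptyset$ and $T_f(c)<\beta$, so Lemma~\ref{lm:sco} applied with $S_1=c$, $S_2=m$, the point $x_0$, and levels $T_f(c)<\beta$ yields $c\leq m$. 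Hence $m$ is an upper bound for $\mathcal C$, and picking any $c\in\mathcal C$ gives $s\leq c\leq m$, so $m\in\mathcal U$. Zorn's lemma then produces a maximal element of $\mathcal U$, which is the desired $m$. I expect the nesting of the top slices $p_1(c_{T_f(c)})$ — the step invoking Lemma~\ref{lmStroncont} — to be the main obstacle; the rest is bookkeeping with compactness of $\gamma$ and boundedness of $f$, together with Lemma~\ref{lm:sco}.
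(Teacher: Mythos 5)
Your proof is correct but takes a genuinely different route from the paper. The paper's argument is direct and constructive: it picks $x\in\overline{p_1(s)}$ where $f$ attains its maximum, shows $s\leq\{(x,f(x))\}$ by a contradiction argument using continuity of $f$ and Theorem~\ref{tmMinMaxTree}\ref{tmMinMaxTree3}, and lets $m$ be the $\sim$-class of $(x,f(x))$; since $x$ also maximizes $f$ on $\overline{p_1(m)}\subset\overline{p_1(s)}$ and $T_f(m)=f(x)$, $m$ is then maximal by the same criterion that handled the case $f(x)=T_f(s)$. You instead run Zorn's lemma on the up-set $\mathcal U=\{c\in\mathcal T_E\mid s\leq c\}$, so the real work moves to the chain condition. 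That part of your argument is sound: Lemma~\ref{lm:cfatf} and Corollary~\ref{lmTE} guarantee each top slice $c_{T_f(c)}$ is a nonempty closed $E$-segment, Lemma~\ref{lmStroncont} gives the nesting $p_1(c_{2,T_f(c_2)})\subseteq p_1(c_{1,T_f(c_1)})$ along the chain, compactness of $\gamma$ supplies the common point $x_0$, boundedness of the continuous $f$ makes $\beta=\sup T_f(c)$ finite with $(x_0,\beta)\in E$, and Lemma~\ref{lm:sco} applied with $(x_0,T_f(c))\in c$ and $(x_0,\beta)\in m$ yields $c\leq m$. Both proofs exploit compactness of $\gamma$ and continuity of $f$ in an essential way, and both ultimately locate a ``highest'' point over a shrinking support; the paper's version is shorter and exhibits $m$ explicitly, while your Zorn argument is nonconstructive but sidesteps the need to verify separately (as the paper leaves implicit) that the class it builds is actually maximal.
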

\begin{proof}
Let $x\in\overline{p_1(s)}$ be the point where $f$ obtains its maximum. Then $f(x)\geq T_f(s)$. If $f(x)=T_f(s)$ then $s$ itself is maximal. If $f(x)>T_f(s)$ we claim that $s\leq \{(x,f(x))\}$. Assume by contradiction otherwise. Then $x\not\in p_1(s_{T_f(s)})$. By Lemma~\ref{lmTE}\ref{lmTE2} $s_{T_f(s)}$ is a single $E$-segment. Therefore $p_1(s_{T_f}(s))$ is closed. It follows that $x$ has an open neighborhood $v\subset\gamma\backslash p_1(s_{T_f(s)})$. Since  $x\in\overline{p_1(s)}$ and $f$ is continuous, there is a point $x'\in p_1(s)\cap v$ close enough to $x$ so that $f(x')>T_f(s)$. Therefore $s\leq \{(x',f(x'))\}$. On the other hand, since $f(x')>T_f(s)$, $(x',f(x'))\not\in s$. But since $x'\not\in p_1(s_{T_f(s)})$, $s_{T_f(s)}\not\leq \{(x',f(x'))\}$ in contradiction to \ref{tmMinMaxTree}\ref{tmMinMaxTree3}.

Let $m\in \mathcal{T}_E$ be the element containing $(x,f(x))$. Then we have that $s\leq \{(x,f(x))\}$ and $m\leq\{(x,f(x))\}$. Therefore, by Lemma \ref{lmNoCycles} $s\leq m$.
\end{proof}

Given a finite collection $V$ of connected $E$-saturated sets, we define a graph $F_V$ as follow. $V$ is the set of vertices of $F$. We connect the vertex $v_1$ to $v_2$ if $v_1\leq v_2$ and there is no $v_3\in V$ such that $v_1\leq v_3\leq v_2$. By virtue of Corollary~\ref{cy:tl}, $F_V$ has no cycles and so is a forest. Again by Corollary~\ref{cy:tl}, each tree $T$ in $F_V$ has a unique minimal vertex $r_T$, which we designate as the root of $T.$ Thus the leaves of $F_V$ are the maximal vertices. Denote by $R(V)\subset V$ the roots of $F_V,$ by $L(V)\subset V$ the leaves, and by $I(V)\subset V$ the vertices which are neither roots nor leaves. Denote by $E(V)$ the edges of $F_V.$

A finite forest $F$ is called \textbf{stable} if any $v\in F$ which is not a leaf has at least two direct descendants. The proof of the following lemma is standard and we omit it.
\begin{lm}\label{lmStabFor}
Let $F$ be a finite stable forest and let $L$ be the number of its leaves. Then $|F|\leq 2L$.
\end{lm}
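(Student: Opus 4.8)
The plan is to prove the statement by induction on the number of vertices $|F|$, or equivalently on the number of leaves $L$. The base case is a forest consisting entirely of isolated vertices (each of which is both a root and a leaf), for which $|F| = L \leq 2L$ trivially; more generally one may take as base case a single vertex, where $|F| = L = 1$.

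For the inductive step, I would pick a tree $T$ in $F$ that has at least one edge (if there is none, $F$ is a union of isolated vertices and we are in the base case). Let $r$ be the root of $T$. By stability, $r$ has at least two direct descendants, say $d_1, \dots, d_m$ with $m \geq 2$. Removing $r$ decomposes $T$ into the subtrees $T_1, \dots, T_m$ rooted at $d_1, \dots, d_m$. Each $T_i$, together with the other trees of $F$, forms a strictly smaller stable forest $F_i$; more efficiently, one can delete $r$ in a single step to obtain a stable forest $F' = F \setminus \{r\}$ with $|F'| = |F| - 1$ and whose number of leaves $L'$ satisfies $L' \geq L - 1$, since at most the single leaf-status of $r$ could be lost and no new non-leaf can be created. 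Actually $r$ is not a leaf of $F$ (it has descendants), so every leaf of $F$ remains a leaf of $F'$, except possibly one of the $d_i$ could fail to be a leaf — but that does not matter, because the leaves of $F$ are all still present in $F'$: deleting the root cannot turn a leaf into a non-leaf. Hence $L' = L$ and $|F'| = |F| - 1$. But $F'$ need not be stable: some $d_i$ with exactly one child in $T$ would now be a root with one descendant.

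To fix this, I would instead contract rather than delete: whenever stability fails at a vertex $v$ with a unique child $w$, contract the edge $vw$. This reduces $|F|$ by one while leaving $L$ unchanged and preserving stability. Iterating, we reduce to the case where the forest is already stable with strictly fewer vertices, unless $F$ was already fully branching, in which case a cleaner route is: delete all the leaves of $F$ at once. Deleting the leaf set $L(F)$ yields a forest $F''$ on $|F| - L$ vertices; by stability each internal vertex of $F$ had $\geq 2$ children, so a standard counting argument (the number of leaves of a forest is at least the number of internal vertices plus the number of trees, when every internal vertex is at least binary) gives $|F| - L \leq L$, i.e. $|F| \leq 2L$. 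I expect the main obstacle to be bookkeeping the edge cases — isolated vertices, trees that are single edges, and ensuring the "every internal node has $\geq 2$ children implies $\#\text{internal} \leq \#\text{leaves}$" inequality is applied per connected component — but all of this is routine, which is why the paper omits it.
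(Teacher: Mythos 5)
The paper deliberately omits the proof, calling it standard, so there is no internal proof to compare against; the test is simply whether your argument is correct. Your final argument is: delete the leaves, observe that the remaining $|F|-L$ vertices are exactly the internal ones, and invoke the standard count that a forest in which every internal vertex has at least two children satisfies $\#\{\text{internal}\}\leq \#\{\text{leaves}\}$. This is correct and is surely the intended proof. (For completeness, the count follows by double-counting edges: with $I$ internal vertices, $L$ leaves and $k$ components, the forest has $I+L-k$ edges, while stability gives at least $2I$ edges emanating downward from internal vertices, so $2I\leq I+L-k$ and $|F|=I+L\leq 2L-k\leq 2L$.)

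However, the route you take to get there is needlessly tangled, and one of the intermediate ideas is vacuous. You propose, after deleting the root, to restore stability by contracting edges $vw$ ``whenever stability fails at a vertex $v$ with a unique child $w$.'' But stability of $F$ itself means no vertex of $F$ has exactly one child, so there is never anything to contract in $F$; and after deleting the root, contraction reduces $|F|$ a second time while fixing $L$, which gives only $|F|\leq 2L+2$ from the inductive hypothesis — not enough. You then notice that $F$ being stable means it is already ``fully branching,'' so the contraction branch is dead code and the delete-all-leaves route always applies. In short: keep only the last paragraph. A clean write-up is just the edge-counting argument above, with the remark that isolated vertices are leaves (so they contribute to $L$, not to $I$), and that a ``tree that is a single edge'' cannot occur in a stable forest since its root would have only one child.
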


\begin{lm}\label{lmMaxfin}
Let $M\subset \mathcal{T}_E$ be the set of maximal elements under $\leq$. $\mathcal{T}_E$ is finite if and only if $M$ is finite. Moreover, in that case $|\mathcal{T}_E| \leq 2|M|.$
\end{lm}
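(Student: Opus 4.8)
The plan is to handle the two implications separately; the reverse one carries the content. The forward implication ``$\mathcal{T}_E$ finite $\Rightarrow M$ finite'' is immediate from $M \subseteq \mathcal{T}_E.$ For the quantitative bound I would, assuming $\mathcal{T}_E$ finite, pass to the graph $F_{\mathcal{T}_E}$ defined above on the vertex set $\mathcal{T}_E$; by Corollary~\ref{cy:tl} it is a forest, and its leaves are exactly the $\leq$-maximal elements, namely the set $M.$ The key point is that $F_{\mathcal{T}_E}$ is stable: if $v \in \mathcal{T}_E$ is not a leaf, choose a direct descendant $v_1,$ so $v < v_1,$ and apply Theorem~\ref{tmMinMaxTree}\ref{tmMinMaxTree2} to get $c_2 \in \mathcal{T}_E$ incomparable to $v_1$ with $v \leq c_2$; then $v < c_2,$ and the first vertex on the path in $F_{\mathcal{T}_E}$ from $v$ to $c_2$ is a direct descendant of $v$ different from $v_1.$ Hence every non-leaf has at least two direct descendants, so Lemma~\ref{lmStabFor} gives $|\mathcal{T}_E| \leq 2|M|.$

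For ``$M$ finite $\Rightarrow \mathcal{T}_E$ finite'' I would argue the contrapositive: suppose $M$ is finite and $\mathcal{T}_E$ is infinite. Using Lemma~\ref{lm:max} I choose, for each $c \in \mathcal{T}_E,$ a maximal element $\mu(c) \geq c$; since $M$ is finite, some fiber $C = \mu^{-1}(m)$ is infinite, and $C \subseteq \{c \in \mathcal{T}_E : c \leq m\},$ which by Corollary~\ref{cy:tl} is linearly ordered by $\leq$. An infinite linear order contains a subset order-isomorphic to $\omega$ or to $\omega^{*}$ (if it is not well-ordered it has an infinite descending sequence, and an infinite well-order contains $\omega$), so $\mathcal{T}_E$ contains an infinite strictly ascending chain $c_0 < c_1 < \cdots$ or an infinite strictly descending chain $c_0 > c_1 > \cdots$. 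In each case I intend to produce infinitely many pairwise distinct maximal elements, contradicting the finiteness of $M.$

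In the ascending case, for each $i$ I apply Theorem~\ref{tmMinMaxTree}\ref{tmMinMaxTree2} to $c_i$ together with the strictly larger element $c_{i+1}$ to obtain $d_i \in \mathcal{T}_E$ incomparable to $c_{i+1}$ with $c_i \leq d_i,$ and I pick a maximal $m_i \geq d_i$ by Lemma~\ref{lm:max}. If $m_i = m_j$ for some $i < j,$ then $c_{i+1} \leq c_j \leq d_j \leq m_j = m_i$ and $d_i \leq m_i,$ so $c_{i+1}$ and $d_i$ are comparable by Corollary~\ref{cy:tl}, contradicting the choice of $d_i$; hence the $m_i$ are pairwise distinct. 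The descending case is the step I expect to be the main obstacle, because then the equality $m_i = m_j$ no longer bears directly on $c_{i+1}.$ Here I apply Theorem~\ref{tmMinMaxTree}\ref{tmMinMaxTree2} to $c_{i+1}$ together with the strictly larger element $c_i$ to obtain $d_i$ incomparable to $c_i$ with $c_{i+1} \leq d_i,$ and a maximal $m_i \geq d_i.$ If $m_i = m_j$ for $i < j,$ then $c_{i+1} \leq d_i \leq m_i = m_j$ and $d_j \leq m_j,$ so $c_{i+1}$ and $d_j$ are comparable by Corollary~\ref{cy:tl}; if $c_{i+1} \leq d_j$ then, using $c_j \leq c_{i+1},$ we get $c_j \leq d_j,$ while if $d_j \leq c_{i+1}$ then $c_j$ and $d_j$ are both $\leq c_{i+1},$ hence comparable — either way contradicting that $d_j$ is incomparable to $c_j.$ So the $m_i$ are again pairwise distinct, $M$ is infinite, and the contrapositive follows.
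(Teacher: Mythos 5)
Your proof is correct, and it uses the same essential ingredients (Lemma~\ref{lm:max}, Corollary~\ref{cy:tl}, Theorem~\ref{tmMinMaxTree}\ref{tmMinMaxTree2}, and Lemma~\ref{lmStabFor}), but the architecture is different from the paper's. The paper first shows that \emph{any} antichain $S \subset \mathcal{T}_E$ satisfies $|S| \leq |M|$ (the map sending $s \in S$ to a maximal element above it is injective by tree-likeness), and then, for an arbitrary finite $V \subset \mathcal{T}_E$, extends $V$ to a finite $V'$ with $F_{V'}$ stable and observes that the leaves of $F_{V'}$ form an antichain, giving $|V| \leq |V'| \leq 2L(V') \leq 2|M|$; the uniform bound on all finite subsets delivers both finiteness and the inequality in one stroke. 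You instead prove the bound only under the standing hypothesis that $\mathcal{T}_E$ is already finite — in which case you can apply the stabilization argument directly to $F_{\mathcal{T}_E}$ rather than passing through finite subsets, which is a genuine simplification of that step — and then handle the finiteness direction separately via an infinite-chain construction (extracting an infinite ascending or descending chain below a fixed maximal element and manufacturing infinitely many distinct maximal elements from it using part~\ref{tmMinMaxTree2}). Your chain argument is considerably longer than the paper's antichain count, but it is sound; the case analysis in the descending case, in particular, is handled correctly. One side note: the paper's proof cites Theorem~\ref{tmMinMaxTree}\ref{tmMinMaxTree3} for the stabilization step, but the relevant part is clearly~\ref{tmMinMaxTree2}, which is what you correctly used; this appears to be a reference typo in the paper.
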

\begin{proof}
First, we let $S$ be an anti-chain in $\mathcal{T}_E$ and show that $|S|\leq  |M|$. By Lemma~\ref{lm:max}, for each $s \in S$ there is at least one $m\in M$ such that $s\leq m$. Since the elements of $S$ are pairwise incomparable, and since by Corollary~\ref{cy:tl} the order on $\mathcal{T}_E$ is tree-like, for any element $m\in M$ there is at most one $s\in S$ such that $s\leq m$. Thus $|S|\leq | M|$.

It suffices to prove the bound for any finite subset $V \subset \mathcal{T}_E.$  Given such $V,$ by Theorem \ref{tmMinMaxTree}\ref{tmMinMaxTree3} we may choose $V' \subset \mathcal{T}_E,$ such that $V \subset V'$ and $F_{V'}$ is stable. The claim now follows from Lemma \ref{lmStabFor}.
\end{proof}

\section{Thick thin partition}\label{SecThickThinPart}

\subsection{Thickened hypograph}
We now specialize the discussion of the previous section to the case where $\gamma$ is a geodesic in $\Sigma$ as in Theorem \ref{TmApLenBound}. Denote the connected components of $\gamma$ by $\gamma_i.$ We will assume throughout this section that for all $i$ we have
\begin{equation}\label{eq:long}
4e^{-\max_{x \in \gamma_i}g(x)} \leq \ell_n(\gamma_i),
\end{equation}
and
\begin{equation}\label{eq:hot}
2k \leq \max_{x \in \gamma_i} e^{g(x)}.
\end{equation}

Recall Definition \ref{dfDB}. Here and below, we abbreviate $\ell_n(c)=\ell_n(p_1(c))$.
Let $s_t:=\{c\in\pi_0(X_t\backslash D)|\ell_n(c)> 4e^{-t}\},$ and let $S_t$ be the union of elements of $s_t$. Let $E_t:=X_t\backslash S_t$, and $E:=\cup_tE_t\cup\gamma\times\{\ln 2k\}$. We will show that $E$ is the hypograph of a continuous function. Figure \ref{Fig4} gives a picture of a typical $E$ compared with $D$.
\begin{figure}[h]
\includegraphics[scale=0.6]{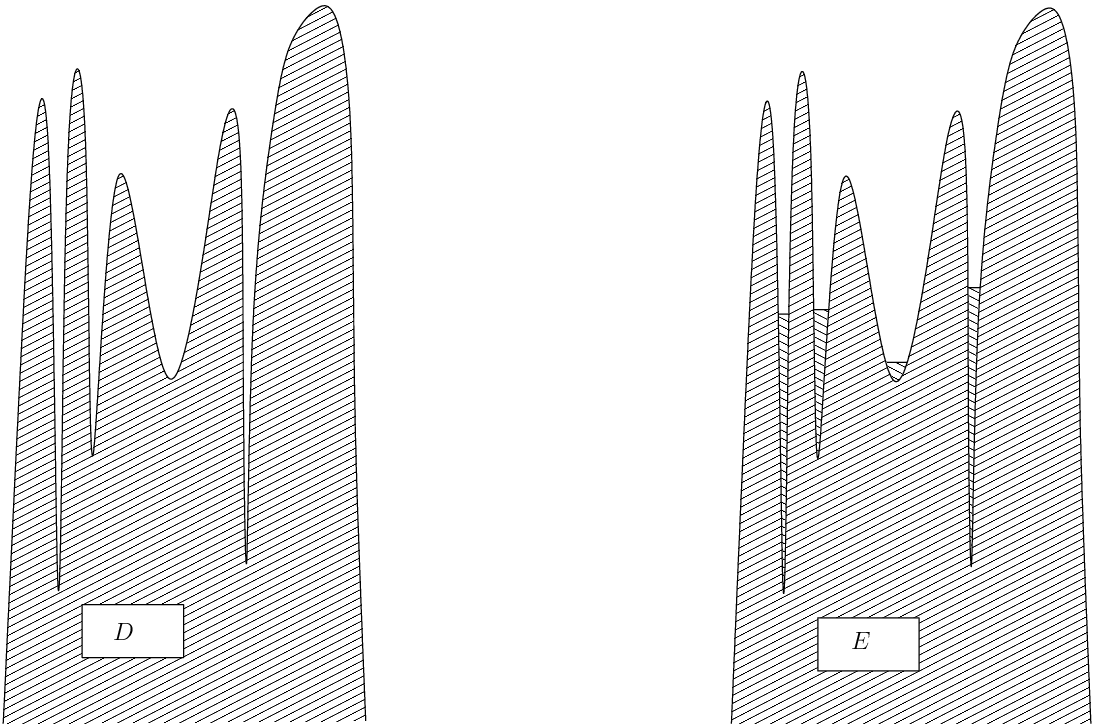}
\caption{}\label{Fig4}
\end{figure}

\begin{rem}\label{remEnPart}
By definition $D_t$ is $2e^{-t}$-dense in $E_t$. Let $t_1,t_2\geq\ln(2k).$ Let $S_1\subset E_{t_1},S_2\subset E_{t_2},$ be segments such that $ p_1(S_1)\cap p_1(S_2)=\emptyset$ and such that $\ell_n(S_i)\geq 8e^{-t_i}$ for $i=1,2$. Then $S_i$ contains a point $x_i\in D_t$ such that $d(x_i,\partial S_i;h_n)\geq 2e^{-t_i}$. Let $B_i=B(x_i,t_i)$. By Lemma \ref{lmDDisc}, $\mu(B_i)\geq \delta_1$. Furthermore, by Lemma \ref{lmDisjDisc}
\[
B_1\cap B_2=\emptyset.
\]
This observation will play a key role in the following.
\end{rem}

\begin{lm}\label{lmEclosed}
$E$ is closed.
\end{lm}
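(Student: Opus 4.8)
The plan is to show that the complement $X \setminus E = \bigcup_t S_t$ is open, where recall $S_t$ is the union of those connected components $c$ of $X_t \setminus D$ with $\ell_n(c) > 4e^{-t}$. So fix a point $(x_0,t_0) \in S_t$, meaning $(x_0,t_0)$ lies in such a ``long'' component $c_0$ of $X_{t_0} \setminus D$; I want to produce a neighborhood of $(x_0,t_0)$ inside $\bigcup_t S_t$. First I would record two facts about the geometry of $D$: by Definition \ref{dfDB}, $D = \{(x,t) : \ln 2k \le t \le g(x)\}$, so for each $x$ the fiber $D \cap (\{x\}\times[\ln 2k,\infty))$ is the closed interval $[\ln 2k, g(x)]$ (empty if $g(x) < \ln 2k$); and $g = \ln\frac{d\ell_\mu}{d\ell_n}$ is continuous by the continuity of the density of $\mu$ (Definition \ref{dfThTh}(a)). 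Thus $D$ itself is closed, and moreover $D$ is ``closed from below'' in the fiber direction in the sense that $X_t \setminus D_t$ is relatively open in $X_t$ and varies upper-semicontinuously: if $(x_0,t_0)\notin D$ then $t_0 > g(x_0)$, and by continuity of $g$ there is $\varepsilon>0$ with $t > g(x)$ for all $(x,t)$ with $d_\gamma(x,x_0;h_n)<\varepsilon$, $|t-t_0|<\varepsilon$ — i.e. a whole box around $(x_0,t_0)$ avoids $D$.

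Now the two steps. Step 1: $(x_0,t_0) \notin D$, since $(x_0,t_0)$ lies in a component of $X_{t_0}\setminus D$. By the box argument just described, choose $\varepsilon > 0$ so that $U := \{(x,t) : d_\gamma(x,x_0;h_n) < \varepsilon,\ |t - t_0| < \varepsilon\}$ is disjoint from $D$ and, shrinking $\varepsilon$, so small that $\varepsilon < \tfrac14 \cdot \tfrac12(\ell_n(c_0) - 4e^{-t_0})$ and $\varepsilon$ is less than the $h_n$-distance from $(x_0,t_0)$ to $\partial c_0$ (the latter is positive since $c_0$ is open in $X_{t_0}$ and $(x_0,t_0)$ is an interior point; if $c_0 = X_{t_0}$, i.e. $\gamma_i \times \{t_0\}$ has no $D$-points, handle that component's fiber trivially). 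Step 2: I claim each slice $U_t := U \cap X_t$ (for $|t-t_0|<\varepsilon$) lies in $S_t$. Fix such a $t$. The component $c_t$ of $X_t \setminus D$ containing $U_t$ contains the subsegment of $c_0$ obtained by following it: more precisely, since $U$ avoids $D$, the segment $\{(x,t): d_\gamma(x,x_0;h_n)<\varepsilon\}$ lies in $X_t\setminus D$, hence in one component $c_t$; and since the part of $c_0$ within $h_n$-distance $\ell_n(c_0)/2 - \varepsilon$ (say) of $x_0$, re-drawn at height $t$, still avoids $D$ — here I use the upper-semicontinuity of $D$ in the fiber direction, namely that points of $c_0$, which have $t_0 > g(x)$, still satisfy $t > g(x)$ for $t$ near $t_0$ after possibly shrinking $\varepsilon$ using continuity of $g$ on the compact segment $\overline{c_0}$ — we get $\ell_n(c_t) \ge \ell_n(c_0) - (\text{small}) > 4e^{-t_0} + (\text{positive slack}) \ge 4e^{-t}$, again shrinking $\varepsilon$ so $4e^{-t}$ stays below the bound for $|t - t_0| < \varepsilon$. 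Hence $c_t$ is a long component, so $c_t \subset S_t$, so $U_t \subset S_t$. Taking the union over $|t-t_0|<\varepsilon$ shows $U \subset \bigcup_t S_t = X\setminus E$, so $X \setminus E$ is open and $E$ is closed. Finally I should not forget the ``base'' piece $\gamma \times \{\ln 2k\}$ included in $E$: this only adds points to $E$, and any sequence in $E$ converging to a point with $t = \ln 2k$ is automatically in $E$ since that slice is entirely in $E$; a point with $t < \ln 2k$ is not in $X$ at all (recall $X = \gamma\times[\ln 2k,\infty)$ in the present specialization, with $\xi = \ln 2k$), so there is nothing further to check.

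The main obstacle is Step 2: making precise and uniform the claim that the ``long'' component $c_0$ at height $t_0$ persists as a long component at nearby heights $t$. This requires combining (i) continuity of $g$ to keep the relevant horizontal segment out of $D$ as $t$ varies, uniformly over the compact closure $\overline{p_1(c_0)}$, with (ii) a careful accounting of the length inequality $\ell_n(c_t) > 4e^{-t}$: the threshold $4e^{-t}$ itself moves with $t$, so one needs the strict inequality $\ell_n(c_0) > 4e^{-t_0}$ to provide enough slack, and one must choose $\varepsilon$ small enough relative to that slack. A minor subtlety is that $c_t$ could a priori be strictly larger than the ``continuation'' of $c_0$ (it might merge with neighbors), which only helps the length bound; and one should note $D_{t_0}\cap \overline{c_0} \subset \partial c_0$ is where the endpoints of $c_0$ sit, so continuity of $g$ there is exactly what controls how the endpoints move. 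None of this is deep, but it is the part where the definitions of $D$, $S_t$, the metric $h_n$, and the continuity of the density all have to be used together.
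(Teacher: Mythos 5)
Your proof is essentially the paper's argument run in reverse: the paper takes $(x,t)\in\partial E$ and, supposing the ambient component $s$ of $X_t\setminus D$ has $\ell_n(s)>4e^{-t}$, produces a rectangle $p_1(s')\times(t-\epsilon,t+\epsilon)$ around $(x,t)$ lying in $X\setminus E$, a contradiction; you argue directly that $X\setminus E$ is open. The ingredients — closedness of $D$ (from continuity of $g$), a compact subsegment inside the open component, a uniform $\epsilon$ from compactness, and continuity of $t\mapsto 4e^{-t}$ — are used identically.

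Two small points would need tightening if written out. First, the compact subsegment should be chosen differently: \emph{the part of $c_0$ within $h_n$-distance $\ell_n(c_0)/2-\varepsilon$ of $x_0$} is the wrong object. If $x_0$ lies near an endpoint of $c_0$ this set need not have length exceeding $4e^{-t_0}$, and it need not be compact — it can abut $\partial c_0$, exactly where $g$ attains the value $t_0$ and the uniform-in-$t$ escape from $D$ fails. The paper's choice is cleaner and is what you should use: take \emph{any} compact segment $s'\subset c_0$ with $\ell_n(s')>4e^{-t_0}$ and $(x_0,t_0)\in s'^{o}$, and play compactness of $s'$ against the closed set $D$; there is no need to center $s'$ at $x_0$ nor to relate its radius to $\ell_n(c_0)$. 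Second, because of the glued-in slice $\gamma\times\{\ln 2k\}$ in the definition of $E$, one has $X\setminus E=\bigcup_{t>\ln 2k}S_t$ rather than $\bigcup_t S_t$, so $\varepsilon$ must also be taken smaller than $t_0-\ln 2k$ (which is positive for $(x_0,t_0)\in X\setminus E$); you gesture at the base piece at the end, but in the open-complement framing this is the precise place it intervenes.
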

\begin{proof}
Let $(x,t)\in\partial E$. We show that $(x,t)\in E$. If $(x,t)\in D,$ we are done since $D\subset E$. Otherwise, let $s$ be the connected component of $X_t\backslash D$ containing $(x,t)$. We show that $\ell_n(s)\leq 4e^{-t},$ which implies that $s\subset E$. Assume by contradiction otherwise. Then there is a compact segment $s'\subset s$ such that  $\ell_n(s')>4e^{-t}$ and such that $(x,t)\in s'^o$. By the closedness of $D$, compactness of $s'$,  and continuity of the exponent, there is an $\epsilon>0$ such that $\pi_1(s')\times(t-\epsilon,t+\epsilon)\subset X\backslash D$ and such that for any $t'\in(t-\epsilon,t+\epsilon)$, $\ell_n(s')>4e^{-t'}$. This implies that $\pi_1(s')\times(t-\epsilon,t+\epsilon)\subset X\backslash E$. But $\pi_1(s')\times(t-\epsilon,t+\epsilon)$ contains an open neighborhood of $(x,t)$ in contradiction to the fact that $(x,t)\in\partial E$.
\end{proof}

\begin{lm}\label{LmEBalanced}
Let $t\in[\ln 2k,\infty)$. For all $x\in p_1(E_t)$,  $\{x\}\times[\ln 2k,t)\subset E^o$.
\end{lm}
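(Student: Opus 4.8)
The plan is to show directly that, for $t>\ln 2k$ and $x\in p_1(E_t)$, every point $(x,t')$ with $t'\in[\ln 2k,t)$ has a neighborhood in $X$ contained in $E$; the case $t=\ln 2k$ is vacuous. Fix such a $t'$ and an intermediate level $t_1\in(t',t)$. I aim to produce $\epsilon>0$ with $B_\epsilon(x)\times[\ln 2k,t_1]\subset E$, where $B_\epsilon(x)$ denotes the open $\epsilon$-ball about $x$ in $\gamma$; since $B_\epsilon(x)$ is open in $\gamma$ and $t'<t_1$, the set $B_\epsilon(x)\times[\ln 2k,t_1)$ is then an $X$-open neighborhood of $(x,t')$ inside $E$, so $(x,t')\in E^o$. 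Throughout I use that $g=\ln\frac{d\ell_\mu}{d\ell_n}$ is continuous (with values in $[-\infty,\infty)$), which follows from continuity of the density $\frac{d\mu}{d\nu_h}$ and of the injectivity radius entering $r(\cdot)$. Consequently $D$ meets each slice $X_s$ in the closed set $\{g\ge s\}$, so that $X_s\setminus D$ is identified with the open subset $\{g<s\}\subset\gamma$, whose connected components are open sub-arcs of $\gamma$.

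The argument splits according to whether $(x,t)$ lies in $D$. If $(x,t)\in D$, i.e. $g(x)\ge t>t_1$, then by continuity of $g$ there is $\epsilon>0$ with $g>t_1$ on $B_\epsilon(x)$; hence for every $y\in B_\epsilon(x)$ and $s\in[\ln 2k,t_1]$ one has $s\le g(y)$, so $(y,s)\in D\subset E$, and the box works.

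If $(x,t)\notin D$, then $(x,t)$ lies in a connected component $c$ of $X_t\setminus D$, and since $(x,t)\in E_t$ we have $c\notin s_t$, i.e. $\ell_n(c)\le 4e^{-t}$. Put $I=p_1(c)$, an open neighborhood of $x$ in $\gamma$, and choose $\epsilon>0$ with $B_\epsilon(x)\subset I$. Fix $y\in B_\epsilon(x)$ and $s\in[\ln 2k,t_1]$. If $(y,s)\in D$ we are done; otherwise $(y,s)$ lies in some component $c'$ of $X_s\setminus D$. The load-bearing observation is that descending in level only shrinks the complement of $D$: since $s\le t_1<t$ we have $\{g<s\}\subset\{g<t\}$, so the component of $\{g<s\}$ through $y$ is contained in the component of $\{g<t\}$ through $y$, which is precisely $I$ because $y\in I$; hence $\ell_n(c')\le\ell_n(I)=\ell_n(c)\le 4e^{-t}<4e^{-s}$. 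Thus $c'\notin s_s$, so $(y,s)\notin S_s$ and therefore $(y,s)\in E_s\subset E$. Again $B_\epsilon(x)\times[\ln 2k,t_1]\subset E$.

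In either case the required neighborhood has been produced, so $(x,t')\in E^o$, which is the assertion. The computations are entirely routine; the only point requiring care is the bookkeeping of connected components of the slices $X_s\setminus D$ across different levels $s$, and the one genuinely essential fact is that passing from level $t$ to a lower level can only shrink a component of the complement of $D$ in a slice, hence can never turn a short component into a long one — which is exactly what keeps the whole vertical segment below $t_1$ inside $E$.
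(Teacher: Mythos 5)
Your proof is correct and follows essentially the same route as the paper: both hinge on the observation that the components of $X_s\setminus D$ shrink as $s$ decreases, so a short component at level $t$ stays short at all lower levels. The paper organizes this slightly differently (first proving the non-open inclusion $\{y\}\times[\ln 2k,t)\subset E$ for all $y\in p_1(E_t)$, then exhibiting $p_1(S)\times[\ln 2k,t)$ as the open neighborhood, with the case split $t'<g(x)$ versus $t'\geq g(x)$), whereas you build the open box directly with an intermediate level $t_1$ and split on $(x,t)\in D$ versus not; the underlying idea is identical.
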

\begin{proof}
First, we prove that for all $y \in p_1(E_t),$
\begin{equation}\label{eq:st1}
\{y\} \times [\ln 2k,t) \subset E.
\end{equation}
Let $t'\in[\ln 2k,t)$. We show that $(y,t')\in E$. Indeed, if $(y,t')\in D,$ we are done since $D\subset E$. If $(y,t')\not\in D,$ then $(y,t)\not\in D$. Let $S$ and $S'$ be the components of $X_{t}\backslash D$ and $X_{t'}\backslash D$ containing $(y,t)$ and $(y,t')$ respectively. Clearly $p_1(S')\subset p_1(S)$. Since $(y,t)\in E,$ we have by definition of $E$ that
\[
\ell_n(S')\leq\ell_n(S)\leq 4e^{-t}< 4e^{-t'}.
\]
By the same definition, we conclude that $(y,t')\in E$.

To prove the claim, it suffices to show for any $t' \in [\ln 2k,t)$ that $(x,t')\in E^o$. If $t'<g(x),$ then $(x,t')\in D^o\subset E^o$ by continuity of $g$. If $t'\geq g(x),$ then $t>g(x)$. Let $S$ be the component of $X_t\setminus D$ containing $(x,t).$ By definition of $E,$ we have $S \subset E_t.$ So, invoking inclusion~\eqref{eq:st1} for each $y \in \pi_1(S),$ we conclude that the open neighborhood $\pi_1(S)\times[\ln 2k,t)$ of $(x,t')$ is contained in $E.$
\end{proof}

It is clear from the definition that for any $x\in\gamma$, the set
\[
\{t\in[\ln 2k,\infty)|(x,t)\in E\}
\]
is bounded from above. It follows from Lemma \ref{LmEBalanced} that $E$ is the hypograph of the function $f_{\partial E}$ defined by
\[
f_{\partial E}(x)=\sup\{t\in[\ln 2k,\infty)|(x,t)\in E\}.
\]
\begin{lm}
$\partial E$ is the graph of $f_{\partial E}$.
\end{lm}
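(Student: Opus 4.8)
The plan is to establish the two inclusions $\{(x,f_{\partial E}(x)):x\in\gamma\}\subseteq\partial E$ and $\partial E\subseteq\{(x,f_{\partial E}(x)):x\in\gamma\}$, with all topological notions taken inside $X=\gamma\times[\ln 2k,\infty)$. First I would record that $f_{\partial E}$ is defined on all of $\gamma$: since $\gamma\times\{\ln 2k\}\subseteq E$, the set $\{t\in[\ln 2k,\infty):(x,t)\in E\}$ is non-empty and bounded above for every $x\in\gamma$, so its supremum exists.

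For the inclusion $\supseteq$, I fix $x$ and put $t_0=f_{\partial E}(x)$. Because $E$ is closed (Lemma~\ref{lmEclosed}) and $t_0$ is the supremum of a subset of the closed set $\{t:(x,t)\in E\}$, the point $(x,t_0)$ lies in $E$. On the other hand, by the definition of the supremum, $(x,t_0+\varepsilon)\notin E$ for every $\varepsilon>0$, and since $t_0\geq\ln 2k$ all these points lie in $X$. Hence every $X$-neighbourhood of $(x,t_0)$ meets $X\setminus E$, so $(x,t_0)\in\partial E$.

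For the inclusion $\subseteq$, let $(x,t)\in\partial E$. By Lemma~\ref{lmEclosed}, $(x,t)\in E$, so $t\leq f_{\partial E}(x)$ by definition of $f_{\partial E}$. Suppose, for contradiction, that $t<f_{\partial E}(x)$, and set $t'=f_{\partial E}(x)$; as above $(x,t')\in E$, and since $t\geq\ln 2k$ we have $t'>\ln 2k$, whence $x\in p_1(E_{t'})$. Lemma~\ref{LmEBalanced} then gives $\{x\}\times[\ln 2k,t')\subseteq E^o$, and since $\ln 2k\leq t<t'$ this forces $(x,t)\in E^o$, contradicting $(x,t)\in\partial E$. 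Therefore $t=f_{\partial E}(x)$, i.e. $(x,t)$ lies on the graph of $f_{\partial E}$.

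Combining the two inclusions proves that $\partial E$ is the graph of $f_{\partial E}$. There is essentially no hard step here: the argument is a direct consequence of the closedness of $E$ and Lemma~\ref{LmEBalanced}. The only point requiring care is the choice of ambient space, namely that $\partial E$ and $E^o$ are computed relative to $X=\gamma\times[\ln 2k,\infty)$, so that the bottom edge $\gamma\times\{\ln 2k\}$ is not spuriously counted in $\partial E$.
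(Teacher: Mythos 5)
Your proof is correct and follows essentially the same route as the paper's: both rest on Lemma~\ref{lmEclosed} and Lemma~\ref{LmEBalanced}, with the only cosmetic difference being that the paper shows directly that $\{x\}\times(t,\infty)\subset X\setminus E$ while you phrase the same fact as a contradiction from the assumption $t<f_{\partial E}(x)$. You also spell out the inclusion of the graph in $\partial E$, which the paper dismisses as "clear"; this is a small gain in explicitness but not a different argument.
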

\begin{proof}
Clearly, the graph of $f$ is contained in $\partial E.$ So, it suffices to prove the opposite inclusion.
Let $(x,t)\in\partial E.$ Then by Lemma~\ref{lmEclosed} $(x,t) \in E,$ so by Lemma \ref{LmEBalanced} $\{x\}\times[\ln 2k,t)\subset E^o$.
On the other hand, we show that $\{x\}\times(t,\infty)\subset X\backslash E$. Indeed, if $t' \in (t,\infty)$ and $(x,t') \in E,$ then by Lemma~\ref{LmEBalanced}
\[
(x,t) \in \{x\}\times[\ln 2k,t') \subset E^o
\]
contradicting the choice of $(x,t).$
Thus, by definition $f_{\partial E}(x)=t$.
\end{proof}

\begin{cy}
$f_{\partial E}$ is continuous.
\end{cy}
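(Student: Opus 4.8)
The plan is to deduce continuity from the two facts already established: that $E$ is closed (Lemma~\ref{lmEclosed}) and that $E$ is the hypograph of $f_{\partial E},$ together with the ``balancedness'' of $E$ below its graph provided by Lemma~\ref{LmEBalanced}. Concretely, I would prove that $f_{\partial E}$ is both upper and lower semicontinuous; continuity is then immediate. No serious obstacle is expected — the genuine work sits in Lemmas~\ref{lmEclosed} and~\ref{LmEBalanced} — so the proof should be short, the only points needing a moment of care being a degenerate case and the verification that Lemma~\ref{LmEBalanced} is applicable.

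For upper semicontinuity I would argue by contradiction. If $f_{\partial E}$ fails to be upper semicontinuous at $x\in\gamma,$ there is a constant $c>f_{\partial E}(x)$ and a sequence $x_m\to x$ with $f_{\partial E}(x_m)\geq c.$ Since $c>f_{\partial E}(x)\geq \ln 2k$ and $E$ is the hypograph of $f_{\partial E},$ we have $(x_m,c)\in E$ for every $m,$ and closedness of $E$ forces $(x,c)\in E,$ hence $f_{\partial E}(x)\geq c,$ a contradiction. Thus $f_{\partial E}$ is upper semicontinuous.

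For lower semicontinuity, fix $x\in\gamma.$ If $f_{\partial E}(x)=\ln 2k$ there is nothing to prove, since $f_{\partial E}\geq \ln 2k$ pointwise. Otherwise, fix $t$ with $\ln 2k\leq t<f_{\partial E}(x).$ Since $E$ is closed we have $(x,f_{\partial E}(x))\in E,$ and since $f_{\partial E}(x)>\ln 2k$ this point lies in $E_{f_{\partial E}(x)},$ so $x\in p_1(E_{f_{\partial E}(x)})$ and Lemma~\ref{LmEBalanced} applies to give $(x,t)\in E^o.$ Hence there is a relatively open neighborhood $U$ of $x$ in $\gamma$ with $U\times\{t\}\subset E,$ so $f_{\partial E}(y)\geq t$ for all $y\in U.$ Letting $t\uparrow f_{\partial E}(x)$ yields $\liminf_{y\to x}f_{\partial E}(y)\geq f_{\partial E}(x),$ which is lower semicontinuity. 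Combining the two semicontinuity statements proves the corollary.
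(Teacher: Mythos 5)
Your proof is correct, but it takes a genuinely different route from the paper's. The paper first proves (as the immediately preceding lemma) that $\partial E$ equals the graph of $f_{\partial E},$ then observes that this graph is closed (being a boundary) and bounded, hence compact, so that $p_1$ restricted to the graph is a closed continuous bijection onto $\gamma$ and therefore a homeomorphism — from which continuity of $f_{\partial E}=p_2\circ (p_1|_{\mathrm{graph}})^{-1}$ is immediate. Your argument bypasses that intermediate boundary-equals-graph lemma and instead establishes the two halves of continuity separately: upper semicontinuity comes straight from closedness of $E$ (Lemma~\ref{lmEclosed}) together with the hypograph characterization, and lower semicontinuity from Lemma~\ref{LmEBalanced}. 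Both proofs ultimately rest on the same two lemmas; the paper's is shorter once the graph lemma is in hand, while yours is more self-contained and makes visible exactly which property of $E$ delivers which half of continuity. One small point worth noting: in your lower semicontinuity step you implicitly use that $(x,f_{\partial E}(x))\in E,$ i.e.\ that the supremum defining $f_{\partial E}(x)$ is attained; this does follow from closedness of $E,$ but it is good to state it, as you did.
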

\begin{proof}
The graph of $f_{\partial E}$ is closed being the boundary of $E$. $f_{\partial E}$ is bounded, so its graph is compact. Thus, the projection $p_1$ restricted to the graph of $f_{\partial E}$ is closed. It follows that $f_{\partial E}$ is continuous.
\end{proof}

\begin{rem}\label{rm:long}
It follows from equations~\eqref{eq:long} and~\eqref{eq:hot} that
\[
\max_{x \in \gamma} f_{\partial E}(x) = \max_{x \in \gamma} g(x).
\]
\end{rem}

\begin{lm}\label{lmEsegChar4}
Let $e_1$ and $e_2$ be $E$-segments that are incomparable with respect to $\leq$. Let $t_i = p_2(e_i).$ Then,
\[
d( p_1(e_1), p_1(e_2);h_n)\geq 4e^{-\min(t_1,t_2)}.
\]
\end{lm}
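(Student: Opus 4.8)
The plan is to refine the proof of Lemma~\ref{lmEsegChar}\ref{i4} by using the explicit level‑by‑level description of $E$ as the complement of the long components of $X_t\setminus D$.

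\emph{Reduction to a single level.} I would first assume without loss of generality that $t_1\le t_2$, so that $\min(t_1,t_2)=t_1$; note $t_1\ge \ln 2k$ automatically. If $p_1(e_1)$ and $p_1(e_2)$ lie in distinct connected components of $\gamma$ there is nothing to prove, their $h_n$-distance being infinite; so assume both lie in one component $\gamma_i$. Choose $x_2\in p_1(e_2)$. Since $(x_2,t_2)\in E$ and $t_1\in[\ln 2k,t_2]$, Lemma~\ref{LmEBalanced} (and directly if $t_1=t_2$) gives $(x_2,t_1)\in E$, so $e_2':=e(x_2,t_1)$ is a well-defined $E$-segment, i.e.\ a connected component of $E_{t_1}$. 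By Lemma~\ref{lmEsegChar}\ref{i3} we have $e_2'\le e_2$, hence $p_1(e_2)\subseteq p_1(e_2')$; and $e_1\neq e_2'$, since $e_1=e_2'$ would give $e_1\le e_2$, contradicting incomparability. Thus $e_1$ and $e_2'$ are two distinct connected components of $E_{t_1}$, which is closed by Lemma~\ref{lmEclosed}, so $p_1(e_1)$ and $p_1(e_2')$ are compact. It now suffices to show $d(p_1(e_1),p_1(e_2');h_n)\ge 4e^{-t_1}$.

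\emph{The geometric step.} Let $\sigma$ be a length-minimizing $h_n$-geodesic in the circle $\gamma_i$ realizing $d(p_1(e_1),p_1(e_2');h_n)$; by compactness it exists, and being a minimizer it is an embedded arc whose two (distinct) endpoints lie in $p_1(e_1)$ and $p_1(e_2')$ respectively, with $\ell_n(\sigma)=d(p_1(e_1),p_1(e_2');h_n)$. Since $\sigma$ is connected and meets the two distinct components $e_1,e_2'$ of $E_{t_1}$, it cannot be contained in $E_{t_1}$; hence it meets $X_{t_1}\setminus E_{t_1}=S_{t_1}$, which by the definition of $S_t$ is a union of components $c$ of $X_{t_1}\setminus D_{t_1}$ with $\ell_n(c)>4e^{-t_1}$. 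Fix one such $c$ meeting $\sigma$. I would then verify the topological claim $c\subseteq\sigma$: the endpoints of $\sigma$ lie in $E_{t_1}$, hence not in $c\subseteq S_{t_1}$, so $c$ is a connected subset of $\gamma_i$ that meets the closed arc $\sigma$ but avoids both of its endpoints; therefore $c\cap\sigma$ equals the intersection of $c$ with $\sigma$ minus its endpoints, which is open in $\gamma_i$, so $c\cap\sigma$ is both open and closed in $c$, and being nonempty it is all of $c$. Consequently $\ell_n(\sigma)\ge\ell_n(c)>4e^{-t_1}$, and combining with the reduction, $d(p_1(e_1),p_1(e_2);h_n)\ge d(p_1(e_1),p_1(e_2');h_n)=\ell_n(\sigma)>4e^{-\min(t_1,t_2)}$.

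The only point that genuinely needs care is that, upon leaving $E_{t_1}$, the arc $\sigma$ must enter a \emph{long} component of $X_{t_1}\setminus D_{t_1}$ and not merely a short one or a point of $D_{t_1}$, together with the ensuing topological claim $c\subseteq\sigma$; both are dispatched cleanly by the identity $X_{t_1}\setminus E_{t_1}=S_{t_1}$ and the fact that $S_t$ is by definition exactly the union of the long components of $X_t\setminus D_t$. Everything else — existence and embeddedness of the minimizing geodesic, compactness of $p_1(e_1),p_1(e_2')$, and well-definedness of $e_2'$ — is routine.
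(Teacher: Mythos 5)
Your proof is correct and follows essentially the same route as the paper's: reduce to level $t_1$ by forming $e_2'=e(x_2,t_1)$, observe that $e_1$ and $e_2'$ are distinct components of $E_{t_1}$ (incomparability ruling out equality via Lemma~\ref{lmEsegChar}\ref{i3}), and conclude from the structure of $E_{t_1}=X_{t_1}\setminus S_{t_1}$ that their $h_n$-distance exceeds $4e^{-t_1}$. The only difference is presentational: where the paper invokes ``by the definition of $E_{t_1}$'' in one line, you unpack that step with the minimizing-arc/open-closed argument showing a full component of $S_{t_1}$ sits inside the connecting arc, which is a clean and valid way to make that leap explicit.
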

\begin{proof}
Assume without loss of generality that $t_1\leq t_2.$ Let $x \in p_1(e_2)$ and denote $e'_2=e(x,t_1)$. Since $e'_2$ and $e_1$ are both connected components of $E_{t_1},$ we have by the definition of $E_{t_1}$ that either
$e'_2 = e_1$ or $d( p_1(e'_2), p_1(e_1))\geq 4e^{-t_1}$. By Lemma~\ref{lmEsegChar}\ref{i3}, $e'_2\leq e_2$. Thus $ p_1(e_2)\subset p_1(e'_2)$ giving the claim.
\end{proof}
\begin{lm}\label{lmEitherOrSat2}
Let $S_1,S_2\subset E$. Suppose $S_1$ is $E$-saturated and connected, $S_2$ is connected and $S_1\leq_2 S_2$. Then either $S_1\leq S_2$ or
\[
d( p_1(S_1), p_1(S_2);h_n)\geq 4e^{-T_i(S_1)}.
\]
\end{lm}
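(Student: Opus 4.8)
The plan is to reduce at once to the case $S_1 \not\leq S_2$, the alternative being exactly the first conclusion of the lemma, and then, for an arbitrary pair of points $x_1 \in p_1(S_1)$ and $x_2 \in p_1(S_2)$, to produce two incomparable $E$-segments lying at a common level $t$, one containing $x_1$ and the other containing $x_2$, with $t$ as close to $T_i(S_1)$ as we please. Granting this, Lemma~\ref{lmEsegChar4} gives $d_\gamma(x_1,x_2;h_n) \geq d(p_1(e(x_1,t)),p_1(e(x_2,t));h_n) \geq 4e^{-t}$; letting $t \downarrow T_i(S_1)$ yields $d_\gamma(x_1,x_2;h_n) \geq 4e^{-T_i(S_1)}$, and since $x_1,x_2$ are arbitrary, $d(p_1(S_1),p_1(S_2);h_n) \geq 4e^{-T_i(S_1)}$. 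Note that under the assumption $S_1 \not\leq S_2$ we already know from Lemma~\ref{lmEitherOrSat} that $p_1(S_1) \cap p_1(S_2) = \emptyset$; this is what will force the two $E$-segments to be distinct.

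Concretely, I would first fix levels $\tilde t \in p_2(S_1)$ and $u \in p_2(S_2)$ with $(x_1,\tilde t) \in S_1$ and $(x_2,u) \in S_2$, observe that $S_1 \leq_2 S_2$ gives $T_i(S_1) \leq T_i(S_2) \leq u$ while trivially $T_i(S_1) \leq \tilde t$, and then, for given $\epsilon > 0$, select $t \in p_2(S_1)$ with $t \leq \min(\tilde t,u)$ and $t < T_i(S_1) + \epsilon$. Next I would check that $e(x_1,t)$ and $e(x_2,t)$ are well-defined $E$-segments: since $t \leq \tilde t$ with both in $p_2(S_1)$, Lemma~\ref{lmEsat}\ref{lmEsat1} gives $x_1 \in p_1(S_{1,\tilde t}) \subseteq p_1(S_{1,t})$, so $(x_1,t) \in S_1$ and, by $E$-saturation, $e(x_1,t) \subset S_1$; and since $t \leq u$ and $E$ is the hypograph of the continuous function $f_{\partial E}$ established earlier in this section, $(x_2,t) \in E$, so $e(x_2,t)$ is an $E$-segment. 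Then I would rule out $e(x_1,t) = e(x_2,t)$: equality would force $x_2 \in p_1(e(x_1,t)) \subseteq p_1(S_1)$, contradicting $p_1(S_1) \cap p_1(S_2) = \emptyset$. Being distinct, these two $E$-segments at the common level $t$ are incomparable by Lemma~\ref{lmEsegChar}\ref{i5}, so Lemma~\ref{lmEsegChar4} applies (with $\min(t,t) = t$) and the estimate above goes through.

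I expect the only genuine obstacle to be the selection of $t$: one needs $t \in p_2(S_1)$ simultaneously bounded above by $\min(\tilde t,u)$ and arbitrarily close to $T_i(S_1)$, which takes a moment of care when the relevant infimum is not attained. This is precisely the bookkeeping already carried out in the choice of $t$ in the proof of Lemma~\ref{lmEitherOrSat} (``Such a $t$ exists by the assumption $S_1\leq_2 S_2$''), and I would handle it in the same way. Apart from that, the argument is a direct splicing together of Lemmas~\ref{lmEitherOrSat}, \ref{lmEsat}\ref{lmEsat1}, \ref{lmEsegChar}\ref{i5}, and~\ref{lmEsegChar4}.
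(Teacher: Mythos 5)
Your argument is correct and essentially reproduces the paper's proof: reduce to $S_1 \not\leq S_2$, invoke Lemma~\ref{lmEitherOrSat} to get $p_1(S_1)\cap p_1(S_2)=\emptyset$, produce two incomparable $E$-segments with level arbitrarily close to $T_i(S_1)$, and apply Lemma~\ref{lmEsegChar4}. The only cosmetic difference is that you bring both segments down to the same level $t$, whereas the paper keeps $e_2 = e(x_2,t_2)$ at its original level $t_2 \geq t_1$; since Lemma~\ref{lmEsegChar4} bounds in terms of $\min(t_1,t_2)$, both variants yield the same estimate.
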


\begin{proof}
Suppose $S_1\not\leq S_2$. Let $(x_1,t_1)\in S_1$ and $(x_2,t_2)\in S_2$. Let $e_i=e(x_i,t_i)$ for $i=1,2$. We show that $d(x_1,x_2)\geq 4e^{-T_i(S_1)}$. Suppose $t \in p_2(S_1)$ is such that $t \leq t_1.$ Since $S_1$ is $E$-saturated and connected, Lemma \ref{lmEsat}\ref{lmEsat1} implies that $(x_1,t)\in S_1$. Since $S_1\leq_2 S_2$, there is a a $t\in p_2(S_1)$ such that $t\leq t_2$. We may thus assume without loss of generality that $t_1\leq t_2$. Furthermore, $t_1$ may be assumed to be arbitrarily close to $T_i(S_1)$. Since $S_1$ is $E$-saturated, $e_1\subset S_1$. By Lemma \ref{lmEitherOrSat} we have $p_1(S_1)\cap p_1(S_2)=\emptyset$. In particular $x_2\not\in p_1(e_1),$ so $e_1\not\leq e_2$. Since $t_1\leq t_2$, $e_1$ and $e_2$ are incomparable. Therefore, by Lemma \ref{lmEsegChar4} we have
\[
d(x_1,x_2)\geq d(p_1(e_1),p_1(e_2))\geq 4e^{-t_1}.
\]
Since $t_1$ is arbitrarily close to $T_i(S_1),$ we have
\[
d(x_1,x_2)\geq 4e^{-T_i(S_1)}.
\]
Since $x_i$ were arbitrary points in $p_1(S_i),$ the claim follows.
\end{proof}
\begin{cy}\label{cyEitherOrSat2}
Let $S_1,S_2\subset E$ be incomparable connected $E$-saturated sets. Then
\[
d( p_1(S_1), p_1(S_2);h_n)\geq 4e^{-\min\{T_i(S_1),T_i(S_2)\}}.
\]
\end{cy}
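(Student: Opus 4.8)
The plan is to deduce the corollary directly from Lemma~\ref{lmEitherOrSat2}, the only missing ingredient being an elementary observation about the relation $\leq_2$ on connected subsets of $E$.

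First I would note that if $S\subset E$ is connected then $p_2(S)$ is a connected subset of $[\ln 2k,\infty)$, hence an interval with infimum $T_i(S)$. Using this, I would verify that for any two connected sets $S_1,S_2\subset E$ at least one of $S_1\leq_2 S_2$ or $S_2\leq_2 S_1$ holds: writing $a_i=T_i(S_i)$, if $a_1<a_2$ then every $t\in p_2(S_2)$ satisfies $t>a_1=\inf p_2(S_1)$, so there is $t'\in p_2(S_1)$ with $t'\leq t$, giving $S_1\leq_2 S_2$; the case $a_2<a_1$ is symmetric; and when $a_1=a_2$ a short case split according to whether this common value lies in $p_2(S_1)$ and in $p_2(S_2)$ again yields at least one of the two relations. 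I would also record that $S_1\leq_2 S_2$ forces $T_i(S_1)\leq T_i(S_2)$: for any $\epsilon>0$, choosing $t\in p_2(S_2)$ with $t<T_i(S_2)+\epsilon$ and a corresponding $t'\in p_2(S_1)$ with $t'\leq t$ gives $T_i(S_1)\leq t'<T_i(S_2)+\epsilon$, and one lets $\epsilon\to 0$.

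With these in hand, the proof proceeds as follows. The desired inequality is symmetric in $S_1$ and $S_2$, so by the first observation I may assume $S_1\leq_2 S_2$, and then $T_i(S_1)=\min\{T_i(S_1),T_i(S_2)\}$ by the second observation. Both $S_1$ and $S_2$ are connected and $E$-saturated, and since they are incomparable we have $S_1\not\leq S_2$. Hence Lemma~\ref{lmEitherOrSat2} applies and yields
\[
d(p_1(S_1),p_1(S_2);h_n)\geq 4e^{-T_i(S_1)}=4e^{-\min\{T_i(S_1),T_i(S_2)\}},
\]
as claimed. I do not anticipate a genuine obstacle here; the only mildly delicate point is the endpoint case $T_i(S_1)=T_i(S_2)$ in the dichotomy for $\leq_2$, which is handled by the case split indicated above.
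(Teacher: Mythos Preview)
Your proof is correct and follows exactly the paper's approach: the paper's proof is the one-liner ``Without loss of generality $S_1\leq_2 S_2$; the claim follows from Lemma~\ref{lmEitherOrSat2},'' and you have simply spelled out the justification for this WLOG (the $\leq_2$ dichotomy for connected sets) and for why the resulting bound $4e^{-T_i(S_1)}$ equals $4e^{-\min\{T_i(S_1),T_i(S_2)\}}$.
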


\begin{proof}
Without loss of generality $S_1\leq_2 S_2$ . The claim thus follows from Lemma \ref{lmEitherOrSat2}.
\end{proof}
\begin{cy}\label{lmEquivFin}
$\mathcal{T}_E$ is finite.
\end{cy}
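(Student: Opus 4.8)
The plan is to deduce finiteness from Lemma~\ref{lmMaxfin}: it suffices to show that the set $M\subset\mathcal{T}_E$ of maximal elements is finite, since then $|\mathcal{T}_E|\leq 2|M|<\infty$. To bound $|M|$ I would attach to each maximal element a disk in $\Sigma$ of $\mu$-mass at least $\delta_1$, in such a way that the disks attached to distinct maximal elements are disjoint; since $\mu$ is a finite measure, this forces $|M|\leq\mu(\Sigma)/\delta_1$.

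The first real step is to check that every $E$-segment meets $D$; since each $c\in\mathcal{T}_E$ is a nonempty union of $E$-segments, this gives $c\cap D\neq\emptyset$ for every $c$. So suppose $e$ is an $E$-segment at level $t$, that is, a connected component of $E_t=X_t\setminus S_t$, and suppose $e\cap D=\emptyset$. Then $e$ lies in a single connected component $s$ of $X_t\setminus D$. If $\ell_n(s)>4e^{-t}$, or if $s$ is an entire connected component of $\gamma$ disjoint from $D$ (in which case $g<t$ on $s$, so $\ell_n(s)\geq 4e^{-\max_s g}>4e^{-t}$ by~\eqref{eq:long}), then $s\subset S_t$ is disjoint from $E_t$, contradicting $e\neq\emptyset$. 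Otherwise $\ell_n(s)\leq 4e^{-t}$, so $s\subset E_t$, hence $e\supset s$, and $s$ has an endpoint $a$ with $a\notin\partial\gamma$; since $D$ is closed, $(a,t)\in\partial D\subset D$, and $s\cup\{(a,t)\}$ is a connected subset of $E_t$ meeting $e$, so $(a,t)\in e\cap D$ --- a contradiction. Thus every $E$-segment, and hence every element of $\mathcal{T}_E$, meets $D$.

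Next, for each $m\in M$ I would fix a point $(y_m,t_m)\in m\cap D$; by Lemma~\ref{lmDDisc} the disk $B(y_m,t_m)=B_{e^{-t_m}r(y_m)}(y_m;\Sigma,h_{can})$ satisfies $\mu(B(y_m,t_m))\geq\delta_1$. Given distinct $m_1,m_2\in M$, the sets $m_1,m_2$ are connected and $E$-saturated by Theorem~\ref{tmMinMaxTree}\ref{tmMinMaxTree1} and are incomparable under $\leq$, so Corollary~\ref{cyEitherOrSat2} gives
\[
d_\gamma(y_{m_1},y_{m_2};h_n)\geq d(p_1(m_1),p_1(m_2);h_n)\geq 4e^{-\min\{T_i(m_1),T_i(m_2)\}}.
\]
Since $t_{m_i}\in p_2(m_i)$ we have $t_{m_i}\geq T_i(m_i)\geq\min\{T_i(m_1),T_i(m_2)\}$, so $2(e^{-t_{m_1}}+e^{-t_{m_2}})\leq 4e^{-\min\{T_i(m_1),T_i(m_2)\}}$, and Lemma~\ref{lmDisjDisc} then yields $B(y_{m_1},t_{m_1})\cap B(y_{m_2},t_{m_2})=\emptyset$ --- strictness in the borderline case being supplied by $r(\cdot)\leq\sinh^{-1}(1)<1$, which makes each disk strictly smaller than $e^{-t_{m_i}}$. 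Hence $\{B(y_m,t_m)\}_{m\in M}$ is a disjoint family of subsets of $\Sigma$, each of $\mu$-measure $\geq\delta_1$, so $|M|\leq\mu(\Sigma)/\delta_1<\infty$ and therefore $|\mathcal{T}_E|\leq 2|M|<\infty$.

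The main obstacle is the second step: ensuring that each maximal element contains a point of $D$ at all. The difficulty is that an $E$-segment may be a thin horizontal ``gap'' with no point of $D$ in its interior, so one must exploit that such a gap is short, that its boundary points lie on $\partial D$, and that the standing assumption~\eqref{eq:long} rules out the exceptional case where the gap is an entire component of $\gamma$. An alternative that sidesteps locating a $D$-point would be to use that a maximal element contains a sufficiently long $E$-segment near its top and to invoke Remark~\ref{remEnPart} directly, but I expect the argument above to be shorter; once a $D$-point is fixed in each maximal element, the disjointness of the disks is routine bookkeeping with the estimates of Section~\ref{SecThickThinPart}.
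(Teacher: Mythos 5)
Your proof is correct, but it takes a genuinely different route from the paper's own proof of this corollary. The paper argues by pure packing on $\gamma$: by Remark~\ref{rm:long}, $T_i(m)\leq T:=\sup_{x\in\gamma}g(x)$ for every maximal $m\in\mathcal{T}_E$; Corollary~\ref{cyEitherOrSat2} then gives that the sets $p_1(m)$ are pairwise $4e^{-T}$-separated in $h_n$; and since $\gamma$ is compact, $|M|\leq\frac14 e^T\ell_n(\gamma)<\infty$, after which Lemma~\ref{lmMaxfin} finishes. No appeal to the measure $\mu$ or to dense disks is needed. What you do instead---attach a dense disk $B(y_m,t_m)$ to each maximal element and pack them in $\Sigma$---is, almost verbatim, the paper's proof of the \emph{later} Lemma~\ref{lmNumTEbound}, which establishes the stronger quantitative bound $|\mathcal{T}_E|\leq 2\mu(\Sigma)/\delta_1$. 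Your careful case analysis showing that every $E$-segment meets $D$ (handling both the ``whole component of $\gamma$'' case via~\eqref{eq:long} and the endpoint case via closedness of $D$) is a legitimate filling-in of a step that the paper dispatches with ``By definition of $E$ and Remark~\ref{rm:long}''; it is also the main extra work your route incurs, and the paper's proof avoids it entirely. One small caveat: your parenthetical justification for strictness in the borderline case of Lemma~\ref{lmDisjDisc} (``$r(\cdot)<1$ makes each disk strictly smaller than $e^{-t_{m_i}}$'') does not actually supply strictness in the hypothesis $d_\gamma(\cdot,\cdot;h_n)>2(e^{-t_1}+e^{-t_2})$, since the factor $r$ already appears in the metric comparison, not in that hypothesis. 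The clean fix is simply to choose $t_m>T_i(m)$, which is possible since $p_2(m)$ is an interval and then makes the inequality strict; note that the paper's own proof of Lemma~\ref{lmNumTEbound} elides this same point.
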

\begin{proof} Let $M$ be the set of maximal elements of $\mathcal{T}_E$. Then the elements of $M$ are pairwise incomparable under $\leq$. Let $T=\sup_{x\in\gamma}g(x)$. By Remark~\ref{rm:long} we have $T_i(c)\leq T$ for any $c\in M$.  It follows from Corollary~\ref{cyEitherOrSat2} that
\[
|M|\leq \frac1{4} e^T\ell_n(\gamma).
\]
In particular, $M$ is finite. The claim therefore follows from Lemma \ref{lmMaxfin}.
\end{proof}

\subsection{Thick thin partition}
We now wish to partition $E$ into a thin part and a thick part. To this end, let
\[
N:=\{(x,t)\in E|\ell_n(e(x,t))\leq  24 e^{-t}\},
\]
and let
\[
K:=E\backslash N.
\]

\begin{lm}\label{lmNclosed}
$N$ is $E$-saturated and closed from above.
\end{lm}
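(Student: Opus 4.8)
The plan is to verify the two properties separately, both directly from the definitions. Recall $N = \{(x,t) \in E \mid \ell_n(e(x,t)) \leq 24 e^{-t}\}$.

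First I would show $N$ is $E$-saturated. The point here is essentially trivial: membership of $(x,t)$ in $N$ depends only on the $E$-segment $e(x,t)$ through $(x,t)$, not on $(x,t)$ itself, since the defining condition $\ell_n(e(x,t)) \leq 24 e^{-t}$ and the value $t = p_2(e(x,t))$ are constant along $e(x,t)$. Hence if $(x,t) \in N$ and $(x',t) \in e(x,t)$, then $e(x',t) = e(x,t)$, so $\ell_n(e(x',t)) = \ell_n(e(x,t)) \leq 24 e^{-t}$ and $(x',t) \in N$. Thus $N$ is a union of $E$-segments, i.e.\ $E$-saturated.

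The main work is showing $N$ is closed from above: for fixed $x \in \gamma$, the set $\{t \mid (x,t) \in N\}$ should be closed from the right in $\{x\} \times [\ln 2k, \infty)$. So suppose $t_j \searrow t$ with $(x,t_j) \in N$ and $(x,t_j) \in E$; I want $(x,t) \in N$. Since $E$ is closed by Lemma \ref{lmEclosed} and $E$ is the hypograph of the continuous function $f_{\partial E}$, we get $(x,t) \in E$, so $e(x,t)$ is defined. By Lemma \ref{lmEsegChar}\ref{i3}, $e(x,t) \leq e(x,t_j)$, hence $p_1(e(x,t_j)) \subseteq p_1(e(x,t))$ for all $j$, which would seem to push the length of $e(x,t)$ \emph{up}, not down — so the naive bound goes the wrong way and one must argue more carefully. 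The key observation is that if $(x,t)$ lies in the interior of the segment $e(x,t)$ away from branching behavior, then $e(x,t_j)$ converges to $e(x,t)$ as $t_j \searrow t$; more precisely, I would use continuity of $f_{\partial E}$ together with Remark \ref{rem:cont} (boundary points of $E$-segments are graph points of $f_{\partial E}$) to show that $p_1(e(x,t)) = \bigcap_j p_1(e(x,t_j))$ when no branching point sits strictly above $(x,t)$ in the relevant rectangle, and then $\ell_n(e(x,t)) = \lim_j \ell_n(e(x,t_j)) \leq \lim_j 24 e^{-t_j} = 24 e^{-t}$ by monotone convergence of the lengths of the nested segments $p_1(e(x,t_j))$. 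If instead a branching point $(x_0, s)$ occurs with $s > t$ and $x_0 \in p_1(e(x,t))$, then for $j$ large $t_j < s$, and the segment $e(x, t_j)$ already contains a neighborhood of $x_0$, so its closure picks up points on both sides of the branch; running the length estimate on this larger segment still gives $\ell_n(e(x,t_j)) \leq 24 e^{-t_j}$, and since $e(x,t) \supseteq$ (in the $p_1$ sense) each $e(x,t_j)$ only up to the branch, the same limiting argument applies. I expect the branching case to be the main obstacle, and the cleanest route is to reduce to it via Remark \ref{rmBp1Conv}: if $e(x,t)$ strictly contains $\liminf_j e(x,t_j)$ then there is a branching point strictly between level $t$ and some $t_j$, and one derives a contradiction with the fact that $(x, t_j) \to (x,t)$ all lie in $E$ with $(x,t')$ staying in $E$ for $t' \in [\ln 2k, t_j]$ by Lemma \ref{LmEBalanced}.

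Assembling these, for fixed $x$ the right-limit of points of $N$ on the fiber over $x$ again lies in $N$, so $N$ is closed from above; combined with the first paragraph, $N$ is $E$-saturated and closed from above, completing the proof.
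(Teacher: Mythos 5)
Your argument for $E$-saturation is correct and matches the paper, which simply declares it obvious.

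The argument for ``closed from above,'' however, rests on a misreading of the definition, and the direction you chose is actually false. You take $t_j \searrow t$ (approaching $t$ from the \emph{right}) with $(x,t_j)\in N$ and try to conclude $(x,t)\in N$. But in this paper ``closed from the right'' means the fiber $\{t : (x,t)\in S\}$ contains every point that is approached from the \emph{left} within the fiber, i.e.\ if $t_j\nearrow t$ with $(x,t_j)\in S$ then $(x,t)\in S$; equivalently, the fiber contains its right boundary points (sups). This is exactly the property invoked in Lemma~\ref{lm:cfatf} to conclude $T_f(S)\in p_2(S)$, and it is also what the proof of Lemma~\ref{lmcClosed} establishes. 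With the interpretation you chose, the statement is in fact false: a branching point $(x_0,t)$ with $x_0\in p_1(e(x,t))$ makes $e(x,t)$ span both sides of $x_0$ while each $e(x,t_j)$ for $t_j>t$ sits on just one side, so $\ell_n(e(x,t))$ can far exceed $\lim_j \ell_n(e(x,t_j))\leq 24e^{-t}$, and $(x,t_j)\in N$ for $t_j\searrow t$ does not force $(x,t)\in N$. Your branching discussion only treats branching points strictly \emph{above} $t$, which is precisely the harmless case, and the identity you write, $p_1(e(x,t))=\bigcap_j p_1(e(x,t_j))$, cannot hold since $p_1(e(x,t_j))\subset p_1(e(x,t))$ for every $j$.

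With the correct reading the inclusion works in your favor and the proof is short. Suppose $(x,t)$ is a right boundary point of the fiber of $N$ over $x$ but $(x,t)\notin N$, so $\ell_n(e(x,t))>24e^{-t}$. Choose $\epsilon>0$ with $\ell_n(e(x,t))>24e^{\epsilon-t}$. For $t'\in[t-\epsilon,t)$, Lemma~\ref{LmEBalanced} gives $(x,t')\in E$ and Lemma~\ref{lmEsegChar}\ref{i3} gives $p_1(e(x,t))\subset p_1(e(x,t'))$, so
\[
\ell_n(e(x,t'))\geq\ell_n(e(x,t))>24e^{\epsilon-t}\geq 24e^{-t'}.
\]
Thus $(x,t')\notin N$ for all $t'\in[t-\epsilon,t)$, contradicting that $(x,t)$ is a right boundary point of the fiber.
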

\begin{proof}
It is obvious that $N$ is $E$-saturated. We show that $N$ is closed from above. Let $(x,t)$ be a right boundary point of $\{x\}\times [\ln 2k,\infty)\cap N$. Since $E$ is closed, we have $(x,t)\in E$. Thus we need to show that $\ell_n(e(x,t))\leq 24e^{-t}$. Assume by contradiction otherwise. Let $\epsilon>0$ be so small that $\ell_n(e(x,t)) > 24e^{\epsilon-t}$. By Lemma \ref{LmEBalanced}, we have
\[
\{x\}\times[t-\epsilon,t)\subset E.
\]
On the other hand, for any $t'\in [t-\epsilon,t)$, by Lemma~\ref{lmEsat}\ref{i3}
\[
p_1(e(x,t)) \subset p_1(e(x,t')).
\]
So,
\[
\ell_n(e(x,t'))\geq \ell_n(e(x,t)) >  24e^{-t + \epsilon} \geq 24 e^{-t'}.
\]
Therefore, $\{x\}\times[t-\epsilon,t)\subset E\backslash N$ giving a contradiction.
\end{proof}

\begin{df}
A \textbf{thin neck} is a connected component of $c\cap N$ where
$c\in\mathcal{T}_E$. Given a thin neck $L,$ we denote by $c_L$ the unique $c \in \mathcal{T}_E$ such that $L \subset c.$
\end{df}

\begin{lm}\label{lmThinNeck}
Let $L$ be a thin neck.
\begin{enumerate}
\item\label{Tn1}
$L$ is $E$-saturated and closed from above.
\item\label{Tn2}
Let $S\subset E$ be disjoint from $L$, then
\[
L\leq S\Rightarrow L_{T_f(L)}\leq S.
\]
\item\label{Tn3}
For any $x\in p_1(L)\backslash p_1(L_{T_f(L)})$,
 \[
 (x,f_{\partial E}(x))\in L.
 \]
\item\label{Tn4}
For any $x\in p_1(L)\backslash p_1(L_{T_f(L)})$,
\[
g(x)\leq \ln24-\ln d(x, p_1(L_{T_f(L)});h_n).
\]
\end{enumerate}
\end{lm}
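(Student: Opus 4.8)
The plan is to prove the four assertions in order, each leaning on the previous ones and on the tree-combinatorics of Section~\ref{SecTreePart}. Write $c := c_L \in \mathcal T_E$, so that $L$ is a connected component of $c \cap N$. For~\ref{Tn1}: the class $c$ is $E$-saturated by Theorem~\ref{tmMinMaxTree}\ref{tmMinMaxTree1} and $N$ is $E$-saturated by Lemma~\ref{lmNclosed}, hence $c \cap N$ is $E$-saturated, and then so is its connected component $L$, both times by Remark~\ref{rmEsat}. Likewise $c$ is closed from above by Corollary~\ref{lmTE}\ref{lmTE1}, $N$ is closed from above by Lemma~\ref{lmNclosed}, so $c \cap N$ is closed from above by Remark~\ref{remClosedAb}; since a connected component is a relatively closed subset, $L$ is closed from above, again by Remark~\ref{remClosedAb}. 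Thus $L$ is $E$-saturated, connected, closed from above, and contained in the single $\sim$-class $c$. In particular Lemma~\ref{lm:cfatf} gives $T_f(L) \in p_2(L)$, and then Lemma~\ref{lm:tconn} shows $L_{T_f(L)}$ is a single nonempty $E$-segment; I use both facts below.

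Assertion~\ref{Tn2} is then formal: with $c' = L \subset c$, the hypotheses of Theorem~\ref{tmMinMaxTree}\ref{tmMinMaxTree3} (equivalently Lemma~\ref{lmStroncont}) hold, so $L \leq S \Rightarrow L_{T_f(L)} \leq S$ for every $S \subset E$ disjoint from $L$. For~\ref{Tn3}, fix $x \in p_1(L) \setminus p_1(L_{T_f(L)})$ and suppose, for contradiction, that $(x, f_{\partial E}(x)) \notin L$. Put $S := \{(x, f_{\partial E}(x))\}$, so $S \subset \partial E \subset E$ and $S \cap L = \emptyset$. Now $L \leq_1 S$ because $x \in p_1(L)$, and $L \leq_2 S$ because any $t'$ with $(x,t') \in L$ satisfies $t' \leq f_{\partial E}(x)$; hence $L \leq S$. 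By~\ref{Tn2}, $L_{T_f(L)} \leq S$, so in particular $L_{T_f(L)} \leq_1 S$, that is, $x \in p_1(L_{T_f(L)})$ --- a contradiction. Therefore $(x, f_{\partial E}(x)) \in L$.

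It remains to prove~\ref{Tn4}. By~\ref{Tn3}, $(x, f_{\partial E}(x)) \in L \subset N$, so by the definition of $N$,
\[
\ell_n\bigl(e(x, f_{\partial E}(x))\bigr) \leq 24\, e^{-f_{\partial E}(x)}.
\]
Choose $y_0 \in p_1(L_{T_f(L)})$. The points $(x, f_{\partial E}(x))$ and $(y_0, T_f(L))$ lie in the same $\sim$-class $c$, and $f_{\partial E}(x) < T_f(L)$ (equality would give $(x, f_{\partial E}(x)) = (x, T_f(L)) \in L$, forcing $x \in p_1(L_{T_f(L)})$); hence the defining property of $\sim$ (Definition~\ref{df:gal}) yields $e(x, f_{\partial E}(x)) \leq e(y_0, T_f(L)) = L_{T_f(L)}$, and in particular $p_1(L_{T_f(L)}) \subset p_1\bigl(e(x, f_{\partial E}(x))\bigr)$. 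Since $x$ and the sub-arc $p_1(L_{T_f(L)})$ both lie in the single arc $p_1\bigl(e(x, f_{\partial E}(x))\bigr)$,
\[
d\bigl(x, p_1(L_{T_f(L)}); h_n\bigr) \leq \ell_n\bigl(e(x, f_{\partial E}(x))\bigr) \leq 24\, e^{-f_{\partial E}(x)},
\]
the left side being positive because $L_{T_f(L)}$ is a closed $E$-segment (Remark~\ref{rem:cont}) missing $x$. Finally I pass from $f_{\partial E}(x)$ to $g(x)$: if $g(x) \geq \ln 2k$, then $(x, g(x)) \in D \subset E$, so $f_{\partial E}(x) \geq g(x)$ and the last display already gives $d(x, p_1(L_{T_f(L)}); h_n) \leq 24\, e^{-g(x)}$; if $g(x) < \ln 2k$, then $f_{\partial E}(x) \geq \ln 2k$ forces $d(x, p_1(L_{T_f(L)}); h_n) \leq 12/k < 24\, e^{-g(x)}$. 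Taking logarithms gives the claimed bound $g(x) \leq \ln 24 - \ln d(x, p_1(L_{T_f(L)}); h_n)$.

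I expect no step to be genuinely hard: the weight of the lemma is in~\ref{Tn1}, which is bookkeeping with $E$-saturation and being closed from above; granted that,~\ref{Tn2} is an invocation of Theorem~\ref{tmMinMaxTree}\ref{tmMinMaxTree3},~\ref{Tn3} is the observation that one may feed a one-point set into~\ref{Tn2}, and~\ref{Tn4} combines~\ref{Tn3} with the comparison $e(x, f_{\partial E}(x)) \leq L_{T_f(L)}$ read off from the $\sim$-relation. The one point that needs attention is the reconciliation of $f_{\partial E}$ with $g$ at the end of~\ref{Tn4}.
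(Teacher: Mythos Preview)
Your proof is correct and follows essentially the same route as the paper: parts~\ref{Tn1}--\ref{Tn3} are identical, and in~\ref{Tn4} you reach $p_1(L_{T_f(L)}) \subset p_1(e(x,f_{\partial E}(x)))$ by reading $e(x,f_{\partial E}(x)) \leq L_{T_f(L)}$ directly off the $\sim$-relation, whereas the paper obtains the same containment via Lemma~\ref{lmEsat}\ref{lmEsat1} and Corollary~\ref{lmTE}. Your closing case split is unnecessary, since $g(x) \leq f_{\partial E}(x)$ holds unconditionally (if $g(x) \geq \ln 2k$ then $(x,g(x)) \in D \subset E$, and otherwise $f_{\partial E}(x) \geq \ln 2k > g(x)$), which is exactly the one-line step the paper uses.
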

\begin{proof}
\begin{enumerate}
\item
Let $c\in\mathcal{T}_E$ such that $L$ is a connected component of $c\cap N$. $c$ is $E$ saturated by definition and closed from above by Corollary~\ref{lmTE}\ref{lmTE1}. $N$ is $E$-saturated and closed from above by Lemma~\ref{lmNclosed}. It follows from Remark \ref{rmEsat} that $L$ is $E$-saturated. By Remark \ref{remClosedAb}, $L$ is closed from above.
\item
$L$ is connected and contained in an equivalence class $c \in \mathcal{T}_E$ by definition. $L$ is $E$-saturated and closed from above by \ref{Tn1}. The claim thus follows from Theorem \ref{tmMinMaxTree}\ref{tmMinMaxTree3}.
\item
Let $t\geq\ln 2k$ be such that $(x,t)\in L$. Since $L\subset E$, we have $t\leq f_{\partial E}(x)$. This means that $L\leq \{(x,f_{\partial E}(x))\}$. Suppose
\[
(x,f_{\partial E}(x))\not\in L.
\]
Then by \ref{Tn2}, $L_{T_f(L)}\leq \{(x,f_{\partial E}(x))\}$. This produces the contradiction $x\in p_1(L_{T_f(L)})$.
\item
Let $x'\in  p_1(L_{T_f(L)})$.  By~\ref{Tn3}, $(x,f_{\partial E}(x))\in L$, so $f_{\partial E}(x)\in p_2(L).$ Thus by~\ref{lmEsat}\ref{lmEsat1} we have $x'\in p_1(L_{f_{\partial E}(x)})$. Let $c \in \mathcal{T}_E$ be such that $L \subset c.$ By~\ref{Tn1}, $L$ is $E$-saturated, so by Corollary \ref{lmTE} we have $L_{f_{\partial E}(x)}=c_{f_{\partial E}(x)}$ and $L_{f_{\partial E}(x)}$ is connected. Thus $e(x,f_{\partial E}(x))=e(x',f_{\partial E}(x))$. But
\[
\ell_n(e(x',f_{\partial E}(x)))\leq 24 e^{-f_{\partial E}(x)},
\]
so
\begin{align}\label{EqExpTn4}
d(x, p_1(L_{T_f(L)});h_n)&\leq d(x,x';h_n)\\
&\leq \ell_n(e(x',f_{\partial E}(x)))\notag\\
&\leq 24 e^{-f_{\partial E}(x)}\notag\\
&\leq 24 e^{-g(x)}\notag.
\end{align}
The claim follows by taking logarithms in \eqref{EqExpTn4}.
\end{enumerate}
\end{proof}

\begin{lm}\label{lmTNCI}
For $i=1,2,$ let $L_i$ be disjoint thin necks such that $p_2(L_1)\cup p_2(L_2)$ is connected.
\begin{enumerate}
\item\label{lmTNCI1}
$c_{L_1}\neq c_{L_2}$.
\item\label{lmTNCI2}
If $L_1\leq L_2,$ then $L_{1,T_f(L_1)}$ contains a branching point.
\end{enumerate}
\end{lm}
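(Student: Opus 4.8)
For part \ref{lmTNCI1}, I would argue by contradiction. Suppose $c_{L_1}=c_{L_2}=:c\in\mathcal T_E$. Then $L_1$ and $L_2$ are distinct (hence disjoint and nonempty) connected components of $c\cap N$. On the other hand, by Lemma \ref{lmThinNeck}\ref{Tn1} each $L_i$ is $E$-saturated, so $L_1\cup L_2$ is $E$-saturated (Remark \ref{rmEsat}); it is contained in the single $\sim$-class $c$, and $p_2(L_1\cup L_2)=p_2(L_1)\cup p_2(L_2)$ is connected by hypothesis. Hence Corollary \ref{lmTECon} shows $L_1\cup L_2$ is connected. A connected subset of $c\cap N$ meeting the component $L_1$ lies inside $L_1$, so $L_2\subseteq L_1$, and since $L_2$ is itself a component, $L_2=L_1$ — contradicting disjointness.

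For part \ref{lmTNCI2}, write $c_i:=c_{L_i}$ and $\tau:=T_f(L_1)$; part \ref{lmTNCI1} gives $c_1\neq c_2$. First I would record the needed structure of $L_1$: by Lemma \ref{lmThinNeck}\ref{Tn1} it is $E$-saturated, connected and closed from above; by Lemma \ref{lm:cfatf} we have $\tau\in p_2(L_1)$; and by Lemma \ref{lm:tconn} the slice $L_{1,\tau}$ is a single $E$-segment. Applying Lemma \ref{lmThinNeck}\ref{Tn2} with $S=L_2$ (disjoint from $L_1$, with $L_1\le L_2$) gives $L_{1,\tau}\le L_2$. In particular $p_1(L_2)\subseteq p_1(L_{1,\tau})$ and $p_2(L_2)\subseteq[\tau,\infty)$. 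Moreover no point of $L_2$ can have height exactly $\tau$: such a point would lie in the $E$-segment $L_{1,\tau}\subseteq c_1$, while it also lies in $c_2$, and $c_1\cap c_2=\emptyset$. Thus $p_2(L_2)\subseteq(\tau,\infty)$. Since $p_2(L_1)$ is an interval with supremum $\tau$ and $p_2(L_1)\cup p_2(L_2)$ is connected, this forces $\inf p_2(L_2)=\tau$: there are points of $L_2$ at heights arbitrarily close to $\tau$ from above.

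Next fix a point $(y,s)\in L_2$ with $s>\tau$. Then $y\in p_1(L_2)\subseteq p_1(L_{1,\tau})$, so $e(y,\tau)=L_{1,\tau}$, and by Lemma \ref{lmEsegChar}\ref{i3} (valid since $\ln 2k\le\tau\le s\le f_{\partial E}(y)$) we get $L_{1,\tau}=e(y,\tau)\le e(y,s)$. Since $(y,\tau)$ and $(y,s)$ lie in the distinct $\sim$-classes $c_1$ and $c_2$, they are not $\sim$-equivalent (Lemma \ref{lm:er}), so by Definition \ref{df:gal} the rectangle $p_1(L_{1,\tau})\times[\tau,s)$ must contain a branching point. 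Letting $s\downarrow\tau$ (possible by the previous paragraph), we conclude that for every $\varepsilon>0$ the rectangle $p_1(L_{1,\tau})\times[\tau,\tau+\varepsilon)$ contains a branching point; in other words there are branching points lying in columns over the compact arc $I:=p_1(L_{1,\tau})$ at heights converging to $\tau$.

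The crux — and the main obstacle — is to upgrade this to a branching point at height exactly $\tau$, i.e. one lying in $L_{1,\tau}$. Here the explicit construction of $E$ in Section \ref{SecThickThinPart} is essential: every connected component of $X_r\setminus E_r$ has $h_n$-length strictly greater than $4e^{-r}$. I would use this to show branching points over $I$ cannot accumulate at $\tau$ from above. A branching point at height $r$ over $I$ is flanked by columns $y',y''\in I$ on opposite sides with $f_{\partial E}(y'),f_{\partial E}(y'')>r$, separated by a column $w$ with $f_{\partial E}(w)<r$; then $(w,r)$ lies in a component of $X_r\setminus E_r$ contained in the sub-arc of $I$ between $y'$ and $y''$, so $d_{h_n}(y',y'')>4e^{-r}$. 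If branching points $(z_m,r_m)$ over $I$ had $r_m\downarrow\tau$ and $z_m\to z_0\in I$, continuity of $f_{\partial E}$ would force the corresponding flanking columns — hence the separation $4e^{-r_m}\to 4e^{-\tau}>0$ — to collapse to $z_0$, a contradiction. (Degenerate plateaus of $f_{\partial E}$ are handled by phrasing the argument in terms of components of the super-level sets $\{f_{\partial E}\ge r\}\cap I$, for which the same separation estimate is available from Lemma \ref{lmEsegChar4}.) Therefore the set of heights $r>\tau$ carrying a branching point over $I$ is bounded away from $\tau$; taking $s$ in the third paragraph with $\tau<s$ below that threshold, the branching point produced in $p_1(L_{1,\tau})\times[\tau,s)$ must sit at height exactly $\tau$, hence in $L_{1,\tau}$. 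This proves \ref{lmTNCI2}; apart from the no-accumulation step, everything is a bookkeeping application of the results of Sections \ref{SecTreePart}--\ref{SecThickThinPart}.
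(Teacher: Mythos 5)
Part \ref{lmTNCI1} matches the paper's own (brief) argument. For part \ref{lmTNCI2} your first paragraphs reproduce the paper's approach: pick $(x,t)\in L_2$, deduce that $(x,T_f(L_1))\not\sim(x,t)$ while $(x,t')\sim(x,t)$ for $t'\in(T_f(L_1),t)$, and thereby obtain branching points over $I:=p_1(L_{1,T_f(L_1)})$ at heights arbitrarily close to $T_f(L_1)$ from above. You correctly single out the remaining step — producing a branching point at height exactly $T_f(L_1)$ — as the crux; the paper's own text passes over this with the terse ``Thus $p\in e(x,T_f(L_1))$.'' Your proposed non-accumulation argument, however, does not work as stated. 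First, at a branching point $(z,r)$ one has $f_{\partial E}(z)=r$ and $f_{\partial E}\ge r$ on all of $e(z,r)\supset s$, so there is no interior column $w$ with $f_{\partial E}(w)<r$; the long component of $X_{r'}\setminus E_{r'}$ appears only at heights $r'$ strictly above $r$ (though the resulting conclusion $d_{h_n}(y',y'')\ge 4e^{-r}$ is still true by a limiting argument). More seriously, the assertion that continuity of $f_{\partial E}$ forces the flanking columns $y',y''$ of $(z_m,r_m)$ to collapse to $z_0$ is unjustified and in general false: these columns are precisely the objects satisfying $d_{h_n}(y',y'')\ge 4e^{-r_m}$, and nothing prevents them from remaining a bounded distance apart (for instance converging to the two endpoints of the open set $\{y\in I : f_{\partial E}(y)>\tau\}$ rather than to $z_0$). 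Hence the contradiction you invoke does not arise and the accumulation of branching points at heights just above $\tau$ is not excluded. The parenthetical remark about working with components of the super-level sets $\{f_{\partial E}\ge r\}\cap I$ and Lemma~\ref{lmEsegChar4} points in a promising direction, but you do not carry it through, so this step remains a genuine gap in your proof.
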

\begin{proof}
\begin{enumerate}
\item
Suppose the contrary. Then Corollary \ref{lmTECon} implies that $L_1\cup L_2$ is connected. The definition of thin necks implies the contradiction $L_1=L_2$.
\item
Let $(x,t)\in L_2$. For any $t'\in [T_f(L_1),t),$ denote
\[
R(t')=p_1(e(x,t'))\times [t',t).
\]
Since $L_1\leq L_2$, by Lemma~\ref{lmThinNeck}\ref{Tn2}, $(x,T_f(L_1))\in L_1$. By \ref{lmTNCI1}, $c_{L_1}\neq c_{L_2}$. Therefore, $(x,t)\not\sim(x,T_f(L_1))$. Thus $R(T_f(L_1))$ contains a branching point $p$. But for any $t'\in(T_f(L_1),t)$, by Lemma~\ref{lm:seg}, $\{x\}\times[t',t]\subset c_{L_2}$. In particular, $(x,t)\sim(x,t')$. So, $R(t')$ contains no branching point. Thus $p\in e(x,T_f(L_1)).$ Since $(x,T_f(L_1)) \in L_1,$ by Lemma~\ref{lmThinNeck}\ref{Tn1}, $e(x,T_f(L_1))\subset L_{1,T_f(L_1)}.$ Therefore, $p \in L_{1,T_f(L_1)}.$

\end{enumerate}
\end{proof}

\begin{df}\label{dfThinBranch}
Let $L$ be a thin neck. $L$ is \textbf{exceptional} if it satisfies the following two conditions.
\begin{enumerate}
\item
\[
T(L)<\ln 3.
\]
\item\label{dTB2}
For any $\epsilon>0$ there are $(x_i,t_i)\in K$ for $i=1,2,$ such that $t_1\in(T_i(L)-\epsilon,T_i(L)]$, $t_2\in(T_f(L),T_f(L)+\epsilon)$, and
\[
e(x_1,t_1)\leq L\leq e(x_2,t_2).
\]
\end{enumerate}
\end{df}

\begin{df}
Let $L$ be a thin neck. We write
\[
\mathcal{N^+}(L,\epsilon):=\left(p_1(L)\times [T_i(L),T_f(L)+\epsilon)\right)\cap E,
\]
and
\[
\mathcal{N^-}(L,\epsilon):=\left(p_1(L)\times[T_i(L)-\epsilon,T_f(L))\right)\cap E.
\]
We say that $L$ is \textbf{upper interior} if there is an $\epsilon>0$ such that $\mathcal{N^+}(L,\epsilon)\subset N$. Similarly, we say that $L$ is \textbf{lower interior} if there is an $\epsilon >0$ such that $\mathcal{N^-}(L,\epsilon)\subset N.$
\end{df}

\begin{rem}\label{remULint}
By Definition \ref{dfThinBranch}\ref{dTB2}, every non-exceptional thin neck $L$ such that $T(L)<\ln 3$ is either upper interior or lower interior.
\end{rem}

\begin{rem}\label{remNLepsCon}
It is clear that for any $\epsilon>0$, $\mathcal{N^+}(L,\epsilon)$ and $\mathcal{N^-}(L,\epsilon)$ are connected.
\end{rem}

\begin{df}
The \textbf{thin part} $A \subset E$ is the union of non-exceptional thin necks. The \textbf{thick part} of $E$ is
\[
C:=E\backslash A.
\]
\end{df}

\begin{lm}\label{lm:CEsat}
$C$ is $E$-saturated.
\end{lm}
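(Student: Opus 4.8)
The plan is to show that $C = E\setminus A$ is $E$-saturated by showing that its complement, the thin part $A$, is $E$-saturated; the claim then follows from Remark~\ref{rmEsat}, since the complement in $E$ of an $E$-saturated set is $E$-saturated. So the task reduces to proving that $A$ is a union of $E$-segments. Now $A$ is by definition the union of all non-exceptional thin necks, so it suffices to show that each non-exceptional thin neck $L$ is itself $E$-saturated. But this is exactly Lemma~\ref{lmThinNeck}\ref{Tn1}, which asserts that every thin neck is $E$-saturated (and closed from above). Hence $A$, being a union of $E$-saturated sets, is $E$-saturated by the first assertion of Remark~\ref{rmEsat}.

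In a little more detail, I would write: let $(x,t)\in A$ and let $e=e(x,t)$ be the $E$-segment through $(x,t)$. Since $A$ is the union of the non-exceptional thin necks, there is some non-exceptional thin neck $L$ with $(x,t)\in L$. By Lemma~\ref{lmThinNeck}\ref{Tn1}, $L$ is $E$-saturated, so $e\subset L\subset A$. Therefore every $E$-segment meeting $A$ is contained in $A$, i.e.\ $A$ is $E$-saturated. Then $C=E\setminus A$ is $E$-saturated by the last sentence of Remark~\ref{rmEsat}.

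There is essentially no obstacle here: the lemma is a bookkeeping consequence of the structural results already established, chiefly Lemma~\ref{lmThinNeck}\ref{Tn1} and Remark~\ref{rmEsat}. The only thing to be mildly careful about is invoking the correct clause of Remark~\ref{rmEsat} — namely that the complement within $E$ of an $E$-saturated set is again $E$-saturated, as opposed to an arbitrary set-theoretic complement — but this is precisely what that remark provides.
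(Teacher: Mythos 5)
Your proposal is correct and takes exactly the same route as the paper: $A$ is a union of non-exceptional thin necks, each $E$-saturated by Lemma~\ref{lmThinNeck}\ref{Tn1}, so $A$ is $E$-saturated, and then Remark~\ref{rmEsat} gives that $C = E\setminus A$ is $E$-saturated. The paper's proof is a one-liner citing the same two ingredients; you have merely unpacked it.
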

\begin{proof}
This follows from Lemma~\ref{lmThinNeck}\ref{Tn1} and Remark~\ref{rmEsat}.
\end{proof}

\subsection{Energy bound on the number of thin necks}
Let $H$ denote the set of non-exceptional thin necks $L$ such that $T(L)<\ln3$. Let $G$ denote the set of all non-exceptional thin necks.
\begin{lm}\label{lmNumETNbound}
\[
|H|\leq 2|\mathcal{T}_E|.
\]
\end{lm}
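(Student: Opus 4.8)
The plan is to show that every non-exceptional thin neck $L$ with $T(L) < \ln 3$ (i.e.\ every element of $H$) can be injectively assigned to an element of $\mathcal{T}_E$, in a way that lands each element of $\mathcal{T}_E$ at most twice in the image. The natural map to consider is $L \mapsto c_L$, the unique equivalence class in $\mathcal{T}_E$ containing $L$. The content of the lemma is therefore that the fibers of this map, restricted to $H$, have size at most $2$: no three distinct elements of $H$ lie in the same $c \in \mathcal{T}_E$.

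The first step is to fix $c\in\mathcal{T}_E$ and analyze the collection $H_c = \{L \in H \mid c_L = c\}$ of thin necks contained in $c$. These are by definition disjoint connected $E$-saturated subsets of $c$, so by Lemma~\ref{lm:lino} the relation $\leq$ restricts to a \emph{linear} order on $H_c$. Order them $L_1 \leq L_2 \leq \dots \leq L_r$. The second step is to use Remark~\ref{rmULint} together with Definition~\ref{dfThinBranch}: since each $L_j$ is non-exceptional with $T(L_j) < \ln 3$, it is either upper interior or lower interior. The third step, which I expect to be the crux, is to show that for consecutive $L_j, L_{j+1}$ in this chain, the interval $p_2(L_j) \cup p_2(L_{j+1})$ need not be connected in general, but when it is, Lemma~\ref{lmTNCI}\ref{lmTNCI2} would force $L_{j,T_f(L_j)}$ to contain a branching point — and this is incompatible with $L_j, L_{j+1}$ being in the \emph{same} equivalence class $c$ (branching points are precisely where $\sim$-classes split). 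So in fact between any two consecutive thin necks of $H_c$ in the chain there must be a "gap" where some point of $K$ (the thick part) intervenes, i.e.\ an element $(x,t) \in c \cap K$ with $T_f(L_j) < t < T_i(L_{j+1})$ — but $c$ being a single $\sim$-class and $K$-points corresponding to $E$-segments of $n$-length $> 24 e^{-t}$, the upper/lower interior dichotomy restricts how $L_j$ can sit. Pushing this: each $L_j$ that is upper interior "uses up" the portion of $c$ immediately above it being thin, and each lower interior one uses the portion immediately below; a careful bookkeeping argument shows a chain of length $\geq 3$ forces a contradiction with the definition of exceptional (Definition~\ref{dfThinBranch}\ref{dTB2}) or with $c$ being a single equivalence class, so $|H_c| \leq 2$.

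The main obstacle, as flagged, is the combinatorial bookkeeping in that third step: one must carefully track, for each thin neck in the chain, whether it is upper or lower interior, and rule out a length-$3$ alternation. The key tools are Lemma~\ref{lmTNCI}, which controls what happens when two thin necks' vertical projections overlap or touch, Lemma~\ref{lmThinNeck}\ref{Tn2}, which propagates the relation $\leq$ down to the top segment of a thin neck, and the definition of "exceptional," which is exactly designed to exclude the bad configuration where a thin neck is squeezed between two $K$-segments with nothing thick in between. Once $|H_c| \leq 2$ is established for every $c$, summing over $c \in \mathcal{T}_E$ gives
\[
|H| = \sum_{c \in \mathcal{T}_E} |H_c| \leq 2|\mathcal{T}_E|,
\]
which is the claim. (Finiteness of $\mathcal{T}_E$ from Corollary~\ref{lmEquivFin} ensures the sum is legitimate.)
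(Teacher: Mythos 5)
Your high-level plan is correct and matches the paper's: show that $L \mapsto c_L$ is at most two-to-one, using the dichotomy of Remark~\ref{remULint} that every element of $H$ is upper interior or lower interior. But you never actually establish the bound $|H_c| \leq 2$ — the phrase ``a careful bookkeeping argument shows a chain of length $\geq 3$ forces a contradiction'' is a placeholder where the proof should be, and the route you sketch toward it does not lead anywhere. Invoking Lemma~\ref{lmTNCI}\ref{lmTNCI2} (branching points) is a detour: Lemma~\ref{lmTNCI}\ref{lmTNCI1} already tells you directly that two disjoint thin necks in the same $c$ must have disconnected $p_2$-projections, so the branching-point discussion establishes nothing new. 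And once you know consecutive necks in the chain have disjoint $p_2$-projections, you still have no mechanism ruling out a chain of length $3$ — three thin necks stacked with $K$-gaps between them is not obviously forbidden by anything you have written.

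The missing step is the following observation, which is what the paper's proof hinges on: if $L \subset c$ is lower interior, then $T_i(L) = T_i(c)$. (Sketch: otherwise choose $\epsilon$ with $\mathcal{N}^-(L,\epsilon) \subset N$ and use Lemma~\ref{lm:seg} to produce a vertical segment in $c \cap N$ connecting $L$ to a point below $T_i(L)$, contradicting that $L$ is a connected component of $c \cap N$.) Dually, if $L$ is upper interior then $T_f(L) = T_f(c)$. From this, uniqueness is immediate: two lower interior necks in $c$ share the level $T_i(c)$, so their $p_2$-projections overlap, so Lemma~\ref{lmTNCI}\ref{lmTNCI1} forces them to coincide; similarly for upper interior. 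Since each $L \in H$ is upper or lower interior, $|H_c| \leq 2$. This is the one genuinely new idea in the proof, and without it your argument does not close.
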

\begin{proof}
We will show that the map $H\rightarrow\mathcal{T}_E$ defined by $L\mapsto c_L$ is at most two to one. Let $c\in\mathcal{T}_E$. By Remark \ref{remULint} every $L\in H$ is either upper interior or lower interior. We claim that that there is at most one lower interior $L\in H\cap\pi_0(N\cap c)$ . Similarly we claim that there is at most one upper interior $L\in H\cap\pi_0(N\cap c)$.

Indeed, suppose $L\subset c$ is a lower interior element of $H$. First, we show that
\begin{equation}\label{eq:Lc}
T_i(L) = T_i(c).
\end{equation}
Since $L \subset c,$ we have $T_i(c) \leq T_i(L).$ Suppose by contradiction the inequality is strict. Let $\epsilon>0$ be such that $\mathcal{N^-}(L,\epsilon)\subset N$. By Theorem~\ref{tmMinMaxTree}\ref{tmMinMaxTree1}, $c$ is connected, so $p_2(c)$ is an interval. Thus we may choose $t_1 \in p_2(c) \cap [T_i(L)-\epsilon,T_i(L)).$ Choose $(x_2,t_2) \in L.$ By Lemma~\ref{lm:seg}, $\{x_2\}\times[t_1,t_2] \subset c.$ By definition of $\mathcal{N^-}(L,\epsilon),$ we have $\{x_2\} \times [t_1,t_2] \subset \mathcal{N^-}(L,\epsilon) \subset N.$ It follows that $(x_2,t_1)$ belongs to the same connected component of $c \cap N$ as $(x_2,t_2).$ However $t_1 < T_i(L),$ so $(x_1,t_1) \notin L$ contradicting the definition of $L.$ Equation~\eqref{eq:Lc} follows.

Let $L'\in H\cap\pi_0(N\cap c)$ also be lower interior. Then $T_i(L')=T_i(c)=T_i(L)$. In particular, $p_2(L)\cup p_2(L')$ is connected. Therefore, by Lemma \ref{lmTNCI}\ref{lmTNCI1}, $L=L'$.

The upper interior case follows similarly. Thus the map $L\mapsto c_L$ is at most two to one.
\end{proof}

\begin{cy}\label{cyNETNfin}
$|G|<\infty.$
\end{cy}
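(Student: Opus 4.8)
The plan is to reduce everything to the already–known finiteness of $H$ and of $\mathcal T_E$. By Lemma~\ref{lmNumETNbound} and Corollary~\ref{lmEquivFin} we have $|H| \le 2|\mathcal T_E| < \infty$. Since Definition~\ref{dfThinBranch} only declares a thin neck exceptional when $T(L) < \ln 3$, every thin neck with $T(L) \ge \ln 3$ is automatically non-exceptional; hence $G = H \cup \{\,L \text{ a thin neck with } T(L) \ge \ln 3\,\}$. So it remains to bound the number of thin necks with $T(L) \ge \ln 3$, and since $\mathcal T_E$ is finite it is enough to do this inside each fixed $c \in \mathcal T_E$, i.e. to bound the number of such $L$ with $c_L = c$.

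The key geometric input I would establish is that distinct thin necks $L, L'$ with $c_L = c_{L'} = c$ occupy disjoint open height intervals $(T_i(L),T_f(L))$ and $(T_i(L'),T_f(L'))$. To prove it, suppose some $t_0$ lies in both. Since $L$ and $L'$ are connected, $p_2(L)$ and $p_2(L')$ are intervals, so $t_0 \in p_2(L)\cap p_2(L')$. By Lemma~\ref{lmThinNeck}\ref{Tn1} each of $L,L'$ is $E$-saturated and contained in the single equivalence class $c$, so Lemma~\ref{lm:tconn} shows that $L_{t_0}$ and $L'_{t_0}$ are single $E$-segments; likewise $c_{t_0}$ is a single $E$-segment since $c$ is $E$-saturated. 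As $L_{t_0} \subseteq c_{t_0}$ and $L'_{t_0} \subseteq c_{t_0}$, and all three are connected components of $E_{t_0}$, we get $L_{t_0} = c_{t_0} = L'_{t_0} \ne \emptyset$. But $L$ and $L'$ are distinct connected components of $c\cap N$, hence disjoint — a contradiction. (Intuitively, this rules out thin necks nested inside one equivalence class; the point is that $c$ has single-segment horizontal slices and thin necks are \emph{full} components of $c \cap N$.)

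Granting this, the count is immediate. Every point of $E$ has height in $[\ln 2k, T]$, where $T := \sup_{x\in\gamma} g(x) < \infty$ (this is the bound already used in the proof of Corollary~\ref{lmEquivFin}, together with Remark~\ref{rm:long}); hence $p_2(c)\subseteq[\ln 2k,T]$. Each thin neck $L$ in $c$ with $T(L)\ge\ln 3$ contributes the open subinterval $(T_i(L),T_f(L))\subseteq p_2(c)$ of length $T(L)\ge\ln 3$, and by the previous paragraph these subintervals are pairwise disjoint. Therefore there are at most $(T-\ln 2k)/\ln 3$ of them for each $c$, and summing over $\mathcal T_E$ gives
\[
|G| \;\le\; 2|\mathcal T_E| \;+\; |\mathcal T_E|\,\frac{T-\ln 2k}{\ln 3} \;<\;\infty .
\]

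I expect the disjointness of the height intervals to be the only real step; once it is in place, relating $T(L)$ to the length of $p_2(L)$ and invoking $\sup_\gamma g < \infty$ is routine bookkeeping.
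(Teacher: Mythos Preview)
Your proof is correct and follows essentially the same strategy as the paper: reduce to $G\setminus H$, work inside each fixed $c\in\mathcal T_E$, show that distinct thin necks in $c$ have disjoint height projections, and then use $T(L)\ge\ln 3$ together with the global height bound to count. The only difference is cosmetic: where you establish the disjointness of height intervals directly via Lemma~\ref{lm:tconn}, the paper simply invokes Lemma~\ref{lmTNCI}\ref{lmTNCI1} (whose contrapositive gives exactly that $p_2(L_1)\cap p_2(L_2)=\emptyset$ when $c_{L_1}=c_{L_2}$), and then bounds $|S_c|$ by $T(c)/\ln 3$.
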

\begin{proof}
In light of Lemmas \ref{lmNumETNbound} and \ref{lmEquivFin} we need only bound $G\setminus H$. Again relying on Lemma~\ref{lmEquivFin}, it suffices to bound the set $S_c$ defined for any $c\in\mathcal{T}_E$ by
\[
S_c=(G\setminus H)\cap\pi_0(N\cap c).
\]
By Remark~\ref{rm:long} we have
\[
T(c)\leq\sup_{x\in\gamma}g(x)-\ln2k=:M.
\]
Lemma \ref{lmTNCI}\ref{lmTNCI1} implies that $p_2(L_1)\cap p_2(L_2)=\emptyset$ for any two elements $L_1,L_2\in S_c$. Therefore $|S_c|\leq \frac{T(c)}{\ln3}\leq\frac{M}{\ln3}$.
\end{proof}

\begin{lm}\label{lm:sand}
Let $c \in \mathcal{T}_E$ and let $L_1,L_2 \subset c$ be non-exceptional thin necks. Let $k$ be a connected component of $c \cap C$ satisfying $L_1 \leq k \leq L_2.$ Then $k$ is a connected component of $C.$
\end{lm}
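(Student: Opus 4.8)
The plan is to let $\tilde k$ denote the connected component of $C$ containing $k$ and to prove $\tilde k = k$. Since $C$ is $E$-saturated (Lemma~\ref{lm:CEsat}), $\tilde k$ is $E$-saturated by Remark~\ref{rmEsat}; and $k\subseteq\tilde k$, so it suffices to show $\tilde k\subseteq c$, for then $\tilde k$ is a connected subset of $c\cap C$ containing $k$ and hence equals $k$ by maximality.

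First I would pin down the position of $k$ inside $c$. By Corollary~\ref{lmTE}\ref{lmTE2} each slice $c_t$, $t\in p_2(c)$, is a single $E$-segment, so the disjoint $E$-saturated sets $L_1$ and $k$ have disjoint level-projections; together with $L_1\leq k$ this forces $T_f(L_1)\leq T_i(k)$, and in fact $T_i(k)>T_f(L_1)=:t_1$, because $c_{t_1}\subseteq L_1$ is disjoint from $k$. Put $e_1:=c_{t_1}=L_{1,t_1}$, a single $E$-segment by Lemma~\ref{lm:tconn}; by Lemma~\ref{lmEsat}\ref{lmEsat1}, $p_1(k)\subseteq p_1(e_1)$, indeed $p_1(k)$ lies in the interior of $p_1(e_1)$ since the endpoints of $p_1(e_1)$ have $f_{\partial E}$-value $t_1<T_i(k)$ (Remark~\ref{rem:cont}). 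Symmetrically, $k\leq L_2$ gives $T_f(k)\leq T_i(L_2)$; choosing $t_2\in p_2(L_2)$ with $t_2>T_i(L_2)$ (or $t_2=T_i(L_2)=T_f(L_2)$ if $L_2$ is a single level) and setting $e_2:=c_{t_2}=L_{2,t_2}$ with endpoints $x_2^\pm$, one gets $T_f(k)<t_2$ and $p_1(e_2)\subseteq p_1(L_2)\subseteq p_1(k)\subseteq p_1(e_1)$.

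Next I would run the separation construction from the proof of Lemma~\ref{lm:sco} at $e_1$ and at $e_2$. At $e_1$ this yields disjoint open sets $U^{(1)}_1,U^{(1)}_2$, $U^{(1)}_2$ being the part of $X$ over the interior of $p_1(e_1)$ at levels $>t_1$, with $E$ minus $e_1$ and the two vertical rays above the endpoints of $p_1(e_1)$ contained in $U^{(1)}_1\sqcup U^{(1)}_2$. Now $\tilde k\subseteq C$ misses $A\supseteq L_1\supseteq e_1$ and misses those rays (which miss $E$), so $\tilde k\subseteq U^{(1)}_1\sqcup U^{(1)}_2$; since $\tilde k$ is connected and $k\subseteq U^{(1)}_2$, we obtain $\tilde k\subseteq U^{(1)}_2$, whence $p_1(\tilde k)\subseteq p_1(e_1)$ and $\tilde k$ lies at levels $>t_1$. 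The same construction at $e_2$, now with $k$ lying at levels $<t_2$ and hence in $U^{(2)}_1$, gives $\tilde k\subseteq U^{(2)}_1=X\setminus\bigl([x_2^-,x_2^+]\times[t_2,\infty)\bigr)$.

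Finally, I would show $\tilde k$ meets no class of $\mathcal{T}_E$ other than $c$, which gives $\tilde k\subseteq c$. Let $d\in\mathcal{T}_E$, $d\neq c$. Since the order on $\mathcal{T}_E$ is tree-like (Corollary~\ref{cy:tl}), $d$ is an ancestor of $c$, a descendant of $c$, or incomparable with $c$. If $d$ is incomparable with $c$, then $p_1(d)$ is disjoint from $p_1(c)\supseteq p_1(e_1)\supseteq p_1(\tilde k)$ by Corollary~\ref{cyEitherOrSat2}, so $d\cap\tilde k=\emptyset$. If $d$ is an ancestor, then $d$ lies at levels $\leq T_i(c)\leq T_i(L_1)\leq t_1$, while $\tilde k$ lies at levels $>t_1$, so $d\cap\tilde k=\emptyset$. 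If $d$ is a descendant, then, applying Lemma~\ref{lmEsat}\ref{lmEsat1} repeatedly along the branches issuing from the branching points on the top slice $c_{T_f(c)}$, one gets $d\subseteq p_1(c_{T_f(c)})\times(T_f(c),\infty)\subseteq[x_2^-,x_2^+]\times[t_2,\infty)$ (using $t_2\leq T_f(L_2)\leq T_f(c)$ and $p_1(c_{T_f(c)})\subseteq p_1(c_{t_2})$), so $d\cap\tilde k=\emptyset$ since $\tilde k\subseteq U^{(2)}_1$. Hence $\tilde k\subseteq c$, completing the proof. The main work is the second step — checking that $e_1$ and $e_2$ genuinely split $E$ as claimed — together with the bookkeeping in the last step; the degenerate possibilities (an endpoint of $p_1(e_i)$ lying on $\partial\gamma$, or $L_1$ or $L_2$ being a single level) are dispatched exactly as in the proof of Lemma~\ref{lm:sco}.
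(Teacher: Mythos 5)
Your plan coincides with the paper's opening move: let $\tilde k$ (the paper's $k'$) be the component of $C$ containing $k$, show $\tilde k\subseteq c$, and conclude $\tilde k=k$ by maximality. From there the two arguments diverge. The paper picks a single $x$ with $(x,t_1)\in L_1$, $(x,t_2)\in L_2$ and the single rectangle $R=p_1(e(x,t_1))\times[t_1,t_2)$; it checks $\partial R\cap C=\emptyset$ so that the connected $\tilde k$ is trapped in $R^o$, and then uses that $L_1,L_2\subset c$ force $R$ to contain no branching points (Lemma~\ref{lmNoBp}), so every point of $\tilde k$ is $\sim$-equivalent to $(x,t_1)\in c$. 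You instead run the Lemma~\ref{lm:sco}-style separation twice, at $e_1=c_{t_1}$ and $e_2=c_{t_2}$, to constrain $\tilde k$, and then finish by a trichotomy on $\mathcal T_E$. That is a genuine variant: it trades the local no-branching-points step for an appeal to the global tree structure, and it does reach the result. One concrete defect in the write-up, though: the bounds you assert for ancestors and descendants are not justified by what you cite. The ancestor statement $p_2(d)\subseteq[\xi,T_i(c)]$ is asserted without proof, and the descendant statement $d\subseteq p_1(c_{T_f(c)})\times(T_f(c),\infty)$ is attributed to "Lemma~\ref{lmEsat}\ref{lmEsat1} applied along the branches", which does not give it. Both are in fact instances of Theorem~\ref{tmMinMaxTree}\ref{tmMinMaxTree3}: applying it to $d\leq c$ yields $d_{T_f(d)}\leq c$, hence $T_f(d)\leq T_i(c)$; applying it to $c\leq d$ yields $c_{T_f(c)}\leq d$, hence $p_1(d)\subseteq p_1(c_{T_f(c)})$ and $T_i(d)\geq T_f(c)$, with strict inequality because a point of $d$ at level $T_f(c)$ over $p_1(c_{T_f(c)})$ would lie in $c_{T_f(c)}\subset c$ (Lemma~\ref{lm:tconn}). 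With those citations supplied, your argument is sound; the paper's version is somewhat tighter because it never needs to enumerate the other classes of $\mathcal T_E$.
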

\begin{proof}
By Remark~\ref{rmEsat} and Lemma~\ref{lm:CEsat}, $k$ is $E$-saturated. Choose $(x,t_2) \in L_2.$ Since $L_1 \leq k \leq L_2,$ there exist $t_1 \in p_2(L_1)$ and $t \in p_2(k)$ such that $(x,t_1) \in L_1$ and $(x,t) \in k.$ Since $L_1,L_2 \subset A$ and $k \subset C,$ we have
\[
L_i \cap k = \emptyset, \qquad i = 1,2.
\]
So, by Lemma~\ref{lm:sco} and Corollary~\ref{cyAntiSym}, we deduce that $t_1 < t < t_2.$

Let $k'$ be the connected component of $C$ containing $k.$
We prove that $k' \subset c,$ which immediately implies the lemma. Indeed, let
\[
R = p_1(e(x,t_1)) \times [t_1,t_2).
\]
Since $L_1,L_2 \subset c,$ we have $(x,t_1) \sim (x,t_2).$ So, $e(x,t_1) \leq e(x,t_2)$ and $R$ contains no branch points. Since $e(x,t_1) \leq e(x,t_2),$ we have
\[
e(x,t_2) \subset (\overline{R} \cap E)_{t_2}.
\]
Since $R$ contains no branched points, by Lemma~\ref{lmNoBp} we deduce that $(\overline{R} \cap E)_{t_2}$ is connected. So,
\[
e(x,t_2) = (\overline{R} \cap E)_{t_2}.
\]
Since $L_i \subset A$ does not intersect $C,$
\[
e(x,t_i) \subset L_i
\]
does not intersect $C$ for $i = 1,2.$ Let $x_1$ and $x_2$ be the endpoints of the interval $p_1(e(x,t_1)).$ By Remark~\ref{rem:cont},
\[
(\{x_i\} \times (t_1,\infty)) \cap C \subset (\{x_i\} \times (t_1,\infty))\cap E  = \emptyset.
\]
It follows that $\partial R \cap C = \emptyset.$ So, $C_1 = C \setminus \overline R$ and $C_2 = C \cap R^o$ constitute a partition of $C$ into relatively open subsets. Since $(x,t) \in R^o$ and $(x,t) \in k \subset k',$ the connectedness of $k'$ implies that $k' \subset C_1.$ Therefore, by the definition of $\sim,$ we conclude that $k' \subset c$ as desired. The lemma follows.
\end{proof}

\begin{lm}\label{lmTNNBP}
Let $L$ be an exceptional thin neck. Then $L_{T_f(L)}$ does not contain a branching point.
\end{lm}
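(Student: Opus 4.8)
The plan is a proof by contradiction: assume $L$ is exceptional but $L_{T_f(L)}$ contains a branching point $p=(x_0,T_f(L))$, and contradict condition~\ref{dTB2} in Definition~\ref{dfThinBranch}. First I would record the structure of $L_{T_f(L)}$. Being a thin neck, $L$ is connected, $E$-saturated and closed from above (Lemma~\ref{lmThinNeck}\ref{Tn1}) and is contained in a single $\sim$-class; hence $T_f(L)\in p_2(L)$ by Lemma~\ref{lm:cfatf}, and $L_{T_f(L)}$ is a single $E$-segment by Lemma~\ref{lm:tconn}, equal to $e(x_0,T_f(L))$. Since the branching point lies in an open subsegment of $e(x_0,T_f(L))$, the point $x_0$ is a relative interior point of the arc $p_1(L_{T_f(L)})$ in its component of $\gamma$. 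Finally, $L\subset N$ forces $\ell_n(L_{T_f(L)})\le 24e^{-T_f(L)}$.

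Next I would dispose of the degenerate case in which $p_1(L_{T_f(L)})$ equals an entire component $\gamma_i$ of $\gamma$. Then for every $t\in p_2(L)$ we have $p_1(L_t)\supseteq p_1(L_{T_f(L)})=\gamma_i$ by Lemma~\ref{lmEsat}\ref{lmEsat1}, so $L_t=\gamma_i\times\{t\}$; as $L_t\subset N$ this gives $\ell_n(\gamma_i)\le 24e^{-t}$, hence $T_i(L)\le\ln(24/\ell_n(\gamma_i))$. On the other hand condition~\ref{dTB2} supplies $(x_1,t_1)\in K$ with $t_1\le T_i(L)$ and $e(x_1,t_1)\le L$, and the latter forces $\gamma_i=p_1(L)\subseteq p_1(e(x_1,t_1))$, hence $e(x_1,t_1)=\gamma_i\times\{t_1\}$, so $(x_1,t_1)\in K$ gives $\ell_n(\gamma_i)>24e^{-t_1}$, i.e. $t_1>\ln(24/\ell_n(\gamma_i))\ge T_i(L)\ge t_1$, a contradiction. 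So $p_1(L_{T_f(L)})$ is a proper arc, and I set $\delta:=\min\{d(\alpha,x_0;h_n),d(x_0,\beta;h_n)\}>0$, where $\alpha,\beta$ are its endpoints; note $\ell_n(L_{T_f(L)})=d(\alpha,x_0;h_n)+d(x_0,\beta;h_n)$.

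Now I would exploit the branching point. Fix $\epsilon>0$ and let $(x_2,t_2)\in K$ be provided by condition~\ref{dTB2}: $t_2\in(T_f(L),T_f(L)+\epsilon)$ and $L\le e(x_2,t_2)$. The last relation gives $x_2\in p_1(L)$; and $(x_2,f_{\partial E}(x_2))\notin L$, since otherwise $f_{\partial E}(x_2)\le T_f(L)<t_2\le f_{\partial E}(x_2)$; so Lemma~\ref{lmThinNeck}\ref{Tn3} yields $x_2\in p_1(L_{T_f(L)})$ and therefore $e(x_2,T_f(L))=L_{T_f(L)}$. By Lemma~\ref{lmEsegChar}\ref{i3}, $p_1(e(x_2,t_2))\subseteq p_1(e(x_2,T_f(L)))=p_1(L_{T_f(L)})$, and $x_0\notin p_1(e(x_2,t_2))$ because $f_{\partial E}(x_0)=T_f(L)<t_2$. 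Thus $p_1(e(x_2,t_2))$ is a connected subarc of $p_1(L_{T_f(L)})$ containing $x_2\ne x_0$ and omitting the interior point $x_0$, so it lies entirely in $[\alpha,x_0)$ or in $(x_0,\beta]$; in either case $\ell_n(e(x_2,t_2))\le\ell_n(L_{T_f(L)})-\delta$. Since $(x_2,t_2)\in K$ we also have $\ell_n(e(x_2,t_2))>24e^{-t_2}$, so
\[
24e^{-t_2}<\ell_n(L_{T_f(L)})-\delta\le 24e^{-T_f(L)}-\delta,
\]
i.e. $\delta<24\bigl(e^{-T_f(L)}-e^{-t_2}\bigr)<24e^{-T_f(L)}(1-e^{-\epsilon})$. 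Since $T_f(L)$ and $\delta$ are fixed once $L$ and the branching point are, choosing $\epsilon$ small enough that $24e^{-T_f(L)}(1-e^{-\epsilon})\le\delta$ contradicts this, proving the lemma.

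I expect the main obstacle to be the geometric observation in the third paragraph: a branching point in $L_{T_f(L)}$ makes $x_0$ a relative interior point of the arc $p_1(L_{T_f(L)})$ and hence disconnects it, so the thick segment $e(x_2,t_2)$ supplied by exceptionality is trapped in one of the two halves and is shorter than $L_{T_f(L)}$ by the fixed amount $\delta$ — which, as $\epsilon\downarrow 0$, becomes incompatible with its being thick. This is the step where the hypothesis that $L_{T_f(L)}$ \emph{contains a branching point}, rather than merely being a local minimum, is used decisively; the rest is the bookkeeping above, the only subtle point being the degenerate case where $p_1(L_{T_f(L)})$ is an entire component of $\gamma$, which is handled using the lower part of condition~\ref{dTB2}.
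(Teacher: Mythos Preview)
Your argument is correct and the core idea---getting a contradiction from the fact that the thick segment $e(x_2,t_2)$ supplied by condition~\ref{dTB2} must be \emph{definitely} shorter than $L_{T_f(L)}$, while $\ell_n(L_{T_f(L)})\le 24e^{-T_f(L)}$ and $\ell_n(e(x_2,t_2))>24e^{-t_2}$ force their lengths to be arbitrarily close as $\epsilon\to 0$---is exactly the paper's. The execution, however, differs.

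The paper does not split off the case $p_1(L_{T_f(L)})=\gamma_i$, nor does it use the observation that $f_{\partial E}(x_0)=T_f(L)$ disconnects $p_1(L_{T_f(L)})$. Instead it invokes Remark~\ref{rmBp1Conv}: the branching point in $L_{T_f(L)}$ forces, for some $t'\in(T_f(L),t_2]$, at least two components of $(p_1(L_{T_f(L)})\times(T_f(L),\infty)\cap E)_{t'}$. One of them, say $e$, satisfies $e\not\le e(x_2,t_2)$, and Lemma~\ref{lmEitherOrSat2} then gives $d(p_1(e),p_1(e(x_2,t_2));h_n)\ge 4e^{-t'}$. Since both lie over $p_1(L_{T_f(L)})$, one gets $\ell_n(L_{T_f(L)})>24e^{-t_2}+4e^{-t'}>28e^{-(T_f(L)+\epsilon)}$, contradicting $\ell_n(L_{T_f(L)})\le 24e^{-T_f(L)}$ for $\epsilon$ small. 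So the paper produces an explicit numeric gap via the earlier machinery and needs no case analysis, whereas your route is more elementary (only the definition of branching point and $K$/$N$ are used) at the price of treating the closed-component case separately. Both are clean; the paper's version reuses Lemma~\ref{lmEitherOrSat2}, which is perhaps why it was preferred.
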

\begin{proof}
Let $R=p_1(L_{T_f(L)})\times(T_f(L),\infty)\cap E$, and let $\epsilon>0$. By Definition \ref{dfThinBranch}\ref{dTB2} there exists $t\in (T_f(L),T_f(L)+\epsilon)$ and a component $k$ of $R_t$ satisfying $k\subset K$. Suppose now that $L_{T_f(L)}$ contains a branching point. Then by Remark \ref{rmBp1Conv} there exists $t'\in (T_f(L),t]$ such that $R_{t'}$ has at least two components. Therefore, there is at least one component $e$ of $R_{t'}$ such that $e\not\leq k$. By Lemma \ref{lmEitherOrSat2}, $d(p_1(e), p_1(k))\geq 4e^{-t'}$. Furthermore, $L_{T_f(L)} \leq e \cup k.$ So, since $k \subset K,$
\begin{align}\label{eqlmTNNBP}
\ell_n(p_1(L_{T_f(L)}))&>\ell_n(p_1(e))+\ell_n(p_1(k))+4e^{-{t'}}\\
&>24e^{-t}+4e^{-t'}\notag\\
&> 28e^{-(T_f(L)+\epsilon)}\notag.
\end{align}
On the other hand,  since $L\subset N$, $\ell_n(L_{T_f(L)})\leq 24e^{-T_f(L)}$. This together with Equation \eqref{eqlmTNNBP} implies that
\[
28e^{-(T_f(L)+\epsilon)} < 24e^{-T_f(L)}.
\]
Since $\epsilon$ is arbitrary, we obtain a contradiction.
\end{proof}

\begin{cy}\label{lmTNgap}
Let $L$ be an exceptional thin neck. Let $L'$ be any thin neck such that $L\lneq L'$. Then $T_i(L')>T_f(L)$.
\end{cy}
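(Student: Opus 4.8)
The plan is to argue by contradiction and play Lemma~\ref{lmTNNBP} against Lemma~\ref{lmTNCI}\ref{lmTNCI2}. So suppose, contrary to the claim, that $L$ is an exceptional thin neck, $L \lneq L'$ with $L'$ a thin neck, and $T_i(L') \le T_f(L)$; I will show that $L_{T_f(L)}$ both does and does not contain a branching point.

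First I would extract two consequences of $L \lneq L'$. From $L \le_2 L'$ it follows that every $t \in p_2(L')$ bounds some element of $p_2(L)$ from above, hence $T_i(L) \le T_i(L')$. And since $L \neq L'$, the two thin necks are disjoint: each is a connected component of $c \cap N$ for a unique $c \in \mathcal{T}_E$ (namely $c_L$ and $c_{L'}$), so if $c_L = c_{L'}$ then $L$ and $L'$ are distinct components of one set and hence disjoint, whereas if $c_L \neq c_{L'}$ then $L \cap L' \subset c_L \cap c_{L'} = \emptyset$ because $\mathcal{T}_E$ is a partition.

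Next I would verify that $p_2(L) \cup p_2(L')$ is connected. Each of $p_2(L)$, $p_2(L')$ is an interval, being the continuous image of a connected set --- thin necks are connected by definition --- and $\sup p_2(L) = T_f(L)$ is attained by Lemma~\ref{lm:cfatf} applied to $L$, which is $E$-saturated, connected, closed from above, and contained in the single equivalence class $c_L$ by Lemma~\ref{lmThinNeck}\ref{Tn1}. Combining $T_i(L) \le T_i(L') \le T_f(L)$ with the attainment of $\sup p_2(L)$, a brief case check on whether $\inf p_2(L')$ is attained shows the two intervals meet or abut along the attained point $T_f(L) \in p_2(L)$, so their union is again an interval.

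Finally, $L$ and $L'$ are disjoint thin necks with $p_2(L) \cup p_2(L')$ connected and $L \le L'$, so Lemma~\ref{lmTNCI}\ref{lmTNCI2} applies and forces $L_{T_f(L)}$ to contain a branching point. Since $L$ is exceptional, this contradicts Lemma~\ref{lmTNNBP}. Therefore $T_i(L') > T_f(L)$, as claimed. The only place that needs a little care is the interval bookkeeping in the connectedness step (tracking open versus closed endpoints), but this is entirely routine; all of the substantive content has already been isolated in Lemmas~\ref{lmTNNBP} and~\ref{lmTNCI}.
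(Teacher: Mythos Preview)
Your proof is correct and follows exactly the same route as the paper: argue by contradiction, then invoke Lemma~\ref{lmTNCI}\ref{lmTNCI2} to produce a branching point in $L_{T_f(L)}$, contradicting Lemma~\ref{lmTNNBP}. The paper's proof is two lines and leaves the verification of the hypotheses of Lemma~\ref{lmTNCI} (disjointness of $L,L'$ and connectedness of $p_2(L)\cup p_2(L')$) to the reader; you have simply written those checks out, which is fine.
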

\begin{proof}
Suppose the contrary. Then by Lemma \ref{lmTNCI}\ref{lmTNCI2}, $L_{T_f(L)}$ contains a branching point. This contradicts Lemma \ref{lmTNNBP}.
\end{proof}

\begin{cy}\label{cyETNInt}
Let $L$ be an exceptional thin neck. There is an $\epsilon>0$ such that $\mathcal{N^+}(L,\epsilon)\subset C$.
\end{cy}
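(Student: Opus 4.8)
The plan is to exhibit a single $\epsilon>0$ for which \emph{no non-exceptional thin neck meets $\mathcal{N^+}(L,\epsilon)$}; since $A$ is by definition the union of all non-exceptional thin necks and $C=E\setminus A$, this is exactly the assertion. By Corollary~\ref{cyNETNfin} there are only finitely many non-exceptional thin necks, so by Corollary~\ref{lmTNgap} the number
\[
\epsilon:=\min\Big(\{1\}\cup\big\{\,T_i(L')-T_f(L)\ \big|\ L' \text{ a non-exceptional thin neck with } L\lneq L'\,\big\}\Big)
\]
is strictly positive. I will show that this $\epsilon$ works.

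Suppose instead that some non-exceptional thin neck $L'$ contains a point $(x,t)\in\mathcal{N^+}(L,\epsilon)$, so $x\in p_1(L)$ and $T_i(L)\le t<T_f(L)+\epsilon$. Since $L$ is exceptional it is not a non-exceptional thin neck, and distinct thin necks are disjoint (being distinct connected components of $c\cap N$ for classes $c\in\mathcal{T}_E$, the classes themselves being pairwise disjoint); hence $L'\neq L$ and $L'\cap L=\emptyset$. Because $x\in p_1(L)$, pick $s$ with $(x,s)\in L$; then $s\neq t$, for otherwise $(x,t)\in L\cap L'$. If $s<t$, apply Lemma~\ref{lm:sco} with $S_1=L$ (which is $E$-saturated by Lemma~\ref{lmThinNeck}\ref{Tn1}), $S_2=L'$ and the points $(x,s),(x,t)$, to obtain $L\le L'$, hence $L\lneq L'$. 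By the choice of $\epsilon$ together with Corollary~\ref{lmTNgap} we get $T_i(L')\ge T_f(L)+\epsilon$; but $(x,t)\in L'$ forces $t\ge T_i(L')$, contradicting $t<T_f(L)+\epsilon$.

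If instead $s>t$, then $T_i(L)\le t<s\le T_f(L)$. By Lemma~\ref{LmEBalanced} (over the base point $x$, using $\ln 2k\le T_i(L)\le t$) the whole segment $\{x\}\times[t,s]$ lies in $E$. For each $t'\in(t,s)$ one has $t'\in p_2(L)$, and Lemma~\ref{lmEsat}\ref{lmEsat1} gives $p_1(L_s)\subseteq p_1(L_{t'})$; since $x\in p_1(L_s)$ this yields $(x,t')\in L_{t'}\subseteq L$. Thus $\{x\}\times(t,s]\subseteq L\subseteq N$, and together with $(x,t)\in L'\subseteq N$ we get $\{x\}\times[t,s]\subseteq N$. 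Applying Lemma~\ref{lm:seg} to the class $c_L$ (with $x\in p_1(c_{L,s})=p_1(L_s)$, using $L_s=c_{L,s}$ via Lemma~\ref{lm:tconn} and Corollary~\ref{lmTE}\ref{lmTE2}) over the heights in $(t,s]$ shows $\{x\}\times(t,s]\subseteq c_L\cap N$; being connected it lies in a single connected component of $c_L\cap N$, i.e. a single thin neck, which equals $L$ since it contains $(x,s)$. When also $(x,t)\in c_L$ the same argument applied to $\{x\}\times[t,s]$ forces $L'=L$, a contradiction. Since $x\in p_1(L)$ always provides such an $s$ and $s\neq t$, one of the two displayed cases occurs, so in every case we reach a contradiction and conclude $\mathcal{N^+}(L,\epsilon)\subset C$; the antisymmetry of $\le$ on thin necks (Corollary~\ref{cyAntiSym}) is what rules out the residual possibility $L\le L'\le L$.

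I expect the main obstacle to be the borderline situation in the last case where $(x,t)\notin c_L$ — equivalently $t=T_i(L)$, with $T_i(L)$ possibly not lying in $p_2(L)$ — so that $(x,t)$ sits exactly at the bottom of the vertical range of $L$. There one must pass to the limit: $(x,t)=\lim_{t'\downarrow t}(x,t')$ with $(x,t')\in L$, so $(x,t)\in\overline{L}\subseteq E$ (as $E$ is closed by Lemma~\ref{lmEclosed}); using that $L'$ is $E$-saturated one has $e(x,t)\le e(x,t')$ for $t'\in(t,s)$ by Lemma~\ref{lmEsegChar}\ref{i3}, and Lemma~\ref{lmEsat}\ref{lmEsat1} then pins $x$ inside $p_1(L_{t'})$ for all $t'$ just above $t$; combining this with the $E$-saturation of $L$ and, if necessary, the branching-point analysis of Lemmas~\ref{lmTNCI} and~\ref{lmTNNBP}, one derives the same contradiction with $L'\cap L=\emptyset$. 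Everything else in the argument is a soft combination of Lemma~\ref{lm:sco}, Corollary~\ref{lmTNgap} and the finiteness statement Corollary~\ref{cyNETNfin}.
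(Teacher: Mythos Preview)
Your approach is the paper's: invoke Corollary~\ref{cyNETNfin} for finiteness, set $\epsilon$ equal to the minimal gap $T_i(L')-T_f(L)$ over non-exceptional $L'$ with $L\lneq L'$ (positive by Corollary~\ref{lmTNgap}), and then argue that no non-exceptional thin neck meets $\mathcal{N}^+(L,\epsilon)$. The paper's three-line proof only writes down the set $S=\{L'\text{ non-exceptional}:L\le L'\}$ and asserts the conclusion; your case $s<t$ is precisely the justification it omits, and your use of Lemma~\ref{lm:sco} there is correct.

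Your case $s>t$, however, is overcomplicated and---as you yourself note---incomplete. The clean reduction is this: for any $x\in p_1(L)$ one has $(x,s')\in L$ for every $s'\in p_2(L)$ close enough to $T_i(L)$, because the slices $L_{s'}$ nest downward (Lemma~\ref{lmEsat}\ref{lmEsat1}) and $x\in p_1(L_{s_0})$ for some $s_0$. Hence whenever $t>T_i(L)$ you can choose $s<t$ and land in the first case. The only genuine residue is $t=T_i(L)\notin p_2(L)$, and there your limiting sketch does not close the gap. In that situation Lemma~\ref{lm:sco} (with roles reversed) gives $L'\le L$, and then Lemma~\ref{lmThinNeck}\ref{Tn2} forces $T_f(L')\le T_i(L)$, hence $T_f(L')=T_i(L)$; the $K$-segments $e(x_1,t_1)$ supplied by condition~\ref{dTB2} of exceptionality for $L$ just below $T_i(L)$ contain $x$, so $(x,t_1)\in K$, and the downward nesting of $L'_{s'}$ then forces $T(L')=0$. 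One must now argue that this single-slice $L'$ is itself exceptional (the lower half of~\ref{dTB2} for $L'$ follows from the same $K$-segments; the upper half is where the real work lies), which is the missing step. The paper's proof does not address this boundary slice either; for the sole application in Lemma~\ref{rmThickPt} only heights strictly above $T_i(L)$ are used, so the issue is harmless there, but as a proof of the corollary as stated both arguments leave this point open.
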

\begin{proof}
Let $S$ be the set of non-exceptional thin necks $L'$ such that $L\leq L'$. It follows from Corollary \ref{cyNETNfin} that $S$ is finite. Let $t=\min_{s\in S}T_i(s)$. By Corollary \ref{lmTNgap}, $t>T_f(L)$. Let $\epsilon=t-T_f(L)$. Then $\epsilon$ satisfies the requirement.
\end{proof}

\begin{lm}\label{rmThickPt}
Let $t\in p_2(C)$ and let $c$ be a connected component of $C_t$. There is a $t'\in[t,t+\ln3)$ and a component $c'$ of $K_{t'}$ such that $c\leq c'$. Furthermore, $c'$ can be taken to belong to the same connected component of $C$ as $c.$
\end{lm}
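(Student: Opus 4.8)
Proof proposal for Lemma~\ref{rmThickPt}.

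The plan is to split according to whether the component $c$ of $C_t$ lies in $K$ or in $N$. First I would observe that $c$ is a single $E$-segment: being a connected subset of $C_t \subset E_t$ it lies in one $E$-segment $e$, and since $C$ is $E$-saturated (Lemma~\ref{lm:CEsat}) and $e$ meets $C$, we have $e \subset C$, whence $c = e$; the same reasoning shows every component of $K_{t'}$ is an $E$-segment. Since $K$ and $N$ are $E$-saturated and partition $E$, the segment $c$ lies entirely in $K$ or entirely in $N$. If $c \subset K$, take $t' = t$ and $c' = c$: then $c'$ is a component of $K_t$, lies in $K \subset C$, hence in the same component of $C$ as $c$, and $t' \in [t,t+\ln 3)$ trivially.

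The substantive case is $c \subset N$. Then $c$ lies in a unique thin neck $L$, namely the connected component of $N \cap \mathfrak{c}$ containing $c$, where $\mathfrak{c} \in \mathcal{T}_E$ is the class with $c \subset \mathfrak{c}$ (the $E$-segment $c$ lies in a single class). Since $c \subset C = E \setminus A$ and $A$ is the union of the non-exceptional thin necks, $L$ must be exceptional; in particular $T(L) < \ln 3$ and the alternative of Definition~\ref{dfThinBranch}\ref{dTB2} applies. I would also record that $c = L_t$ (it is an $E$-segment meeting the $E$-segment $L_t$, the latter by Lemma~\ref{lm:tconn}), that $T_f(L) \in p_2(L)$ by Lemma~\ref{lm:cfatf} (using Lemma~\ref{lmThinNeck}\ref{Tn1}), and that by Corollary~\ref{cyETNInt} there is $\epsilon_0 > 0$ with $\mathcal{N}^+(L,\epsilon_0) \subset C$. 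Now choose $\epsilon \in \bigl(0,\min(\epsilon_0,\ln 3 - T(L))\bigr)$ and apply Definition~\ref{dfThinBranch}\ref{dTB2} to obtain $(x_2,t_2) \in K$ with $t_2 \in (T_f(L),T_f(L)+\epsilon)$ and $L \leq e(x_2,t_2)$. Set $c' = e(x_2,t_2)$, a component of $K_{t_2}$ since $K$ is $E$-saturated, and $t' = t_2$.

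It remains to check the three claims. For the location of $t'$: $t_2 > T_f(L) \geq t$ since $t \in p_2(L)$, and $t_2 < T_f(L)+\epsilon = T_i(L)+T(L)+\epsilon \leq t + T(L)+\epsilon < t + \ln 3$. For $c \leq c'$: since $c' \subset K$ and $L \subset N$ are disjoint, Lemma~\ref{lmThinNeck}\ref{Tn2} gives $L_{T_f(L)} \leq c'$, so $p_1(c') \subset p_1(L_{T_f(L)}) \subset p_1(L_t) = p_1(c)$, the middle inclusion being Lemma~\ref{lmEsat}\ref{lmEsat1} for $L$ with $t \leq T_f(L)$; together with $t \leq t_2$ this gives $c \leq c'$. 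For the component claim: $p_1(c') \subset p_1(L)$ and $p_2(c') = \{t_2\} \subset [T_i(L),T_f(L)+\epsilon_0)$ with $c' \subset E$, so $c' \subset \mathcal{N}^+(L,\epsilon_0)$; likewise $c = L_t \subset \mathcal{N}^+(L,\epsilon_0)$; since $\mathcal{N}^+(L,\epsilon_0)$ is connected (Remark~\ref{remNLepsCon}) and contained in $C$ (Corollary~\ref{cyETNInt}), $c$ and $c'$ lie in the same connected component of $C$.

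The step I expect to be the main obstacle is establishing $c \leq c'$: a priori one only knows $L \leq e(x_2,t_2)$, and since $p_1(L)$ may be strictly larger than $p_1(c) = p_1(L_t)$, one cannot compare $c$ and $c'$ directly. The trick is to first push the inequality down to the top slice $L_{T_f(L)}$ via Lemma~\ref{lmThinNeck}\ref{Tn2} — which is exactly where the disjointness of $K$ and $N$ is used — and only then compare with $c = L_t$ by monotonicity of the slices of $L$. A secondary technical point is the joint choice of $\epsilon$: small enough that $t_2 < T_f(L)+\epsilon_0$, so that $c'$ sits inside the region furnished by Corollary~\ref{cyETNInt}, and small enough that $t_2 - t < \ln 3$.
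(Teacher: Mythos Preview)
Your proof is correct and follows essentially the same approach as the paper's: split into the cases $c\subset K$ (trivial) and $c\subset N$ (then $c$ lies in an exceptional thin neck $L$), invoke Corollary~\ref{cyETNInt} for $\epsilon_0$, use Definition~\ref{dfThinBranch}\ref{dTB2} together with Lemma~\ref{lmThinNeck}\ref{Tn2} to produce $c'=e(x_2,t_2)\subset K$ with $L_{T_f(L)}\le c'$, and conclude via $c=L_t\le L_{T_f(L)}$ and the connectedness of $\mathcal{N}^+(L,\epsilon_0)\subset C$. The only cosmetic difference is your choice $\epsilon<\ln 3 - T(L)$ in place of the paper's $\epsilon<\ln 3-(T_f(L)-t)$; yours is slightly more restrictive but equally valid.
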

\begin{proof}
If $c\subset K,$ take $t'=t$. Otherwise, since by Lemma~\ref{lm:CEsat}, $c$ is an $E$-segment, and by Remark~\ref{rmEsat} $K$ is $E$-saturated, we have $c \cap K = \emptyset.$ So, $c\subset N\cap C$. Therefore, $c$ is contained in an exceptional thin neck $L$. Using Corollary~\ref{cyETNInt}, choose $\epsilon_1$ small enough that $\mathcal{N}^+(L,\epsilon_1)\subset C$. Let
\[
0<\epsilon<\min(\ln3-(T_f(L)-t),\epsilon_1).
\]
By Definition~\ref{dfThinBranch}\ref{dTB2} and Lemma~\ref{lmThinNeck}\ref{Tn2}, there is a
\[
t'\in(T_f(L),T_f(L)+\epsilon)
\]
and an
\[
x\in p_1(K_{t'})
\]
such that $L_{T_f(L)}\leq e(x,t')$. By Lemma~\ref{lmThinNeck}\ref{Tn1}, again using the fact that $c$ is an $E$-segment, $c=L_t$. So, by Lemma~\ref{lmEsat}\ref{lmEsat1}
\[
c\leq L_{T_f(L)}\leq e(x,t').
\]
By the choice of $\epsilon$,
\[
t<t'<T_f(L)+\epsilon<\ln 3+t.
\]
So, we take $c' = e(x,t').$  By Remark \ref{remNLepsCon}, $\mathcal{N}^+(L,\epsilon_1)$ is a connected subset of $C$ that contains both $c$ and $c'$.
\end{proof}

\begin{df}
A subset $Z\subset\Sigma$ is \textbf{dense} if $\mu(Z)\geq \delta_1$.
\end{df}
\begin{df}
Let $S$ be a set. An \textbf{energy partition} for $S$ is a map which assigns to each element $e\in S$ a dense subset $Z(e)\subset \Sigma$ in such a way that if $e_1\neq e_2\in S$, $Z(e_1)\cap Z(e_2)=\emptyset$.
\end{df}
\begin{rem}\label{rmEpartition}
Any set $S$ that carries an energy partition satisfies $|S|\leq \frac{E}{\delta_1}$. Let $F$ be a finite forest, and denote by $Y_F$ the set of vertices of $F$ with at most one child. It is easy to see that $|F| \leq 2|Y_F|.$ So, if $Y_F$ admits an energy partition, then
\[
|F|\leq2\frac{\mu\{\Sigma\}}{\delta_1}.
\]
\end{rem}

\begin{lm}\label{lmNumTEbound}
$|\mathcal{T}_E|\leq 2\frac{\mu\{\Sigma\}}{\delta_1}$.
\end{lm}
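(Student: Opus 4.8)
The plan is to reduce to the combinatorial machinery of Remark~\ref{rmEpartition} and then build an energy partition by hand. By Corollary~\ref{lmEquivFin} the set $\mathcal{T}_E$ is finite, so by Lemma~\ref{lmMaxfin}, writing $M\subset\mathcal{T}_E$ for the set of maximal elements, we have $|\mathcal{T}_E|\leq 2|M|$. (Equivalently, $M$ is the set $Y_F$ of vertices with at most one child in the forest on $\mathcal{T}_E$: by Theorem~\ref{tmMinMaxTree}\ref{tmMinMaxTree2}, applied with $c_1$ the hypothetical unique child of a non-maximal $c$, every non-maximal element has at least two children.) By Remark~\ref{rmEpartition}, it therefore suffices to produce an energy partition of $M$, i.e. to attach to each $c\in M$ a set $Z(c)\subset\Sigma$ with $\mu(Z(c))\geq\delta_1$ so that $Z(c)\cap Z(c')=\emptyset$ for $c\neq c'$; then $|M|\leq\mu\{\Sigma\}/\delta_1$.

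To each $c\in M$ I would attach a dense disk sitting at its top level $T_f(c)$. Since a $\sim$-class $c$ is $E$-saturated, connected and closed from above (Theorem~\ref{tmMinMaxTree}\ref{tmMinMaxTree1} and Corollary~\ref{lmTE}\ref{lmTE1}), Lemma~\ref{lm:cfatf} gives $T_f(c)\in p_2(c)$, so there is a point $(x,T_f(c))\in c$. If $(x,T_f(c))\in D$, set $x_c=x$. If not, then $(x,T_f(c))$ lies in a component $s$ of $X_{T_f(c)}\setminus D$ which, because $(x,T_f(c))\in E$, satisfies $\ell_n(s)\leq 4e^{-T_f(c)}$; an endpoint $x_c$ of $\overline s$ then satisfies $(x_c,T_f(c))\in D$ — here $T_f(c)\geq\ln 2k$ is used, together with Remark~\ref{rm:long} to exclude the degenerate possibility that $s$ is an entire component of $\gamma$ — and since $x_c$ and $x$ lie in the same $E$-segment at the same level, $(x_c,T_f(c))\in c$. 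In all cases $(x_c,T_f(c))\in c\cap D$, so by Lemma~\ref{lmDDisc} the disk $Z(c):=B(x_c,T_f(c))$ is dense.

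For disjointness, let $c\neq c'\in M$. These are incomparable connected $E$-saturated subsets of $E$ (Theorem~\ref{tmMinMaxTree}\ref{tmMinMaxTree1}), so Corollary~\ref{cyEitherOrSat2} gives
\[
d_\gamma(x_c,x_{c'};h_n)\geq d_\gamma(p_1(c),p_1(c');h_n)\geq 4e^{-\min\{T_i(c),T_i(c')\}}\geq 2\left(e^{-T_f(c)}+e^{-T_f(c')}\right),
\]
the last inequality because $T_i\leq T_f$ and $2\max\{a,b\}\geq a+b$. Lemma~\ref{lmDisjDisc} then yields $Z(c)\cap Z(c')=\emptyset$; the displayed inequality can be an equality only when $c$ and $c'$ are horizontal segments at one common level, and then the two closed disks meet in a single point, which we delete from one of them without destroying density. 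This exhibits an energy partition of $M$, so $|M|\leq\mu\{\Sigma\}/\delta_1$ and $|\mathcal{T}_E|\leq 2\mu\{\Sigma\}/\delta_1$, as desired.

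I expect the middle paragraph to be the crux: the set $E$ is strictly larger than the hypograph of $g$ — it is fattened over every small gap of $X_t\setminus D$ — so a maximal $\sim$-class need not contain a point of $D$ at its top level for obvious reasons, and one must chase the gap to an endpoint lying in $D$ and still in the class. It is precisely in excluding the pathology where such a gap engulfs a whole boundary circle of $\gamma$ that the standing hypotheses~\eqref{eq:long}--\eqref{eq:hot} (via Remark~\ref{rm:long}) enter; the rest — the reduction to $M$ and the disjointness estimate — is routine bookkeeping on top of the results already proved.
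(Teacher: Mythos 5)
Your proposal is correct and takes essentially the same route as the paper: pick, for each maximal $\sim$-class $c$, a point $(x_c,t_c)\in c\cap D$, observe that the resulting disks $B(x_c,t_c)$ are pairwise disjoint via Corollary~\ref{cyEitherOrSat2} and Lemma~\ref{lmDisjDisc}, and conclude via Remark~\ref{rmEpartition} and Lemma~\ref{lmMaxfin}. The paper is terser (it picks an arbitrary $t_c\in p_2(c)$ rather than $T_f(c)$ and simply asserts $c_{t_c}\cap D_{t_c}\neq\emptyset$ from the definition of $E$ together with Remark~\ref{rm:long}), whereas you spell out the endpoint-chasing needed to land in $D$ and flag the $\geq$/$>$ boundary case in the disjointness estimate; these are elaborations of the same argument, not a different one.
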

\begin{proof}
Let $c\in\mathcal{T}_E$ and let $t_c\in p_2(c)$. By definition of $E$ and Remark~\ref{rm:long}, we have $c_{t_c}\cap D_{t_c}\neq\emptyset$. Let $x_c\in c_{t_c}\cap D_{t_c}$ and let $B_c=B(x_c,t_c)$. By Lemma \ref{lmDDisc}, $\mu\{B_c\}\geq\delta_1$. Let $c_1\neq c_2\in\mathcal{T}_E$ be maximal elements.  By Corollary \ref{cyEitherOrSat2}
\[
d(x_{c_1},x_{c_2})>2(e^{-t_{c_1}}+e^{-t_{c_2}}).
\]
Thus, by Lemma \ref{lmDisjDisc}, $B_{c_1}\cap B_{c_2}=\emptyset$. The claim follows from Remark \ref{rmEpartition} and Lemma \ref{lmMaxfin}.
\end{proof}

\begin{lm}\label{lmNumCBound}
$|\pi_0(C)|\leq 10\frac{\mu\{\Sigma\}}{\delta_1}.$
\end{lm}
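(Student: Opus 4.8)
The plan is to bound $|\pi_0(C)|$ by distributing the connected components of $C$ into three families, each controlled either by the number of tree elements $|\mathcal T_E|$ or by an energy partition, and then invoke Lemma~\ref{lmNumTEbound}. First I would fix $c\in\mathcal T_E$ and study $\pi_0(C\cap c)$, using Corollary~\ref{cy:tl} and Lemma~\ref{lm:lino} to note that $\leq$ restricts to a linear order on the connected $E$-saturated subsets of $c$, so the components of $C\cap c$ and of $A\cap c$ (the non-exceptional thin necks inside $c$, which are finite in number by Corollary~\ref{cyNETNfin}) interleave along this linear order. The key structural input is Lemma~\ref{lm:sand}: if a component $k$ of $C\cap c$ is sandwiched, $L_1\leq k\leq L_2$, between two non-exceptional thin necks of $c$, then $k$ is actually a full connected component of $C$. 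Hence the components of $C$ split into (i) those that, within their tree element $c$, lie below the lowest non-exceptional thin neck of $c$ (or $c$ has no such neck); (ii) those lying above the highest such neck; and (iii) the sandwiched ones, which by Lemma~\ref{lm:sand} are in bijection with the "gaps between consecutive non-exceptional thin necks" inside the various $c$'s.

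For family (iii), I would charge each sandwiched component of $C$ to the non-exceptional thin neck immediately below it; since the thin necks of a fixed $c$ are linearly ordered and finite, this is at most one-to-one onto $G$, the set of all non-exceptional thin necks. Now Lemma~\ref{lmNumETNbound} bounds $|H|\le 2|\mathcal T_E|$, but $G$ itself is larger, so instead I would bound the sandwiched components directly by an energy partition: each such component $k$, being a genuine component of $C$, contains (by Lemma~\ref{rmThickPt}) a point of $K$, hence by definition of $K$ an $E$-segment of $h_n$-length $>24e^{-t}$ for the relevant $t$; incomparable such segments yield disjoint dense disks via Remark~\ref{remEnPart} (together with Lemmas~\ref{lmDDisc} and~\ref{lmDisjDisc}). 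So family (iii) contributes at most $\mu(\Sigma)/\delta_1$. For families (i) and (ii): within each $c$ there is at most one "below" component and at most one "above" component, so together they contribute at most $2|\mathcal T_E|$.

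Combining, $|\pi_0(C)|\le 2|\mathcal T_E| + \mu(\Sigma)/\delta_1$, and Lemma~\ref{lmNumTEbound} gives $|\mathcal T_E|\le 2\mu(\Sigma)/\delta_1$, so $|\pi_0(C)|\le 4\cdot\mu(\Sigma)/\delta_1 + \mu(\Sigma)/\delta_1 = 5\mu(\Sigma)/\delta_1$; a small loss in the bookkeeping (e.g.\ counting both the below- and above-components more carefully, or the fact that a single component of $C$ may meet several tree elements and must be counted once) should still land comfortably under the stated $10\,\mu(\Sigma)/\delta_1$. The main obstacle I anticipate is the careful bookkeeping in the last point: a connected component $k'$ of $C$ need not be contained in a single $c\in\mathcal T_E$ (only the sandwiched ones are, by Lemma~\ref{lm:sand}), so I must make sure that each component of $C$ is assigned to exactly one of the three families and counted once, and in particular verify that a "below" or "above" component of $C\cap c$ that is \emph{not} a full component of $C$ extends into a neighboring tree element where it becomes sandwiched — so that the genuinely new components of $C$ are exhausted by the three families without double counting. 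Making this interleaving argument precise, using Lemma~\ref{lm:sand} and the linear order from Lemma~\ref{lm:lino}, is where the real work lies; the energy-partition estimates themselves are routine given Remark~\ref{remEnPart}.
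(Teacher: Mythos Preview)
Your outline has a real gap in the energy-partition step for family~(iii). You assign to each sandwiched component $k$ a component of $K$ and hence a dense disk, and you note that \emph{incomparable} $k$'s give disjoint disks. But sandwiched components coming from different tree elements $c\in\mathcal T_E$ can be \emph{comparable} in the tree-like order on $\pi_0(C)$ (Theorem~\ref{TmTreeEsat}); for such pairs $p_1(k_2)\subset p_1(k_1)$, and there is no reason the $K$-segments you choose avoid each other's projections. Remark~\ref{remEnPart} then does not yield disjoint disks, so the map $k\mapsto B_k$ is not an energy partition. Your earlier idea of charging each sandwiched $k$ to the thin neck just below it is injective into $G$, but $|G|$ is only bounded \emph{after} Lemma~\ref{lmNumCBound} (indeed Lemma~\ref{lmNumLnBound} uses $|\pi_0(C)|$), so that route is circular.

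The paper's proof avoids this by working directly with the forest $F_P$ on a finite $P\subset\pi_0(C)$ and bounding the vertices $q$ with at most one child. For $q$ with a unique child $q'$, one finds a non-exceptional thin neck $g_q$ separating $q$ from $q'$. The crucial dichotomy is whether $g_q\in H$ (i.e.\ $T(g_q)<\ln 3$) or $g_q\in G\setminus H$. In the first case one simply counts via $|H|\le 2|\mathcal T_E|$. In the second case the inequality $T(g_q)\ge\ln 3$ forces $\ell_n(p_1(q'))\le\frac13\ell_n(p_1(k_q))$ for a $K$-segment $k_q\subset q$, so one can carve out $s_q\subset k_q$ of length $\ge 8e^{-t_q}$ with $p_1(s_q)\cap p_1(q')=\emptyset$. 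That disjointness from the child's projection is exactly what makes the energy partition work for comparable pairs; your proposal has no analogue of this step. The second, lesser issue you already flagged --- that a genuine component of $C$ need not sit in a single $c\in\mathcal T_E$ --- is also sidestepped by the paper's approach, since it never decomposes $C$ tree-element by tree-element.
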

\begin{proof}
Let $P\subset\pi_0(C)$ be finite. Let $Q$ be the set of vertices of $F_P$ with at most one child. We partition $Q$ into two sets, $Q_1$ and $Q_2$, such that $Q_1$ carries an energy partition, and $Q_2$ is mapped two to one into $\mathcal{T}_E$. Let $Q'\subset Q$ be the set of vertices with exactly one child. Let $q\in Q'$, let $q'$ be the unique child of $q$, and let $(x,t')\in q'$. Since $q \leq q',$ there exists $t \in p_2(q)$ such that $(x,t) \in q.$ By Lemma~\ref{lm:sco}, we have $t < t'.$ Let
\[
L_q=\{x\} \times [t,t'].
\]
Since $q$ and $q'$ are different components of $C$, and $L_q$ is a path connecting them, it follows that $L_q$ intersects at least one non-exceptional thin neck. So, for each $q \in Q',$ choose a non-exceptional thin neck $g_q$ such that $g_q \cap L_q \neq \emptyset.$ Denote by $Q_2$ the subset of elements $q\in Q'$ for which $g_q\in H$. By Lemmas \ref{lmNumETNbound} and \ref{lmNumTEbound}, $|Q_2|\leq4 \frac{\mu\{\Sigma\}}{\delta_1}$.

Denote $Q_1=Q\setminus Q_2$. We define an energy partition for $Q_1$ as follows. To each $q\in Q_1$ we associate a $t_q\in p_2(q)$ and a connected segment $s_q\subset q_{t_q}$ in such a way that the following conditions hold:
\begin{enumerate}
\item\label{PCondA}
\[
\ell_n(s_q)\geq 8e^{-t_q}.
\]
\item\label{PCondB}
\[
p_1(s_q)\times[t_q,\infty)\cap (\cup_{\{q''\in Q_1|q\leq q''\}}q'')=\emptyset.
\]
\end{enumerate}

Indeed, suppose $q \in Q_1$ has no children. By Lemma \ref{rmThickPt} there is a $t_q\in p_2(q)$ such that $q_{t_q}$ contains a component of $K_{t_q}$. Let $s_q$ be a connected component of $K_{t_q} \cap q_{t_q}$. Such $s_q$ satisfies condition~\ref{PCondA} by definition of $K$ and condition~\ref{PCondB} because $q$ has no descendants.

Otherwise $q \in Q'.$ Let $q',x,t,t'$ and $g_q$ be as above. By Lemma~\ref{rmThickPt} we may choose $t_q \in p_2(q)$ and a connected component $k_q \subset q_{t_q}\cap K$ such that $e(x,t) \leq k_q.$ In particular, $(x,t_q) \in k_q.$ By choice of $g_q$, there exists $t_{g_q} \in p_2(q_q)$ such that $(x,t_{g_q}) \in L_q.$ Since $q,q'\subset C,$ we have
\begin{equation}\label{eq:qgqd}
q \cap g_q = \emptyset = q' \cap g_q.
\end{equation}
In particular, $t' > t_{g_q} > t.$ It follows from Lemma~\ref{lm:sco} that
\begin{equation}\label{eq:qgqo}
q \leq g_q \leq q'.
\end{equation}
By Lemma~\ref{lmThinNeck}\ref{Tn2}, we have
\begin{equation}\label{eq:gqTfq'}
(g_q)_{T_f(g_q)} \leq q'.
\end{equation}
We show that
\begin{equation}\label{eq:tggtq}
t_{g_q}>t_q.
\end{equation}
Indeed, if $t_{g_q} < t_q,$ then Lemma~\ref{lm:sco} would imply that $g_q \leq q,$ which in light of equation~\eqref{eq:qgqd} and relation~\eqref{eq:qgqo}, contradicts Corollary~\ref{cyAntiSym}. Also, $t_{g_q} \neq t_q$ by equation~\eqref{eq:qgqd}. Inequality~\eqref{eq:tggtq} follows. Lemma~\ref{lm:sco} and inequality~\eqref{eq:tggtq} imply that $k_q \leq g_q.$ So,
\begin{equation}\label{eq:tigq}
t_q \leq T_i(g_q).
\end{equation}
Since $q \in Q_1,$ we have $g_q\in G\setminus H,$ that is $T(g_q) \geq \ln 3.$ So, by inequality~\eqref{eq:tigq}, we have
\begin{equation}\label{eq:Tftq}
T_f(g_q) \geq T_i(g_q) + \ln 3 \geq t_q + \ln 3.
\end{equation}
By definition of $K$ and $N$ respectively, we have
\[
\ell_n( p_1(k_{q})) > 24 e^{-t_q}, \qquad \ell_n\!\left(p_1\!\left((g_q)_{T_f(g_q)}\right)\right) \leq 24 e^{-T_f(g_q)}.
\]
So, by relation~\eqref{eq:gqTfq'} and inequality~\eqref{eq:Tftq}, we conclude that

\[
\ell_n( p_1(q'))\leq \ell_n\!\left(p_1\!\left((g_q)_{T_f(g_q)}\right)\right) \leq 24e^{-(\ln3+t_q)} < \frac1{3}\ell_n(p_1(k_q)).
\]
Therefore, there exists an $s_q\subset k_q \subset q_{t_q}$ such that $\ell_n(s_q)\geq 8e^{-t_q}$ and such that $p_1(q')\cap p_1(s_q)=\emptyset$.  By construction $s_q$ satisfies condition~\ref{PCondA}. We show it satisfies condition~\ref{PCondB} as follows. Since $q'$ is the unique child of $q$, any $q''\in Q_1\setminus \{q\}$ such that $q\leq q''$ must satisfy $q'\leq q''$. In particular, $p_1(q'')\subset p_1(q')$. Thus $p_1(q'')\cap p_1(s_q)=\emptyset$ implying condition~\ref{PCondB}.

We claim that if $q \neq q' \in Q_1,$ then
\begin{equation}\label{eq:dis}
p_1(s_q) \cap p_1(s_{q'}) = \emptyset.
\end{equation}
Indeed, if  $q\neq q'\in Q_1$ are comparable,
condition~\ref{PCondB} implies equation~\eqref{eq:dis}. So, suppose they are incomparable. We may assume without loss of generality that $q \leq_2 q'.$ Thus since $s_q\subset q,$ equation~\eqref{eq:dis} follows from Lemma~\ref{lmEitherOrSat}. By condition~\ref{PCondA}, equation~\eqref{eq:dis} and Remark~\ref{remEnPart}, we can associate with each $q$ a dense disk $B_q$ such that if $q\neq q'$, then $B_q\cap B_{q'}=\emptyset$. The assignment $q\mapsto B_q$ is an energy partition.

Therefore, we have
\[
|P|\leq 2(|Q_1|+|Q_2|)\leq 10\frac{\mu(\Sigma)}{\delta_1}.
\]
Since $P$ was an arbitrary finite subset, it follows that $\pi_0(C)$ itself is finite and thus satisfies the same inequality.
\end{proof}

For $c \in \mathcal T_E,$
let $\mathcal P_{c}$ denote the partition of $c$ into the connected components of $c\cap C$ together with the connected components of $c \cap A.$ That is, $\mathcal P_{c}$ is the partition of $c$ into non-exceptional thin necks and the connected components of their complement.
\begin{lm}\label{lm:Pc}
Let $c \in \mathcal T_E.$
\begin{enumerate}
\item\label{it:fwo}
The set $\mathcal P_c$ is well-ordered under the relation $\leq$ and finite.
\item\label{it:suc}
If $L \in \mathcal P_c$ is a non-exceptional thin neck that has a successor, its successor is a component of $c \cap C.$
\end{enumerate}
\end{lm}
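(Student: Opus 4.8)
The plan is to deduce part~\ref{it:fwo} from the general linear-ordering result of Lemma~\ref{lm:lino} together with a counting argument, and to deduce part~\ref{it:suc} from Lemma~\ref{lmTNCI}\ref{lmTNCI1}.

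First I would record that every element of $\mathcal P_c$ is connected and $E$-saturated: the non-exceptional thin necks by Lemma~\ref{lmThinNeck}\ref{Tn1}, and the connected components of $c\cap C$ by Lemma~\ref{lm:CEsat} together with Remark~\ref{rmEsat}. Since the elements of $\mathcal P_c$ partition $c$ they are pairwise disjoint, so Lemma~\ref{lm:lino} shows that $\leq$ restricts to a linear order on $\mathcal P_c$. The next, and I expect most delicate, step is to understand this order concretely via the projection $p_2$. For $P\in\mathcal P_c$, Corollary~\ref{cy:pro} (with $s=c$) and Corollary~\ref{lmTE}\ref{lmTE2} give $P_t=c_t$ for every $t\in p_2(P)$, and $c_t$ is a single $E$-segment; hence the intervals $p_2(P)$, $P\in\mathcal P_c$, are pairwise disjoint and partition the interval $p_2(c)$, and the linear order on $\mathcal P_c$ coincides with the natural order on these intervals. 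Because they partition a single interval with no gaps, consecutive elements $P_1\lneq P_2$ of $\mathcal P_c$ must satisfy $T_f(P_1)=T_i(P_2)$; the attainment of $T_f$ for thin necks and for components of $c\cap C$ is supplied by Lemma~\ref{lm:cfatf}. Keeping careful track of which endpoints are attained is the main bookkeeping obstacle.

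For finiteness I would argue as follows. The non-exceptional thin necks contained in $c$ form a subset of the finite set of all non-exceptional thin necks, finite by Corollary~\ref{cyNETNfin}; call them $L_{(1)}\lneq\cdots\lneq L_{(T)}$ in the linear order (if $T=0$ then $c\cap C=c$ is connected, so $\mathcal P_c$ is a singleton and we are done; assume $T\geq 1$). Any component $k$ of $c\cap C$ with $L_{(1)}\leq k\leq L_{(T)}$ is an honest connected component of $C$ by Lemma~\ref{lm:sand}, and distinct such $k$ give distinct elements of $\pi_0(C)$, so there are at most $|\pi_0(C)|$ of them, a finite number by Lemma~\ref{lmNumCBound}. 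Every other component of $c\cap C$ lies strictly below $L_{(1)}$ or strictly above $L_{(T)}$ in the linear order; the union of those lying below $L_{(1)}$ is $E$-saturated with connected $p_2$-image, hence connected by Lemma~\ref{lmTECon}, hence a single component of $c\cap C$, and likewise above $L_{(T)}$. Thus $|\mathcal P_c|\leq T+|\pi_0(C)|+2<\infty$, and being finite and linearly ordered, $\mathcal P_c$ is well-ordered. This proves~\ref{it:fwo}.

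Finally, for~\ref{it:suc}, let $L'\in\mathcal P_c$ be the successor of the non-exceptional thin neck $L$. As an element of $\mathcal P_c$, $L'$ is either a component of $c\cap C$ or a non-exceptional thin neck; suppose the latter. Since $L$ and $L'$ are consecutive in $\mathcal P_c$, the previous step gives $T_f(L)=T_i(L')$, so—using that $T_f(L)$ is attained in $p_2(L)$ and that $p_2(L')$ is a nondegenerate interval with infimum $T_f(L)$—the union $p_2(L)\cup p_2(L')$ is a connected interval. Then Lemma~\ref{lmTNCI}\ref{lmTNCI1}, applied to the disjoint thin necks $L$ and $L'$, yields $c_L\neq c_{L'}$, contradicting $L,L'\subset c$. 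Hence $L'$ is a component of $c\cap C$, which proves~\ref{it:suc}.
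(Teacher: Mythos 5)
Your proof is correct and relies on essentially the same key ingredients as the paper's: Lemma~\ref{lm:lino} for linear ordering, Corollary~\ref{cyNETNfin} for finiteness of the thin necks, Corollary~\ref{cy:pro} together with Lemma~\ref{lmTECon} to pass from $p_2$-images to elements of $\mathcal P_c,$ and Lemma~\ref{lmTNCI}\ref{lmTNCI1} for part~\ref{it:suc}. The one place you diverge is the finiteness of the components of $c\cap C$: you invoke Lemma~\ref{lm:sand} to identify the ``interior'' ones with genuine components of $C$ and then appeal to the global bound $|\pi_0(C)|<\infty$ from Lemma~\ref{lmNumCBound}, handling the two extremes separately. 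The paper avoids this detour entirely by observing that, once $p_2(c)$ is an interval and the $p_2(L_i)$ are $n$ disjoint sub-intervals, their complement in $p_2(c)$ is at most $n+1$ intervals $b_j$; setting $M_j=p_2^{-1}(b_j)\cap c$ and using Corollary~\ref{cy:pro} and Lemma~\ref{lmTECon}, these $M_j$ are exactly the components of $c\cap C,$ giving $|\mathcal P_c|\le 2n+1$ purely locally within $c.$ Both routes work; the paper's is a bit lighter since it does not need the quantitative energy bound underlying Lemma~\ref{lmNumCBound}, but that bound is of course already established in the text, so your version is perfectly acceptable.
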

\begin{proof}
\begin{enumerate}
\item
By Corollary~\ref{cyNETNfin}, the set of non-exceptional thin necks in $\mathcal P_c$ is finite. We denote them by $L_1,\ldots,L_n.$ Let $a_i = p_2(L_i)$ for $i = 1,\ldots,n.$ Since $L_i$ is connected, $a_i$ is an interval. Since $c$ is connected, $p_2(c)$ is an interval. So, $p_2(c) \setminus (L_1 \cup \cdots \cup L_n)$ is a finite collection of intervals $b_1,\ldots,b_k,$ with $k \leq n+1.$ Let
\[
M_j = p_2^{-1}(b_j) \cap c
\]
for $j = 1,\ldots,k.$ By definition, $M_j$ is $E$-saturated. By Corollary~\ref{lmTECon}, $M_j$ is connected. By Corollary~\ref{cy:pro},
\[
c = \bigcup_{i=1}^n L_i \cup \bigcup_{j = 1}^k M_j.
\]
By the construction of the $M_j,$ the union is disjoint. Therefore,
\[
\mathcal P_c = \{L_1,\ldots,L_n,M_1,\ldots,M_k\}.
\]
In particular, $P_c$ is finite. By Lemma~\ref{lm:lino}, it is well-ordered.
\item
We continue using the notation of part~\ref{it:fwo}. Without loss of generality, we may assume $L_i \leq L_{i+1}$ for $i = 1,\ldots,n-1.$ Let $i$ be such that $L = L_i.$ If the successor of $L_i$ were not a component of $c \cap C,$ it would have to be $L_{i+1}.$ Assume so by way of contradiction. But the union $a_i \cup a_{i+1}$ cannot be connected by Lemma~\ref{lmTNCI}\ref{lmTNCI1}. So, there is a $j$ such that $a_i \leq b_j \leq a_{i+1},$ where $\leq$ denotes the usual order on $\R.$ It follows that $L_i \leq_2 M_j \leq_2 L_{i+1}.$ By Lemma~\ref{lm:lino}, we conclude that $L_i \leq M_j \leq L_{i+1},$ contradicting the assumption that $L_{i+1}$ is the successor of $L_i.$
\end{enumerate}
\end{proof}

\begin{lm}\label{lmNumLnBound}
$|G|\leq 12\frac{\mu\{\Sigma\}}{\delta_1}.$
\end{lm}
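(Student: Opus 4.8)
The plan is to build an explicit injection $\Phi\colon G\hookrightarrow \pi_0(C)\sqcup\mathcal{T}_E$, so that the bounds $|\pi_0(C)|\le 10\mu(\Sigma)/\delta_1$ (Lemma~\ref{lmNumCBound}) and $|\mathcal{T}_E|\le 2\mu(\Sigma)/\delta_1$ (Lemma~\ref{lmNumTEbound}) give $|G|\le 12\mu(\Sigma)/\delta_1$ at once. The first observation is that non-exceptional thin necks are pairwise disjoint, each lies in a unique class $c_L\in\mathcal{T}_E$, and for a fixed $c$ the non-exceptional thin necks contained in $c$ are exactly the elements of $\mathcal P_c$ that lie in $A$. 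Writing $n_c$ for their number, we get $|G|=\sum_{c\in\mathcal{T}_E}n_c$.

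Next I would analyze $\mathcal P_c$ for a fixed $c$ with $n_c\ge 1$. By Lemma~\ref{lm:Pc}\ref{it:fwo}, $\mathcal P_c$ is finite and well-ordered by $\le$, hence linearly ordered with a unique maximum, and every non-maximal element has an immediate successor. List the non-exceptional thin necks in $c$ in increasing order as $L^c_1<\dots<L^c_{n_c}$. For $1\le i<n_c$ the element $L^c_i$ is not maximal (it precedes $L^c_{i+1}$), so it has an immediate successor $k^c_i$, which by Lemma~\ref{lm:Pc}\ref{it:suc} is a connected component of $c\cap C$; since $k^c_i$ is the immediate successor of $L^c_i$ and $L^c_{i+1}$ is a thin neck above $L^c_i$, we have $L^c_i\le k^c_i\le L^c_{i+1}$, and $k^c_i$ is distinct from both because $k^c_i\subset C$ while $L^c_i,L^c_{i+1}\subset A$. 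The crucial step is then to invoke Lemma~\ref{lm:sand}: a connected component of $c\cap C$ sandwiched between two non-exceptional thin necks of $c$ is a connected component of $C$, so $k^c_i\in\pi_0(C)$.

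I would then define $\Phi(L^c_i)=k^c_i$ for $i<n_c$ and $\Phi(L^c_{n_c})=c$, and verify injectivity: distinct thin necks in the same $c$ have distinct immediate successors in $\mathcal P_c$; for $c\ne c'$ the components $k^c_i\subset c$ and $k^{c'}_j\subset c'$ are distinct since $c\cap c'=\emptyset$; the maximal thin neck of each $\mathcal P_c$ is unique; and the $\pi_0(C)$-valued and $\mathcal{T}_E$-valued images lie in different summands. Counting then gives $|G|=\sum_{c:\,n_c\ge 1}n_c=\sum_{c:\,n_c\ge1}(n_c-1)+|\{c:n_c\ge1\}|\le |\pi_0(C)|+|\mathcal{T}_E|\le 12\mu(\Sigma)/\delta_1$.

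The main difficulty is exactly the upgrade from ``component of the slice $c\cap C$'' to ``component of $C$'' — without it the successors $k^c_i$ cannot be fed into the global count $|\pi_0(C)|\le 10\mu(\Sigma)/\delta_1$ — and this is precisely the content of Lemma~\ref{lm:sand}; the remainder is elementary bookkeeping with the linear order on $\mathcal P_c$. Alternatively one could bound $G\setminus H$ directly by an energy-partition argument parallel to the proof of Lemma~\ref{lmNumCBound}, using $|H|\le 2|\mathcal{T}_E|$ from Lemma~\ref{lmNumETNbound} and the finiteness of $G\setminus H$ from Corollary~\ref{cyNETNfin}, but routing through $\mathcal P_c$ and Lemma~\ref{lm:sand} is shorter.
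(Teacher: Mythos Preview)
Your proposal is correct and follows essentially the same route as the paper: the paper also constructs an injection $G\to\pi_0(C)\coprod\mathcal{T}_E$ by sending the maximal non-exceptional thin neck in each $c$ to $c$ and every other $L$ to its immediate successor in $\mathcal P_{c_L}$, invoking Lemma~\ref{lm:Pc} and Lemma~\ref{lm:sand} exactly as you do. The only cosmetic difference is that the paper defines the map elementwise (case-splitting on whether $L$ is the maximal thin neck in $c_L$) rather than first enumerating the thin necks in each $c$, but the map and the injectivity argument are the same.
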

\begin{proof}
It suffices to define an injective map $i:G\rightarrow \pi_0(C)\coprod \mathcal{T}_E$. Let $L\in G.$ If $L$ is maximal in the set of thin necks which are components of $c_L$, map $L$ to $c_L$. Otherwise, $L$ is not a maximal element of $\mathcal P_{c_L}.$ By Lemma~\ref{lm:Pc}\ref{it:fwo}, $\mathcal P_{c_L}$ is well-ordered. So, we let $j_L \in P_{c_L}$ be the successor of $L.$ By Lemma~\ref{lm:Pc}\ref{it:suc}, we have $j_L \subset c_L \cap C.$ Since $L$ is not a maximal thin-neck in $c_L,$ Lemma~\ref{lm:sand} implies that $j_L$ is a connected component of $C.$ So, we map $L$ to $j_L.$

We prove that $i$ is injective. Indeed, suppose $i(L) = c_L \in \mathcal{T}_E.$ By Lemma~\ref{lm:lino} maximal thin necks in $c_L$ are unique. So, there can be no other thin neck mapped to $c_L.$ On the other hand, suppose $i(L) = j_L \in \pi_0(C).$ By construction $j_L$ is a subset of a unique $c \in \mathcal T_E.$ It has a unique predecessor in $\mathcal P_c,$ which is $L.$ So, no other thin neck can be mapped to $j_L.$
\end{proof}

\begin{rem}\label{remConjInv}
It is straightforward to verify that all constructions of this section, namely $E$, $N$, $K$, $C$, $A$ and $G$, are conjugation invariant.
\end{rem}

\section{Tame geodesics}\label{SecTameGeo}
\begin{df}
Let $k>0$ and let $I$ be a cylinder. A compact embedded geodesic $\gamma \subset I$ is said to be $k$-tame if for any sub-cylinder $I'\subset I,$ we have
 \[
 \ell_{h_{st}}(\gamma\cap I')\leq 2\pi k \max(Mod I',1).
 \]
\end{df}
\begin{lm}\label{lmLNCutoffBounf}
There are constants $a_1,a_2$ such that the following holds. Let $k_1,k_2>0$ and let $(\Sigma,j,\mu)\in\mathcal{M}$. Let $I\subset\Sigma_{\C}$ be clean and doubly connected. Let $\gamma:[0,1]\rightarrow I$ be $k_1$-tame. Let $\mu$ be a thick thin measure on $I$ and let
\[
f(x):=\min\left(\frac{d\ell_{\mu}}{d\ell_{h_{st}}},k_2\right).
\]
Then
\[
\int_0^1f(\gamma(t))\vectornorm{\dot{\gamma}(t)}_{st}dt\leq k_1(k_2+1)(a_1\mu(I)+a_2)
\]
\end{lm}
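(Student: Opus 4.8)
The plan is to reduce the bound to a base estimate for doubly connected domains of $\mu$-mass below $\delta_2$, and then to peel $I$ into such domains. Write $f=\min(\sqrt{d\mu/d\nu_{h_{st}}},k_2)$ as in the statement, so the left-hand side equals $\int_\gamma f\,d\ell_{h_{st}}$, the geodesic being embedded by the definition of tameness. After replacing $I$ and $\gamma$ by their conjugates if necessary, we may assume $I\subset\Sigma$ or $I=\overline I$; in the latter case, by Lemma~\ref{lmMerLong}, the conjugation on $I$ is either longitudinal or latitudinal.

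\textbf{Base estimate.} First I would show there is a constant $b$ depending only on $c_2,c_3,\delta_2$ and the constant $a$ of Lemma~\ref{ExpCylDEst}, such that for every clean, doubly connected $J\subset\Sigma_{\C}$ with $\mu(J)<\delta_2$,
\[
\int_{\gamma\cap J}f\,d\ell_{h_{st}}\le k_1(k_2+1)\,b .
\]
To prove this, put $j_0=\lceil c_2+\pi\rceil$ and, in the flat coordinates of circumference $2\pi$ on $J$, decompose $J$ into shells $A_j=\{z\in J\mid \lfloor d_J(z)\rfloor=j\}$ for $j\ge 0$, where $d_J(z)$ is the distance, in modulus units, from $z$ to the nearer boundary circle of $J$. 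Each $A_j$ is contained in a union of two subcylinders of modulus $\le 1$, so $k_1$-tameness (applied in $I$, using that subcylinders of $J$ are subcylinders of $I$) gives $\ell_{h_{st}}(\gamma\cap A_j)\le 4\pi k_1$. The shells with $j<j_0$ fill the two end collars of $J$ of modulus $\le j_0$; there I bound the integrand by $k_2$, getting a contribution $\le 4\pi j_0\,k_1k_2$. Every point of a shell with $j\ge j_0$ lies in the core $C(c_2+\pi,c_2+\pi;J)$, so Lemma~\ref{ExpCylDEst} applies and yields $\frac{d\mu}{d\nu_{h_{st}}}(z)<a\,e^{-c_3 j}\mu(J)<a\delta_2 e^{-c_3 j}$, hence $f(z)<\sqrt{a\delta_2}\,e^{-c_3 j/2}$; summing $\sum_{j\ge j_0}4\pi k_1\sqrt{a\delta_2}\,e^{-c_3 j/2}$ bounds this part by $4\pi k_1\sqrt{a\delta_2}/(1-e^{-c_3/2})$. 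Adding the two contributions proves the base estimate.

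\textbf{Peeling.} Next I would handle a general $I$ by recursion. Given a clean, doubly connected cylinder $J$: if $\mu(J)<\delta_2$, declare $J$ a leaf and invoke the base estimate; otherwise $\mu(J)\ge\delta_2$, and since $t\mapsto\mu(C(t,t;J))$ is continuous, nonincreasing, equal to $\mu(J)$ at $t=0$ and tending to $0$, I choose $t_0$ with $\mu(C(t_0,t_0;J))=\delta_2/2$, set $J_1=C(t_0,t_0;J)$, let $J^\pm$ be the two remaining end subcylinders, and recurse on $J^+$ and $J^-$. Every cylinder so produced is clean: when $I\subset\Sigma$ they all lie in $\Sigma$; when $I=\overline I$ and the conjugation is longitudinal all subcylinders of $I$ are conjugation invariant; when it is latitudinal, at each stage $J_1$ is conjugation invariant while $\overline{J^+}=J^-$, so $J_1,J^+,J^-$ are clean. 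The middle pieces $J_1$ arising at the internal nodes are pairwise disjoint subsets of $I$, each of $\mu$-mass $\delta_2/2$, so (since $\mu(I)<\infty$) the recursion terminates and there are at most $2\mu(I)/\delta_2$ internal nodes; as the tree is a full binary tree, there are at most $2\mu(I)/\delta_2+1$ leaves, and $I$ is partitioned into at most $4\mu(I)/\delta_2+1$ pieces, each clean, doubly connected, and of $\mu$-mass $<\delta_2$. Applying the base estimate to each piece and summing,
\[
\int_\gamma f\,d\ell_{h_{st}}=\sum_P\int_{\gamma\cap P}f\,d\ell_{h_{st}}\le\Big(\tfrac{4\mu(I)}{\delta_2}+1\Big)k_1(k_2+1)\,b ,
\]
so the lemma holds with $a_1=4b/\delta_2$ and $a_2=b$.

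\textbf{The main point.} The shell decomposition and the geometric series are routine once Lemma~\ref{ExpCylDEst} is in hand; the step that needs care is the peeling. One cannot apply Lemma~\ref{ExpCylDEst} to $I$ directly, since $\mu(I)$ may exceed $\delta_2$, and a careless recursion would branch so that the number of pieces grows exponentially in the depth. The crucial observation is that the middle pieces peeled off at distinct nodes are pairwise disjoint, which forces the number of pieces to be linear in $\mu(I)$ regardless of depth. A secondary point is tracking cleanness of the subcylinders in the conjugation-invariant case, which is where the dichotomy of Lemma~\ref{lmMerLong} enters.
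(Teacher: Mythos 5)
Your proof is correct, and it rests on the same two core ideas as the paper's: (i) a base estimate on a clean doubly connected piece $J$ of $\mu$-mass below $\delta_2$, obtained by splitting $J$ into end collars (where one bounds the integrand by $k_2$ and the length by tameness) and an interior where Lemma~\ref{ExpCylDEst} gives exponential decay of the density, summable in a geometric series; and (ii) a decomposition of $I$ into $O(\mu(I)/\delta_2)+O(1)$ such pieces. The paper carries out (i) inline rather than isolating it as a separate lemma, which is a purely organizational difference. Where you genuinely diverge is in the decomposition (ii). The paper takes a single linear sweep: it sets $\alpha_i$ by $\mu(S(0,\alpha_i;I))=i\delta_2/2$ and uses the consecutive subcylinders $S(\alpha_i,\alpha_{i+1};I)$, at most $\lfloor 2\mu(I)/\delta_2\rfloor+1$ of them, each of mass $\le\delta_2/2$; it handles cleanness by first assuming $I\subset\Sigma$ or longitudinal conjugation (in which case every subcylinder is automatically clean by Definition~\ref{df:cln}), and deals with the latitudinal case at the very end by cutting $I$ once at its center of symmetry into two halves each disjoint from its conjugate, to which the already-proved statement is applied and the bounds added. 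You instead build a recursive binary tree that always peels from the center, so that in the latitudinal case the middle $J_1=C(t_0,t_0;J)$ is conjugation invariant and $\overline{J^+}=J^-$, keeping every piece clean at every depth; the disjointness of the middle pieces gives a linear bound on the number of internal nodes and hence on the number of leaves. Both constructions work and give the same qualitative bound; your count of pieces ($\le 4\mu(I)/\delta_2+1$) is a bit worse than the paper's ($\le 2\mu(I)/\delta_2+1$), and the recursion with its termination argument and tree-counting is heavier machinery than is needed, since the paper's one-time halving trick for the latitudinal case lets the linear partition go through unchanged. On the other hand, your scheme does have the minor virtue of treating all three cleanness cases in a unified way rather than by a preliminary case split.
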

\begin{proof}
First assume that either $I\cap\overline{I}=\emptyset$ or the conjugation on $I$ is longitudinal as in Definition~\ref{df:cln}. Let $e:[0,Mod I] \rightarrow \R$ be given by
\[
e(t)=\mu(S(0,t;I)).
\]
Let $M= \lfloor\frac{2\mu(I)}{\delta_2}\rfloor$, let $0=\alpha_0<\alpha_1<...<\alpha_M$ be the sequence defined by $e(\alpha_i)=i\frac{\delta_2}{2}$ and let $\alpha_{M+1}=L$. By our current assumption, $S(\alpha_i,\alpha_{i+1})$ is clean. Therefore, by Lemma \ref{ExpCylDEst} there is a constant $K\geq 1$ such that for any $1\leq j\in\mathbb{N}$, any $0\leq i\leq M$ such that $\alpha_{i+1}-\alpha_i>2jK$ and any $x\in S(\alpha_{i}+jK,\alpha_{i+1}-jK;I)$, we have
\[
f(x)\leq 2^{-j}.
\]
Therefore, we calculate
\begin{align}
\int_0^1f(\gamma(t))\vectornorm{\dot{\gamma}(t)}_{st}dt&=\sum_{i=0}^{M}\int_{\gamma^{-1}((\alpha_i,\alpha_{i+1})\times S^1)}f(\gamma(t))\vectornorm{\dot{\gamma}(t)}_{st}dt\\
&\leq\sum_{i=0}^M\left( 4\pi k_2k_1K+\sum_{j=1}^{\lfloor\frac{\alpha_{i+1}-\alpha_i}{L}\rfloor}4\pi k_1K\times 2^{-i}\right)\notag\\
&\leq\sum_{i=0}^{M} 4\pi k_1(k_2+ 1) K\notag
\end{align}
By Lemma \ref{lmMerLong} it remains only to treat the case where the conjugation on $I$ is latitudinal. Denote $I_1=S(0,\frac1{2}ModI;I)$ and $I_2=S(\frac1{2}ModI,ModI;I)$. Then $I_i\cap \overline{I_i}=\emptyset$ for $i=1,2$. So, by what has already been proved, the claim of the lemma holds for $I_1$ and $I_2$ separately. But up to addition of a constant the claim is additive. Thus the claim holds for $I=I_1\cup I_2$.
\end{proof}

For the following few lemmas let $\Sigma_\C$ be a surface with $genus(\Sigma_\C)>1$ equipped with the metric $h_{can}$ of constant curvature $-1$, and let $\beta\subset\Sigma_\C$ be a simple closed geodesic. Recall the notation of Theorem~\ref{TmThTh}. Let $I=\mathcal{C}(\beta)$. Denote by $(\rho,\theta)$ the coordinates on $I$ given by Theorem~\ref{TmThTh}\ref{it:spco}. Note that $\rho$ gives the distance from $\beta$. The coordinates $(\rho,\theta)$ are axially symmetric in the sense of equation~\eqref{eq:axsy} with
\begin{equation}\label{eq:hth}
h_\theta(\rho) = \frac{\ell(\beta) \cosh \rho}{2\pi}.
\end{equation}

\begin{lm}\label{lmConstInjRad}
There is a constant $c$ with the following significance. For any $x\in I$,
\[
\frac{1}{\pi} \leq \frac{h_{\theta}(x)}{InjRad(\Sigma_\C;h_{can},x)}\leq c.
\]
\end{lm}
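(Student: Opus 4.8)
The plan is to compare the injectivity radius of $\Sigma_\C$ at a point $x$ in the collar $I = \mathcal{C}(\beta)$ with the metric coefficient $h_\theta(x)$, exploiting the explicit formula for $InjRad$ in the collar provided by Theorem~\ref{TmThTh2}\ref{it:fml}. Writing $\ell = \ell(\beta)$ and $\rho$ for the distance from $\beta$, so that $h_\theta(x) = \frac{\ell \cosh\rho}{2\pi}$, I would first record the width bound $w(\beta) = \sinh^{-1}(1/\sinh(\ell/2))$ from Theorem~\ref{TmThTh}\ref{it:coll} and note that $|\rho| \leq w(\beta)$ for $x \in I$. On the collar, equation~\eqref{InjRadEq} gives $\sinh(InjRad(x)) = \cosh(\ell/2)\cosh d - \sinh d$ where $d = \operatorname{dist}(x,\partial\mathcal{C}(\beta)) = w(\beta) - |\rho|$, so the entire problem reduces to elementary estimates of this hyperbolic expression against $\frac{\ell\cosh\rho}{2\pi}$.

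The key steps, in order, are as follows. First I would establish the upper bound $h_\theta(x) \leq c \cdot InjRad(x)$: since $\cosh(\ell/2)\cosh d - \sinh d \geq \cosh(\ell/2) - 1 + (\cosh(\ell/2)-1)\sinh d \cdot(\text{something})$, one shows $\sinh(InjRad(x))$ is bounded below by a quantity comparable to $\ell$ for small $\ell$ (using $\cosh(\ell/2) - 1 \sim \ell^2/8$ and the relation $\sinh w(\beta) = 1/\sinh(\ell/2) \sim 2/\ell$, so $\cosh(\ell/2)\cosh d$ picks up a factor $\sim \cosh w(\beta) \sim 2/\ell$ at $d$ near $w(\beta)$), while $h_\theta(x) = \frac{\ell\cosh\rho}{2\pi} \leq \frac{\ell \cosh w(\beta)}{2\pi}$ is also comparable to a constant. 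The point is that both $h_\theta$ and $InjRad$ range between something $\sim\ell$ and something $\sim 1$ as $\rho$ ranges over the collar, and they do so in lockstep because $\cosh\rho$ and $\cosh d = \cosh(w(\beta)-|\rho|)$ are reciprocally related via $\cosh w(\beta)$. Second, for the lower bound $h_\theta(x) \geq \frac{1}{\pi} InjRad(x)$, I would use that $InjRad(x) \leq InjRad$ along $\beta$ translated out, more directly: $\sinh(InjRad(x)) \leq \cosh(\ell/2)\cosh d$, and combine with $\cosh d \leq \cosh w(\beta) = \sqrt{1 + 1/\sinh^2(\ell/2)} = \cosh(\ell/2)/\sinh(\ell/2)$, giving $\sinh(InjRad(x)) \leq \cosh^2(\ell/2)/\sinh(\ell/2)$; meanwhile $h_\theta(x) \geq \frac{\ell}{2\pi}$, and one checks $\frac{\ell}{2\pi} \geq \frac{1}{\pi}\sinh(InjRad(x))$ fails naively, so instead I would bound $InjRad(x)$ itself (not its sinh) — using $InjRad(x) \leq w(\beta) + \tfrac12\ell$ type bounds or monotonicity — and compare with $h_\theta(x) \geq \frac{\ell\cosh\rho}{2\pi}$, noting $\cosh\rho$ grows exactly as one leaves $\beta$ while $InjRad$ cannot grow faster than linearly (cf. Lemma~\ref{eq:drl1}).

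The main obstacle I anticipate is getting the constant $\frac{1}{\pi}$ exactly right in the lower bound rather than merely some constant; this requires a careful analysis of the worst case, which occurs either at $\rho = 0$ (on $\beta$ itself, where $InjRad = \ell/2$ exactly since $\sinh(InjRad) = \cosh(\ell/2)\cdot 1 - 0$... wait, at $d = w(\beta)$, not $d=0$) — so I would pin down that the ratio $h_\theta(x)/InjRad(x)$ is minimized on the core geodesic $\beta$, where $InjRad$ takes its largest value $w(\beta) + \tfrac{1}{2}\ell$ or similar, and there $h_\theta = \ell/2\pi$, yielding the factor $\frac{1}{\pi}$ after the elementary inequality $\frac{\ell/2}{w(\beta)+\ell/2} \geq$ (something). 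For the upper bound the constant $c$ is allowed to be inexplicit, so that half is routine once the hyperbolic estimates are set up. I would organize the write-up by first reducing to the inequality in the single variable $\rho \in [-w(\beta), w(\beta)]$ with parameter $\ell > 0$, then splitting into the regime $\ell$ small (where the asymptotics $\sinh(\ell/2)\approx \ell/2$, $\cosh w(\beta) \approx 2/\ell$ drive everything) and $\ell$ bounded away from $0$ (where $w(\beta)$ is bounded and all quantities are trapped in a compact range, so continuity and compactness give the constants).
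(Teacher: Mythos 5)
Your plan for the upper bound $h_\theta(x)\leq c\,InjRad(x)$ is essentially the one the paper takes: substitute the explicit expression~\eqref{InjRadEq} for $\sinh(InjRad)$ in the collar, write the ratio as a function of $d = w(\beta)-|\rho|$ and $b=\frac12\ell(\beta)$, and control it by splitting into the regime where $d$ is large (use the asymptotic $\sinh^{-1}(e^{-d})\sim e^{-d}$ and the key cancellation $be^{d_0(b)}\leq\text{const}$ coming from $\sinh w(\beta) = 1/\sinh b$) and the regime where $d$ is small (compactness). That half is fine and needs no further comment.

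The lower bound $InjRad(x)\leq \pi h_\theta(x)$ is where the two arguments diverge, and your route does not go through. The paper does not use the explicit formula for $InjRad$ at all for the lower bound: it simply observes that the latitude circle $\rho=\rho(x)$ through $x$ has $h_{can}$-length $2\pi h_\theta(x)$ and is non-contractible (it is freely homotopic to $\beta$), so it certifies a non-constant geodesic loop based at $x$ of length at most $2\pi h_\theta(x)$, giving $InjRad(x)\leq \pi h_\theta(x)$ in one line, with the sharp constant built in and no case analysis in $\ell(\beta)$. You instead try to extract the bound from the formula. Several things go wrong. First, you correctly note that comparing $\ell/(2\pi)$ against $\sinh(InjRad)$ fails (since $\sinh(InjRad)\geq InjRad$ goes the wrong way), and then you propose to show the ratio $h_\theta/InjRad$ is minimized on $\beta$ --- but you give no argument for this monotonicity, and it is not obvious: both $h_\theta$ and $InjRad$ increase in $|\rho|$ (a short computation with~\eqref{InjRadEq} shows $\sinh(InjRad)$ is decreasing in $d$ on $[0,w(\beta)]$, so $InjRad$ attains its \emph{minimum} on $\beta$, not its maximum), so establishing the minimizer of the ratio requires comparing derivatives and is genuinely more work than the lemma itself. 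Second, and concretely, your final step asserts $InjRad|_\beta = w(\beta)+\tfrac12\ell$ ``or similar'' and proposes to conclude via $\frac{\ell/2}{w(\beta)+\ell/2}\geq\dots$. This is wrong: plugging $d = w(\beta)$ into~\eqref{InjRadEq} and using $\cosh w(\beta) = \cosh(\ell/2)/\sinh(\ell/2)$ gives $\sinh(InjRad|_\beta) = \sinh(\ell/2)$, so $InjRad|_\beta = \ell/2$ exactly --- which is what makes the ratio on $\beta$ come out to precisely $1/\pi$. (You actually noticed $InjRad|_\beta=\ell/2$ earlier in the same sentence, then contradicted it.) If your route is to be salvaged you would need (i) the correct value $InjRad|_\beta=\ell/2$, and (ii) an actual proof that the ratio is minimized at $\rho=0$; but it is simpler to abandon the formula for this half and use the topological observation.

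Recommendation: keep your hyperbolic-estimate plan for the upper bound, but replace the lower-bound argument with the observation that the non-contractible latitude loop through $x$ has length $2\pi h_\theta(x)$, hence $InjRad(x) \leq \pi h_\theta(x)$.
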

\begin{proof}
First we prove the lower bound. Indeed, the non-contractible loop $\rho = \rho(x)$ has $h_{can}$-length $2\pi h_\theta(x).$ So, there is a non-constant geodesic beginning and ending at $x$ with length less than $2\pi h_\theta(x).$ Therefore, $Injrad(\Sigma_\C;h_{can},x) \leq \pi h_\theta(x)$ as desired.

We turn now to the proof of the upper bound.
Denote $b=\frac1{2}\ell(\beta)$. Let $d_0(b)=w(\beta)$ and for any $x\in I$, let $d(x)=w(\beta)-\rho(x)$. By Theorems~\ref{TmThTh} and~\ref{TmThTh2}, we need only bound the expression
\[
E(d,b)=\frac{b\cosh\rho}{\pi\sinh^{-1}(\cosh b\cosh d-\sinh{d})}
\]
in the region $0\leq\rho\leq d_0(b)$. We have
\begin{align}
E(d,b)&\leq\frac{b\cosh\rho}{\pi\sinh^{-1}(e^{-d})}\notag\\
&\leq\frac{be^{\rho}}{2\pi\sinh^{-1}(e^{-d})}\label{eq:2l}\\
&=\frac{be^{\rho}}{2\pi(e^{-d}+o(e^{-d}))}\notag\\
&=\frac{b e^{\rho+d}}{2\pi(1+o(1))}\notag\\
&=\frac{b e^{d_0(b)}}{2\pi(1+o(1))}.\label{eq:ll}
\end{align}

Pick a $d'$ large enough  that
\[
|\sinh^{-1}(e^{-d'})-e^{-d'}|\leq\frac1{2}.
\]
A bound
\begin{equation}\label{eq:bnum}
\frac{b e^{d_0(b)}}{2\pi} \leq c',
\end{equation}
combined with bound~\eqref{eq:ll} suffices to give the desired bound on $E(d,b)$ in the region $d>d'$. On the other hand, for $d\leq d'$ the desired bound on $E(d,b)$ follows from inequalities~\eqref{eq:2l} and~\eqref{eq:bnum} since $\rho\leq d_0(b)$. To prove inequality~\eqref{eq:bnum}, we use Theorem~\ref{TmThTh}\ref{it:coll} to calculate
\[
\frac{be^{d_0(b)}}{2\pi}=\frac{b}{2\pi}\left(\frac{1}{\sinh b}+\sqrt{\frac{1}{\sinh^2b}+1}\right),
\]
which is clearly bounded for $b \in [0,\infty)$.
\end{proof}

\begin{lm}\label{lm:eppi}
Let $\epsilon \leq \pi.$
\begin{enumerate}
\item\label{it:B}
$
B_{\epsilon/2}(x;h_{st})\subset B_{\epsilon h_\theta(x)}(x;h_{can}).
$
\item\label{it:dg}
Let $\gamma \subset I$ be a embedded $1$-manifold possibly with boundary.
If $x_1,x_2 \in \gamma$ satisfy $d_\gamma(x_1,x_2;h_{st}) \geq \epsilon/2,$ then
\[
d_\gamma(x_1,x_2;h_{can}) \geq \epsilon h_{\theta}(x_1).
\]
\end{enumerate}
\end{lm}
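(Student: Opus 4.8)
The plan rests on the conformal identity $h_{can}=h_\theta(\rho)^2\,h_{st}$ on $I$ together with the fact that $h_\theta$ varies slowly with respect to $h_{st}$. Introduce on $I$ the $h_{st}$-arclength coordinate $s$ in the $\rho$-direction, so that in coordinates $(s,\theta)$ one has $h_{st}=ds^2+d\theta^2$ and $h_{can}=h_\theta^2(ds^2+d\theta^2)$, with $h_\theta=\tfrac{\ell(\beta)\cosh\rho}{2\pi}$ and $ds=d\rho/h_\theta$. Then $\tfrac{d}{ds}\ln h_\theta=h_\theta\tanh\rho=\tfrac{\ell(\beta)\sinh\rho}{2\pi}$, and since the collar only reaches $|\rho|\le w(\beta)$ with $\sinh w(\beta)=1/\sinh(\ell(\beta)/2)\le 2/\ell(\beta)$ by Theorem~\ref{TmThTh}\ref{it:coll}, we get $\bigl|\tfrac{d}{ds}\ln h_\theta\bigr|\le\tfrac1{\pi}$ throughout $I$. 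Hence along any $h_{st}$-rectifiable path of $h_{st}$-length $L$ the value of $h_\theta$ changes by a factor in $[e^{-L/\pi},e^{L/\pi}]$; for $L\le\pi/2$ this factor lies strictly between $\tfrac12$ and $2$, because $\sqrt e<2$.

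For part~\ref{it:B}, given $y\in B_{\epsilon/2}(x;h_{st})$ take $\sigma$ to be a minimizing $h_{st}$-path from $x$ to $y$ inside $(I,h_{st})$. It has $h_{st}$-length $<\epsilon/2\le\pi/2$, and since its $s$-coordinate stays between $s(x)$ and $s(y)$ it remains in $I$; by the slow-variation bound $h_\theta<e^{1/2}h_\theta(x)$ along $\sigma$. Therefore
\[
d_{h_{can}}(x,y)\le\int_\sigma h_\theta\,d\ell_{h_{st}}\le e^{1/2}h_\theta(x)\,\ell_{h_{st}}(\sigma)<\tfrac{\sqrt e}{2}\,\epsilon\,h_\theta(x)<\epsilon\,h_\theta(x),
\]
i.e. $y\in B_{\epsilon h_\theta(x)}(x;h_{can})$.

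For part~\ref{it:dg} I would argue by contraposition. Let $\alpha\subset\gamma$ be the $h_{can}$-length-minimizing arc from $x_1$ to $x_2$ and suppose $\ell_{h_{can}}(\alpha)=d_\gamma(x_1,x_2;h_{can})<\epsilon h_\theta(x_1)\le\pi h_\theta(x_1)$; it suffices to show $\ell_{h_{st}}(\alpha)<\epsilon/2$, since then $d_\gamma(x_1,x_2;h_{st})\le\ell_{h_{st}}(\alpha)<\epsilon/2$. Because $\rho$ is $1$-Lipschitz for $h_{can}$, every point of $\alpha$ has $|\rho-\rho(x_1)|<\epsilon h_\theta(x_1)$; using $\cosh(\rho(x_1)-t)\ge e^{-t}\cosh\rho(x_1)$ together with the a priori bound $h_\theta\le\tfrac{\ell(\beta)\coth(\ell(\beta)/2)}{2\pi}$ (again from Theorem~\ref{TmThTh}\ref{it:coll}), one confines $h_\theta$ along $\alpha$ above a fixed multiple of $h_\theta(x_1)$, and then estimates $\ell_{h_{st}}(\alpha)=\int_\alpha h_\theta^{-1}\,d\ell_{h_{can}}$.

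The crux, and where I expect the real work, is making the constant in part~\ref{it:dg} come out exactly as $\epsilon/2$: the bound $\int_\alpha h_\theta^{-1}\,d\ell_{h_{can}}$ is weak whenever $\alpha$ descends toward the core geodesic $\beta$, where $h_\theta$ is small and $h_{can}$-length is cheap relative to $h_{st}$-length. Closing this gap should use that $\alpha$, being a length minimizer for $h_{can}$, cannot linger near $\beta$ (a geodesic of the collar metric $d\rho^2+h_\theta^2\,d\theta^2$ bends away from $\beta$), together with the two-sided comparison of $h_\theta$ with the injectivity radius in Lemma~\ref{lmConstInjRad}. I would also verify separately the borderline case in part~\ref{it:B} where the ball meets $\partial I$, using that just outside $I$ the injectivity radius exceeds $\sinh^{-1}1$.
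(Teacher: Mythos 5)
Your proof of part~\ref{it:B} is correct and essentially the paper's argument, streamlined: where the paper first shows $|\rho(y)-\rho(x)|\le\epsilon h_\theta(x)$ by contradiction and then invokes the $\rho$-Lipschitz bound $|dh_\theta/d\rho|\le 1/\pi$ to get $h_\theta(y)\le 2h_\theta(x)$, you pass directly to the conformal coordinate $s$ (the paper's $\tau$) and use the log-Lipschitz bound $|d\ln h_\theta/ds|\le 1/\pi$, which yields the two-sided comparison $e^{-1/2}h_\theta(x)<h_\theta<e^{1/2}h_\theta(x)$ on $B_{\epsilon/2}(x;h_{st})$ in one stroke, with a slightly sharper constant than $2$.

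Your sketch of part~\ref{it:dg} is, as you suspected, incomplete, and the heuristic you offer to close the gap is not correct. After the reduction to $\ell_{h_{st}}(\alpha)=\int_\alpha h_\theta^{-1}\,d\ell_{h_{can}}$ given $\ell_{h_{can}}(\alpha)<\epsilon h_\theta(x_1)$, one would need the pointwise lower bound $h_\theta\ge 2h_\theta(x_1)$ along $\alpha$ to force $\ell_{h_{st}}(\alpha)<\epsilon/2$, which is already violated at $x_1$; what the $\rho$-Lipschitz bound and $|\rho-\rho(x_1)|<\epsilon h_\theta(x_1)$ give is only the far weaker $h_\theta\ge(1-\epsilon/\pi)h_\theta(x_1)$, degenerate as $\epsilon\to\pi$. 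Your suggestion that an $h_{can}$-minimizer ``cannot linger near~$\beta$'' is false: $\beta$ is itself a closed geodesic of $h_{can}$, so $\gamma$ and the arc $\alpha$ may lie entirely in $\beta$. In fact, taking $\gamma=\beta$ and $x_1,x_2\in\beta$ with $d_\gamma(x_1,x_2;h_{st})=\epsilon/2$ gives $d_\gamma(x_1,x_2;h_{can})=h_\theta(x_1)\cdot\epsilon/2<\epsilon h_\theta(x_1)$, so the conclusion of part~\ref{it:dg} as stated actually fails. The difficulty you were fighting is therefore not a gap in your approach but a constant error in the statement itself.

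For context, the paper's own proof of part~\ref{it:dg} breaks at the same spot: from $h_\theta(y)\le 2h_\theta(x_1)$ it asserts $h_{st}\le(2h_\theta(x_1))^{-2}h_{can}$, but since $h_{st}=h_\theta^{-2}h_{can}$, an \emph{upper} bound on $h_\theta$ yields a \emph{lower} bound on $h_{st}$, so the inequality is reversed. A corrected version reads, for instance, $d_\gamma(x_1,x_2;h_{can})\ge\tfrac{\epsilon}{4}h_\theta(x_1)$: then $|\rho-\rho(x_1)|<\tfrac{\epsilon}{4}h_\theta(x_1)$ gives $h_\theta\ge(1-\tfrac{\epsilon}{4\pi})h_\theta(x_1)\ge\tfrac34 h_\theta(x_1)$, whence $\ell_{h_{st}}(\alpha)<\tfrac{4}{3}\cdot\tfrac{\epsilon}{4}=\tfrac{\epsilon}{3}<\tfrac{\epsilon}{2}$. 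In the only place the lemma is used, Lemma~\ref{lmKTame}, this adjustment merely changes the universal constant $C$ and does not affect the rest of the paper.
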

\begin{proof}
\begin{enumerate}
\item
Let $\tau$ be the function on $I$ defined by
\[
\tau(y) = \tau(\rho(y)) = \int_{\rho(x)}^{\rho(y)}\frac{d\rho}{h_{\theta}(\rho)}.
\]
So, $\tau\times \theta$ is a conformal diffeomorphism of $I$ to the flat cylinder. In particular,
\[
h_{st} = d\theta^2 + d\tau^2.
\]
Suppose $y \in B_{\epsilon/2}(x;h_{st}).$ First, we show that
\begin{equation}\label{eq:rho}
|\rho(y) - \rho(x)| \leq \epsilon h_\theta(x).
\end{equation}
Indeed, assume the contrary. By the formula~\eqref{eq:hth} for $h_\theta,$ and the fact from Theorem~\ref{TmThTh}\ref{it:coll} that
\[
|\rho| \leq w(\beta) = \sinh^{-1}(1/\sinh(\ell(\beta)/2)),
\]
we have
\begin{equation}\label{eq:dhdr}
\left|\frac{dh_\theta}{d\rho}(\rho)\right| = \left|\frac{\ell(\beta) \sinh \rho}{2\pi}\right| \leq \frac{1}{\pi}.
\end{equation}
So,
\begin{align*}
|\tau(y)| &= \left|\int_{\rho(x)}^{\rho(y)}\frac{d\rho}{h_{\theta}(\rho)}\right| \\
&\geq \left|\int_{\rho(x)}^{\rho(y)}\frac{d\rho}{h_{\theta}(x) + \frac{1}{\pi}|\rho - \rho(x)|}\right| \\
& > \int_{|\rho(x)|}^{|\rho(x)| + \epsilon h_\theta(x)} \frac{d\rho}{h_\theta(x) + \frac{\epsilon h_\theta(x)}{\pi}} \\
& \geq \frac{\epsilon}{2},
\end{align*}
which is a contradiction. Inequality~\eqref{eq:rho} follows.
Combining inequalities~\eqref{eq:rho} and~\eqref{eq:dhdr}, we obtain
\[
h_\theta(y) \leq 2 h_\theta(x), \qquad \forall y \in B_{\epsilon/2}(x;h_{st}).
\]
So, since $h_{can} = h_\theta^2 h_{st},$ on $B_{\epsilon/2}(x;h_{st})$ we have the inequality of bilinear forms
$h_{can} \leq (2h_\theta(x))^2 h_{st}.$ Therefore, for $y \in B_{\epsilon/2}(x;h_{st}),$ and $\alpha$ an $h_{st}$ geodesic from $x$ to $y,$ we have
\[
\ell_{h_{can}}(\alpha) \leq 2h_{\theta}(x) \ell_{h_{st}}(\alpha) \leq h_{\theta}(x) \epsilon.
\]
That is, $y \in B_{\epsilon h_{\theta}(x)}(x;h_{can}).$
\item
By way of contradiction, suppose $d_\gamma(x_1,x_2;h_{can}) < \epsilon h_\theta(x_1).$ Then the segment $a$ in $\gamma$ between $x_1$ and $x_2$ must be contained in $B_{\epsilon h_\theta(x_1)}(x_1;h_{can}).$ Moreover, by inequality~\eqref{eq:dhdr} we have
\[
h_\theta(y) \leq 2h_\theta(x_1), \qquad \forall y \in B_{\epsilon h_\theta(x_1)}(x_1;h_{can}).
\]
So $h_{st} \leq (2h_\theta(x_1))^{-2} h_{can}$ on $B_{\epsilon h_\theta(x_1)}(x_1;h_{can}).$ It follows that
\[
\ell_{h_{st}}(a) \leq (2 h_\theta(x_1))^{-1} \ell_{h_{can}}(a) < \frac{\epsilon}{2},
\]
which is a contradiction.
\end{enumerate}
\end{proof}

\begin{lm}\label{lmKTame}
There is a constant $C$ with the following significance. Let $k\geq1$ and let $\gamma$ be a compact embedded geodesic in $I$ such that
\[
SegWidth(\gamma,x;h_{can})\geq\frac1{k}InjRad(\Sigma_\C;h_{can},x).
\]
for all $x\in\gamma$. Then $\gamma$ is ${Ck}$-tame.
\end{lm}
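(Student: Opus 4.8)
The plan is to prove the length bound by a covering argument, after first reducing to subcylinders of modulus at most one. Enlarging the constant $c$ of Lemma~\ref{lmConstInjRad} if necessary, we may assume $c\ge 1$. Given a subcylinder $I'\subset I$, write it as $[a,b]\times S^1$ in the flat coordinates of $I$ and subdivide $[a,b]$ into $\lceil Mod I'\rceil\le 2\max(Mod I',1)$ intervals of length $\le 1$; this cuts $I'$ into that many closed subcylinders of modulus $\le 1$, and since $\ell_{h_{st}}(\gamma\cap\,\cdot\,)$ is subadditive over a cover by subcylinders, it suffices to find a universal constant $A$ with $\ell_{h_{st}}(\gamma\cap I'')\le Ack$ whenever $Mod I''\le 1$; one then takes $C=Ac/\pi$. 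Throughout it is convenient to view $I=[0,Mod I]\times S^1$ inside the bi-infinite flat cylinder $\bar I=\R\times S^1$ with metric $h_{st}=d\tau^2+d\theta^2$: since $[0,Mod I]$ is convex in $\R$ and $S^1$ has length $2\pi$, any two points of $I$ at $h_{st}$-distance $<\pi$ are joined by a minimizing $\bar I$-geodesic lying in $I$, so $d_I=d_{\bar I}$ at such scales, and $h_{st}$-balls of radius $<\pi$ in $\bar I$ are honest flat disks.

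The key step is a local length estimate: for every $x\in\gamma$, setting $\rho_0:=1/(4ck)$,
\[
\ell_{h_{st}}\!\left(\gamma\cap B_{\rho_0}(x;I,h_{st})\right)\le \frac{2}{ck}.
\]
Indeed, Lemma~\ref{lmConstInjRad} gives $InjRad(\Sigma_\C;h_{can},x)\ge h_\theta(x)/c$, so the hypothesis of the lemma yields $SegWidth(\gamma,x;h_{can})\ge h_\theta(x)/(ck)$. Applying Lemma~\ref{lm:eppi}\ref{it:B} with $\epsilon=1/(2ck)$ (which is $\le\pi$ since $c,k\ge1$) gives
\[
B_{\rho_0}(x;I,h_{st})\subset B_{h_\theta(x)/(2ck)}(x;I,h_{can})\subset B_{h_\theta(x)/(2ck)}(x;\Sigma_\C,h_{can}).
\]
Since $h_\theta(x)/(2ck)<SegWidth(\gamma,x;h_{can})$, the last ball meets $\gamma$ in an intrinsic $\gamma$-ball of radius $h_\theta(x)/(2ck)$, which has total $h_{can}$-length $\le h_\theta(x)/(ck)$. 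On the ball $B_{h_\theta(x)/(2ck)}(x;I,h_{can})$ one has $|\rho-\rho(x)|\le h_\theta(x)/(2ck)$, so inequality~\eqref{eq:dhdr} gives $h_\theta\ge h_\theta(x)/2$ there (using $\pi ck\ge 1$). As $h_{st}=h_\theta^{-2}h_{can}$ on $I$, for $Z:=\gamma\cap B_{\rho_0}(x;I,h_{st})$ we conclude $\ell_{h_{st}}(Z)\le 2\,h_\theta(x)^{-1}\,\ell_{h_{can}}(Z)\le 2/(ck)$.

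For the packing step, fix $I''=[\tau_1,\tau_2]\times S^1$ with $Mod I''\le 1$ and set $\sigma:=\gamma\cap I''$, a compact set. Choose a maximal (hence finite) subset $\{x_1,\dots,x_m\}\subset\sigma$ that is $\rho_0$-separated in the $h_{st}$-metric. Then the flat disks $B_{\rho_0/2}(x_i;\bar I,h_{st})$ are pairwise disjoint, each of area $\pi(\rho_0/2)^2$, and all contained in $[\tau_1-1,\tau_2+1]\times S^1$, of area $\le 6\pi$; hence $m\le 24/\rho_0^2=384\,c^2k^2$. By maximality $\sigma\subset\bigcup_i B_{\rho_0}(x_i;I,h_{st})$, so the local estimate yields
\[
\ell_{h_{st}}(\sigma)\le\sum_{i=1}^m\ell_{h_{st}}\!\left(\gamma\cap B_{\rho_0}(x_i;I,h_{st})\right)\le m\cdot\frac{2}{ck}\le 768\,ck.
\]
Thus $A=768$ works and $\gamma$ is $\tfrac{768c}{\pi}k$-tame.

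The main obstacle is the local estimate: one must move freely between the flat metric $h_{st}$ and the hyperbolic metric $h_{can}$ near a point of $\gamma$, which requires choosing the scale $\rho_0$ correctly against $h_\theta(x)$ and invoking inequality~\eqref{eq:dhdr}; and one must protect against the possibility that the collar $I$ is very thin (that is, $\beta$ very long), which is precisely why the packing is performed inside the ambient cylinder $\bar I$, where small $h_{st}$-balls are genuine flat disks regardless of the width of $I$. The subdivision into subcylinders of bounded modulus and the Vitali-type packing are routine.
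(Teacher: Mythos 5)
Your proof is correct and follows essentially the same approach as the paper's: a Vitali-type packing argument combining the segment-width hypothesis with Lemma~\ref{lmConstInjRad} and Lemma~\ref{lm:eppi} to produce disjoint flat disks counted by area. The differences are organizational rather than substantive --- the paper packs points along $\gamma$ using the \emph{intrinsic} $h_{st}$-distance (so the length bound follows at once from maximality, with Lemma~\ref{lm:eppi}\ref{it:dg} transferring separation to $h_{can}$) and treats $Mod(I')\geq 1$ and $Mod(I')<1$ directly, whereas you pack in the \emph{ambient} cylinder, isolate a local length estimate using the segment-width condition and inequality~\eqref{eq:dhdr}, and reduce to unit subcylinders.
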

\begin{proof}
Let $I' \subset I$ with $Mod(I') \geq 1.$
Choose a collection of points $x_1,\ldots,x_n \in \gamma \cap I'$ maximal with respect to the condition that
\[
d_\gamma(x_i,x_j;h_{st}) \geq \frac{\pi}{2k}, \qquad i \neq j.
\]
In particular,
\begin{equation}\label{eq:gn}
\ell_{h_{st}}(\gamma \cap I') < \frac{n\pi}{k}.
\end{equation}
By Lemmas~\ref{lm:eppi}\ref{it:dg} and~\ref{lmConstInjRad}, we have
\begin{align*}
d_\gamma(x_i,x_j;h_{can}) &\geq \frac{\pi}{k}\max(h_\theta(x_i),h_\theta(x_j)) \\
&\geq \frac1{k} \max(InjRad(x_i),InjRad(x_j)).
\end{align*}
So, by the assumption on $SegWidth,$
\[
B_{\frac{InjRad(x_i)}{2k}}(x_i;h_{can})\cap B_{\frac{InjRad(x_j)}{2k}}(x_j;h_{can}) = \emptyset, \quad i \neq j.
\]
Thus by Lemma~\ref{lmConstInjRad} we have
\[
B_{\frac{h_\theta(x_i)}{2ck}}(x_i;h_{can})\cap B_{\frac{h_\theta(x_j)}{2ck}}(x_j;h_{can}) = \emptyset, \quad i \neq j.
\]
It follows by Lemma~\ref{lm:eppi}\ref{it:B} that
\begin{equation}\label{eq:ijem}
B_{\frac{1}{4ck}}(x_i;h_{st}) \cap B_{\frac{1}{4ck}}(x_j;h_{st})= \emptyset, \quad i \neq j.
\end{equation}
Consider the case $Mod(I') \geq 1.$ Even if $x_i \in \partial I',$ at least half of $B_{\frac{1}{4ck}}(x_i;h_{st})$ is contained in $I'.$ So,
\[
Area\left(B_{\frac{1}{4ck}}(x_i;h_{st})\cap I'\right) \geq \frac{\pi}{32 c^2 k^2}, \quad Area(I') = 2\pi Mod(I').
\]
Therefore, by equation~\eqref{eq:ijem} we have
\begin{equation}\label{eq:nmod}
n \leq 64 c^2 k^2 Mod(I').
\end{equation}
Combining inequalities~\eqref{eq:gn} and~\eqref{eq:nmod}, we deduce the lemma with
\[
C = 32 c^2.
\]
On the other hand, suppose $Mod(I') < 1.$ Then
\[
Area\left(B_{\frac{1}{4ck}}(x_i;h_{st})\cap I'\right) \geq \frac{\pi}{32 c^2 k^2} Mod I'.
\]
So, by equation~\eqref{eq:ijem} we have
\begin{equation}\label{eq:n1}
n \leq 64 c^2 k^2.
\end{equation}
Combining inequalities~\eqref{eq:gn} and~\eqref{eq:n1}, the lemma follows with the same value of $C.$
\end{proof}

\section{Proof of the theorem}\label{SecProof}
The goal of this section is to prove Theorem~\ref{TmApLenBound}. We use the notation and terminology of Theorem~\ref{TmApLenBound} as well as Sections~\ref{SecTreePart},~\ref{SecThickThinPart}, and~\ref{SecTameGeo}. To begin, observe that
\begin{align}\label{eq:I0I1}
\ell_{\mu}(\gamma)&=\int_{x\in\gamma}e^{g(x)}d\ell_n(x)\\
&=\int_{x\in\gamma}\min\{e^{g(x)},2k\}d\ell_n(x)+\int_{x\in\gamma}\max\{e^{g(x)}-2k,0\}\notag.
\end{align}
We denote the first term by $I_0$ and the second by $I_1$ and deal with each term separately.

We first dispense with $I_0$.
\begin{lm}\label{lmI0}
There are constants $d_1$ and $d_2$ such that
\[
I_0 \leq k^2\{d_1\mu(\Sigma_\C)+d_2genus(\Sigma_{\C})\}.
\]
\end{lm}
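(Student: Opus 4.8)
The plan is to bound $I_0 = \int_\gamma \min\{e^{g(x)},2k\}\,d\ell_n(x)$ by splitting $\gamma$ according to the thick-thin decomposition developed in Sections \ref{SecPrelimimaries} and \ref{SecThickThinPart}, together with the tame-geodesics machinery of Section \ref{SecTameGeo}. First I would observe that $\min\{e^{g(x)},2k\} = 2k\cdot\min\{\tfrac{1}{2k}e^{g(x)},1\}$ and that, by definition of $d\ell_n$ and $d\ell_\mu$, the integrand $e^{g(x)}\,d\ell_n$ is exactly $d\ell_\mu$, so $\min\{e^{g(x)},2k\}\,d\ell_n$ is a truncated version of $d\ell_\mu$. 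The key point is that the integrand is now uniformly bounded by $2k$ pointwise, so the only way $I_0$ can be large is if $\gamma$ is long in the $h_n$-metric; hence the whole difficulty is controlling the $h_n$-length of $\gamma$, not the pointwise size of the density.

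Next I would handle the two cases $g = 0$ and $g \geq 1$ separately, since $h_{can}$ behaves differently. When $g=0$, $\Sigma_\C$ is a round sphere of total area $1$, so $InjRad$ is bounded below by a universal constant, $r(x)$ is a constant, and $h_n$ is a fixed multiple of $h_{can}$; then $\ell_n(\gamma) \leq \ell_{h_{can}}(\gamma)/r$ is at most a universal constant times $\operatorname{diam}$, which on a unit-area sphere is universally bounded, so $I_0 \le 2k \cdot \mathrm{const}$. When $g \geq 1$, I would use the thick-thin decomposition of Theorems \ref{TmThTh} and \ref{TmThTh2}: away from the collars $\mathcal C(\beta_i)$ the injectivity radius is bounded below by $\sinh^{-1}1$, so the part of $\gamma$ in the thick part has $h_{can}$-length controlled by a universal constant times $genus(\Sigma_\C)$ (via Theorem \ref{TmThTh}, since the surface is a bounded union of pairs of pants of bounded geometry), and $h_n$ is comparable to $h_{can}$ there. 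For the part of $\gamma$ lying inside a collar $I = \mathcal C(\beta_i)$, I would invoke Lemma \ref{lmKTame}: the segment-width hypothesis \eqref{eq:swa} of Theorem \ref{TmApLenBound} implies $\gamma\cap I$ is $Ck$-tame, so Lemma \ref{lmLNCutoffBounf} (applied with $k_2 = 2k$, after noting $\frac{d\ell_\mu}{d\ell_{h_{st}}} = \frac{d\ell_\mu}{d\ell_n}\cdot\frac{d\ell_n}{d\ell_{h_{st}}}$ and that on the collar $h_n$ and $h_{st}$ are comparable up to factors controlled by Lemma \ref{lmConstInjRad}) bounds the contribution of that collar by $Ck(2k+1)(a_1\mu(I) + a_2)$. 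Summing over the at most $3g-3$ collars and using that the $\mu(I)$ are disjoint so sum to at most $\mu(\Sigma_\C)$, and that $\sum a_2 \leq a_2(3g-3)$, gives the claimed bound with $I_0 \leq k^2\{d_1\mu(\Sigma_\C) + d_2 genus(\Sigma_\C)\}$ after absorbing the metric-comparison constants.

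The main obstacle I anticipate is the careful translation between the metrics $h_n$, $h_{can}$ and $h_{st}$ on the collars: Lemma \ref{lmLNCutoffBounf} is phrased in terms of $h_{st}$ and the $k_1$-tameness condition, whereas the integral $I_0$ and the length $\ell_n$ are defined using $h_n$, which is $h_{can}$ rescaled pointwise by $1/r(x)$. Pinning down that $r(x)$ and $h_\theta(x)$ and $InjRad$ are all mutually comparable on a collar — which is exactly the content of Lemma \ref{lmConstInjRad} and Lemma \ref{eq:drl1} — and that this comparison only costs universal constants (not $k$-dependent ones), is the delicate bookkeeping step; everything else is a direct application of the already-established tameness bound and the classical collar lemma.
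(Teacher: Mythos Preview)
Your overall strategy---split $\gamma$ into thick and thin parts of $\Sigma_\C$, bound the thick contribution by the truncation $2k$, and bound each collar contribution via Lemmas~\ref{lmKTame}, \ref{lmConstInjRad} and~\ref{lmLNCutoffBounf}---is exactly the paper's approach, and your thin-part argument is correct as stated.

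There is, however, a genuine gap in the thick-part step. You assert that $\ell_{h_{can}}(\gamma\cap Thick)$ is bounded by a universal constant times $genus(\Sigma_\C)$, invoking only Theorem~\ref{TmThTh} and ``pairs of pants of bounded geometry''. This is false without the $SegWidth$ hypothesis~\eqref{eq:swa}: on a closed hyperbolic surface whose systole exceeds $2\sinh^{-1}1$ (so the entire surface is thick), there are embedded simple closed geodesics of arbitrarily large length. What the paper actually does is a packing argument that uses~\eqref{eq:swa} directly: choose a maximal set $x_1,\dots,x_n$ on $\gamma\cap Thick$ with $d_\gamma(x_i,x_j;h_{can})\geq \sinh^{-1}(1)/k$; then~\eqref{eq:swa} makes the balls $B_{\sinh^{-1}(1)/(2k)}(x_i;h_{can})$ pairwise disjoint, and comparing their total area to $Area(\Sigma_\C;h_{can})\asymp genus$ yields $\ell_{h_{can}}(\gamma\cap Thick)\lesssim k\cdot genus$. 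Note in particular the factor of $k$ you are missing: it is this $k$ times the pointwise bound $2k$ that produces the $k^2$ in front of $d_2\,genus$.

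A second, smaller gap: in genus $0$ your bound $I_0\leq 2k\cdot\mathrm{const}$ (really $\lesssim k^2$, by the same packing issue) does not fit the form $k^2 d_1\mu(\Sigma_\C)$ since there is no genus term to absorb it. The paper closes this by observing that the gradient inequality forces $\mu(\Sigma)\geq\delta_1$ whenever $genus(\Sigma_\C)=0$, so the constant can be absorbed into $d_1$.
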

\begin{proof}
Let
\[
Thick(\Sigma_{\C}):= \{x\in\Sigma_{\C}|InjRad(x;h_{can})\geq \sinh^{-1}(1)\},
\]
and
\[
Thin(\Sigma_{\C})=\Sigma_{\C}\backslash Thick(\Sigma_{\C}).
\]
In the following write $\Gamma= genus(\Sigma_{\C})$. Note that when $\Gamma>1$, Theorem~\ref{TmThTh2} implies $Thin(\Sigma_{\C})$ is contained in the union of at most $3\Gamma-3$ clean geodesic cylinders. Cleanness follows from the conjugation invariance of $h_{can}$. When $\Gamma=1,$ $Thin(\Sigma_{\C})$ is either empty or all of $\Sigma_\C.$  So, $Thin(\Sigma_{\C})$ is contained in the union of at most $2$ clean geodesic cylinders. When $\Gamma=0,$ $Thin(\Sigma_\C)$  is always empty. So, we define
\[
N_\Gamma =
\begin{cases}
3\Gamma-3, & \Gamma \geq 2,\\
2, & \Gamma  = 1,\\
0, & \Gamma = 0.
\end{cases}
\]
We calculate
\begin{align}\label{eq:I0thth}
I_0&=\int_{x\in\gamma}\min\{e^{g(x)},{2k}\}d\ell_n(x)\\
&\leq\int_{Thick(\Sigma_{\C})\cap\gamma}\frac{2k}{\sinh^{-1}(1)}d\ell_{h_{can}}+\int_{Thin(\Sigma_{\C})\cap\gamma}\min\{e^{g(x)},2k\}d\ell_n(x)\notag
\end{align}
Let $\chi(\Sigma_\C)$ denote the Euler characteristic and let
\[
M_\Gamma = \max(|\chi(\Sigma_\C)|,1).
\]
We claim that
\begin{equation}\label{eq:gth}
\int_{Thick(\Sigma_{\C})\cap\gamma}\frac{2k}{\sinh^{-1}(1)}d\ell_{h_{can}} \leq \frac{2k^2M_\Gamma}{\pi(\sinh^{-1}(1))^2}.
\end{equation}
Indeed, let $x_1,\ldots,x_n \in \gamma \cap Thick(\Sigma_\C)$ be a collection of points maximal with respect to the condition that
\[
d_\gamma(x_i,x_j;h_{can}) \geq \frac{\sinh^{-1}(1)}{k}, \quad i\neq j.
\]
In particular,
\begin{equation}\label{eq:lhn}
\ell_{h_{can}}(\gamma \cap Thick(\Sigma_\C)) < 2n \frac{\sinh^{-1}(1)}{k}.
\end{equation}
By assumption~\eqref{eq:swa}, we have
\[
B_{\frac{\sinh^{-1}(1)}{k}}(x_i;h_{can}) \cap B_{\frac{\sinh^{-1}(1)}{k}}(x_j;h_{can}) = \emptyset, \quad i \neq j.
\]
Moreover,
\begin{equation*}
Area\left(B_{\frac{\sinh^{-1}(1)}{k}}(x_i;h_{can})\right) \geq \pi \left(\frac{\sinh^{-1}(1)}{k}\right)^2, \quad Area(\Sigma_\C) = M_\Gamma.
\end{equation*}
So,
\begin{equation}\label{eq:nch}
n \leq \frac{k^2 M_\Gamma}{\pi(\sinh^{-1}(1))^2}.
\end{equation}
Combining inequalities~\eqref{eq:lhn} and~\eqref{eq:nch} we obtain
\[
\ell_{h_{can}}(\gamma \cap Thick(\Sigma_\C)) < \frac{k M_\Gamma}{\pi\sinh^{-1}(1)},
\]
which implies inequality~\eqref{eq:gth} as claimed.

Furthermore, we calculate
\begin{align}
&\int_{Thin(\Sigma_{\C})\cap\gamma}\min\{e^{g(x)},2k\}d\ell_n(x)\leq \label{eq:gtn}\\
&\qquad \qquad \leq \int_{Thin(\Sigma_{\C})\cap\gamma}\min\{e^{g(x)},2k\}\frac{d\ell_{h_{can}}(x)}{InjRad(x)}\notag\\
&\qquad \qquad  =  \int_{Thin(\Sigma_{\C})\cap\gamma}\min\{e^{g(x)},2k\}\frac{h_{\theta}(x)}{InjRad(x)}d\ell_{h_{st}}(x)\notag\\
&\qquad \qquad \leq Ck(2k+1)( a_2N_\Gamma a_1\mu(\Sigma_\C))\notag
\end{align}
using Lemmas \ref{lmLNCutoffBounf}, \ref{lmKTame} and \ref{lmConstInjRad} in the last inequality. Combining inequalities~\eqref{eq:I0thth},~\eqref{eq:gth}, and~\eqref{eq:gtn}, we obtain
\begin{align}\label{I0Ineq}
I_0 \leq \frac{2k^2M_\Gamma}{\pi(\sinh^{-1}(1))^2} + Ck(2k+1)( a_2N_\Gamma+a_1\mu(\Sigma_\C)).
\end{align}

Note that when $\Gamma=0$, the first term in the right hand side of \eqref{I0Ineq} can be absorbed in the coefficient $d_1$. This is so because the gradient inequality implies that for any $(\Sigma,\mu)\in\mathcal{M}$ with $genus(\Sigma_{\C})=0$, we have $\mu(\Sigma)\geq\delta_1.$
\end{proof}

\begin{as}\label{as:longhot}
From here until Lemma~\ref{lm:short}, we assume that conditions~\eqref{eq:long} and~\eqref{eq:hot} are satisfied. Thus we may use the constructions and results of Section~\ref{SecThickThinPart}.
\end{as}
The following arguments deal with $I_1$ given the preceding assumption.
\begin{align}
I_1&=\int_{x\in\gamma}\max\{e^{g(x)}-2k,0\}d\ell_n(x)\notag\\
&=\int_{x\in\gamma}\int_{2k}^{\max\{e^{g(x)},2k\}}dyd\ell_n(x)\notag.
\end{align}

Change variables to $y=e^t$. Then, using Fubini's theorem,
\begin{align}\label{eq:I2I3}
I_1&=\int_{\ln2k}^{\infty}\int_{x\in D_t}d\ell_n(x)e^tdt\\
&=\int_{\ln2k}^{\infty}\int_{x\in D_t\cap C}d\ell_n(x)e^tdt+\int_{\ln2k}^{\infty}\int_{x\in D_t\cap A}d\ell_n(x)e^tdt\notag\\
&\leq \int_{\ln2k}^{\infty}\int_{x\in C_t}d\ell_n(x)e^tdt+\int_{\ln2k}^{\infty}\int_{x\in D_t\cap A}d\ell_n(x)e^tdt.\notag
\end{align}
In the last line, denote the first term, giving the contribution of the thick part, by $I_2$, and the second, giving the contribution of the thin part, by $I_3$.

\begin{lm}\label{lmI1}
There is a constant $d_3$ such that $I_2\leq d_3\mu(\Sigma_{\C}).$
\end{lm}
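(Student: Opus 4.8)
The plan is to bound $I_2 = \int_{\ln 2k}^\infty \int_{x \in C_t} d\ell_n(x)\, e^t\, dt$ by relating the mass of $C$ at each level $t$ to a collection of disjoint dense disks, using the thick-part combinatorics already established. Recall $C_t = X_t \setminus S_t$ where the removed set $S_t$ consists of $D$-free segments of $n$-length $> 4e^{-t}$, so the complement of $D$ inside $C_t$ is covered by finitely many short segments; hence $C_t$ is a disjoint union of finitely many $E$-segments, and each such $E$-segment $e$ of $n$-length $\ell_n(e)$ contains at least $\lfloor t\ell_n(e)/8 \rfloor$ or so interior points of $D$ that are $h_n$-separated by more than $2(e^{-t}+e^{-t})$, so by Lemma~\ref{lmDisjDisc} and Lemma~\ref{lmDDisc} the associated balls $B(x,t)$ are disjoint and dense. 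The quantity $\int_{C_t} d\ell_n(x)\, e^t$ is therefore controlled, up to the $O(e^t \cdot e^{-t}) = O(1)$ per component correction terms, by $8\delta_1^{-1}$ times the $\mu$-mass carried by the union of these disks over $C_t$.

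The key step is to organize this fibered count over all $t$ simultaneously without double-counting $\mu$-mass. Here I would exploit the tree structure: by Lemma~\ref{rmThickPt}, every component $c$ of $C_t$ lies below a component $c'$ of $K_{t'}$ with $t' \in [t,t+\ln 3)$ in the same component of $C$, so the thick part is, up to bounded vertical translation, concentrated on $K$; and by Lemma~\ref{lmNumCBound}, $|\pi_0(C)| \leq 10\mu(\Sigma)/\delta_1$. For a single component $P$ of $C$, I would integrate $e^t \ell_n(C_t \cap P)$ over $t \in p_2(P)$ and compare it to a geometric sum: at scale $t$, the $\lfloor t \ell_n(\cdot)/8\rfloor$ disjoint dense disks inside $C_t \cap P$ can be chosen to be disjoint from those at scale $t + \ln 3$ (since the disks shrink like $e^{-t}$ and the horizontal positions are constrained by the tree order, as in the discussion of Figure~\ref{wt}), so summing the geometric series in $t$ over a fixed component gives $\int_{p_2(P)} e^t\ell_n(C_t\cap P)\,dt \leq \mathrm{const}\cdot(\mu(\text{disks in }P) + 1)$, where the $+1$ absorbs the boundary-correction terms. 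Summing over the at most $10\mu(\Sigma)/\delta_1$ components of $C$ and using that the chosen disks over distinct components are $\mu$-disjoint yields $I_2 \leq d_3 \mu(\Sigma_\C)$, after absorbing the $|\pi_0(C)|$-many additive constants into $\mu(\Sigma_\C)$ via the gradient-inequality lower bound $\mu(\Sigma) \geq \delta_1$ already invoked at the end of Lemma~\ref{lmI0} (valid here because, under Assumption~\ref{as:longhot}, condition~\eqref{eq:hot} forces $D$ nonempty).

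The main obstacle I anticipate is the bookkeeping that prevents the geometric sum over levels $t$ within a single thick component from blowing up: one must choose the dense disks at level $t$ so that they do not overlap the disks already used at levels $t-\ln 3, t - 2\ln 3, \dots$, while still capturing a definite fraction of $\ell_n(C_t)$ at each level. This is exactly the issue of assumption~\eqref{EqAssumption} failing in the introduction's sketch — short components $C_t$ contribute an $O(1)$ term rather than a term proportional to their mass — and the resolution is precisely the thick-thin partition: short components of $C_t$ at a given level are few (bounded by $|\pi_0(C)|$ times $1/\ln 3$ per the $T(c) \leq \ln 3$ spacing) and their total contribution is the additive constant that gets absorbed. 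I would carry out the argument by first handling the "tall" components of $C_t$ — those with $\ell_n(C_t \cap P) \geq c\, e^{-t}$ for a suitable constant — via the disjoint-disk count giving a term linear in $\mu$, and then bounding the "short" components' total contribution $\sum e^t \cdot O(e^{-t})$ over the boundedly many relevant $(P,t)$ pairs by a constant, then by $\mu(\Sigma_\C)$.
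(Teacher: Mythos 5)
There is a genuine gap, and it sits exactly at the step you flagged as the main obstacle. You propose to choose the dense disks at level $t$ so that they are disjoint from the disks already placed at level $t+\ln 3$, and you claim this is possible because the disks shrink and the horizontal positions are constrained by the tree order. This is not true in general. Suppose $\ell_n(C_{t}\cap P)$ and $\ell_n(C_{t+\ln 3}\cap P)$ are nearly equal; then the small disks at the upper level blanket essentially the entire horizontal footprint of the lower level, leaving no room for a single lower-level disk that avoids all of them. This is precisely the case in which the introduction's simplified count $n_i = a_{3^i}-\lfloor\tfrac13(a_{3^{i+1}}-1)\rfloor-2$ can go to zero or become negative, i.e.\ the case where assumption~\eqref{EqAssumption} fails, and it happens even for ``tall'' components where both lengths are large. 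The paper does something different: it does \emph{not} require the disks at distinct levels to be disjoint. It takes all $n_i$ disks at every level, defines $m_i$ to be the number of \emph{lower-level} disks first knocked out at level $i$, and proves the ledger bound $m_i\le\tfrac34 n_i$. That inequality is provable because lower-level disks are strictly larger (the paper discretizes with spacing $2\ln 3$ precisely so that each lower-level segment $T_{i'j'k'}$ has $\ell_n$-length $>3$ times that of $T_{ijk}$), so the number of disjoint big intervals that can land above $n_{ij}$ small ones is at most roughly $n_{ij}/3+2\le\tfrac34 n_{ij}$. Debiting downward rather than avoiding upward is what makes the accounting close without any extra assumption.

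Your proposed remedy for the short-component contribution also doesn't hold up. You want to argue that short components of $C_t$ are few, ``bounded by $|\pi_0(C)|$ times $1/\ln 3$ per the $T(c)\le\ln 3$ spacing,'' but the $T(L)<\ln 3$ bound is a property of thin necks in $H$, not of components of $C$ or of level sets $C_t$; a single thick component $P$ of $C$ can contain many exceptional thin necks, one short stretch per exceptional neck, and the paper never bounds the number of exceptional necks. The actual proof needs no separate treatment of short level-set components at all: the inequality $\frac{8n_{ij}}{3^{2i}2k}\ge\frac34\ell_n(c_{ij})$ is established uniformly, because whenever $c_{ij}\subset N$ one has $\ell_n(c_{ij})\le 24e^{-t}$ while the $K$-component $k_{ij}$ produced by Lemma~\ref{rmThickPt} has $\ell_n(k_{ij})\ge24e^{-t_{ij}}$, forcing $n_{ij}\ge 3$. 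So every component, short or long, contributes a term proportional to its $\ell_n$-length, and the final bound $\frac{\mu(\Sigma_\C)}{\delta_1}\ge\frac1{256\ln3}I_2$ is purely multiplicative with no additive constants to absorb. Your overall plan --- discretize, invoke Lemma~\ref{rmThickPt} and the dense-disk machinery, sum over thick components --- is the right scaffolding, but the two claims above are where the proof has to be different.
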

\begin{proof}
Discretizing, we have the bound
\begin{equation}\label{eq:dscr}
I_2\leq(2\ln 3)(2k)\sum_{i=0}^{\infty}3^{2(i+1)}\ell_n\{C_{2i\ln 3 + \ln 2k}\}.
\end{equation}
Write $c_i:=C_{2i\ln 3 +\ln 2k}$ and denote by $c_{ij}$ the components of $c_i$. By Lemma~\ref{rmThickPt}, for each $j$ choose
\[
t_{ij}\in [2i\ln3 +\ln 2k,(2i+1)\ln3 +\ln 2k)
\]
and a component $k_{ij}\subset K_{t_{ij}}$ such that $c_{ij}\leq k_{ij}$. In the case that $c_{ij} \subset K,$ we insist upon taking $t_{ij} = 2i\ln 3 + \ln 2k$ and $k_{ij} = c_{ij}.$ For each $j$ let $T_{ij}\subset k_{ij}$ be a segment of length
\begin{equation}\label{eq:dTij}
\ell_n(T_{ij}) = \frac{8}{e^{t_{ij}}}\left\lfloor\frac{e^{t_{ij}}\ell_n(k_{ij})}{8}\right\rfloor.
\end{equation}
Since $k_{ij} \subset K_{t_{ij}},$ we have
\begin{equation}\label{eq:kij}
\ell_n(k_{ij}) \geq 24e^{-t_{ij}}.
\end{equation}
So,
\begin{equation}\label{eq:Tij}
\ell_n(T_{ij}) \geq \frac{3}{4}\ell_n(k_{ij}).
\end{equation}
Partition $T_{ij}$ into segments $T_{ijk}$ such that
\begin{equation}\label{eq:Tijk}
\ell_n(T_{ijk})=8 e^{-t_{ij}}.
\end{equation}
Let $n_{ij}$ be the number of segments $T_{ijk}$. We claim that
\begin{equation}\label{eq:ncij}
\frac{8n_{ij}}{3^{2i}2k}\geq\frac3{4}\ell_n\left(c_{ij}\right).
\end{equation}
Indeed, suppose first that $c_{ij} = k_{ij}.$ Then $t_{ij} = 3^{2i}2k.$ So, using inequality~\eqref{eq:Tij}, we have
\[
\frac{8n_{ij}}{3^{2i}2k} = 8e^{-t_{ij}}n_{ij} = \ell_n(T_{ij}) \geq \frac{3}{4}\ell_n(k_{ij}) = \frac{3}{4} \ell_n(c_{ij}).
\]
On the other hand, suppose $c_{ij} \neq k_{ij}.$ Then $c_{ij} \subset N,$ so $\ell(c_{ij}) \leq \frac{24}{3^{2i} 2k}.$ But
\[
n_{ij} = \frac{e^{t_{ij}}}{8} \ell_n(T_{ij}) \geq \frac{3}{4} \frac{e^{t_{ij}}}{8}\ell_n(k_{ij}) \geq \frac{3}{4} \frac{24}{8} > 2.
\]
Since $n_{ij}$ is an integer, we have $n_{ij} \geq 3.$ Thus
\[
\frac{8n_{ij}}{3^{2i}2k} \geq \frac{24}{3^{2i}2k} \geq \ell(c_{ij}).
\]
Inequality~\eqref{eq:ncij} follows. Setting
\begin{equation}\label{eq:dni}
n_i=\sum n_{ij},
\end{equation}
inequality~\eqref{eq:ncij} implies
\begin{equation}\label{eq:ni}
\frac{8n_{i}}{3^{2i}2k}\geq\frac3{4}\ell_n\left(C_{2i\ln3 + \ln 2k}\right).
\end{equation}
To each segment $T_{ijk},$ assign a disk $B_{ijk}\subset \Sigma_{\C}$ as in Remark \ref{remEnPart}. So, if $k_1\neq k_2$, then $B_{ijk_1}\cap B_{ijk_2}=\emptyset$. Let $m_i$ denote the number of disks $B_{i'j'k'}$ with $i'<i$ that intersect one of the disks $B_{ijk}$ but do not intersect any disk $B_{i''j''k''}$ for $i'<i''<i$. We claim that
\begin{equation}\label{eq:mi}
m_i\leq \frac{3}{4}n_i.
\end{equation}
Indeed, let $m_i'$ denote the number of intervals $T_{i'j'k'}$ with $i' < i$ such that $p_1(T_{i'j'k'}) \cap p_1(T_{ijk}) \neq \emptyset$ for some $j,k,$ and $p_1(T_{i'j'k'}) \cap p_1(T_{i''j''k''}) = \emptyset$ for $i' < i'' < i.$ By Remark~\ref{remEnPart} we have $m_i \leq m_i'.$ So inequality~\eqref{eq:mi} will follow if we show
\begin{equation}\label{eq:mi'}
m_i' \leq \frac{3}{4}n_i.
\end{equation}
Let $m_{ij}'$ denote the number of intervals $T_{i'j'k'}$ with $i' < i$ such that $p_1(T_{i'j'k'}) \cap p_1(T_{ijk}) \neq \emptyset$ for some $k$ and
\begin{equation}\label{eq:i'i''}
p_1(T_{i'j'k'}) \cap p_1(T_{i''j''k''}) = \emptyset
\end{equation}
for $i' < i'' < i.$ Then
\[
m_i' \leq \sum_j m_{ij}'.
\]
So, by equation~\eqref{eq:dni} inequalities~\eqref{eq:mi'} and~\eqref{eq:mi} will follow if we prove
\begin{equation}\label{eq:mij}
m_{ij}' \leq \frac{3}{4}n_{ij}
\end{equation}
for all $j.$
To see this, choose integers $a_1$ and $0\leq a_2<3$ such that $n_{ij}=3a_1+a_2$. By equations~\eqref{eq:dTij}, ~\eqref{eq:kij} and~\eqref{eq:Tijk}, we have $a_1 \geq 1.$ Enumerate the intervals $T_{i'j'k'}$ by $T_{i_lj_lk_l}$ for $l =1,\ldots,m_{ij}'.$
By~\eqref{eq:i'i''} we conclude the intervals $p_1(T_{i_lj_lk_l})$ are pairwise disjoint.
Since $i_l<i$ for all $l,$ by equation~\eqref{eq:Tijk} we have $\ell_n(T_{i_lj_lk_l})\geq 3\ell_n(T_{ijk})$. Therefore it is easy to verify that  $m_{ij}'$ is at most $a_1+2$ when $a_2\neq 0$, and at most $a_1+1$ when $a_2=0$. But in the first case,
\[
\frac{m_{ij}'}{n_{ij}} = \frac{a_1+2}{3a_1+a_2}\leq\frac{3}{4},
\]
and in the second case,
\[
\frac{m_{ij}'}{n_{ij}} = \frac{a_1+1}{3a_1}\leq\frac{2}{3}\leq \frac{3}{4}.
\]
Inequalities~\eqref{eq:mij},~\eqref{eq:mi'} and~\eqref{eq:mi} follow.

Combining inequalities~\eqref{eq:mi},~\eqref{eq:ni} and~\eqref{eq:dscr}, we obtain
\begin{align}
\frac{\mu(\Sigma_{\C})}{\delta_1}&\geq\sum_{i=0}^{\infty}n_i-m_i\\
&\geq\sum_{i=0}^{\infty}\frac1{4}n_i\notag\\
&\geq\frac{2k}{4\cdot8\cdot4}\sum_{i=0}^{\infty}3^{2i+1}\ell_n(C_{2i\ln3+\ln 2k})\notag\\
&\geq\frac1{256\ln3}I_2\notag.
\end{align}
\end{proof}

We now deal with the thin part.
\begin{lm}\label{lmI2}
There is a constant $d_4$ such that $I_3\leq d_4\mu(\Sigma_{\C})$.
\end{lm}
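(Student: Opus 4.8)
The plan is to estimate $I_3$ by splitting the thin part $A=\bigcup_{L\in G}L$ into the individual non-exceptional thin necks. By Fubini, setting
\[
I_3(L)=\int_{\ln 2k}^{\infty}\int_{\{x\,:\,(x,t)\in L,\ t\le g(x)\}}e^t\,d\ell_n(x)\,dt,
\]
we have $I_3=\sum_{L\in G}I_3(L)$, and for fixed $x$ the inner $t$-integral is at most $e^{\min(g(x),T_f(L))}$: indeed $(x,t)\in L$ forces $t\le T_f(L)$ when $x\in p_1(L_{T_f(L)})$, while by Lemma~\ref{lmThinNeck}\ref{Tn3} we have $(x,f_{\partial E}(x))\in L$ with $g(x)\le f_{\partial E}(x)\le T_f(L)$ otherwise. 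I would then split $p_1(L)$ into the spine $p_1(L_{T_f(L)})$ and its complement, the tail. On the spine the contribution is at most $e^{T_f(L)}\ell_n(p_1(L_{T_f(L)}))\le 24$, since $L\subset N$ and $L_{T_f(L)}$ is a single $E$-segment. On the tail, Lemma~\ref{lmThinNeck}\ref{Tn4} bounds $e^{g(x)}\le 24/d(x,p_1(L_{T_f(L)});h_n)$, which together with $e^{g(x)}\le e^{T_f(L)}$ and $\ell_n(p_1(L))\le 24e^{-T_i(L)}$ gives a tail contribution of at most $C_0(1+T(L))$ for an absolute $C_0$. In particular, whenever $T(L)$ lies below a fixed threshold $T_0$ depending only on $c_2$, both the spine and the tail contributions of $L$ are bounded by an absolute constant.

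Thus $\sum_{L\in G,\ T(L)\le T_0}I_3(L)\lesssim|G|$, which by Lemma~\ref{lmNumLnBound} is $\lesssim\mu(\Sigma_\C)/\delta_1$, and it remains to bound $\sum_{L\in G,\ T(L)>T_0}I_3(L)$. For such $L$ the naive tail bound is too weak: $\int 24/d\,d\ell_n$ diverges logarithmically near the spine, reflecting the fact that a tall thin neck need not carry much energy. Here I would bring in the cylinder machinery. For each $L$ with $T(L)>T_0$ I would exhibit a clean doubly connected domain $I_L\subset\Sigma_\C$ housing the tail of $L$, with $\mu(I_L)<\delta_2$ and $Mod(I_L)>2c_2$; such $I_L$ is produced either from the nested dense disks $B(x,t)$, $(x,t)\in L$, along $L$ (the genus-zero case, and the case of a pinch of the graph without an accompanying short geodesic of $h_{can}$), or from the collar of Theorem~\ref{TmThTh} when $L$ follows $\gamma$ through a thin collar; in either case one uses Lemma~\ref{lmConstInjRad} and an argument as in Lemma~\ref{lmKTame} to check that $\gamma$ is $Ck$-tame in $I_L$. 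The spine, being a segment of $n$-length at most $24e^{-T_f(L)}$, sits near an end of $I_L$, so by Lemma~\ref{ExpCylDEst} the density $d\mu/d\nu_{h_{st}}$ along $\gamma\cap I_L$ decays exponentially in the cylindrical coordinate measured from that end. Applying Lemma~\ref{lmLNCutoffBounf} with $k_1=Ck$ and a cutoff $k_2$ set at the scale of the spine controls $\int_{\gamma\cap I_L}\min(d\ell_\mu/d\ell_{h_{st}},k_2)\,d\ell_{h_{st}}$ by $k_1(k_2+1)(a_1\mu(I_L)+a_2)$, while the region where $d\ell_\mu/d\ell_{h_{st}}>k_2$ is again a near-spine region and is absorbed into the spine estimate. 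The net effect is $I_3(L)\lesssim\sqrt{\mu(I_L)}$, up to $k$-dependent factors designed to cancel.

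To conclude I would sum over $L$. Since distinct thin necks lying in a common $c\in\mathcal T_E$ have disjoint $t$-ranges (Lemma~\ref{lmTNCI}\ref{lmTNCI1}) and necks in distinct classes meet disjoint parts of $E$, the domains $I_L$ can be chosen pairwise disjoint, or at least with uniformly bounded overlap, so $\sum_L\mu(I_L)\lesssim\mu(\Sigma_\C)$; combining this with $|G|\lesssim\mu(\Sigma_\C)/\delta_1$ via Cauchy--Schwarz on the $\sqrt{\mu(I_L)}$ terms yields $I_3\le d_4\mu(\Sigma_\C)$.

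The step I expect to be the main obstacle is the construction of the housing cylinders $I_L$ with all of the required properties simultaneously --- above all, establishing $Ck$-tameness of $\gamma$ inside $I_L$ when $I_L$ is not literally the collar of a short closed geodesic, and arranging that the overlaps of the $I_L$ are genuinely controlled so that $\sum_L\mu(I_L)=O(\mu(\Sigma_\C))$. A secondary nuisance is the bookkeeping needed to make the cutoff remainder and the various factors of $k$ cancel against the spine terms, so that $d_4$ comes out independent of $k$.
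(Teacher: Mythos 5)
Your high-level strategy matches the paper's: split $G$ into short thin necks ($T(L)$ below a threshold, bounded by counting via Lemma~\ref{lmNumLnBound}) and tall thin necks, and for the latter house each $L$ in a clean cylinder so that Lemma~\ref{lmLNCutoffBounf} applies. You also correctly identify the main obstacle — producing the housing cylinders $I_L$ with the required tameness and disjointness. But it is precisely here that your proposed construction diverges from the paper's, and the divergence costs you.

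The paper does not build $I_L$ from collars of Theorem~\ref{TmThTh} nor from unions of dense disks. Instead, for each tall $v$, it sets $x_v$ to be the midpoint of the spine $v_{T_f(v)}$ and defines $A_v = B^2_v \setminus B^1_v$, where $B^1_v$ and $B^2_v$ are metric balls in $(\Sigma_\C, h_{can})$ centered at $x_v$ of radii roughly $r(x_v)e^{-T_f(v)}$ and $r(x_v)e^{-T_i(v)}$. The crucial point (Claim~\ref{cmr1r2estimates}) is that the outer radius is less than $\frac{1}{2}SegWidth(\gamma,x_v;h_{can})$, so $\gamma\cap B^2_v$ is a single radial geodesic through $x_v$. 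A radial geodesic is trivially $1$-tame in $A_v$; there is no $Ck$-tameness, no appeal to Lemma~\ref{lmKTame}, and consequently no $k$-dependent factors that need to cancel. Your expectation of $Ck$-tameness followed by cancellation is the wrong route: Lemma~\ref{lmKTame} is used for the collar cylinders in the $I_0$ bound (Lemma~\ref{lmI0}), where the $k^2$ lands legitimately; here one avoids $k$ entirely by the choice of $A_v$.

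Two further corrections. First, your bound $I_3(L)\lesssim\sqrt{\mu(I_L)}$ and the subsequent Cauchy--Schwarz are unnecessary and do not reflect Lemma~\ref{lmLNCutoffBounf}, which is \emph{linear} in $\mu(I)$: $\int f\,d\ell_{h_{st}}\le k_1(k_2+1)(a_1\mu(I)+a_2)$. The paper simply sums the linear bounds. Second, "bounded overlap" of the $I_L$ is not an adequate fallback and is hard to make precise; the paper proves genuine pairwise disjointness of the $A_v$ (Claim~\ref{cmAvDisj}), using Corollary~\ref{cyEitherOrSat2} and the segment-width control, and this is what licenses $\sum_v\mu(A_v)\le\mu(\Sigma_\C)$. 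Also note that Lemma~\ref{lmLNCutoffBounf} does not require $\mu(I)<\delta_2$ — the proof internally subdivides the cylinder by measure — so your requirement $\mu(I_L)<\delta_2$ is extraneous. The remaining pieces of your outline (spine/tail split, use of Lemma~\ref{lmThinNeck}\ref{Tn4} for the cutoff $k_2$, the $\min$-integrand, the $|G|$-count) agree with the paper.
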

\begin{proof}
Recall that $G$ denotes the collection of all non-exceptional thin necks. Let $G_1=\{v\in G|T(v)\leq\ln 48\}$. The contribution of the thin part is
\begin{align}
I_3&=\int_{\ln{2k}}^{\infty}\int_{x\in A_t\cap D}d\ell_n(x)e^tdt\notag\\
&=\sum_{v\in G}\int_{T_i(v)}^{T_f(v)}\int_{x\in v_t\cap D}d\ell_n(x)e^tdt\notag\\
&\leq \sum_{v\in G_1}\int_{T_i(v)}^{T_f(v)}24dt+\sum_{v\in G\backslash G_1}\int_{T_i(v)}^{T_f(v)}\int_{x\in v_t\cap D}d\ell_n(x)e^tdt\notag\\
&\leq 24\ln48\frac{12\mu(\Sigma_{\C})}{\delta_1}+ \sum_{v\in G\backslash G_1}\int_{T_i(v)}^{T_f(v)}\int_{x\in v_t\cap D}d\ell_n(x)e^tdt.\notag
\end{align}
The last transition relies on Lemma~\ref{lmNumLnBound}.

To each $v\in G\backslash G_1$ we associate an annulus as follows. Let $x_v$ denote the midpoint of $v_{T_f(v)}$ with respect to the metric $h_{can}$ and let
\[
r_v=\frac1{2}\ell_{h_{can}\left(v_{T_f(v)}\right)}.
\]
Let
\begin{gather*}
r^1_v=r_v+2r(x_v)e^{-T_f(v)},\\
r^2_v=r(x_v)e^{-T_i(v)},\\
B^1_{v}:=B_{r^1_v}(x_v;h_{can})\subset\Sigma_{\C}, \quad B^2_v=B_{r^2_v}(x_v;h_{can})\subset\Sigma_{\C},
\end{gather*}
and $A_v:=B_v^2\backslash B_v^1.$

The following claims justify the definitions of the preceding paragraph.

\begin{cm}\label{cmr1r2estimates}
$r^1_v< r^2_v\leq \frac{1}{2k}r(x_v)<\frac1{2}SegWidth(\gamma,x_v;h_{can})$.
\end{cm}
\begin{proof}
The inequality $r^2_v\leq \frac{1}{2k}r(x_v)$ follows from the fact that $T_i(v)\geq\ln 2k$. The inequality $\frac{1}{2k}r(x_v)<\frac1{2}SegWidth(\gamma,x_v;h_{can})$ is a special case of assumption~\eqref{eq:swa}. By inequality~\eqref{NormEstimate} and the definition of $N$,
\[
r_v\leq \frac3{4}\ell_n(v_{T_f(v)})r(x_v)\leq\frac3{4}\cdot 24 e^{-T_f(v)}r(x_v).
\]
Therefore, $r^1_v\leq20 r(x_v)e^{-T_f(v)}$. Since
\[
T_f(v)-T_i(v)=T(v)>\ln20,
\]
the inequality $r^1_v< r^2_v$ follows.
\end{proof}

\begin{cm}\label{cmSegWidthBound}
Let $t\geq T_i(v)+\ln 48$. Then $p_1(v_t)\subset B^2_v$.
\end{cm}
\begin{proof}
This is immediate from  inequality~\eqref{NormEstimate} and the definition of $N$.
\end{proof}
\begin{cm}\label{cmAvDisj}
For any $v_1,v_2\in G\setminus G_1,$ we have
\[
v_1\neq v_2\Rightarrow A_{v_1}\cap A_{v_2}=\emptyset.
\]
\end{cm}
\begin{proof}
Consider first the case that $p_1(v_1) \cap p_1(v_2) \neq \emptyset.$ By Lemmas~\ref{lmEitherOrSat} and~\ref{lmThinNeck}\ref{Tn2}, we may assume without loss of generality that
\[
(v_1)_{T_f(v_1)} \leq v_2.
\]
So, we have
\begin{equation}\label{eq:x1x2rv1}
d(x_{v_1},x_{v_2};h_{can})\leq r_{v_1}.
\end{equation}
Thus by Claim~\ref{cmr1r2estimates} we obtain
\[
d(x_{v_1},x_{v_2};h_{can}) < r(x_{v_1}).
\]
So, it follows from Lemma~\ref{eq:drl1} that
\[
r(x_{v_2})\leq 2r(x_{v_1}).
\]
Thus we have
\[
r^2_{v_1}=r(x_{v_2})e^{-T_i(v_2)}\leq2r(x_{v_1})e^{-T_f(v_1)}.
\]
The preceding inequality and inequality~\eqref{eq:x1x2rv1} together imply
\[
d(x_{v_1},x_{v_2};h_{can})+r^2_{v_1}\leq r_{v_1}+2r(x_{v_1})e^{-T_f(v_1)}.
\]
It follows that $B^2_{v_2}\subset B^1_{v_1}$ giving the claim.

On the other hand, if $p_1(v_1) \cap p_2(v_2) = \emptyset,$  Corollary \ref{cyEitherOrSat2} implies
\[
d(x_{v_1},x_{v_2};h_n)\geq 4e^{-\min\{T_i(v_1),T_i(v_2)\}}.
\]
But by inequality~\eqref{NormEstimate} and the definition of $B^2_{v_j}$ for $j=1,2$, we have the inequality
\[
d(x_{v_j},\partial B^2_{v_j}\cap\gamma;h_n)\leq 2e^{-T_i(v_j)}.
\]
Therefore, $B^2_{v_1}\cap B^2_{v_2}\cap\gamma=\emptyset$. By definition of $SegWidth$ and by Claim~\ref{cmr1r2estimates}, we conclude $B^2_{v_1}\cap B^2_{v_2}=\emptyset$.

\end{proof}
\begin{cm}\label{CmAvClean}
$A_v$ is clean.
\end{cm}
\begin{proof}
Keeping in mind Remark \ref{remConjInv}, by construction $A_{\overline{v}}=\overline{A_v}$. In particular if $v=\overline{v}$, then $A_v$ is conjugation invariant. If $v\neq \overline{v},$ then by Claim~\ref{cmAvDisj} we have $A_{v}\cap\overline{A_v}=\emptyset$.
\end{proof}

\begin{cm}
\begin{equation}\label{eq:bv1}
\ell_n(B^1_v \cap \gamma)\leq 32e^{-T_f(v)}.
\end{equation}
\end{cm}
\begin{proof}
We have
\[
\ell_n(B^1_v \cap \gamma)=\ell_n(v_{T_f(v)})+\ell_n(B^1_v \cap \gamma\setminus p_1(v_{T_f(v)})).
\]
Denote the first term by $a$ and the second term by $b$. The definition of $N$ implies that $a\leq 24e^{-T_f(v)}$. Since $T(v) \geq \ln 48,$ we have $2r(x_v)e^{-T_f(x_v)} \leq \frac{1}{2}r(x_v).$ So, we use inequality~\eqref{NormEstimate} to verify that $b\leq 8e^{-T_f(v)}$.
\end{proof}

We return to estimating $I_3.$ By the inclusion of Claim~\ref{cmSegWidthBound}, for $v\in G\backslash G_1$ we obtain
\begin{align}
&\int_{T_i(v)}^{T_f(v)}\int_{x\in  p_1(v_t\cap D)}d\ell_n(x)e^tdt \leq \notag\\
&\qquad \qquad \qquad\leq 24\ln48 +\int_{T_i(v)+\ln48}^{T_f(v)}\int_{x\in  p_1(v_t\cap D)}d\ell_n(x)e^tdt \notag\\
&\qquad \qquad \qquad\leq 24\ln48 +\int_0^{T_f(v)}\int_{x\in B^2_v\cap  p_1(v_t\cap D)}d\ell_n(x)e^tdt\notag.
\end{align}
By equation~\eqref{eq:bv1} we have
\begin{align}
&\int_0^{T_f(v)}\int_{x\in B^2_v\cap  p_1(v_t\cap D)}d\ell_n(x)e^tdt \leq\notag\\
&\qquad \qquad \qquad\leq \int_{x\in A_v\cap\gamma}e^{g(x)}d\ell_n(x)+\int_{x\in B_v^1}e^{T_f(v)}d\ell_n(x)\notag\\
&\qquad\qquad\qquad \leq \int_{x\in A_v\cap\gamma}e^{g(x)}d\ell_n(x)+32\notag.
\end{align}

We bound the integral on the domain $A_v\cap\gamma$ as follows. By Lemma \ref{lmThinNeck}\ref{Tn4}, we have
\[
\frac{d\ell_{\mu}}{d\ell_n}(x)=e^{g(x)}\leq \frac{24}{d(x,x_v;h_n)},
\]
for any $x\in A_v\cap\gamma$. So, by inequality~\eqref{NormEstimate} we deduce that
\begin{equation}\label{eq:mucan}
r(x_v)\frac{d\ell_{\mu}}{d\ell_{h_{can}}}(x)\leq\frac{36r(x_v)}{d(x,x_v;h_{can})}.
\end{equation}

In polar coordinates on $B_v^2,$ the metric $h_{can}$ is given by formulas~\eqref{eq:pch} and~\eqref{eq:forhth}, and
$h_{can}|_{A_v} = h_\theta^2 h_{st}.$ So, since by Claim \ref{cmr1r2estimates} we have $r_v^2 \leq 1,$ it follows that
\begin{equation}\label{eq:canst}
\frac{d\ell_{h_{can}}}{d\ell_{h_{st}}}(x) = h_{\theta}(x)\leq 3 d(x,x_v;h_{can}), \qquad x \in A_v.
\end{equation}
Using the chain rule to combine inequalities~\eqref{eq:mucan} and~\eqref{eq:canst}, we obtain
\[
\frac{d\ell_{\mu}}{d\ell_{h_{st}}}(x)\leq 108
\]
for $x\in A_v\cap\gamma$. On the other hand, we have
\[
\int_{x\in A_v}e^{g(x)}d\ell_n(x)=\int_{x\in A_v}\frac{d\ell_{\mu}}{d\ell_{h_{st}}}d\ell_{h_{st}}.
\]

It follows from Claim \ref{cmr1r2estimates} that $\gamma \cap B_v^2$ is a radial geodesic, so $\gamma \cap A_v$ is $1$-tame. Thus by Claim \ref{CmAvClean} we may apply Lemma \ref{lmLNCutoffBounf} with $k_1=1$ and $k_2=108$ to deduce the bound
\[
\int_{x\in A_v}e^{g(x)}d\ell_n(x)\leq 108(a_1\mu(A_v)+a_2).
\]

Collecting the terms, using Claim~\ref{cmAvDisj}, and invoking Lemma \ref{lmNumLnBound} for the constant terms, we obtain
\begin{align}
&\sum_{v\in G\backslash G_1}\int_{t_0(v)}^{t_1(v)}\int_{x\in v_t\cap D}d\ell_n(x)e^tdt\leq \\
& \qquad \qquad \qquad \leq \left(\frac{12}{\delta_1}(24\ln48+108a_2+32)+108a_1\right)\mu(\Sigma_\C),\notag
 \end{align}
 which completes the proof.
\end{proof}

\begin{lm}\label{lm:short}
Let $\beta \subset \gamma$ be the union of the connected components $\beta_i$ of $\gamma$ such that
\begin{equation}\label{eq:short}
\ell_n(\beta_i) < 4 e^{-\max_{x \in \beta_i}g(x)},
\end{equation}
and
\begin{equation}\label{eq:warm}
\ln 2k \leq \max_{x \in \beta_i} g(x).
\end{equation}
We have
\[
\ell_\mu(\beta) \leq 4\frac{\mu(\Sigma_\C)}{\delta_1}.
\]
\end{lm}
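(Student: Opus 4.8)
The plan is to handle each short component $\beta_i$ individually and show it carries at most $O(1)$ units of $\ell_\mu$-length, more precisely that $\ell_\mu(\beta_i)$ is controlled by a single dense disk's worth of measure. Fix such a component $\beta_i$, let $x_i \in \beta_i$ be a point where $g$ attains its maximum $M_i := \max_{x\in\beta_i}g(x)$, and set $t_i = M_i$. Note that by~\eqref{eq:warm} we have $t_i \geq \ln 2k$, so $(x_i,t_i) \in D$ in the sense of Definition~\ref{dfDB}, and hence by Lemma~\ref{lmDDisc} the disk $B(x_i,t_i) = B_{e^{-t_i}r(x_i)}(x_i;\Sigma,h_{can})$ satisfies $\mu(B(x_i,t_i)) \geq \delta_1$. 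The key point is that $\beta_i$ is so short, by~\eqref{eq:short}, that its entire image under $u$ fits inside a region comparable to this one dense disk, so its $\ell_\mu$-length is bounded by $c'_1 \delta_1 = 1$ up to a universal constant; I will need to make this precise.

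First I would estimate $\ell_\mu(\beta_i)$ directly. Writing $\ell_\mu(\beta_i) = \int_{\beta_i} e^{g(x)}\,d\ell_n(x)$ and using $e^{g(x)} \leq e^{M_i}$ together with $\ell_n(\beta_i) < 4e^{-M_i}$ from~\eqref{eq:short}, we get the crude bound $\ell_\mu(\beta_i) < 4$. This already looks like the right order of magnitude, but summing over all short components gives a bound proportional to the number of short components rather than to $\mu(\Sigma_\C)$, so it is not yet enough. To upgrade it, I would show that distinct short components give rise to \emph{disjoint} dense disks $B(x_i,t_i)$, whence $\sum_i \delta_1 \leq \sum_i \mu(B(x_i,t_i)) \leq \mu(\Sigma_\C)$, so the number of short components is at most $\mu(\Sigma_\C)/\delta_1$, and combining with the crude bound $\ell_\mu(\beta_i) < 4$ yields $\ell_\mu(\beta) < 4\mu(\Sigma_\C)/\delta_1$ as desired. (A small bookkeeping point: one should check the constant works out to $\leq$, not just $<$; the statement allows $\leq$, so this is harmless, and in fact $\ell_\mu(\beta_i)<4$ is strict.)

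The main obstacle is establishing the disjointness of the disks $B(x_i,t_i)$ for distinct short components. Here I would argue as follows: since $\beta_i$ and $\beta_j$ are \emph{distinct connected components} of $\gamma$, and $\gamma$ is an embedded geodesic, the sets $p_1(\beta_i)$ and $p_1(\beta_j)$ are disjoint closed subsets of $\gamma$; moreover, the hypothesis~\eqref{eq:swa} on $SegWidth$ gives, as in the proof of Lemma~\ref{lmDisjDisc}, that the $h_{can}$-balls $B(x_i,t_i)$ meet $\gamma$ only in the $h_{can_\gamma}$-ball of the same radius, which lies within $\beta_i$ (using $t_i \geq \ln 2k$ so that $2e^{-t_i}r(x_i) \leq \frac{1}{k}r(x_i) \leq SegWidth(\gamma,x_i;h_{can})$, together with the shortness~\eqref{eq:short} which bounds the $h_n$-radius of $\beta_i$ so the ball cannot ``wrap around''). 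Hence $B(x_i,t_i)\cap\gamma \subset \beta_i$ and similarly for $j$, so the two disks meet $\gamma$ in disjoint sets; since each disk is centered on $\gamma$ and has radius less than half the segment width, the argument of Lemma~\ref{lmDisjDisc} forces $B(x_i,t_i)\cap B(x_j,t_j)=\emptyset$. Once this is in hand, the estimate assembles immediately, and I would also remark that $\beta$, being a union of connected components of the conjugation-invariant $\gamma$, is itself conjugation invariant, so the measure count on $\Sigma_\C$ is legitimate.
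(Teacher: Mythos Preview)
Your argument is correct and matches the paper's proof almost verbatim: bound $\ell_\mu(\beta_i)<4$ via~\eqref{eq:short}, place a dense disk $B(x_i,t_i)$ at the maximum of $g|_{\beta_i}$ using~\eqref{eq:warm} and Lemma~\ref{lmDDisc}, and then count. The only cosmetic difference is the disjointness step: the paper simply invokes Lemma~\ref{lmDisjDisc}, whose hypothesis $d_\gamma(x_i,x_j;h_n)>2(e^{-t_i}+e^{-t_j})$ is trivially satisfied because $x_i$ and $x_j$ lie in distinct connected components of $\gamma$ (so $d_\gamma=\infty$); your longer $SegWidth$ discussion reaches the same conclusion but the ``wrap around'' concern is unnecessary, since all that is used is that the $\gamma$-ball lies in the component of its center.
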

\begin{proof}
Using inequality~\eqref{eq:short} we estimate
\[
\ell_\mu(\beta_i) \leq \ell_n(\beta_i) \max_{x \in \beta_i} \frac{d\ell_\mu}{d\ell_n}(x) \leq 4.
\]
Choose $x_i \in \beta_i$ such that
\[
\frac{d\ell_\mu}{d\ell_n}(x_i) = \max_{x \in \beta_i} \frac{d\ell_\mu}{d\ell_n}(x).
\]
Set $t_i = \frac{d\ell_\mu}{d\ell_n}(x_i).$ By inequality~
\eqref{eq:warm}, we have $(x_i,t_i) \in D.$ So, Lemma~\ref{lmDDisc} implies that $\mu(B(x_i,t_i)) \geq \delta_1,$ and Lemma~\ref{lmDisjDisc} implies the disks $B(x_i,t_i)$ are pairwise disjoint implying the claim.
\end{proof}

\begin{proof}[\textbf{Proof of Theorem \ref{TmApLenBound}}]
By equation~\eqref{eq:I0I1}, it suffices to bound $I_0$ and $I_1.$ Lemma~\ref{lmI0} takes care of $I_0$ unconditionally. The components $\gamma_i \subset \gamma$ that violate condition~\eqref{eq:hot} contribute trivially to $I_1,$ so we assume without loss of generality that condition~\eqref{eq:hot} does hold. Then Lemma~\ref{lm:short} allows us to disregard the contribution to $I_1$ of components $\gamma_i \subset \gamma$ that violate condition~\eqref{eq:long}. So, without loss of generality, we impose Assumption~\ref{as:longhot}.  Then equation~\eqref{eq:I2I3}, Lemma~\ref{lmI1} and Lemma~\ref{lmI2} bound $I_1$ implying Theorem~\ref{TmApLenBound}.
\end{proof}

\section{Applications}\label{SecApp}

We now apply Theorem \ref{TmApLenBound} to prove the theorems stated in the introduction.
\begin{lm}\label{lmDiscRef}
Let $\Sigma$ be a closed Riemann surface with an isometric involution $\psi$. Suppose $p\in\Sigma$ is fixed under $\psi$. Then for any $r>0$, $B_r(p;h_{can})$ is $\psi$ invariant.
\end{lm}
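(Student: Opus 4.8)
The plan is to use only the defining property of $h_{can}$: on a closed surface it is the \emph{unique} metric in its conformal class of constant curvature (and, in the genus zero case, of unit total volume). The first step is to check that $\psi$ is an isometry of $(\Sigma,h_{can})$. In the situation of interest $\psi$ is the conjugation of Definition~\ref{dfCplxDbl}, which is anti-holomorphic, so $d\psi\circ j=-j\circ d\psi$; consequently $\psi$ preserves the underlying conformal structure (angles are insensitive to the sign of $j$), and $\psi^{*}h_{can}$ is again a metric of the same constant curvature and the same total volume. By the uniqueness characterization of $h_{can}$ we conclude $\psi^{*}h_{can}=h_{can}$. (If one simply takes the hypothesis that $\psi$ is isometric at face value, this step is empty.)

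Given that $\psi$ is an isometry fixing $p$, the conclusion is immediate. For any $q\in\Sigma$,
\[
d_{h_{can}}(p,\psi(q))=d_{h_{can}}(\psi(p),\psi(q))=d_{h_{can}}(p,q),
\]
so $q\in B_{r}(p;h_{can})$ if and only if $\psi(q)\in B_{r}(p;h_{can})$; that is, $\psi\bigl(B_{r}(p;h_{can})\bigr)=B_{r}(p;h_{can})$.

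The only substantive point is the verification that the relevant involution is isometric for $h_{can}$, which rests on the existence and uniqueness of the canonical metric; this is classical (uniformization, together with the Gauss--Bonnet normalization when $genus(\Sigma)=0$), so I do not anticipate a genuine obstacle here. The remainder is a one-line distance computation.
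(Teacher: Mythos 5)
Your proof is correct and, at its core, identical to the paper's: both reduce the claim to the one-line distance computation $d(p,\psi(q))=d(\psi(p),\psi(q))=d(p,q)$. The extra paragraph justifying that $\psi$ is an $h_{can}$-isometry is unnecessary here since the lemma already hypothesizes that $\psi$ is isometric, as you yourself note.
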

\begin{proof}
For any $q\in\Sigma$ we have
\[
d(p,q;h_{can})=d(\psi(p),\psi(q);h_{can})=d(p,\psi(q);h_{can}).
\]
In particular, $q\in B_r(p;h_{can})$ if and only if $\psi(q)\in B_r(p;h_{can})$.
\end{proof}

\begin{lm}\label{lmSegWid}
Let $\Sigma$ be a closed Riemann surface with an isometric involution $\psi$. Let $\gamma$ be either a minimal geodesic connecting two points or an embedded geodesic fixed under an isometric involution $\psi$. Then for any $p\in\gamma,$
\[
SegWidth(\gamma,p;h_{can})\geq InjRad(\Sigma;h_{can},p).
\]
\end{lm}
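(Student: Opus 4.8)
The plan is to show, in both cases, that for every $q\in\gamma$ with $d_\Sigma(p,q;h_{can})<InjRad(\Sigma;h_{can},p)$ the unique $h_{can}$-minimizing geodesic $\sigma$ of $\Sigma$ from $p$ to $q$ is contained in $\gamma$, whence $d_\gamma(p,q;h_{can})\le\ell(\sigma)=d_\Sigma(p,q;h_{can})$. Since the reverse inequality $d_\gamma\ge d_\Sigma$ is automatic, this gives $B_r(p;(\Sigma,h_{can}))\cap\gamma=B_r(p;(\gamma,h_{can}|_\gamma))$ for all $r<InjRad(\Sigma;h_{can},p)$, which is precisely the asserted inequality between $SegWidth(\gamma,p;h_{can})$ and $InjRad(\Sigma;h_{can},p)$. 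The only analytic inputs are the standard facts that a compact manifold is complete, so any two of its points are joined by a minimizing geodesic, and that when $d_\Sigma(p,q)<InjRad(\Sigma;h_{can},p)$ this minimizing geodesic is unique, $\exp_p$ being a diffeomorphism on the corresponding ball of $T_p\Sigma$.

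In the first case $\gamma$ is a globally minimizing geodesic between its two endpoints. I would first observe that every subarc of $\gamma$ is again minimizing in $\Sigma$: if the subarc from $p$ to $q$ were not, replacing it by a strictly shorter path would produce a path between the endpoints of $\gamma$ shorter than $\gamma$, contradicting the minimality of $\gamma$. Hence the subarc of $\gamma$ from $p$ to $q$ is itself a minimizing geodesic of length $d_\Sigma(p,q;h_{can})<InjRad(\Sigma;h_{can},p)$, so by uniqueness it coincides with $\sigma$; in particular $\sigma\subset\gamma$, and in fact $d_\gamma(p,q;h_{can})=d_\Sigma(p,q;h_{can})$ for all $q\in\gamma$.

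In the second case $\gamma$ is an embedded closed geodesic pointwise fixed by the isometric involution $\psi$, hence a connected component of $Fix(\psi)$ (the fixed-point set of an isometric involution of a closed surface is a disjoint union of points and closed geodesics). Here I would use that $\psi$ is an isometry fixing both $p$ and $q$: then $\psi\circ\sigma$ is again a minimizing geodesic of $\Sigma$ from $p$ to $q$, of the same length $d_\Sigma(p,q;h_{can})<InjRad(\Sigma;h_{can},p)$, so uniqueness forces $\psi\circ\sigma=\sigma$. Thus $\sigma$ is a path in $Fix(\psi)$ emanating from $p\in\gamma$, and since the connected component of $Fix(\psi)$ through $p$ is $\gamma$, we get $\sigma\subset\gamma$, hence $d_\gamma(p,q;h_{can})\le\ell(\sigma)=d_\Sigma(p,q;h_{can})$.

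I do not expect a genuine obstacle. The two points that must be handled with a little care are the quoted uniqueness of the minimizing geodesic within the injectivity radius, together with the identity $\ell(t\mapsto\exp_p(tv))=d_\Sigma(p,q;h_{can})$ for the appropriate $v$, which is classical and can simply be cited; and, in the second case, the observation that a one-dimensional connected component of the fixed locus is automatically a full embedded circle, so that no boundary phenomena interfere with the connectedness argument. The real content is just this uniqueness-within-the-injectivity-radius principle applied to the $\psi$-image of $\sigma$ (respectively, to the relevant subarc of $\gamma$).
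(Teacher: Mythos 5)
Your argument is correct and reaches the same conclusion as the paper by a closely related route. For the case of a $\psi$-fixed geodesic, the paper's proof shows that $B_r(p)$ is $\psi$-invariant (its Lemma~\ref{lmDiscRef}), that the fixed locus $Fix(\psi|_{B_r(p)})$ is a single radial geodesic, and that $\gamma\cap B_r(p)$ sits inside that radial geodesic; your version instead applies the uniqueness-within-the-injectivity-radius principle directly to the minimizing geodesic $\sigma$ from $p$ to $q$ and its $\psi$-image $\psi\circ\sigma$. This avoids having to describe $Fix(\psi|_{B_r(p)})$ via $\exp_p$ and is therefore marginally more self-contained, but both arguments express the same phenomenon: within the injectivity radius, $\exp_p$ trivializes the local geometry and $\psi$ acts through $d\psi_p$, so that whatever is fixed near $p$ must be a radial segment. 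For the minimal-geodesic case the paper simply declares the claim ``obvious,'' while you supply the standard subarc-of-a-minimizer argument, which is exactly the intended one. You also make explicit that a one-dimensional component of $Fix(\psi)$ on a closed surface is a closed geodesic, hence the component through $p$ coincides with $\gamma$; the paper uses this tacitly in passing from $\gamma\cap B\subset B_\psi$ to the conclusion about $SegWidth$.
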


\begin{proof}
If $\gamma$ is a minimal geodesic the claim is obvious. Suppose $\gamma$ is fixed under $\psi$. For any $p\in\gamma$ and $r\in(0, InjRad(\Sigma;h_{can},p))$, write $B=B_{r}(p;h_{can})$. Let $\Sigma_{\psi}$ denote the fixed points of $\psi$. $B$ is conjugation invariant by Lemma \ref{lmDiscRef}. Denote by $B_\psi$ the fixed points of $\psi|_B$. It is clear that $B_{\psi}$ is a radial geodesic and that $B_{\psi}=\Sigma_{\psi}\cap B$. It follows that $B_{r}(p;h_{can})\cap\gamma$ is contained in a radial geodesic and so, that $SegWidth(\gamma,p;h_{can})\geq r$.
\end{proof}

Let $(M,\omega)$ be a symplectic manifold, $L\subset M$ a Lagrangian submanifold, and $J$ an $\omega$-tame almost complex structure.
\begin{df}\label{BoundCond}
Let $S$ be a family of compact Riemann surfaces, possibly with boundary. We say that the data of $S$ together with $(M,\omega,L,J)$ comprise a \textbf{K-bounded setting} if one of the following holds.
\begin{enumerate}
\item\label{BoundCond1} $M$ and $L$ are compact.
\item \label{BoundCond2}$L=\emptyset$ and
\[
\max\left\{\vectornorm{R},\vectornorm{J}_2,\frac1{i}\right\}<K.
\]
\item\label{BoundCond3}
$L$ is $\frac1{K}$-Lipschitz and
\[
\max\left\{\vectornorm{R}_2,\vectornorm{J}_2,\vectornorm{B}_2,\frac1{i}\right\}<K.
\]
\item\label{BoundCond4}
Each connected component $L'$ of $L$ is $\frac1{K}$-Lipschitz and
\[
\max\left\{\vectornorm{R}_2,\vectornorm{J}_2,\vectornorm{B}_2,\frac1{i}\right\}<K.
\]
Furthermore, for each $\Sigma \in S$, there is a conformal metric $h$ of constant curvature $0,\pm 1,$ and of unit area in case of zero curvature, such that $\partial\Sigma$ is totally geodesic and $\frac1{K}$-Lipschitz.

\end{enumerate}
\end{df}

Let $\mathcal{M}$ be a family of $J$-holomorphic curves in $M$ with boundary in $L$, and let $S$ be the family of Riemann surfaces that are domains of members of $\mathcal{M}$. For each $(u,\Sigma)\in \mathcal{M}$ let $\mu_u$ be the corresponding energy measure
\[
\mu_u(U):=\int_U\vectornorm{du}^2dvol_\Sigma,
\]
for $U\subset\Sigma$ an open subset. The following is Theorem~2.8 of~\cite{GS13}. In cases~\ref{BoundCond1} and~\ref{BoundCond2} of Definition~\ref{BoundCond}, it follows straightforwardly from the discussion in Chapter 4 of~\cite{MS2}.

\begin{tm}\label{TmThickThinM}
Suppose $S$ and $(M,\omega,J,L)$ comprise a $K$-bounded setting. Then the collection of measured Riemann surfaces
\[
\{(\Sigma,\mu_u)|(u,\Sigma)\in\mathcal{M}\}
\]
is uniformly thick thin with constants depending only on $K$.
\end{tm}

\begin{proof}[\textbf{Proof of Theorems \ref{TmRIIBountdary}, \ref{TmRIIReal}, \ref{Estimates} and \ref{Estimates2}}]
 Theorems \ref{TmRIIBountdary}, \ref{TmRIIReal}, \ref{Estimates} and \ref{Estimates2} are  immediate from  Theorem \ref{TmApLenBound}, Lemma \ref{lmSegWid}, Remark \ref{rmScaling} and Theorem \ref{TmThickThinM}. Note that Theorem \ref{Estimates2} is covered by Theorem \ref{TmThickThinM} applied to case \ref{BoundCond2} in Definition \ref{BoundCond}.
\end{proof}

To prove Theorem~\ref{tmDiamEst}, we need the following additional definitions and lemmas.
\begin{df}
Let $\Sigma$ be a Riemann surface with boundary. A \textbf{bridge} in $\Sigma$ is a length minimizing geodesic connecting two components $\gamma_1\neq \gamma_2$ of $\partial\Sigma$. An \textbf{admissible bridge} is a bridge $\gamma$ such that for any $p\in\gamma$ and any $r\in\frac1{3}InjRad(\Sigma_{\C};h_{can},p)$, we have $B_r(p;h_{can})\cap\partial\Sigma\subset\gamma_1\cup\gamma_2$.
\end{df}
In the following Lemmas \ref{lmSegWidth2} to \ref{lmBridge}, we shall consider a Riemann surface $\Sigma$ with boundary and its complex double $\Sigma_\C.$ All metric quantities and sets are with respect to the metric $h_{can}$ on $\Sigma_{\C}$, so we omit them from the notation.
\begin{lm}\label{lmSegWidth2}
Let $\gamma$ be an admissible bridge in $\Sigma$ connecting the components $\gamma_1$ and $\gamma_2$ of $\partial\Sigma$. Then for any $p\in\gamma\cup\overline{\gamma} \subset \Sigma_\C,$
\[
SegWidth(\gamma\cup\overline{\gamma},p)\geq\frac1{3}InjRad(p).
\]
\end{lm}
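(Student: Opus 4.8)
The plan is to reduce the statement to the local picture near a point $p \in \gamma \cup \overline\gamma$ and exploit the double structure of $\Sigma_\C$ together with the admissibility hypothesis on the bridge. Fix $p \in \gamma \cup \overline\gamma$; by symmetry of $\Sigma_\C$ under conjugation we may assume $p \in \gamma$. Write $r_0 = \tfrac13 InjRad(p)$ and let $r < r_0$. We must show $B_r(p) \cap \gamma \cup \overline\gamma = B_r(p;(\gamma\cup\overline\gamma, h_{can}|_{\gamma\cup\overline\gamma}))$, i.e. the ambient ball meets $\gamma \cup \overline\gamma$ in exactly the intrinsic ball of the one-manifold. The first step is to observe that $\gamma \cup \overline\gamma$ is a geodesic in $\Sigma_\C$: $\gamma$ is a bridge, hence a length-minimizing geodesic in $\Sigma$, and since $\partial\Sigma$ is geodesic in $\Sigma_\C$ (by the construction of $h_{can}$ and the reflection principle), the doubled curve $\gamma \cup \overline\gamma$ is a smooth geodesic of $\Sigma_\C$ meeting $\partial\Sigma$ transversally at the two endpoints on $\gamma_1$ and $\gamma_2$.

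The key step is to control $B_r(p)$. If $p$ is an interior point of $\gamma$, i.e. $p \notin \partial\Sigma$, then for $r < InjRad(p)$ the ball $B_r(p)$ is an embedded geodesic disk and its intersection with the geodesic $\gamma\cup\overline\gamma$ through $p$ is precisely the radial geodesic segment of length $2r$ centered at $p$, which is the intrinsic ball $B_r(p; \gamma\cup\overline\gamma)$ — this is the standard fact about totally geodesic submanifolds inside a geodesic ball, and it gives $SegWidth(\gamma\cup\overline\gamma,p) \geq InjRad(p) \geq \tfrac13 InjRad(p)$. The substantive case is when $p$ lies on $\partial\Sigma$, say on $\gamma_1$, so $p$ is an endpoint of $\gamma$. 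Here $B_r(p) \cap (\gamma \cup \overline\gamma)$ could a priori contain points of $\gamma_1$ (or of another component's double) that are close to $p$ in $\Sigma_\C$ but not on $\gamma$. This is exactly what admissibility rules out: since $r < \tfrac13 InjRad(p)$, the admissibility condition asserts $B_r(p) \cap \partial\Sigma \subset \gamma_1 \cup \gamma_2$, and in fact since $p \in \gamma_1$ and $r$ is small we have $B_r(p) \cap \partial\Sigma \subset \gamma_1$. Now within the embedded geodesic ball $B_r(p)$, the fixed-point set of the conjugation is a single radial geodesic through $p$ — this is the same argument as in Lemma \ref{lmSegWid} using Lemma \ref{lmDiscRef} — namely $\partial\Sigma \cap B_r(p)$ is a radial geodesic. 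Since $\gamma$ emanates from $p$ transversally to $\partial\Sigma$ and $\gamma\cup\overline\gamma$ is a geodesic through $p$, it too is a radial geodesic of $B_r(p)$, distinct from $\partial\Sigma\cap B_r(p)$. Two distinct radial geodesics of a geodesic disk meet only at the center, so $B_r(p) \cap (\gamma\cup\overline\gamma)$ is exactly the portion of that radial geodesic inside $B_r(p)$, which is the intrinsic ball $B_r(p;\gamma\cup\overline\gamma)$.

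Combining the two cases: for every $p \in \gamma\cup\overline\gamma$ and every $r < \tfrac13 InjRad(p)$ we have $B_r(p) \cap (\gamma\cup\overline\gamma) = B_r(p;(\gamma\cup\overline\gamma,h_{can}))$, so by definition $SegWidth(\gamma\cup\overline\gamma,p) \geq \tfrac13 InjRad(p)$. The main obstacle I anticipate is the bookkeeping near the endpoints of the bridge — confirming that $\gamma\cup\overline\gamma$ is genuinely a smooth geodesic across $\gamma_1$ and $\gamma_2$ (which requires that a length-minimizing bridge hits $\partial\Sigma$ orthogonally, so that its reflection is $C^1$, hence geodesic) and that admissibility correctly excludes the "wrong" boundary components from the small ball. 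Everything else is the standard geometry of totally geodesic curves inside embedded geodesic balls, already used in Lemmas \ref{lmDiscRef} and \ref{lmSegWid}.
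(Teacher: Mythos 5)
Your proposal misses the case that is actually the crux of the lemma, and the paper's proof is organized around it. You split into ``$p$ interior to $\gamma$'' and ``$p \in \partial\Sigma$,'' but the delicate configuration is when $p$ is an interior point of $\gamma$ close enough to $\partial\Sigma$ that $B_r(p)$ contains the endpoint $q = \gamma \cap \gamma_1$ and therefore also a piece of $\overline\gamma$. In that situation the ball $B_r(p)$ is not conjugation-invariant (so the Lemma~\ref{lmDiscRef}/\ref{lmSegWid} mechanism does not apply directly), and the doubled geodesic $\gamma \cup \overline\gamma$ is \emph{not} length-minimizing across $q$ (only $\gamma$ alone is minimizing, between $\gamma_1$ and $\gamma_2$). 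The ``standard fact about totally geodesic submanifolds inside a geodesic ball'' that you invoke to get $SegWidth \geq InjRad(p)$ in the interior case holds for \emph{minimizing} geodesics (which is how the paper's Lemma~\ref{lmSegWid} treats $\gamma$ alone) but is not a standard fact for an arbitrary embedded geodesic through the center of an embedded-but-not-obviously-convex ball; an embedded geodesic can in general exit $B_r(p)$ and re-enter. If your claim were that easy, the lemma would hold with $InjRad(p)$ in place of $\tfrac13 InjRad(p)$, and the paper goes out of its way not to claim this.

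The paper's proof resolves this by recentering. Having established (via admissibility and minimality of the bridge) that $q \in B_r(p)$, it passes to $B'' = B_{2r}(q)$ centered at the boundary point $q$, which is conjugation-invariant and is still a normal disk because $B'' \subset B' = B_{3r}(p)$ and $3r < InjRad(p)$ --- this is exactly where the factor $\tfrac13$ is used. In $B''$ each of $\gamma$ and $\overline\gamma$ is radial by Lemma~\ref{lmSegWid} (minimal geodesic case), they meet $\partial\Sigma$ at $q$ at right angles from opposite sides, hence $\gamma\cup\overline\gamma \cap B''$ is a single radial geodesic $C$; then $B \cap C$ is radial in $B$ because $p \in C$ and $h_{can}$ has constant curvature. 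Your write-up has the right ingredients (admissibility controlling $B_r(p) \cap \partial\Sigma$, $\gamma\cup\overline\gamma$ a smooth geodesic by orthogonal reflection, the radial-geodesic picture in an invariant ball centered on $\partial\Sigma$) but assembles them only in the two endpoint cases and leaves the intermediate case, which is the whole content, covered by an unjustified appeal to a ``standard fact.'' To fix the proposal you would essentially need to reproduce the paper's recentering argument, which is where $\tfrac13$ enters.
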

\begin{proof}
Let $p\in\gamma$ and $r\in(0,\frac1{3}InjRad(p))$. Write
\[
B=B_r(p),
\]
and
\[
B':=B_{3r}(p).
\]
We need to show that  $B\cap(\gamma\cup\overline{\gamma})$ is a radial geodesic. If $B\cap\partial\Sigma=\emptyset$ then $(\gamma\cup\overline{\gamma})\cap B = \gamma \cap B$ is a minimizing geodesic, so the claim follows from Lemma \ref{lmSegWid}. Otherwise, let $q$ be the point of $\gamma \cap \partial \Sigma$ closest to $p,$ and let
\[
B'':=B_{2r}(q).
\]
Since $\gamma$ is admissible, $B \cap \partial\Sigma \subset \gamma_1 \cup \gamma_2.$ Thus since $\gamma$ minimizes length between $\gamma_1$ and $\gamma_2,$ we have $q \in B.$ The triangle inequality implies that $B\subset B''\subset B'$. Since $h_{can}$ has constant curvature and $B'$ is a normal disk, it follows that $B''$ is a normal disk. By Lemma~\ref{lmSegWid} and the fact that $B''$ is a normal disk, we have that each of $\gamma\cap B''$ and $\overline{\gamma}\cap{B''}$ lies on a radial geodesic of $B''$. Furthermore $\gamma$ and $\overline \gamma$ intersect $\partial\Sigma$ at $q$ in a right angle. So, $B''\cap(\gamma\cup\overline{\gamma})$ is a radial geodesic which we denote by $C$. Since $p\in C$, again using the fact $h_{can}$ has constant curvature, $B\cap C = B \cap (\gamma \cup \bar \gamma)$ is a radial geodesic in $B$.
\end{proof}

\begin{df}
Let $\Sigma$ be a Riemann surface with boundary and let $\gamma_i,$ for $i=1,2$, be components of $\partial\Sigma$. An \emph{admissible chain} connecting $\gamma_1$ and $\gamma_2$ is a pairwise disjoint sequence
\[
\alpha_1=\gamma_1,\alpha_2,\ldots,\alpha_n=\gamma_2,
\]
of components of $\partial\Sigma$ with admissible bridges $\beta_i$ connecting $\alpha_i$ and $\alpha_{i+1}$ for each $1 \leq i \leq n-1.$
\end{df}

\begin{lm}\label{lmBridegChain}
Let $\Sigma$ be a Riemann surface with boundary and let $\chi_i,$ for $i=1,2$, be components of $\partial\Sigma$. There exists an admissible chain connecting $\chi_1$ and $\chi_2.$
\end{lm}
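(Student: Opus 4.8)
The plan is to build the admissible chain by an exhaustion argument, propagating admissible bridges one component at a time and using connectedness of $\Sigma$ to guarantee that the process reaches $\chi_2$. First I would set up the right local picture: for a component $\alpha$ of $\partial\Sigma$ and a point $p$ on a length-minimizing geodesic $\gamma$ emanating from $\alpha$, admissibility is a statement that the ball $B_r(p;h_{can})$ with $r<\frac13 InjRad(p)$ meets $\partial\Sigma$ only in the two components $\gamma$ joins. Since a minimizing geodesic between two components of $\partial\Sigma$ hits each of them orthogonally and exactly once, and since the relevant small balls are normal disks (constant curvature), a bridge fails to be admissible only if some small ball along it touches a \emph{third} boundary component. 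The key observation is then: if $\beta$ is \emph{any} length-minimizing geodesic from $\alpha_1$ to $\partial\Sigma\setminus\alpha_1$, realizing $\mathrm{dist}(\alpha_1,\partial\Sigma\setminus\alpha_1)$, and it ends on $\alpha_2$, then it \emph{is} admissible: a small ball meeting a third component $\alpha_3$ would produce a strictly shorter geodesic from $\alpha_1$ to $\alpha_3$ (one travels along $\beta$ almost to the far end, then jumps the short distance $<\frac13 InjRad$ into $\alpha_3$), contradicting minimality. This gives, for free, one admissible bridge out of every boundary component to a \emph{nearest} other component.

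Second, I would iterate this. Start the chain with $\alpha_1=\chi_1$. Having built a pairwise-disjoint admissible chain $\alpha_1,\dots,\alpha_k$ with admissible bridges between consecutive terms, if $\chi_2\in\{\alpha_1,\dots,\alpha_k\}$ we are done (truncate the chain there); otherwise consider the surface-with-boundary obtained, morally, by ``removing'' the components already used — more carefully, consider the distance from the already-used boundary components $\bigcup_{i\le k}\alpha_i$ to the remaining ones. A nearest-point geodesic from this set to its complement in $\partial\Sigma$ realizes a minimal distance and, by the same argument as above, is admissible and hits a new component $\alpha_{k+1}$ disjoint from all previous ones; moreover this geodesic leaves from exactly one of the $\alpha_i$, so after possibly re-indexing we may attach it to the end of the chain (the intermediate components are already connected by admissible bridges). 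Since $\partial\Sigma$ has finitely many components, the process terminates, yielding an admissible chain through \emph{all} boundary components reachable from $\chi_1$ in this way.

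Third, I must rule out the process stopping before reaching $\chi_2$, i.e.\ I need that $\chi_1$ and $\chi_2$ lie in the ``same cluster.'' This is where connectedness of $\Sigma$ enters: if the chain-building process exhausts a proper nonempty subset $\Pi_1\subset\pi_0(\partial\Sigma)$ with $\chi_1\in\Pi_1$, $\chi_2\notin\Pi_1$, then $\mathrm{dist}\big(\bigcup_{\alpha\in\Pi_1}\alpha,\ \partial\Sigma\setminus\bigcup_{\alpha\in\Pi_1}\alpha\big)>0$, and a standard separating-hypersurface / Gauss-lemma argument shows $\Sigma$ would decompose as a disjoint union of two nonempty open-and-closed pieces, one containing $\chi_1$'s cluster and one containing $\chi_2$ — contradicting connectedness of $\Sigma$. (Concretely: the set of points of $\Sigma$ closer to $\bigcup_{\Pi_1}$ than to the rest, and its complement, are both open once the two families of boundary components are a positive distance apart, because no minimizing geodesic between the two families exists inside $\Sigma$ — if it did, the process would have continued.)

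The main obstacle I expect is the bookkeeping in Step 2: ensuring the newly found admissible bridge can always be appended so as to keep the sequence \emph{pairwise disjoint} and \emph{consecutively bridged}, rather than branching off somewhere in the middle. The clean way around this is to not insist on extending a linear chain greedily, but instead to first prove the purely combinatorial statement that the ``admissible-bridge graph'' on $\pi_0(\partial\Sigma)$ — with an edge $\alpha\beta$ whenever a length-minimizing geodesic from $\alpha$ realizing $\mathrm{dist}(\alpha,\partial\Sigma\setminus\alpha)$ lands on $\beta$ — is connected (this is exactly the content of Steps 1 and 3), and then extract the chain as a path in that graph, noting that every edge of the graph is witnessed by an admissible bridge. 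Disjointness of the components along the path is automatic since distinct vertices are distinct components, and admissibility of each bridge was the point of Step 1. The length-minimization subtleties (existence of minimizing geodesics between boundary components, orthogonal meeting, the normal-disk structure of small balls) are routine given $\Sigma_\C$ is closed with a smooth metric and $\Sigma$ sits inside it as a totally-geodesic-boundaried piece, and I would invoke them without detailed proof, citing the analogous reasoning already used for Lemma~\ref{lmSegWidth2}.
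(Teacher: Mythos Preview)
Your core insight---that a shortest bridge between suitable families of boundary components must be admissible---is correct and is precisely what drives the paper's proof. But your final ``clean'' formulation has a genuine gap. The nearest-neighbor graph you define (an edge $\alpha\beta$ whenever a geodesic realizing $\mathrm{dist}(\alpha,\partial\Sigma\setminus\alpha)$ lands on $\beta$) need \emph{not} be connected: picture a sphere with four disks removed, two clustered near each pole. Each boundary circle is nearest to its same-pole partner, so the graph is two disjoint edges. Your Step~3 does not rescue this, since a positive distance between two disjoint compact subsets of $\partial\Sigma$ never forces $\Sigma$ itself to be disconnected.

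The Prim-like process you sketch in Step~2, on the other hand, \emph{does} work, and you abandon it too quickly. It does not produce a linear chain directly but a spanning \emph{tree} on $\pi_0(\partial\Sigma)$, each of whose edges is witnessed by an admissible bridge. (One small point you glide over: when the intruding component $\gamma_3$ lies \emph{inside} the already-built cluster, you need that the bridge from $\gamma_3$ to the newly-reached component $\alpha_{k+1}$ is also strictly shorter than $\beta$; this is exactly the full content of Lemma~\ref{lmBridge}, which bounds \emph{both} $\ell(\beta_1)$ and $\ell(\beta_2)$.) Once the tree is built, the unique tree-path from $\chi_1$ to $\chi_2$ is the desired admissible chain, and the branching worry dissolves.

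The paper takes a closely related but more economical route: rather than constructing the tree greedily, it takes \emph{any} minimum spanning tree $T$ on the complete weighted graph over $\pi_0(\partial\Sigma)$ (weights $=$ bridge lengths) and argues by contradiction that every edge of $T$ is admissible. If an edge $e$ joining $\gamma_1,\gamma_2$ were violated by some $\gamma_3$, Lemma~\ref{lmBridge} produces a strictly shorter edge from $\gamma_3$ to whichever of $\gamma_1,\gamma_2$ lies across the cut of $T$ determined by removing $e$; the swap yields a lighter spanning tree. Your corrected Prim argument and the paper's argument are two faces of the same minimum-spanning-tree idea; the paper's version is slightly quicker since it avoids the inductive bookkeeping.
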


\begin{lm}\label{lmBridge}
Let $\beta$ be a bridge connecting boundary components $\gamma_1$ and $\gamma_2$. Suppose there is a boundary component $\gamma_3$ and a point $p\in \beta$ such that
\[
d(p,\gamma_3)<\frac13InjRad(p).
\]
Let $\beta_i$ for $i=1,2,$ be a bridge connecting $\gamma_3$ with $\gamma_i$. Then
\[
\ell(\beta_i)<\ell(\beta).
\]
\end{lm}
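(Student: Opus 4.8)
The plan is to exploit the defining minimality of the bridge $\beta$ together with the triangle inequality, using the hypothesis that $\gamma_3$ comes within $\tfrac13 InjRad(p)$ of the bridge at some interior point $p$. First I would set $d_i = \ell(\beta_i)$ and recall that, by definition, $\beta_i$ is a \emph{length minimizing} geodesic connecting $\gamma_3$ with $\gamma_i$; in particular $\ell(\beta_i) = d(\gamma_3,\gamma_i)$. Likewise $\ell(\beta) = d(\gamma_1,\gamma_2)$. The point $p \in \beta$ splits $\beta$ into two subarcs, one from $\gamma_1$ to $p$ of length $a_1$ and one from $p$ to $\gamma_2$ of length $a_2$, with $a_1 + a_2 = \ell(\beta)$. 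Write $\epsilon = d(p,\gamma_3) < \tfrac13 InjRad(\Sigma_\C;h_{can},p)$.

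The key estimate is the triangle inequality applied along the concatenation of the subarc of $\beta$ from $\gamma_i$ to $p$ with a minimizing path of length $\epsilon$ from $p$ to $\gamma_3$: this yields
\[
\ell(\beta_i) = d(\gamma_3,\gamma_i) \leq a_i + \epsilon.
\]
So it suffices to show $a_i + \epsilon < \ell(\beta) = a_1 + a_2$, i.e. $\epsilon < a_{3-i}$, meaning the distance from $p$ to $\gamma_3$ is strictly less than the distance from $p$ to the \emph{other} endpoint component of $\beta$ along $\beta$. The step I expect to be the main obstacle is ruling out the degenerate possibility that $\epsilon \geq a_{3-i}$, i.e. that $p$ is very close to one of the endpoints $\gamma_1$ or $\gamma_2$ of $\beta$. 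I would argue this cannot happen under the injectivity-radius hypothesis: if $a_{3-i} \leq \epsilon < \tfrac13 InjRad(p)$, then $\gamma_{3-i}$, $\gamma_3$, and $p$ all lie in a normal ball $B_{\frac13 InjRad(p)}(p)$, so $d(\gamma_3,\gamma_{3-i}) \leq a_{3-i} + \epsilon < \tfrac23 InjRad(p)$; but then one can produce a bridge between $\gamma_i$ and $\gamma_{3-i}$, or more directly show the original bridge $\beta$ is not minimizing because it can be shortcut through $\gamma_3$ — contradicting the definition of $\beta$. Concretely, if $p$ were within $\epsilon$ of \emph{both} $\gamma_3$ and $\gamma_{3-i}$, one obtains a path from $\gamma_1$ to $\gamma_2$ of length at most $a_i + 2\epsilon \leq a_i + 2a_{3-i}$, which is not obviously shorter; the cleaner route is to observe that since $\beta$ realizes $d(\gamma_1,\gamma_2)$ and $p\in\beta^\circ$, the subarc lengths satisfy $a_j = d(\gamma_j, p) \geq d(\gamma_j,\gamma_{3})$ would force $\ell(\beta_j)\le a_j$, and pushing this through both indices together with $\epsilon$ small gives the strict inequality.

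To carry this out rigorously I would proceed in the following order: (1) record $\ell(\beta) = a_1 + a_2$ with $a_j = d(\gamma_j,p)$ using minimality of $\beta$ and the fact that a subarc of a minimizing geodesic is minimizing; (2) apply the triangle inequality through $p$ and the $\epsilon$-short path to $\gamma_3$ to get $\ell(\beta_i) \leq a_i + \epsilon$ for $i=1,2$; (3) show $\epsilon < \min(a_1,a_2)$ — this is where the injectivity radius enters: if, say, $a_1 \le \epsilon$, then $p$ lies within $\epsilon < \tfrac13 InjRad(p)$ of $\gamma_1$, hence $d(\gamma_1,\gamma_3) \le a_1 + \epsilon \le 2\epsilon < \ell(\beta)$ is automatic, but the sharper point is that the concatenated path $\gamma_3 \to p \to \gamma_2$ has length $\le \epsilon + a_2 < a_1 + a_2 = \ell(\beta)$ only if $\epsilon < a_1$, so I must instead use that $\beta$ being a \emph{bridge} means its interior does not meet $\partial\Sigma$, combined with the normal-ball structure around $p$ of radius $\tfrac13 InjRad(p)$, to conclude $p$ has $h_{can}$-distance at least $\tfrac13 InjRad(p) > \epsilon$ to $\partial\Sigma \setminus (\gamma_1\cup\gamma_2)$ unless it is near an endpoint, and an endpoint-closeness case is handled because then $\gamma_3$, being even closer, would have to coincide with or intersect $\gamma_1$ or $\gamma_2$, contradicting that $\gamma_1,\gamma_2,\gamma_3$ are distinct disjoint components; (4) combine (2) and (3): $\ell(\beta_i) \le a_i + \epsilon < a_i + a_{3-i} = \ell(\beta)$. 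The main subtlety throughout is bookkeeping the two indices $i=1,2$ simultaneously and making the endpoint-degeneracy argument airtight using only the injectivity radius bound and disjointness of the boundary components.
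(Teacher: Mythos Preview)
Your overall framework is sound and matches the paper's: write $a_i = d(p,\gamma_i)$ so that $a_1 + a_2 = \ell(\beta)$, set $\epsilon = d(p,\gamma_3) < \tfrac13 InjRad(p)$, observe $\ell(\beta_i) \leq a_i + \epsilon$, and then reduce the lemma to the inequality $\epsilon < \min(a_1,a_2)$. The gap is entirely in your step (3): none of the arguments you sketch there actually establishes $\epsilon < a_j$. The first attempt is circular, as you yourself note. The claim that ``$p$ has $h_{can}$-distance at least $\tfrac13 InjRad(p)$ to $\partial\Sigma\setminus(\gamma_1\cup\gamma_2)$'' directly contradicts the hypothesis $d(p,\gamma_3)<\tfrac13 InjRad(p)$. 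And the final claim---that if $p$ is close to both $\gamma_1$ and $\gamma_3$ then $\gamma_3$ ``would have to coincide with or intersect'' $\gamma_1$---is unjustified: nothing you have written rules out two disjoint boundary circles passing close to one another.

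The missing idea is to invoke Lemma~\ref{lmSegWid} at the nearest point $q\in\gamma_3$ to $p$. Since $d(p,q)<\tfrac13 InjRad(p)$, the ball $B_{\frac23 InjRad(p)}(q)$ sits inside $B_{InjRad(p)}(p)$; as $h_{can}$ has constant curvature this forces $InjRad(q)\geq\tfrac23 InjRad(p)$, and in particular $d(p,q)<\tfrac12 InjRad(q)$. Now Lemma~\ref{lmSegWid}, applied to $\Sigma_\C$ with the conjugation as isometric involution, says that the fixed-point set of conjugation meets the normal ball $B_{InjRad(q)}(q)$ in a single radial geodesic through $q$, namely the local arc of $\gamma_3$. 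Since $\gamma_1,\gamma_2$ are disjoint from $\gamma_3$, they cannot enter this ball, i.e.\ $d(q,\gamma_i)\geq InjRad(q)$. The triangle inequality then gives
\[
a_i = d(p,\gamma_i) \;\geq\; d(q,\gamma_i) - d(p,q) \;>\; InjRad(q) - \tfrac12 InjRad(q) \;=\; \tfrac12 InjRad(q) \;>\; \epsilon,
\]
which is exactly the inequality you were missing. This is how the paper proceeds; once you have it, your step (4) finishes the proof.
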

\begin{proof}
Let $\delta$ be the length minimizing geodesic from $p$ to $\gamma_3$ and let $q=\delta\cap\gamma_3$. Write $r= InjRad(p)$. Let $B_1=B_r(p)$ and $B_2=B_{\frac{2}{3}r}(q)$. By the triangle inequality, $B_2\subset B_1$. Since $h_{can}$ has constant curvature and $B_1$ is a normal disk, so is $B_2$. In particular $InjRad(q)\geq \frac23InjRad(p)$. It follows that
\begin{equation}\label{eqbrideg1}
d(p,q)<\frac13InjRad(p)\leq\frac12InjRad(q).
\end{equation}
On the other hand, by Lemma \ref{lmSegWid} applied to the Riemann surface $\Sigma_{\C}$ with conjugation as the isometric involution we have, for $i=1,2$,
\begin{equation}\label{eqbrideg2}
d(q,\gamma_i)\geq InjRad(q).
\end{equation}
Denote by $p_i$ for $i=1,2,$ the point where $\beta$ meets $\gamma_i$. Combining inequalities \eqref{eqbrideg1}, \eqref{eqbrideg2} and the triangle inequality,
\begin{equation}
d(p,p_i)\geq d(q,p_i)-d(p,q)\geq d(q,\gamma_i)-d(p,q)>\frac12InjRad(q).
\end{equation}
Therefore, since $d(p,p_1) + d(p,p_2) =\ell(\beta)$, we have
\begin{equation}\label{eqbrideg3}
d(p,p_i)<\ell(\beta)-\frac12InjRad(q).
\end{equation}
Combining inequalities \eqref{eqbrideg1} and \eqref{eqbrideg3}, we conclude
\begin{equation}
\ell(\beta_i)\leq d(p_i,q)\leq d(p_i,p)+d(p,q)<\ell(\beta).
\end{equation}
\end{proof}

\begin{proof}[Proof of Lemma \ref{lmBridegChain}]
For any graph $\Gamma,$ we denote by $\mathcal{E}(\Gamma)$ the set of edges of $\Gamma.$ Let $G$ be the complete graph with vertices the boundary components of $\Sigma$. For any $e \in \mathcal{E}(G),$ denote by $\ell(e)$ the length of a corresponding bridge. Let $\mathcal{S}$ be the set of spanning trees of $G$. For $T\in\mathcal{S},$ we write
\[
\ell(T)=\sum_{e\in\mathcal{E}(T)}\ell(e).
\]
Let $T\in\mathcal{S}$ be a tree that minimizes $\ell(\cdot)$. We claim that all bridges corresponding to an edge of $T$ are admissible. Indeed, suppose the contrary and let $e\in\mathcal{E}(T)$ correspond to a non-admissible bridge $\beta$ connecting $\gamma_1$ and $\gamma_2$. By definition, there is a $\gamma_3$ such that, using the notation of Lemma~\ref{lmBridge}, the condition of Lemma \ref{lmBridge} is satisfied. For $i =1,2,$ let $e_i$ be the edge of $G$ connecting $\gamma_3$ and $\gamma_i.$ Removing $e$ disconnects $T$ into two connected components $T_1$ and $T_2$ containing the vertices $\gamma_1$ and $\gamma_2$ respectively. Without loss of generality, suppose the vertex corresponding to $\gamma_3$ is in $T_2$ .Then, connecting $T_1$ and $T_2$ with the edge $e_1$ produces a spanning tree $T'$. By Lemma~\ref{lmBridge} we have
\[
\ell(T')=\ell(T)-\ell(e)+\ell(e_1)<\ell(T)
\]
contrary to the choice of $T$. The claim follows. So, the sequence of boundary components corresponding to the unique path in $T$ connecting $\chi_1$ with $\chi_2$ is an admissible chain.
\end{proof}

\begin{proof}[\textbf{Proof of Theorem~\ref{tmDiamEst}}]
To deduce the diameter estimate of Theorem \ref{tmDiamEst}, let $p_1,p_2\in\Sigma$. If $\partial\Sigma =\emptyset$, Theorem \ref{TmApLenBound} and Lemma \ref{lmSegWid} immediately imply the claim. Otherwise, let $\delta_i$, for $i=1,2$,  be the minimal geodesic connecting $p_i$ to $\overline{p_i}$. Clearly, $\delta_i$ is conjugation invariant and intersects a component $\gamma_i$ of $\partial\Sigma$. Let $(\alpha_i)_{i\leq n}$ be an admissible chain connecting $\gamma_1$ and $\gamma_2$ with bridges $\beta_i$ connecting $\alpha_i$ and $\alpha_{i+1}$ for $1 \leq i \leq n-1.$ It follows from the definition of an admissible chain that  $n\leq |\pi_0(\partial\Sigma)|$. We have the estimate
 \[
 d(u(p_1),u(p_2))\leq\ell(u|_{\delta_1})+\ell(u|_{\delta_2})+\ell(u|_{\partial\Sigma})+\sum_{i=1}^{n-1}\ell(u|_{\beta_i}),
 \]
 where all the lengths and distances are measured with respect to $g_J$. So, the claim again follows from Theorem \ref{TmApLenBound}, Lemmas \ref{lmSegWid} and \ref{lmSegWidth2}, and Theorem \ref{TmThickThinM}.
\end{proof}
\begin{rem}
The reader may have noted the unequal treatment of the elements $\alpha_i$ and $\beta_i$ in the admissible chain connecting two boundary components. For the $\alpha_i$ we have an estimate which is independent of the number of components since Lemma \ref{lmSegWid} bounds the segment widths of the boundary as a whole. For the $\beta_i$, on the other hand, Lemma \ref{lmSegWidth2} only provides estimates for each component separately. For now we leave open the question whether or not this may be improved to eliminate the dependence on the number of boundary components in Theorem~\ref{tmDiamEst} altogether.
\end{rem}

\section{Calculation of optimal isoperimetric constant}\label{sec:calc}
\begin{proof}[Proof of Proposition~\ref{pr:ex}]
Let $M=\mathbb{CP}^n$, let $L=\mathbb{RP}^n$ and let $\omega=\omega_{FS}$ be the Fubini Study form, normalized so the area of line is $\frac{1}{\pi}.$ Let $J$ be an $\omega$-tame almost complex structure. Let $\mathcal{M}_{0,2}(M,L,J)$ denote the moduli space of degree $1$  $J$-holomorphic disk maps with two marked points on the boundary modulo reparametrization. Since the energy of a degree $1$ map is minimal, there is no bubbling, and $\mathcal{M}_{0,2}(M,L,J)$ is compact. The structure theorem for the image of $J$-holomorphic disks~\cite{KO00,La11} implies that minimal energy disks are somewhere injective.  It follows that there is a dense set of regular $J$. Recall that for $J$ regular, $\mathcal{M}_{0,2}(M,L,J)$ is a manifold. Let
\[
ev_J:\mathcal{M}_{0,2}(M,L,J)\rightarrow L^2
\]
be the evaluation map. We claim that $ev_J$ is surjective for all $J.$ To see this, note that for $J$ regular $ev_J$ is relatively orientable by~\cite[Theorem 1]{So06}. In particular, it is not hard to check that the standard complex structure $J_{st}$ is regular. For $J =J_{st},$ degree $1$ disk maps are equivalent to oriented real lines. So $ev_{J_{st}}$ is $2$ to $1$ away from the diagonal. Using~\cite[Prop. 5.1]{So06}, one can check that conjugate disks contribute with equal sign, so $e_{J_{st}}$ has degree $2.$  A routine cobordism argument then shows $ev_J$ has degree $2$ for $J$ regular. Finally, Gromov compactness implies surjectivity for any $J$.

We deduce that for any smooth almost complex structure $J$ on $M$, there is a $u: (D^2,\partial D^2) \to (\C P^n,\R P^n)$ such that
\[
\ell(u|_{\partial D^2};g_J)\geq 2Diam(L;g_J)= 4\pi Diam(L;g_J)Area(u;g_J).
\]
So, for any $\omega$-tame $J$, we have $F_1(M,L,J)\geq 4\pi Diam(L;g_J)$, which implies
\[
h_1(\omega)\geq 2\pi.
\]
On the other hand, it follows from the discussion of Section~\ref{sec:cc} that $F_1( J_{st},\omega)\leq 2\pi$ and $Diam(L;g_{J_{st}})=\frac{1}{2}$, which implies that $h_1\leq 2\pi$. Combining the two inequalities, we conclude $h_1(\omega)=2\pi$.
\end{proof}

\bibliographystyle{amsabbrvc}
\bibliography{RefRII}

\providecommand{\bysame}{\leavevmode\hbox to3em{\hrulefill}\thinspace}
\providecommand{\MR}{\relax\ifhmode\unskip\space\fi MR }
\providecommand{\MRhref}[2]{%
  \href{http://www.ams.org/mathscinet-getitem?mr=#1}{#2}
}
\providecommand{\href}[2]{#2}
\begin{thebibliography}{10}

\bibitem{KV}
M.~Aganagic, A.~Klemm, M.~Mari{\~n}o, and C.~Vafa, \emph{The topological
  vertex}, Comm. Math. Phys. \textbf{254} (2005), no.~2, 425--478, \href
  {http://dx.doi.org/10.1007/s00220-004-1162-z}
  {\path{doi:10.1007/s00220-004-1162-z}}.

\bibitem{AV00}
M.~Aganagic and C.~Vafa, \emph{Mirror symmetry, {D}-branes and counting
  holomorphic discs}, \href {http://arxiv.org/abs/hep-th/0012041}
  {\path{arXiv:hep-th/0012041}}.

\bibitem{AkahoJoyce}
M.~Akaho and D.~Joyce, \emph{Immersed {L}agrangian {F}loer theory}, J.
  Differential Geom. \textbf{86} (2010), no.~3, 381--500.

\bibitem{AlGr}
N.~L. Alling and N.~Greenleaf, \emph{Foundations of the theory of {K}lein
  surfaces}, Lecture Notes in Mathematics, Vol. 219, Springer-Verlag, Berlin,
  1971.

\bibitem{Bosch}
S.~Bosch, U.~G{\"u}ntzer, and R.~Remmert, \emph{Non-{A}rchimedean analysis},
  Grundlehren der Mathematischen Wissenschaften [Fundamental Principles of
  Mathematical Sciences], vol. 261, Springer-Verlag, Berlin, 1984, A systematic
  approach to rigid analytic geometry, \href
  {http://dx.doi.org/10.1007/978-3-642-52229-1}
  {\path{doi:10.1007/978-3-642-52229-1}}.

\bibitem{BM09}
E.~Brugall{\'e} and G.~Mikhalkin, \emph{Floor decompositions of tropical
  curves: the planar case}, Proceedings of {G}\"okova {G}eometry-{T}opology
  {C}onference 2008, G\"okova Geometry/Topology Conference (GGT), G\"okova,
  2009, pp.~64--90.

\bibitem{Bu}
P.~Buser, \emph{Geometry and spectra of compact {R}iemann surfaces}, Modern
  Birkh\"auser Classics, Birkh\"auser Boston Inc., Boston, MA, 2010, Reprint of
  the 1992 edition.

\bibitem{BuS02}
P.~Buser and M.~Sepp{\"a}l{\"a}, \emph{Short homology bases and partitions of
  {R}iemann surfaces}, Topology \textbf{41} (2002), no.~5, 863--871, \href
  {http://dx.doi.org/10.1016/S0040-9383(01)00003-9}
  {\path{doi:10.1016/S0040-9383(01)00003-9}}.

\bibitem{FaKr}
H.~M. Farkas and I.~Kra, \emph{Riemann surfaces}, second ed., Graduate Texts in
  Mathematics, vol.~71, Springer-Verlag, New York, 1992.

\bibitem{FukAdic}
K.~Fukaya, \emph{Cyclic symmetry and adic convergence in {L}agrangian {F}loer
  theory}, Kyoto J. Math. \textbf{50} (2010), no.~3, 521--590, \href
  {http://dx.doi.org/10.1215/0023608X-2010-004}
  {\path{doi:10.1215/0023608X-2010-004}}.

\bibitem{FukCount}
\bysame, \emph{Counting pseudo-holomorphic discs in {C}alabi-{Y}au 3-folds},
  Tohoku Math. J. (2) \textbf{63} (2011), no.~4, 697--727, \href
  {http://dx.doi.org/10.2748/tmj/1325886287}
  {\path{doi:10.2748/tmj/1325886287}}.

\bibitem{FOOO}
K.~Fukaya, Y.-G. Oh, H.~Ohta, and K.~Ono, \emph{Lagrangian intersection {F}loer
  theory: anomaly and obstruction. {P}art {I}}, AMS/IP Studies in Advanced
  Mathematics, vol.~46, American Mathematical Society, Providence, RI;
  International Press, Somerville, MA, 2009.

\bibitem{GS13}
Y.~Groman and J.~P. Solomon, \emph{J-holomorphic curves with boundary in
  bounded geometry}, preprint, \href {http://arxiv.org/abs/1311.7484}
  {\path{arXiv:1311.7484}}.

\bibitem{Gu}
L.~Guth, \emph{Minimax problems related to cup powers and {S}teenrod squares},
  Geom. Funct. Anal. \textbf{18} (2009), no.~6, 1917--1987, \href
  {http://dx.doi.org/10.1007/s00039-009-0710-2}
  {\path{doi:10.1007/s00039-009-0710-2}}.

\bibitem{HaLa}
R.~Harvey and H.~B. Lawson, Jr., \emph{Calibrated geometries}, Acta Math.
  \textbf{148} (1982), 47--157, \href {http://dx.doi.org/10.1007/BF02392726}
  {\path{doi:10.1007/BF02392726}}.

\bibitem{HS12}
A.~Horev and J.~P. Solomon, \emph{The open {G}romov-{W}itten-{W}elschinger
  theory of blowups of the projective plane}, preprint, 2012, \href
  {http://arxiv.org/abs/1210.4034} {\path{arXiv:1210.4034}}.

\bibitem{IKS09}
I.~Itenberg, V.~Kharlamov, and E.~Shustin, \emph{A {C}aporaso-{H}arris type
  formula for {W}elschinger invariants of real toric del {P}ezzo surfaces},
  Comment. Math. Helv. \textbf{84} (2009), no.~1, 87--126, \href
  {http://dx.doi.org/10.4171/CMH/153} {\path{doi:10.4171/CMH/153}}.

\bibitem{Ka07}
M.~G. Katz, \emph{Systolic geometry and topology}, Mathematical Surveys and
  Monographs, vol. 137, American Mathematical Society, Providence, RI, 2007,
  With an appendix by Jake P. Solomon.

\bibitem{Kosh}
S.~Koshkin, \emph{Conormal bundles to knots and the {G}opakumar-{V}afa
  conjecture}, Adv. Theor. Math. Phys. \textbf{11} (2007), no.~4, 591--634.

\bibitem{KO00}
D.~Kwon and Y.-G. Oh, \emph{Structure of the image of (pseudo)-holomorphic
  discs with totally real boundary condition}, Comm. Anal. Geom. \textbf{8}
  (2000), no.~1, 31--82, Appendix 1 by Jean-Pierre Rosay.

\bibitem{La11}
L.~Lazzarini, \emph{Relative frames on {$J$}-holomorphic curves}, J. Fixed
  Point Theory Appl. \textbf{9} (2011), no.~2, 213--256, \href
  {http://dx.doi.org/10.1007/s11784-010-0004-1}
  {\path{doi:10.1007/s11784-010-0004-1}}.

\bibitem{Liu}
C.-C.~M. Liu, personal communication.

\bibitem{Li03}
\bysame, \emph{Moduli of {J}-holomorphic curves with {L}agrangian boundary
  conditions and open {G}romov-{W}itten invariants for an {$S^1$}-equivariant
  pair}, \href {http://arxiv.org/abs/math/0210257v2}
  {\path{arXiv:math/0210257v2}}.

\bibitem{MS2}
D.~McDuff and D.~Salamon, \emph{{$J$}-holomorphic curves and symplectic
  topology}, American Mathematical Society Colloquium Publications, vol.~52,
  American Mathematical Society, Providence, RI, 2004.

\bibitem{PS08}
R.~Pandharipande, J.~Solomon, and J.~Walcher, \emph{Disk enumeration on the
  quintic 3-fold}, J. Amer. Math. Soc. \textbf{21} (2008), no.~4, 1169--1209,
  \href {http://dx.doi.org/10.1090/S0894-0347-08-00597-3}
  {\path{doi:10.1090/S0894-0347-08-00597-3}}.

\bibitem{Sant}
L.~A. Santal{\'o}, \emph{Integral geometry in general spaces}, Proceedings of
  the {I}nternational {C}ongress of {M}athematicians, {C}ambridge, {M}ass.,
  1950, vol. 1 (Providence, R. I.), Amer. Math. Soc., 1952, pp.~483--489.

\bibitem{Si}
J.-C. Sikorav, \emph{Some properties of holomorphic curves in almost complex
  manifolds}, Holomorphic curves in symplectic geometry, Progr. Math., vol.
  117, Birkh\"auser, Basel, 1994, pp.~165--189.

\bibitem{So06}
J.~P. Solomon, \emph{Intersection theory on the moduli space of holomorphic
  curves with {L}agrangian boundary conditions}, MIT Thesis, \href
  {http://arxiv.org/abs/math.SG/0606429} {\path{arXiv:math.SG/0606429}}.

\bibitem{WuTu}
J.~Tu, \emph{On the reconstruction problem in mirror symmetry}, Adv. Math.
  \textbf{256} (2014), 449--478, \href
  {http://dx.doi.org/10.1016/j.aim.2014.02.005}
  {\path{doi:10.1016/j.aim.2014.02.005}}.

\bibitem{We05}
J.-Y. Welschinger, \emph{Invariants of real symplectic 4-manifolds and lower
  bounds in real enumerative geometry}, Invent. Math. \textbf{162} (2005),
  no.~1, 195--234, \href {http://dx.doi.org/10.1007/s00222-005-0445-0}
  {\path{doi:10.1007/s00222-005-0445-0}}.

\end{thebibliography}

\vspace{.5 cm}
\noindent
Institute of Mathematics \\
Hebrew University, Givat Ram \\
Jerusalem, 91904, Israel \\

\end{document}